\numberwithin{equation}{section}
\DeclareMathAlphabet{\mathpzc}{OT1}{pzc}{m}{it}
\theoremstyle{plain}
\newtheorem*{maintheorem*}{Main Theorem}
\newtheorem*{thm*}{Theorem}
\newtheorem*{remark*}{Remark}
\newtheorem*{conjecture*}{Conjecture}
\newtheorem*{prop*}{Proposition}
\newtheorem*{lem*}{Basic Lemma}
\newtheorem{thm}{Theorem}[section]
\newtheorem{cor}[thm]{Corollary}
\newtheorem{lem}[thm]{Lemma}
\newtheorem{prop}[thm]{Proposition}
\newtheorem*{sublemma*}{Sublemma}
\theoremstyle{definition}
\newtheorem*{proofc*}{Proof of Theorem C}
\newtheorem{definition}[thm]{Definition}
\newtheorem{remark}[thm]{Remark}
\def\bbz{\mathbb{Z}}
\def\bbf{\mathbb{F}}
\def\bbr{\mathbb{R}}
\def\bba{\mathbb{A}}
\def\bbb{\mathbb{B}}
\def\bbe{\mathbb{E}}
\def\bbh{\mathbb{H}}
\def\bbn{\mathbb{N}}
\def\bbg{\mathbb{G}}
\def\bbv{\mathbb{V}}
\def\bbu{\mathbb{U}}
\def\bbp{\mathbb{P}}
\def\bbl{\mathbb{L}}
\def\bbs{\mathbb{S}}
\def\bbw{\mathbb{W}}
\def\bbm{\mathbb{M}}
\def\fcal{\mathcal{F}}
\def\ucal{\mathcal{U}}
\def\vcal{\mathcal{V}}
\def\ocal{\mathcal{O}}
\def\scal{\mathcal{S}}
\def\acal{\mathcal{A}}
\def\bcal{\mathcal{B}}
\def\mcal{\mathcal{M}}
\def\tcal{\mathcal{T}}
\def\rcal{\mathcal{R}}
\def\Bfrak{\mathfrak{B}}
\def\Sfrak{\mathfrak{S}}
\def\ybf{\mathbf{y}}
\def\xbf{\mathbf{x}}
\def\qpz{{\mathpzc{q}}}
\def\ppz{{\mathpzc{p}}}
\DeclareMathOperator\Spec{Spec}
\def\SL{{\rm{SL}}}
\def\Ad{{\rm{Ad}}}
\def\GL{{\rm{GL}}}
\def\Lie{{\rm Lie}}
\def\h{\hspace{1mm}}
\def\hh{\hspace{.5mm}}
\def\la{\lambda}
\def\kt{k_{\mathcal{T}}}
\def\lt{l_{\mathcal T}}
\def\vare{\varepsilon}
\def\tprod{\prod_{{v}\in\mathcal{T}}}
\def\wpg{W_G^+(s)}
\def\wmg{W_G^-(s)}
\def\wpgp{W_{G'}^+(s)}
\def\wmgp{W_{G'}^-(s)}
\def\wph{W_H^+(s)}
\def\wmh{W_H^-(s)}
\def\wpmg{W_G^\pm(s)}
\def\zg0{Z_{G_{w}}(s)}
\def\zg{Z_G(s)}
\def\zgp{Z_{G'}( s)}
\def\tphi{\widetilde{\phi}}
\def\rest{\mathcal{R}_{k/k^q}}
\def\be{\begin{equation}}
\def\ee{\end{equation}}
\def\field{{k}}
\def\sfield{{l}}
\def\gfield{{K}}
\def\vintegers{{\mathcal O_\tcal}}
\def\oldaleph{{\mathsf C}}
\begin{document}

\title[Measure rigidity for solvable group action]{Characteristic free measure rigidity for the action of solvable groups on homogeneous spaces}

\author{Amir Mohammadi}
\address{Mathematics Dept, University of California, San Diego, CA 92093-0112}
\email{ammohammadi@ucsd.edu}
\thanks{A.M.\ was supported in part by NSF Grants \#1200388, \#1500677 and \#1128155, and Alfred P.~Sloan Research Fellowship.}

\author{Alireza Salehi Golsefidy}
\address{Mathematics Dept, University of California, San Diego, CA 92093-0112}
\email{golsefidy@ucsd.edu}
\thanks{A.S-G.\ was supported in part by NSF Grants  \#1001598, \#1160472, \#1303121 and \#1602137, and by Alfred P.~Sloan Research Fellowship.}

\subjclass[2000]{ 37D40, 37A17, 22E40} \keywords{
Invariant measures, measure rigidity, local fields, positive
characteristic}

\begin{abstract}
We classify measures on a homogeneous space 
which are invariant under a certain solvable 
subgroup and ergodic under its unipotent
radical. Our treatment is independent of characteristic.  
As a result we get the first measure classification for the action of semisimple subgroups 
without any characteristic restriction.  
\end{abstract}

\maketitle

\section{introduction}\label{sec;intro} 
Ratner's celebrated work,~\cite{Rat3,Rat5,Rat6} see also~\cite{MT}, 
classifies all probability measures invariant and ergodic under a one parameter unipotent subgroup
in the setting of homogeneous spaces over local fields of characteristic zero.    
In positive characteristic setting, however, classification theorems in this generality are not yet available.

Roughly speaking, the main technical difficulty arises from the fact that 
the image of a polynomial map over a field
of positive characteristic can lie in a proper subfield, and hence the image may be small. 
This simple fact enters our study as follows.
The divergence of the orbits of two nearby points under a unipotent 
group is governed by a certain polynomial like map, see~\S\ref{sec;quasi}.
Now slow growth of polynomial maps and Birkhoff ergodic theorem imply that $\mu$ is
invariant under the image of a polynomial like map, see~\S\ref{sec:basic-lemma}.
This way, we construct a higher dimensional subgroup which leaves $\mu$ invariant.   
In the positive characteristic setting, however, this construction only guarantees the dimension is increased by a (possibly non-constant) fraction at each step. Thus, there is no a priori reason for this process to terminate.

In recent years there have been some partial results in this direction, e.g.~\cite{EG, AmirHoro,EM}. 
In~\cite{EG} measure rigidity for the action of semisimple groups was proved under the assumption that the characteristic is {\em large}. 
Our account here, in particular, removes the characteristic restriction from the main result in~\cite{EG}.
It is plausible that for {\em large characteristics}, 
one can carry out the proof in~\cite{MT} and get a measure rigidity for the action of unipotent groups.

Removing the characteristic restriction introduces serious technical difficulties. 
Resolving these difficulties requires not only a comprehensive use of the main ideas and techniques from~\cite{MT},
but also an extensive use of the theory of algebraic groups.
In contrast, the proof in~\cite{EG} relies on a simpler argument which goes back to~\cite{E}. 

In order to properly state our main results we need to make use of Weil's restriction of scalars for possibly non separable extensions. But before we set up the general formulation, we start with an example which is essentially as hard as the general case. Consider $G=\SL_d\Bigl(\bbf_q((t))\Bigr)$ and let $\Gamma\subset G$ be a discrete subgroup. 
Let $B=SU$ be the group of upper triangular matrices in a {\em copy} of $\SL_2\Bigl(\bbf_q((t^{q}))\Bigr)$ in $G$. Our result classifies measures on $G/\Gamma$ which are invariant under $B$ and ergodic for $U.$ 
Note, however, to get a copy of $\SL_2\Bigl(\bbf_q((t^{q}))\Bigr)$ in $G$ it is inevitable to view $G$ as the set of $\bbf_q((t^{q}))$-points of 
\[
\rcal_{\bbf_q((t))/\bbf_q((t^{q}))}(\SL_d)
\]
where $\rcal$ denotes Weil's restriction of scalars, see Definition~\ref{weil-rest}. 
This is an indication of the subtlety involved in the setup.

Let $\mathcal{T}$ be a finite set, and 
for any ${v}\in\mathcal{T}$ let $k_{v}$ be a local field; 
set $\kt=\tprod k_{v}$.  
For any ${v}\in\mathcal{T}$ let $\bbg_{v}$ be a $k_{v}$-algebraic group and let $G_{v}=\bbg_{v}(k_{v}).$ 
Define 
\[
\mbox{$\bbg=\tprod\bbg_{v}$ and $G=\tprod G_v,$}
\] 
and let $\Gamma$ be a discrete subgroup of $G.$


Fix an element ${w}\in\mathcal{T}$ once and for all and let 
$k'$ be a closed subfield of $k_{w}.$ Suppose $k''/k'$
is a finite extension of $k',$ note that $k''$ is not necessarily assumed to be a subfield of $k_w.$
Let $\bbh'$ be a connected $k''$-almost simple, $k''$-group which is isotropic over $k'',$ 
and put 
\[
 \bbh=\rcal_{k''/k'}(\bbh').
\] 
In particular, we have $\bbh(k')=\bbh'(k'').$

Fix a non central cocharacter $\lambda$ of $\bbh,$ that is $\lambda:\bbg_m\to\bbh$ is a non central homomorphism defined over $k';$ such homomorphism exists thanks to the fact that $\bbh'$ is $k''$-isotropic,~\cite[App.~C]{CGP}.
Put $\bbs=\lambda(\bbg_m).$ 
Let $s'\in\bbs(k')$ be an element which generates an unbounded subgroup
and set $\bbu=\bbw^+_{\bbh}(s'),$ see~\S\ref{sec;notation-algebra} for the notation.

For the base change $\bbh\times_{k'}k_w$, we fix a $k_w$-homomorphism 
$\iota:\bbh\times_{k'}k_w\to\bbg_w$ with a solvable kernel, and put 
\[
\mbox{$H=\iota(\bbh(k')),$ $S=\iota(\bbs(k')),$ and $U=\iota(\bbu(k')).$}
\]

We recall the following definition. Let $M$ be a locally compact second countable group and 
suppose $\Lambda$ is a discrete subgroup of $M$. 
Let $\mu$ be a probability measure on $M/\Lambda$ and let $\Sigma=\{g\in M: g_*\mu=\mu\}$.
We call $\mu$ {\it homogeneous} 
if $\mu$ is the $\Sigma$-invariant probability measure on a closed orbit $\Sigma x$ for some $x\in M/\Lambda.$

\begin{thm}[Solvable case]\label{t;U-measure-class}\label{thm:main}
Let $\mu$ be a probability measure on $G/\Gamma$ which is invariant under the action of $SU$ and is $U$-ergodic. Then $\mu$ is a homogeneous measure.
\end{thm}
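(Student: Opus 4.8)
The plan is to run the Margulis--Tomanov induction scheme for measure rigidity of unipotent (here, horospherical) subgroups, replacing the dimension count that makes it terminate in characteristic zero by a finer algebro-geometric control adapted to the restriction-of-scalars picture. Write $\Sigma=\{g\in G:g_*\mu=\mu\}$ for the closed stabilizer of $\mu$; since $SU\subseteq\Sigma$, the theorem asks us to prove that $\mu$ is the $\Sigma$-invariant probability measure on a closed $\Sigma$-orbit. I would begin by recording the features of the setup that drive the argument: $U=\iota(\bbu(k'))$ with $\bbu=\bbw^+_{\bbh}(s')$ is the image of the expanding horospherical subgroup of the unbounded element $s'\in\bbs(k')$, so $s=\iota(s')\in S$ expands $U$; since $\ker\iota$ is solvable, the internal structure of $U$, of its normalizer, and of the relevant parabolic subgroups is governed by $\bbh=\rcal_{k''/k'}(\bbh')$ together with the cocharacter $\lambda$; and $\mu$ is $U$-ergodic while $SU$-invariant, which is exactly the input MT-type arguments require, here with no restriction on the characteristic.

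The engine of the proof is the mechanism of \S\ref{sec;quasi} and \S\ref{sec:basic-lemma}. Given a subgroup $L$ with $SU\subseteq L\subseteq\Sigma$ under which $\mu$ is already known to be invariant (hence, by $U$-ergodicity, $L$-ergodic), one asks whether $\mu$ is already homogeneous under $L$, i.e.\ supported on a single closed $L$-orbit; if so, we are done. If not, the failure of local closedness yields, for $\mu$-generic $x$, nearby generic points $gx$ with $g$ small and \emph{transverse} to $L$, whose $U$-orbits diverge only polynomially; the relative displacement along the $U$-orbit is a quasi-regular map in the sense of \S\ref{sec;quasi}, and the slow-growth estimate for such maps combined with the Birkhoff ergodic theorem (the Basic Lemma of \S\ref{sec:basic-lemma}) forces $\mu$ to be invariant under the Zariski closure of its image. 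This image is not contained in $L$, so we obtain a strictly larger $\mu$-invariant subgroup $L'\subseteq\Sigma$, and iterating produces an ascending chain $SU=L_0\subsetneq L_1\subsetneq\cdots$ inside $\Sigma$.

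\textbf{The main obstacle} is the termination of this chain. In positive characteristic the step $L_i\rightsquigarrow L_{i+1}$ need only enlarge a \emph{fractional} invariant, because the image of a polynomial-like map can lie in a proper, possibly inseparable, subfield; hence no dimension bound over $k'$ forces the process to halt, which is precisely the difficulty flagged in the introduction. To overcome it I would not track dimensions over $k'$ but rather the position of each $L_i$ as an algebraic datum attached to $\rcal_{k''/k'}(\bbh')$: one shows the transverse directions produced at each step sit in a fixed ambient algebraic group, are defined over fields squeezed between $k'$ and $k''$ (or their finite extensions), and are constrained by the structure of the horospherical and parabolic subgroups associated with $\lambda$; a Noetherian, bounded-generation argument for these algebraic data — where the comprehensive use of the theory of algebraic groups promised in the introduction is genuinely needed — then forces $L_i=L_{i+1}$ for some $i$.

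Once the chain stabilizes at some $L_\infty\supseteq SU$, we are in the first alternative of the dichotomy: $\mu$ is supported on a single closed orbit $L_\infty x$, on which it is the $L_\infty$-invariant measure; since $L_\infty\subseteq\Sigma$, the measure $\mu$ is automatically $\Sigma$-invariant and supported on the closed orbit $\Sigma x$, so it is homogeneous. Besides termination, the delicate points — which I expect to absorb most of the technical work while following the template of \S\ref{sec;quasi} and \S\ref{sec:basic-lemma} — are producing \emph{enough} transverse generic points in the non-homogeneous case (an entropy and recurrence argument exploiting discreteness of $\Gamma$ and finiteness of $\mu$) and verifying that the quasi-regular divergence map is genuinely polynomial-like with its subfield behavior made explicit.
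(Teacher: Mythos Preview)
Your outline captures the spirit of the Margulis--Tomanov strategy but misses the two ideas that actually make the paper's proof go through, and your proposed resolution of the termination problem is not the one that works here.

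First, the paper does \emph{not} run an ascending chain $L_0\subsetneq L_1\subsetneq\cdots$ and then argue termination. Instead it defines, once and for all, $\ucal\subset W^+_G(s)$ to be the \emph{maximal} subgroup of the expanding horospherical leaving $\mu$ invariant. The crucial point is that $\mu$ is $S$-invariant, so $\ucal$ is normalized by $S$; the algebraic lemma (Proposition~\ref{p:TorusInvariantUnipotent}) then shows that any $S$-normalized closed subgroup of $W^+_G(s)$ is the $k^q$-points of a connected $k^q$-split unipotent $k^q$-group after a single restriction of scalars, with $q$ depending only on the weights. This is the replacement for the dimension count, and it is done in one stroke rather than by controlling an infinite chain; your ``Noetherian, bounded-generation'' sketch does not substitute for it.

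Second, and more seriously, you do not mention the entropy step, which is the heart of the argument. Once $\ucal$ is fixed, the paper introduces $\fcal(s)$ and $\ucal^-=\fcal(s)\cap W^-_G(s)$, proves the structural Corollary~\ref{c;contarcting-leaf2} (a.e.\ $W^-_G(s)x\cap\Omega\subset\ucal^-x$) from the Basic Lemma by contradiction with the maximality of $\ucal$, and then invokes the entropy comparison (Theorem~\ref{entropy}) together with the epimorphic-type inequality $\alpha(s,\ucal)\ge\alpha(s^{-1},\ucal^-)$ (Lemma~\ref{l;exp-geq-cont}) to force $\mu$ to be $\ucal^-$-invariant. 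Only after this does one reduce to a Zariski-dense situation in a smaller $\bbg'$ (Lemma~\ref{zd-measure1}), and then the Basic Lemma is used twice more, each time as a \emph{contradiction} device against the maximality of $\ucal$, to show $\ucal^-=W^-_{G'}(s)$ and $\ucal=W^+_{G'}(s)$. The conclusion is then Lemma~\ref{normal-unimodular} applied to the normal unimodular subgroup $\langle W^-_{G'}(s),W^+_{G'}(s)\rangle$ of $G'$. Your scheme of ``enlarge $L$ until homogeneous'' never produces invariance in the \emph{contracting} direction, and without the entropy input there is no mechanism to do so; this is a genuine gap, not a detail.
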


The following presents an interesting special case which is a direct corollary of Theorem~\ref{t;U-measure-class}
based on the generalized Mautner phenomenon.
\begin{thm}[Semisimple case]\label{c;H-measureclass}
Let 
\begin{itemize}
\item $k'\subset k_w$ be a closed subfield, 
\item let $\bbh_0$ be a connected, simply connected, $k'$-almost simple, $k'$-isotropic semisimple group, 
\item let $\iota:\bbh_0\times_{k'}k_w\to\bbg_w$ be a $k_w$-homomorphism with a finite central kernel.
\end{itemize} 
Set $H_0=\iota(\bbh_0(k')).$
If $\mu$ is an $H_0$-invariant ergodic measure on $G/\Gamma,$ then $\mu$ is a homogeneous measure.
\end{thm}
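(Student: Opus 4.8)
The plan is to deduce Theorem~\ref{c;H-measureclass} from Theorem~\ref{t;U-measure-class} by first reducing to the case where $\mu$ is invariant under a Borel-type solvable subgroup $SU$ inside $H_0$, and then promoting the resulting homogeneity to full $H_0$-invariance via the generalized Mautner phenomenon. Concretely, since $\bbh_0$ is $k'$-isotropic and $k'$-almost simple, we may fix a non-central cocharacter $\lambda$ of $\bbh_0$ defined over $k'$ (this exists by~\cite[App.~C]{CGP}), set $\bbs=\lambda(\bbg_m)$, pick $s'\in\bbs(k')$ generating an unbounded subgroup, and let $\bbu=\bbw^+_{\bbh_0}(s')$. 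Taking $k''=k'$ and $\bbh'=\bbh_0$ in the setup preceding Theorem~\ref{t;U-measure-class}, and using the given $k_w$-homomorphism $\iota$ (whose kernel is finite central, hence in particular solvable), we obtain subgroups $H_0=\iota(\bbh_0(k'))$, $S=\iota(\bbs(k'))$ and $U=\iota(\bbu(k'))$ to which Theorem~\ref{t;U-measure-class} applies.

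The first real step is to produce, from the $H_0$-invariant $H_0$-ergodic measure $\mu$, a measure that is $SU$-invariant and $U$-ergodic. Since $\mu$ is $H_0$-invariant it is in particular $SU$-invariant; the issue is $U$-ergodicity. Here I would invoke the generalized Mautner phenomenon: because $U=\bbw^+_{\bbh_0}(s')(\cdot)$ is the expanding horospherical subgroup associated with the unbounded element $s\in S$, any function invariant under $U$ is, by a standard Mautner-type argument for the $S$-action (using that $S$ normalizes $U$ and conjugation by $s$ contracts/expands $U$), invariant under the subgroup generated by $U$, $S$, and $\bbw^-_{\bbh_0}(s')$, which because $\bbh_0$ is $k'$-almost simple generates all of $H_0$. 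Thus every $U$-invariant function is $H_0$-invariant, so $H_0$-ergodicity of $\mu$ forces $U$-ergodicity. (One must be slightly careful that $\mu$ need not be $S$-ergodic a priori, so the Mautner argument should be run at the level of the $U$-invariant $\sigma$-algebra being contained in the $H_0$-invariant one, which is what $H_0$-ergodicity then collapses.)

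With $\mu$ now $SU$-invariant and $U$-ergodic, Theorem~\ref{t;U-measure-class} gives that $\mu$ is homogeneous: $\mu$ is the $\Sigma$-invariant measure on a closed orbit $\Sigma x$, where $\Sigma=\{g\in G: g_*\mu=\mu\}$. The final step is to observe that $H_0\subseteq\Sigma$ by hypothesis, so $\Sigma x$ is a closed $\Sigma$-orbit carrying an $H_0$-invariant homogeneous measure, which is exactly the assertion of Theorem~\ref{c;H-measureclass}. I expect the main obstacle to be the careful execution of the Mautner phenomenon in the positive-characteristic / restriction-of-scalars setting --- in particular checking that $s'\in\bbs(k')$ can be chosen so that conjugation by $\iota(s)$ genuinely contracts $U$ and expands the opposite unipotent, and that $U$ together with its opposite and $S$ topologically generate $H_0$ (using $k'$-almost simplicity and the fact that $\iota$ has central kernel so these generation statements transfer from $\bbh_0(k')$ to $H_0$). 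Once the generation and contraction facts are in place, the deduction is formal.
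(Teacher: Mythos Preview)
Your overall strategy matches the paper's: verify that $\mu$ is $SU$-invariant and $U$-ergodic, then invoke Theorem~\ref{t;U-measure-class}. The one place where your argument is shaky is the justification of $U$-ergodicity. The implication you sketch --- ``$U$-invariant $\Rightarrow$ invariant under $\langle U,S,W^-\rangle$'' --- runs in the \emph{opposite} direction to the classical Mautner phenomenon (which deduces invariance under contracted subgroups from invariance under $s$, not the other way around), so as written it does not go through. Your parenthetical caveat shows you sense the difficulty but does not resolve it.

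The paper handles this differently and more cleanly: rather than a general cocharacter, it uses~\cite[Ch.~I, Prop.~1.6.3]{Mar5} to fix a $k'$-homomorphism $j:\SL_2\to\bbh_0$ with finite central kernel, takes $S,U$ to be the images of the diagonal and upper-triangular unipotent subgroups of $\SL_2(k')$, and sets $L=\iota(j(\SL_2(k')))$. Then the chain is: $H_0$-ergodicity $\Rightarrow$ $S$-ergodicity by~\cite[Ch.~I, Thm.~2.3.1 and Ch.~II, Lemma~3.3]{Mar5} (this uses that $\bbh_0$ is simply connected, $k'$-simple, and $k'$-isotropic), hence $L$-ergodicity since $S\subset L$, and finally $U$-ergodicity by the $\SL_2$-Mautner lemma~\cite[Ch.~II, Lemma~3.4]{Mar5}. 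Each step here is a standard Mautner-direction implication. If you prefer to keep your general cocharacter, you could instead invoke Howe--Moore vanishing of matrix coefficients for $\bbh_0(k')$ to get directly that $H_0$-ergodicity implies ergodicity under the noncompact subgroup $U$; but the argument as you wrote it needs one of these fixes.
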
 

\begin{proof}
Since $\bbh_0$ is $k'$-isotropic, by~\cite[Ch. I, Prop.\ 1.6.3]{Mar5} we have that there exists a $k'$-homomorphism 
\[
j:\SL_2\to\bbh_0
\]
with a finite central kernel. 
Let $L=\iota\Bigl(j(\SL_2(k'))\Bigr)$ and let $B=SU$ be the image under $j\circ\iota$ of the group of
upper triangular matrices in $\SL_2(k').$

Since $\bbh_0$ is simply connected, $k'$-simple, and $k'$-isotropic, it follows 
from~\cite[Ch.I, Thm.\ 2.3.1 and Ch. II, Lemma 3.3]{Mar5} that $\mu$ is $S$-ergodic.
In particular, $\mu$ is $L$-ergodic. 
Now by~\cite[Ch. II, Lemma 3.4]{Mar5} we get that $\mu$ is $U$-ergodic.
Therefore, the claim follows from Theorem~\ref{t;U-measure-class}. 
\end{proof}

For a homogeneous measure $\mu$, the group $\Sigma=\{g: g_*\mu=\mu\}$ has an algebraic description. 
The statement of this refinement involves definitions and notation which will be developed later, see Theorem~\ref{algeb-measure}. When $\Gamma$ is an arithmetic lattice, we also give a connection between the arithmetic structure of $\Gamma$ and the algebraic description of $\Sigma$, see Theorem~\ref{thm:arithmetic-irred} and Theorem~\ref{thm:arithmetic-red}.  
This connection and Theorem~\ref{c;H-measureclass} are crucial ingredients 
in~\cite{ELM-Torus} where a measure rigidity for the action of diagonalizable groups is proved.

Now we give a brief description of our approach and highlight the main difficulties. 
We construct extra invariance for the measure using quasi-regular maps and utilize entropy, similar to the strategy in~\cite{MT}.
As we described above, however, several problems arise along the way.
The source of the main technical difficulties is the fact that topologically closed unipotent subgroups in positive characteristic are far from being algebraic. 
This issue appears in our proof, as we need to control the group generated by the image of a polynomial map.

In our setting, since $\mu$ is invariant under $S$, we need to understand the structure of 
topologically closed unipotent subgroups which are normalized by $S$.
In \S\ref{sec;algeb-lemma}, we show that such unipotent groups have an algebraic structure which is controlled by the weights of the conjugation action of $S$.
Our argument is based on a recent treatment, \cite[App.\ B]{CGP}, of the fundamental work of Tits on unipotent groups in positive characteristic.

As was mentioned above, \S\ref{sec:arithmetic} is devoted to providing 
a connection between the arithmetic structure of $\Gamma$ and the algebraic description of $\Sigma$. 
Our argument is based on the structure theory of {\em pseudo reductive} groups developed in~\cite{CGP}, to get a local model, and a descent argument from~\cite{Pink}, to get a global model.

Let us briefly recall the definition of a  pseudo reductive group and how it appears in our work.
An $l$-group with no nontrivial, normal, unipotent $l$-subgroup is called {\it pseudo reductive}.
In the setting of homogeneous dynamics we have to work with topologically closed subgroups of $G$.
As we mentioned above these groups are not necessarily algebraic, but under some conditions, they become 
algebraic after using Weil's restriction of scalars and viewing $G$ as an algebraic group over a smaller field.
So groups of the form $\rcal_{k/l}(\bbl)$ where $k/l$ is a finite extension naturally appear in our work.
When $\bbl$ is a connected, reductive, $l$-group and $k/l$ is inseparable, 
$\rcal_{k/l}(\bbl)$ is pseudo reductive but {\em not} reductive.

It is worth mentioning that, in our work, 
the structure theory of pseudo reductive groups is used only to reveal a more precise local description of $\Sigma$, however, our measure classification result, Theorem~\ref{thm:main}, {does not} rely on this structure theory.

We close the introduction with the following remark. 
It is desirable to classify all $SU$-orbit closures. Since $SU$ is amenable, 
such a topological rigidity would follow from a combination of linearization techniques and a classification of $SU$-{invariant, ergodic probability measures}. 
Here, however, we have made and used, in a crucial way, the stronger assumption that $\mu$ is $SU$-invariant and $U$-{\em ergodic.}  
Indeed if we assume $\mu$ is $SU$-invariant and only $SU$-ergodic, then $\mu$ is not necessarily $U$-ergodic, 
even though, by Mautner phenomenon, it is $S$-ergodic. In particular, $S$ acts ergodically on the space of $U$-ergodic components of $\mu$ and the invariance group of a $U$-ergodic component of 
$\mu$ is not necessarily normalized by $S$.
So we can not get a good algebraic description of this invariance group.

{\bf Acknowledgement:} We would like to thank 
M.~Einsiedler, A.~Eskin, M.~Larsen, E.~Lindenstrauss, G.~Margulis, and G.~Prasad for their interest 
in this project and several insightful conversations.


\section{Notation and preliminary lemmas}\label{sec;notation-algebra}

Let $\mathcal{T}$ be a finite set and let $k_{v}$ be a local field for all ${v}\in\mathcal{T};$ 
define $\kt=\tprod k_{v}$ as in the introduction. 
We endow $\kt$ with the norm $|\cdot|=\max_{{v}\in S} |\cdot|_{{v}}$ 
where $|\cdot|_{{v}}$ is a norm on $k_{{v}}$ for each ${v}\in \mathcal{T}.$ 

\subsection{$\kt$-algebraic groups.}\label{sec;alg-group} 
A $\kt$-algebraic group $\mathbb{M}$ (resp.\ $\kt$-algebraic variety $\mathbb{M}$) is the formal product of 
$\tprod \mathbb{M}_{{v}}$ of $k_{{v}}$-algebraic groups 
(resp.\ $\tprod \mathbb{M}_{{v}}$ of $k_{{v}}$-algebraic varieties). 
The usual notions from elementary algebraic geometry 
e.g.\ regular maps, rational maps, rational point etc.\ are defined fiberwise. 
We will use these notions without further remarks.  

There are two topologies on $\mathbb{M}(\kt),$ the Hausdorff topology and the Zariski topology.
We will make it clear when referring to the Zariski topology. 
Hence, if in a topological statement we do not give reference 
to the particular topology used, then the one which is being considered is the Hausdorff topology.

Let $\bbm$ be a $\kt$-group.
An element $e\neq g\in\mathbb{M}(\kt)$ is an element of class $\mathcal{A}$ if $g=(g_{{v}})_{{v}\in \mathcal{T}}$ is diagonalizable over $\kt,$ and for all ${v}\in \mathcal{T}$ 
the component $g_{{v}}$ has eigenvalues which are integer powers of the uniformizer 
$\varpi_{{v}}$ of $k_{{v}}$.

Given a subset $B\subset \mathbb{M}(\kt)$ we let $\langle B\rangle$ denote the closed (in the Hausdorff topology) group generated by $B.$

\subsection{Pseudo-parabolic subgroups}\label{sec:pseudo-parabolic}
Let $k$ be a local field of positive characteristic.
Suppose $\mathbb{M}$ is a connected $k$-group, 
and let $\lambda:\bbg_m\rightarrow{\bbm}$ be a non central homomorphism defined over $k.$ 
Define $-\lambda(a)=\lambda(a)^{-1}$ for all $a\in k^\times.$ 
As in~\cite[\S13.4]{Sp} and~\cite[Ch.~2 and App.~C]{CGP}, we let $\bbp_{\bbm}(\lambda)$ 
denote the closed subgroup of ${\bbm}$ formed by those elements $x\in{\bbm}$ 
such that the map $\lambda(a)x\lambda(a)^{-1}$ extends to a map from $\bbg_a$ into ${\bbm}.$ 

Let $\bbw_\bbm^+(\lambda)$ be the normal subgroup of $\bbp_{\bbm}(\lambda),$ 
formed by $x\in\bbp(\lambda)$ such that $\lambda(a)x\lambda(a)^{-1}\rightarrow e$ as $a$ goes to $0.$ 
The centralizer of the image of $\lambda$ is denoted by $\bbz_{\bbm}(\lambda).$ 
Similarly define $\bbw_{\bbm}^+(-\lambda)$ which we will denote by $\bbw_{\bbm}^-(\lambda).$ 

The multiplicative group $\bbg_m$ acts on $\Lie({\bbm})$ via $\lambda,$ and the weights are integers. 
The Lie algebras of $\bbz_{\bbm}(\lambda)$ and $\bbw_{\bbm}^\pm(\lambda)$ 
may be identified with the weight subspaces of this action corresponding to the zero, positive and negative weights. 
It is shown in~\cite[Ch.~2 and App.~C]{CGP}, see also~\cite[\S13.4]{Sp} and~\cite{BT2}, 
that $\bbp_{\bbm}(\lambda),$ $\bbz_{\bbm}(\lambda)$ and $\bbw_{\bbm}^{\pm}(\lambda)$ 
are $k$-subgroups of ${\bbm}.$ Moreover, $\bbw_{\bbm}^\pm(\lambda)$ is a normal subgroup of $\bbp_{\bbm}(\lambda)$ and the product map 
\[
\mbox{$\bbz_{\bbm}(\lambda)\times \bbw_{\bbm}^+(\lambda)\rightarrow\bbp_{\bbm}(\lambda)$ 
is a $k$-isomorphism of varieties.} 
\]

A pseudo-parabolic $k$-subgroup of $\bbm$ is a group of the form
$\bbp_\bbm(\la)R_{u,k}(\bbm)$ for some $\la$ as above 
where $R_{u,k}(\bbm)$ denotes the maximal,
connected, normal unipotent $k$-subgroup of $\bbm,$~\cite[Def.~2.2.1]{CGP}.

We also recall from~\cite[Prop.~2.1.8(3)]{CGP} that the product map
\begin{equation}\label{opposite-horo}
\bbw^-_{\bbm}(\lambda)\times\bbz_{\bbm}(\lambda)\times\bbw_{\bbm}^+(\lambda)\rightarrow{\bbm}\;\text{ is an open immersion of $k$-schemes.}
\end{equation} 
It is worth mentioning that these results are generalization to arbitrary groups of analogous and 
well known statements for reductive groups. 

Let $M={\bbm}(k),$ and put 
\[
\mbox{$W_M^\pm(\la)=\bbw^\pm_{\bbm}(\lambda)(k),$ and $Z_M(\la)=\bbz_{\bbm}(\lambda)(k).$ }
\]
From~\eqref{opposite-horo} we conclude that $W_M^-(\la)Z_M(\la)W_M^+(\la)$ 
is a Zariski open dense subset of $M,$ which contains a neighborhood of 
identity with respect to the Hausdorff topology.

For any $\la$ as above define 
\be\label{eq:M-+-la}
M^+(\la):=\langle W_M^+(\la),W_M^-(\la)\rangle.
\ee

\begin{lem}\label{lem:w-pm-normal}
The group $M^+(\la)$ is a normal subgroup of $M$ for any $\la$ as above.
Moreover, $M^+(\la)$ is unimodular.
\end{lem}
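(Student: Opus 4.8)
The plan is to prove normality first and then deduce unimodularity as a consequence. For normality, the key observation is that conjugation by an element $g \in M$ sends $\bbw^+_{\bbm}(\la)$ to $\bbw^+_{\bbm}(g\la g^{-1})$, i.e. to the analogous group attached to the conjugate cocharacter $g\la g^{-1}$, and similarly for the negative part. So $g M^+(\la) g^{-1} = \langle W_M^+(g\la g^{-1}), W_M^-(g\la g^{-1})\rangle = M^+(g\la g^{-1})$. Thus it suffices to show $M^+(\mu) = M^+(\la)$ for all cocharacters $\mu$ that arise as conjugates of $\la$ — in fact the cleanest route is to show $M^+(\mu) = M^+(\la)$ whenever $\mu$ and $\la$ have the same image $\bbs = \la(\bbg_m)$, or more generally to identify $M^+(\la)$ with a group depending only on coarser data. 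The standard fact (valid in the pseudo-reductive generality via \cite[Ch.~2, App.~C]{CGP}) is that $\bbw^\pm_{\bbm}(\la)$ depends only on the parabolic-type data of $\la$: if $\mu = c\la$ for a positive rational $c$ (after passing to a power so things are integral) then $\bbw^+_{\bbm}(\mu) = \bbw^+_{\bbm}(\la)$, and replacing $\la$ by $-\la$ swaps $\bbw^+$ and $\bbw^-$, leaving $M^+(\la)$ unchanged since the generating set is symmetric. The real content is therefore handling a conjugate $\mu = g\la g^{-1}$ with image an $M$-conjugate torus of $\bbs$.

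**Here the argument I would run** uses that $M^+(\la)$ is generated by all $W_M^\pm(\mu)$ as $\mu$ ranges over this conjugacy class. Concretely: let $N = \langle g W_M^\pm(\la) g^{-1} : g \in M\rangle$, the normal closure of $M^+(\la)$; this is manifestly normal in $M$, and I want $N = M^+(\la)$, i.e. each $g W_M^+(\la) g^{-1} = W_M^+(g\la g^{-1})$ already lies in $M^+(\la)$. For this I would use the structure of the group generated by two opposite horospherical subgroups. The open immersion \eqref{opposite-horo}, applied to the connected group $\bbm$, shows $\bbw^-_{\bbm}(\la) \bbz_{\bbm}(\la) \bbw^+_{\bbm}(\la)$ is Zariski-open dense, and the product $\langle \bbw^+_{\bbm}(\la), \bbw^-_{\bbm}(\la)\rangle$ is a (closed, connected) normal subgroup of $\bbm$ containing no central torus in its "directions" — in the reductive case this is the derived-like subgroup generated by the two opposite unipotent radicals, and it is normal in $\bbm$ by the usual Bruhat-decomposition argument. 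In the general pseudo-reductive/solvable-kernel setting one invokes \cite[Prop.~2.1.8]{CGP} together with the fact that $R_u(\bbm)$-cosets are controlled, so that the algebraic subgroup $\bbm^+(\la) := \langle \bbw^+_{\bbm}(\la), \bbw^-_{\bbm}(\la)\rangle$ is normal in $\bbm$ and its set of $k$-points is (up to finite index / Zariski-density issues which one checks using that $\bbw^\pm$ are split unipotent hence have no nontrivial $k$-forms) exactly $M^+(\la)$. Normality of $\bbm^+(\la)$ in $\bbm$ then forces $g \bbw^+_{\bbm}(\la)g^{-1} \subset \bbm^+(\la)$, and taking $k$-points gives $g W_M^+(\la) g^{-1} \subset M^+(\la)$, completing the proof that $M^+(\la) \trianglelefteq M$.

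**For unimodularity**, the point is that $M^+(\la)$ is generated by unipotent subgroups. More precisely: a topologically closed subgroup of $\GL_n(k)$ generated by unipotent one-parameter-type subgroups has no nontrivial continuous homomorphism to $\bbr^\times_{>0}$, because any such character kills every element that is topologically unipotent (lies in a compact, or in a group with no nontrivial $\bbr$-quotient), and $W_M^\pm(\la)$ consist of such elements — indeed $\bbw^\pm_{\bbm}(\la)$ are unipotent $k$-groups, so their $k$-points form a group in which every element is contained in a subgroup on which the modular function is trivial. Hence the modular function $\Delta_{M^+(\la)}$, which is a continuous homomorphism $M^+(\la) \to \bbr_{>0}$, is trivial on the generating set $W_M^+(\la) \cup W_M^-(\la)$ and therefore trivial, so $M^+(\la)$ is unimodular.

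**The main obstacle** I anticipate is not normality per se but the bookkeeping needed to identify the abstract topologically-closed group $M^+(\la) = \langle W_M^+(\la), W_M^-(\la)\rangle$ with the $k$-points of the algebraic group $\bbm^+(\la)$: in positive characteristic a topologically closed group generated by $k$-points of unipotent subgroups need not be Zariski-closed, so one cannot glibly pass between the Hausdorff-closed object and its Zariski closure. The way around it is that for \emph{unipotent} (in fact $k$-split unipotent, which $\bbw^\pm_{\bbm}(\la)$ are, as filtered successive extensions of $\bbg_a$'s) subgroups the exponential-type parametrization makes $\bbw^\pm_{\bbm}(\la)(k)$ already closed in both topologies, and the group they generate sits inside the (automatically $k$-rational) algebraic subgroup $\bbm^+(\la)$, so the containment $M^+(\la) \subseteq \bbm^+(\la)(k)$ is clear; the reverse containment, needed only to transport normality downward, is not actually required — we only use $g \bbw^+_{\bbm}(\la) g^{-1} \subseteq \bbm^+(\la)$ at the level of algebraic groups and then take $k$-points. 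So in the write-up I would phrase normality entirely through the algebraic group $\bbm^+(\la)$ and never claim equality $M^+(\la) = \bbm^+(\la)(k)$, which sidesteps the subtlety cleanly.
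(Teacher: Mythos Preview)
Your normality argument has a genuine gap, and it is exactly the one you flag and then dismiss. You correctly note that you cannot identify the Hausdorff-closed group $M^+(\la)$ with $\bbm^+(\la)(k)$, and then claim this equality ``is not actually required.'' But it is. From normality of $\bbm^+(\la)$ in $\bbm$ at the algebraic level you obtain, for $g\in M$,
\[
g\,W_M^+(\la)\,g^{-1}\;\subset\;\bbm^+(\la)(k).
\]
What you need is $g\,W_M^+(\la)\,g^{-1}\subset M^+(\la)$. Since you only have the inclusion $M^+(\la)\subset\bbm^+(\la)(k)$ and not the reverse, the conclusion does not follow. Working ``entirely through the algebraic group $\bbm^+(\la)$'' does not sidestep the problem; it relocates it.

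The paper's argument avoids this entirely and is much shorter. One never passes to the algebraic group $\bbm^+(\la)$. Instead, the open immersion \eqref{opposite-horo} gives that $D:=W_M^-(\la)Z_M(\la)W_M^+(\la)$ is Zariski open dense in $M$, so $DD=M$, hence
\[
M=\langle W_M^-(\la),\,Z_M(\la),\,W_M^+(\la)\rangle.
\]
Now $Z_M(\la)$ normalizes each $W_M^\pm(\la)$ (this is built into the structure theory recalled in \S\ref{sec:pseudo-parabolic}), hence normalizes $M^+(\la)$; and $W_M^\pm(\la)\subset M^+(\la)$ trivially normalize $M^+(\la)$. Since every generator of $M$ normalizes $M^+(\la)$, it is normal. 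No comparison between $M^+(\la)$ and $\bbm^+(\la)(k)$ is ever needed.

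Your unimodularity argument is essentially correct but vaguer than necessary. The paper's version is one line: since ${\rm char}(k)>0$, unipotent subgroups are torsion, so any continuous homomorphism from $M^+(\la)$ to $\bbr^+$ kills $W_M^\pm(\la)$ and hence is trivial. Your phrasing (``topologically unipotent \ldots\ no nontrivial $\bbr$-quotient'') gestures at this but does not invoke the positive-characteristic hypothesis that makes it immediate.
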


\begin{proof}
Since $D:=W_M^-(\la)Z_M(\la)W_M^+(\la)$ is a Zariski open dense subset of $M,$
we have $DD=M.$ In particular, $\langle W_M^-(\la),Z_M(\la),W_M^-(\la)\rangle=M.$
This together with the fact that $W_M^{\pm}(\la)$ is normalized by $Z_M(\la)$ implies the first claim.

We now show that $M^+(\la)$ is unimodular. The modular function 
is a (continuous) homomorphism from $M^+(\la)$ into the multiplicative group $\bbr^+.$
However, $M^+(\la)$ is generated by unipotent subgroups and since 
${\rm char}(k)$ is positive, unipotent subgroups are torsion. Hence, 
the modular function is trivial; the claim follows.     
\end{proof}





Given an element $s\in M$ from class $\mathcal A$. There is $\la:\bbg_m\to\bbm$
so that $s=\la(a)$ for some $a\in k$ with $|a|>1.$ Then we define
\be\label{eq:def-W-pm-s}
W^\pm_M(s):=W^\pm(\la).
\ee


\subsection{}\label{sec:k-closed}
When working with algebraic groups over a non perfect field $k,$ say with characteristic $p>0$, 
it is convenient to use the language of group schemes. 
With the exceptions of \S\ref{sec:psd-red} and \S\ref{sec:arithmetic} which are independent of 
the rest of the paper, we have tried to avoid this language.  
However, one should note that certain natural objects, e.g.\ kernel of a $k$-morphism, 
are not necessarily defined over the base field as linear algebraic groups in the sense of~\cite{B1} or~\cite{Sp}. They are so called $k$-closed\footnote{Let us remark that over a perfect field the notation of $k$-closed and that of
a variety defined over $k$ coincide.}; a notion which we now define.  

\begin{definition}[\cite{B1}, AG, \S12.1]\label{def;alg-mfld}
Let $\Omega$ be an algebraically closed field which contains $k$ 
and let ${\bbm}=\Spec(\Omega[x_1,\cdots,x_n]/I)$ be a variety.
The variety $\bbm$ is called $\field$-closed if $I={\rm rad}\Bigl({\Omega[x_1,\cdots,x_n]J}\Bigr)$ where $J$ is an ideal in $k[x_1,\cdots,x_n].$ 
\end{definition}

A subset of $k^n$ will be called $k$-closed 
if it may be realized as the $k$-points of a $k$-closed subset of $\bbg_a^n$.

If $M\subset k^n$ is a set which is the zero set of an ideal 
$J$ in $k[x_1,\cdots,x_n],$ then $M$ is a $k$-closed set; 
this is how the $k$-closed sets arise in our study. 
We also note that if ${\bbm}$ is $k^{p^{-\infty}}$-closed, then it is also $k$-closed.

If we start with a subset of the $k$-points 
of a variety and take the Zariski closure of this set, then
we get a variety defined over $k,$ see~\cite[Lemma 11.2.4(ii)]{Sp}. 
The next lemma is a more general formulation.

\begin{lem}[Cf.~\cite{CGP}, Lemma C.4.1]\label{l;group-scheme}
Let ${\bbm}$ be a scheme locally of finite type over a field $k$. 
There exists a unique geometrically reduced, closed subscheme ${\bbm}'\subset{\bbm}$ such that 
${\bbm}'(k')={\bbm}(k')$ for all separable field extensions $k'/k.$ 
The formation of ${\bbm}'$ is functorial in ${\bbm},$ and commutes with the formation of 
products over $k$ and separable extensions of the ground field. 
In particular, if ${\bbm}$ is a $k$-group scheme (not necessarily smooth), 
then ${\bbm}'$ is a smooth $k$-subgroup scheme.    
\end{lem}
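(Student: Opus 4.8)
The plan is to reduce to the affine case and then invoke the functorial characterization of the maximal geometrically reduced closed subscheme, which is exactly the content of \cite[Lemma C.4.1]{CGP}. First I would treat the affine situation: write $\bbm = \Spec A$ for a finitely generated (or merely finite-type) $k$-algebra $A$. The key observation is that the set $\mathfrak{n} \subset A$ of elements that vanish on every point of $\bbm$ valued in a separable closure $k_{\mathrm{sep}}$ forms an ideal, and that $A/\mathfrak{n}$ is geometrically reduced over $k$: indeed, reducedness after base change to $k_{\mathrm{sep}}$ is equivalent to reducedness after base change to $\overline{k}$ precisely because $k_{\mathrm{sep}}$ is separably closed and one uses that $\overline{k}/k_{\mathrm{sep}}$ is purely inseparable, so nilpotents introduced over $\overline{k}$ already show up as failure of $k_{\mathrm{sep}}$-points to separate functions. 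Setting $\bbm' := \Spec(A/\mathfrak{n})$ gives a closed subscheme with $\bbm'(k') = \bbm(k')$ for every separable $k'/k$, since a separable extension embeds into $k_{\mathrm{sep}}$ over $k$. Uniqueness is immediate: any geometrically reduced closed subscheme agreeing with $\bbm$ on all separable extensions must have coordinate ring a quotient of $A$ by an ideal contained in $\mathfrak{n}$, and geometric reducedness forces equality.

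Next I would globalize. For a general scheme $\bbm$ locally of finite type over $k$, cover it by affine opens $\{U_i\}$, form $U_i'$ as above on each piece, and check that the $U_i'$ glue: on overlaps $U_i \cap U_j$ the two constructions agree by the uniqueness statement applied to the affine open $U_i \cap U_j$ (or a further affine cover thereof), because the defining ideal $\mathfrak{n}$ is manifestly local in nature — an element of $\mathcal{O}_{\bbm}$ lies in it iff it vanishes at all separable-extension points of a neighborhood. This yields a well-defined closed subscheme $\bbm' \subset \bbm$, geometrically reduced, with the required agreement on separable-extension points, and its uniqueness is inherited from the affine case.

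Then I would verify the functoriality and compatibility assertions. Given a $k$-morphism $f : \bbm \to \bbn$, the induced map on separable points sends $\bbm'(k')$ into $\bbn(k') = \bbn'(k')$; since $\bbm'$ is geometrically reduced and $\bbn'$ is closed in $\bbn$, the composite $\bbm' \hookrightarrow \bbm \to \bbn$ factors (scheme-theoretically) through $\bbn'$ — here one uses that a map from a reduced scheme to a scheme lands in any closed subscheme through which it passes on points, after base change to $\overline{k}$. Compatibility with products over $k$ follows because $(\bbm \times_k \bbn)(k_{\mathrm{sep}}) = \bbm(k_{\mathrm{sep}}) \times \bbn(k_{\mathrm{sep}})$ and the product of geometrically reduced schemes over the (separably closed, hence for reducedness purposes adequate) base is geometrically reduced; compatibility with separable base extension $k \to \tilde k$ follows since separable extensions of $\tilde k$ are separable over $k$ and base change along $k \to \tilde k$ preserves geometric reducedness. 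Finally, if $\bbm$ is a $k$-group scheme, apply functoriality to the multiplication, inversion, and unit maps together with compatibility with products: these furnish $\bbm'$ with a group-scheme structure making $\bbm' \hookrightarrow \bbm$ a homomorphism, and a geometrically reduced group scheme locally of finite type over a field is smooth by Cartier's theorem (valid in all characteristics once geometric reducedness is assumed).

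The main obstacle is the affine step — specifically, proving that $A/\mathfrak{n}$ is genuinely geometrically reduced (not merely reduced) and that $\mathfrak{n}$ is the right ideal to quotient by. This is where the separability of the test extensions is essential and where one must carefully compare $k_{\mathrm{sep}}$-points with $\overline{k}$-points using that $\overline{k}/k_{\mathrm{sep}}$ is purely inseparable; everything afterwards (gluing, functoriality, the passage to group schemes) is formal bookkeeping once the affine case is in hand, so I would cite \cite[Lemma C.4.1]{CGP} for the affine statement and devote the write-up mainly to the globalization and the group-scheme conclusion.
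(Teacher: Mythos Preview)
The paper does not supply a proof of this lemma; it simply records the statement and cites \cite[Lemma C.4.1]{CGP}. So there is no ``paper's own proof'' to compare against. Your write-up is essentially a reconstruction of the argument in \cite{CGP}, and the overall architecture (define the ideal $\mathfrak{n}$ of functions vanishing on all $k_{\mathrm{sep}}$-points, globalize by gluing via uniqueness, deduce functoriality and compatibility with products/base change, then pass to group schemes) is correct.

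Two points deserve sharpening. First, your justification that $A/\mathfrak{n}$ is geometrically reduced is the crux and your sketch is vague there. The clean argument is: by Galois descent the base change $(\bbm')_{k_{\mathrm{sep}}}$ is exactly the reduced Zariski closure of $\bbm(k_{\mathrm{sep}})$ inside $\bbm_{k_{\mathrm{sep}}}$, so it is reduced with a Zariski-dense set of $k_{\mathrm{sep}}$-rational points. Now use that a reduced scheme locally of finite type over a separably closed field $F$ with dense $F$-points is geometrically reduced: on each irreducible component the non-regular locus is a proper closed subset (the local ring at the generic point is a field, hence regular), so some $F$-point is regular, and a regular closed point with separable residue field is a smooth point; hence the component is generically smooth, so its function field is separable over $F$. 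This is what replaces your informal phrase about nilpotents over $\overline{k}$ ``already showing up'' over $k_{\mathrm{sep}}$; note that it is \emph{not} true in general that $X_{k_{\mathrm{sep}}}$ reduced implies $X_{\overline{k}}$ reduced (e.g.\ $\Spec k[x]/(x^p-t)$ for $k=\bbf_p(t)$), so the density of rational points is genuinely needed.

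Second, the name ``Cartier's theorem'' is usually reserved for the characteristic-zero statement that finite-type group schemes are smooth. What you actually use is the elementary fact (valid in all characteristics) that a \emph{geometrically reduced} group scheme locally of finite type over a field is smooth: after base change to $\overline{k}$ it is reduced, hence generically smooth, and homogeneity under translation propagates smoothness everywhere. With these two clarifications your proposal is a complete proof.
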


Let us also recall the definition of the Weil's restriction of scalars.
\begin{definition}\label{weil-rest}
Let $k$ be a field and $k'$ a subfield of $k$ such that $k/k'$ is a finite extension, and let $\bbm$ be an affine $k$-variety. 
The Weil's restriction of scalars $\rcal_{k/k'}(\bbm)$ is the affine $k'$-scheme satisfying the following universal property 
\begin{equation}\label{e-weil-rest}
\rcal_{k/k'}(\bbm)(B)=\bbm(k\otimes_{k'}B)
\end{equation}
for any $k'$-algebra $B.$   
\end{definition}
 
\subsection{Ergodic measures on algebraic varieties.}\label{sec;alg-measure}
Let $\mathbb{M}$ be a $\kt$-group and let $M={\bbm}(\kt)$. 
Suppose $B\subset M$ is a group which is generated by one parameter $\kt$-split 
unipotent algebraic subgroups and by one dimensional $\kt$-split tori.
Let $\Lambda$ be a discrete subgroup of $M$ and put $\pi:M\rightarrow M/\Lambda$ to be the natural projection. 

\begin{lem}[Cf.~\cite{MT}, Proposition 3.2]\label{zd-measure1}
Let $\mu$ be a $B$-invariant and ergodic Borel probability measure on $M/\Lambda$. 
Suppose $\mathbb{D}$ is a $\kt$-closed subset of ${\bbm}$ and put $D=\mathbb{D}(\kt).$ 
If $\mu(\pi(D))>0,$ then there exists a connected $\kt$-algebraic subgroup $\mathbb{E}$ of ${\bbm}$ 
such that $B\subset E:=\mathbb{E}(\kt),$ and a point $g\in D$ such that $Eg\subset D$ and $\mu(\pi(Eg))=1.$
Moreover, $E\cap g\Lambda g^{-1}$ is Zariski dense in $\mathbb E.$   
\end{lem}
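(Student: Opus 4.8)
The plan is to mimic the classical argument of Margulis–Tomanov (Proposition 3.2 in~\cite{MT}) using the Poincaré recurrence theorem together with a maximality/minimality argument, but being careful that $\mathbb{D}$ is only $\kt$-closed, not necessarily a variety defined over $\kt$, and that $B$ is not algebraic. First I would set up the following dichotomy. Among all pairs $(\mathbb{E},g)$ where $\mathbb{E}$ is a connected $\kt$-algebraic subgroup of $\bbm$ with $B\subset E=\mathbb{E}(\kt)$, and $g\in D$ with $Eg\subset D$, I want to find one with $\mu(\pi(Eg))>0$ (and then upgrade to $=1$ by ergodicity). The trivial subgroup $\mathbb{E}=\{e\}$ does not immediately work since $B$ must be contained in $E$; so the starting point is instead $\mathbb{E}=\overline{B}^{\text{Zar}}$, the Zariski closure of $B$ in $\bbm$, which by~\cite[Lemma~11.2.4(ii)]{Sp} is defined over $\kt$ (here one uses that $B$ is generated by $\kt$-split unipotent subgroups and one-dimensional $\kt$-split tori, so it is Zariski connected), and one observes that since $\mu$ is $B$-invariant and $\pi(D)$ has positive measure, a positive-measure set of $B$-orbits lies in $\pi(D)$, hence (passing to the preimage and using that $D$ is $\kt$-closed) for $\mu$-a.e.\ such orbit the corresponding point $g\in D$ satisfies $\overline{Bg}^{\text{Zar}}\cap D \supset Bg$; enlarging, one may take $\mathbb{E}$ to be the Zariski closure of $Bg$ inside the ambient, intersected appropriately with $\mathbb{D}$.

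The heart of the argument is then to show that if $\mu(\pi(Eg))<1$ for the current candidate, we can strictly enlarge $\mathbb{E}$. Here I would argue as in~\cite{MT}: consider the set $X_{\mathbb E}=\{x\in M/\Lambda : x=\pi(hg),\ h\in \bbm(\kt),\ Ehg\subset D,\ \dim\mathbb E h = \dim \mathbb E\}$, roughly the union of translates of $E$-orbits of maximal dimension sitting inside $D$; one shows this is a Borel set of positive measure which is $B$-invariant up to the $\mu$-null set, so by ergodicity it is conull. Then Poincaré recurrence for the $B$-action (more precisely, for a countable dense subgroup, or for a one-parameter subgroup whose orbits are dense in $B$-orbits) produces, for a.e.\ $x$, a sequence $b_n\to e$ in $B$, $b_n\neq e$, with $b_n x$ returning near $x$ along points that still lie in these maximal $E$-orbit translates inside $D$; comparing the two nearby $E$-orbits forces their Zariski closures inside $\mathbb D$ to share a common larger algebraic subgroup, contradicting maximality of $\dim\mathbb E$ — unless already $\mu(\pi(Eg))=1$. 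Once $\mu(\pi(Eg))=1$, the final assertion that $E\cap g\Lambda g^{-1}$ is Zariski dense in $\mathbb E$ follows from a standard argument: the orbit map $E/(E\cap g\Lambda g^{-1})\to M/\Lambda$ pushes the Haar measure to $\mu$ which is a probability measure, forcing $E\cap g\Lambda g^{-1}$ to be a lattice-like (at least, finite-covolume up to the non-algebraic part) subgroup; and a subgroup of finite covolume in $E=\mathbb E(\kt)$ with $\mathbb E$ connected is automatically Zariski dense by Borel density (the version valid in this generality, e.g.\ because otherwise $\mu$ would be supported on a proper subvariety translate contradicting $E$-invariance and connectedness of $\mathbb E$).

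The step I expect to be the main obstacle is controlling the \emph{algebraic group generated by the image of the "difference" map} when comparing two nearby $E$-orbits inside the $\kt$-closed set $\mathbb D$: in positive characteristic the naive Zariski closure of $\{h : Ehg\subset D\}$ need not be a subgroup, and the recurrence-produced elements $b_n$ give elements of $\mathbb D$, not of a group, so one must show that the relevant limiting object — the Zariski closure of the set of $h\in\bbm$ with $Ehg\cap$ (the fixed maximal orbit) large — actually contains a connected algebraic subgroup strictly larger than $\mathbb E$ and still fitting inside $\mathbb D$. This is exactly the subtlety flagged in the introduction about topologically closed unipotent subgroups failing to be algebraic; here I would invoke that $\mathbb D$ is $\kt$-closed (so its $\kt$-points, while possibly small, still cut out a genuine $\kt$-closed subscheme whose Zariski closure is defined over $\kt$ by Lemma~\ref{l;group-scheme} applied to separable extensions) and that $B\subset E$ is already algebraic, so the enlargement happens "above" an algebraic base, which lets one run the dimension-induction cleanly. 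The rest — measurability of the sets $X_{\mathbb E}$, the reduction from $\mu>0$ to $\mu=1$ by ergodicity, and Borel density — is routine and parallels~\cite{MT} verbatim.
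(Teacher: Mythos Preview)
Your proposal has a genuine gap, and the paper's proof takes a completely different route.

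The core problem is your enlargement step. You invoke Poincar\'e recurrence for the $B$-action to produce $b_n\to e$ in $B$ with $b_nx$ returning near $x$, and then you want to ``compare the two nearby $E$-orbits.'' But $B\subset E$, so $b_nx$ lies on the \emph{same} $E$-orbit as $x$; there are no two distinct $E$-orbits to compare, and no transverse displacement is produced. Recurrence for the acting group cannot by itself enlarge the isotropy-type subgroup. What you seem to be reaching for is a self-intersection argument (two distinct $E$-orbit translates inside $D$ passing near the same point, yielding a nontrivial element of the stabilizer of $D$ outside $E$), but that is not Poincar\'e recurrence for $B$, and you give no mechanism for why such a nearby point should exist or why the resulting element should generate a strictly larger \emph{connected algebraic} subgroup still contained in $D$. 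You correctly flag this as the main obstacle, but you do not resolve it; the appeal to Lemma~\ref{l;group-scheme} does not address why the enlarged object is a group at all.

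The paper avoids this entirely. It runs a \emph{minimality} argument on $\mathbb D$ rather than a maximality argument on $\mathbb E$: by Noetherian descent, take $\mathbb D$ minimal with $\mu(\pi(D))>0$. One then shows $BD=D$ directly, using that $B$ is generated by one-dimensional $\kt$-split tori and split unipotent groups (so any countable-index subgroup of each generator is still Zariski dense in that generator). Setting $\Lambda_0=\{\lambda\in\Lambda:D\lambda=D\}$ and $\mathbb F=\overline{\Lambda_0}^{\rm Zar}$, the push-forward of $\mu$ to the quasi-projective quotient $D/F\subset(\mathbb D/\mathbb F)(\kt)$ is $B$-ergodic, and the key external input is Shalom's theorem~\cite[Thm.~1.1 and Thm.~3.6]{Shalom}: a $B$-invariant ergodic probability measure on such a variety is a Dirac mass. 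This gives a point $z$ with $\mu(\pi(zF))=1$; minimality forces $zF=D$, whence $\mathbb F$ is connected, and one takes $\mathbb E=z\mathbb F z^{-1}$. The Zariski density of $E\cap g\Lambda g^{-1}$ in $\mathbb E$ is then immediate from the construction (it is $z\Lambda_0z^{-1}$ up to conjugation), with no appeal to Borel density needed.
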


\begin{proof}
First note that thanks to Lemma~\ref{l;group-scheme} 
we may assume that $D$ is the $\kt$-points of a $\kt$-variety.
Now since the Zariski topology is Noetherian we may and will 
assume that $\mathbb D$ is minimal -- in the sense that
$\mu(\pi(\mathbb D'(\kt)))=0$ for all proper $\kt$-varieties $\mathbb D'\subset\mathbb D.$
This, in particular, implies that $D$ is irreducible.

Put $B':=\{g\in B:g\pi(D)=\pi(D)\}.$
The minimality assumption, and applying Lemma~\ref{l;group-scheme} again if necessary, 
imply that $\mu(\pi(D)\setminus g\pi(D))=0$ for all $g\not\in B'.$ 

Since $\mu$ is a probability measure, we get that $B'$ has finite index in $B.$ 
Let now $g\in B'$. Then $gD\subset D\Lambda$ and since $\Lambda$ is countable,
we get that there exists some $\lambda\in\Lambda$ so that $\mu(\pi(gD\cap D\lambda))>0.$   
Using our minimality assumption and Lemma~\ref{l;group-scheme} one more time, 
we get that $gD\subset D\lambda$; therefore, $gD=D\lambda.$ 
Since $\Lambda$ is countable, we get that $B'/B''$ is countable where $B'':=\{g\in B':gD=D\}.$ 

All together we have shown that $B''$ has countable index in $B.$ 
Recall that $B=\langle B_i\rangle$ where each $B_i=\mathbb B_i(\kt)$
and $\mathbb B_i$ is either a one dimensional $\kt$-split 
unipotent algebraic subgroup, or a one dimensional $\kt$-split tori;
in particular, $\mathbb B_i$ is connected for all $i$. 
This implies $B_i\cap B''$ is Zariski dense in $B_i.$ 
Hence $B_iD=D$ for all $i,$ which implies $BD=D.$ 
 
Put $\Lambda_0=\{\lambda\in\Lambda: D\lambda=D\},$ 
and $Y=D\setminus\cup_{\Lambda\setminus\Lambda_0}D\lambda.$ 
Minimality of $D$ implies that $\mu(\pi(Y))=\mu(\pi(D)).$
Moreover, $BY\Lambda_0=Y$ and the natural map $Y/\Lambda_0\to M/\Lambda$ is injective.
We thus get a $B$-invariant ergodic probability measure $\mu_0$ on $Y/\Lambda_0.$

Let $\mathbb F$ be the Zariski closure of $\Lambda_0$ in $\mathbb M$.
Then $\bbf$ is a $\kt$-group, see~\cite[Lemma 11.2.4(ii)]{Sp}, and $\mathbb D\mathbb F=\mathbb D.$ 
The push forward of $\mu_0$ gives a $B$-ergodic invariant measure 
on $D/F\subset(\mathbb D/\mathbb F)(\kt)$     
where $F=\mathbb F(\kt).$
Now by~\cite[Thm.~1.1 and Thm.~3.6]{Shalom}, this measure is the Dirac mass at one point.
That is there is some $z\in D$ so that $\mu(\pi(zF))=1.$ Since $zF\subset D,$
our minimality assumption implies $zF=D;$ in particular, we have $\bbf$ is connected. 
Therefore, $g=z$ and $\mathbb E=g\mathbb Fg^{-1}$ satisfy the claims in the lemma. 
\end{proof}

Let the notation be as in the beginning of \S\ref{sec;alg-measure}. 
We will say a Borel probability measure $\mu$ on $M/\Lambda$ is {\it Zariski dense} 
if there is no proper $\kt$-closed subset $\mathbb{M}$ of ${\bbm}$ 
such that $\mu(\pi(M))>0,$ where $M=\mathbb{M}(\kt).$ 
Two $\kt$-subvarieties $\bbl_1$ and $\bbl_2$ of ${\bbm}$ are said to be transverse at $x$ 
if they both are smooth at $x$ and 
\[
\mbox{$T_x(\bbl_1)\oplus T_x(\bbl_2)=T_x({\bbm}),$}
\] 
where $T_x(\bullet)$ denotes the tangent space of $\bullet$ at $x.$
Thanks to Lemma~\ref{l;group-scheme}, we also have the following, see~\cite[Prop.~3.3]{MT}.

\begin{lem}\label{zd-measure2}
Suppose
$B=\bbb(\kt)$ for a $\kt$-subgroup $\bbb$ of ${\bbm}.$
Assume that $\mu$ is a Zariski dense $B$-invariant Borel probability measure on $M/\Lambda$. 
Suppose $\mathbb{L}$ is a connected $\kt$-algebraic subvariety of ${\bbm}$ containing $e$ 
which is transverse to $\bbb$ at $e.$ 
Let $\mathbb D\subsetneq\bbl$ be a $\kt$-closed subset of $\bbl$ containing $e.$ 
Then, there exists a constant $0<\vare<1$ so that the following holds. 
If ${\Omega}\subset M/\Lambda$ is a measurable set with $\mu({\Omega})>1-\vare,$ 
then one can find a sequence $\{g_n\}$ of elements in $M$ with the following properties 
\begin{itemize}
\item[(i)] $\{g_n\}$ converges to $e,$ 
\item[(ii)] $g_n{\Omega}\cap{\Omega}\neq\emptyset,$ and 
\item[(iii)] $\{g_n\}\subset\bbl(\kt)\setminus\mathbb D(\kt).$ 
\end{itemize}   
\end{lem}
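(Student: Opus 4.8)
The plan is to deduce Lemma~\ref{zd-measure2} from Lemma~\ref{zd-measure1} by a Baire-category / measure-theoretic argument in the transverse slice. First I would set up coordinates: since $\bbl$ is transverse to $\bbb$ at $e$, the product map $\bbl\times\bbb\to\bbm$ is \'etale at $(e,e)$, so it restricts to an open immersion on a Hausdorff neighborhood $\ocal$ of $e$ in $\bbl(\kt)\times\bbb(\kt)$; write $\Phi$ for this local chart and note $\Phi(\ocal)$ is an open neighborhood of $e$ in $M$. The point of transversality is exactly that moving along $\bbl$ is a genuinely new direction not already contained in the $B$-invariance, so that producing elements $g_n\in\bbl(\kt)\setminus\bbd(\kt)$ with $g_n\Omega\cap\Omega\neq\emptyset$ really forces extra structure on $\mu$.

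Next I would argue by contradiction. Suppose no such constant $\vare$ exists; then for every $n$ there is a set $\Omega_n$ with $\mu(\Omega_n)>1-1/n$ such that no $g\in\bbl(\kt)\setminus\bbd(\kt)$ arbitrarily close to $e$ satisfies $g\Omega_n\cap\Omega_n\neq\emptyset$ — more precisely, for a suitable formulation one gets a fixed neighborhood $V$ of $e$ in $\bbl(\kt)$ with $(V\setminus\bbd(\kt))\cdot\Omega_n\cap\Omega_n=\emptyset$. Intersecting, one produces a set $\Omega_\infty$ of full measure and a neighborhood $V$ of $e$ in $\bbl(\kt)$ such that $g\Omega_\infty\cap\Omega_\infty=\emptyset$ for all $g\in V\setminus\bbd(\kt)$. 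Equivalently, in the chart $\Phi$, the measure $\mu$ (pulled back near a generic point) is supported, fiber by fiber over the $\bbb$-direction, on the $\bbd(\kt)$-slice. Making this precise is where one invokes a disintegration of $\mu$ with respect to the local $\bbl$-fibration induced by $\Phi$: for $\mu$-a.e.\ point $x$, the conditional measure on the local $\bbl$-slice through $x$ is supported on the translate of $\bbd(\kt)$.

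Then I would globalize this to a Zariski statement. The set of $x\in M/\Lambda$ whose local $\bbl$-slice conditional measure concentrates on a $\bbd$-translate is $B$-quasi-invariant in an appropriate sense, and integrating over $M/\Lambda$ shows that the $\bbm$-orbit of the graph of these slices lies in a proper $\kt$-closed subset $\mathbb D'$ of $\bbm$: concretely, $\{\,(\ell,b): \ell\in\bbd\,\}$ generates, under the group law and together with $\bbb$, a $\kt$-closed subset of $\bbm$ of dimension $\dim\bbb+\dim\bbd<\dim\bbm$, and a positive-measure piece of $M/\Lambda$ maps into its $\kt$-points. This contradicts the assumption that $\mu$ is Zariski dense. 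To run this cleanly one uses Lemma~\ref{l;group-scheme} to replace the various constructible/$\kt$-closed sets that arise (images of product maps, Zariski closures of slices) by genuine $\kt$-varieties, exactly as in the proof of Lemma~\ref{zd-measure1}, and one uses that $\bbb$ is a $\kt$-subgroup so that the $B$-action genuinely translates these slices.

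The main obstacle I expect is the measure-theoretic bookkeeping in the chart: making rigorous the passage from "for a full-measure set $\Omega_\infty$, small elements of $\bbl(\kt)\setminus\bbd(\kt)$ move $\Omega_\infty$ off itself" to "the conditional measures of $\mu$ along $\bbl$-slices are supported on $\bbd$-translates", uniformly enough over the base to then conclude positivity of $\mu$ on the $\kt$-points of a proper subvariety. This requires a careful choice of a Borel cross-section for the local $\bbl$-fibration, a Fubini-type argument for the disintegration, and attention to the fact that $\bbd$ need not be a subgroup, only a $\kt$-closed subset containing $e$, so one must track translates $g\cdot\bbd(\kt)$ rather than a fixed set. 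Everything else — transversality giving the chart, torsion/unimodularity-type facts already recorded, and the reduction to Lemma~\ref{zd-measure1} — is routine given the machinery assembled above and in~\cite{MT}.
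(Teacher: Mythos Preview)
The paper does not prove this lemma; it refers to \cite[Prop.~3.3]{MT}, adding only that Lemma~\ref{l;group-scheme} lets one pass from $\kt$-closed sets to $\kt$-varieties. So there is no in-paper argument to compare against beyond the Margulis--Tomanov proof, and your sketch is indeed in that spirit: transversality yields a local product chart, $B$-invariance controls conditionals along $B$-leaves, and failure of the conclusion should force $\mu$ onto a proper subvariety, contradicting Zariski density.

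There are, however, real gaps. Two are minor: you announce a reduction to Lemma~\ref{zd-measure1} but never actually invoke it (the contradiction with Zariski density is direct once $\mu$ charges a proper subvariety), and your intersection $\bigcap_n\Omega_n$ with $\mu(\Omega_n)>1-1/n$ need not have positive, let alone full, measure. The substantive gap is precisely the one you flag as ``the main obstacle.'' From $(V\setminus\bbd(\kt))\Omega\cap\Omega=\emptyset$ you get only that for $x\in\Omega$ the return set $\{l\in V: lx\in\Omega\}$ lies in $\bbd(\kt)$; turning this into ``the transverse conditional measure $\nu_{x_0}$ is supported on $\bbd(\kt)$'' already requires $\mu(\Omega)$ close to $1$ together with a Fubini/density argument you have not supplied (since the $\bbl$-slice through $x_0$ could largely miss $\Omega$), and turning \emph{that} into ``$\mu$ charges a proper subvariety'' requires covering a compact positive-measure set by finitely many flow boxes $Wx_i$ and using that $\mu|_{Wx_i}$ is supported in $\pi\bigl(\overline{\bbb\cdot\bbd}^{\,z}g_i\bigr)$, where $\dim\overline{\bbb\cdot\bbd}^{\,z}\le\dim\bbb+\dim\bbd<\dim\bbm$ by transversality. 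None of this is hard, but it is the entire content of the proof, and your write-up stops exactly where the work begins.
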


\subsection{Homogeneous measures.}\label{sec;homo-measure}
Let $M$ be a locally compact second countable group
and let $\Lambda$ be a discrete subgroup of $M$. 
Suppose $\mu$ is a Borel probability measure on $M/\Lambda$. 
Let $\Sigma=\{g\in M: g_*\mu=\mu\}$. 
The measure $\mu$ is called {\it homogeneous} if there exists $x\in M/\Lambda$ such that $\Sigma x$ is closed 
and $\mu$ is the $\Sigma$-invariant probability measure on $\Sigma x.$

\begin{lem}[Cf.~\cite{MT}, Lemma 10.1]\label{normal-unimodular}
Let $M$ be a locally compact second countable group and $\Lambda$ a discrete subgroup of $M.$ 
Suppose $B$ is a normal and unimodular subgroup of $M.$ 
Then any $B$-invariant, ergodic measure on $M/\Lambda,$ 
is homogeneous. Moreover, $\Sigma=\overline{B\Lambda}$.
\end{lem}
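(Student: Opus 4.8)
The plan is to first reduce to the ergodic case and then produce a single closed $B$-orbit on which $\mu$ is supported. So let $\mu$ be $B$-invariant and ergodic on $M/\Lambda$, with $B$ normal and unimodular. The first step is to exploit normality of $B$: since $B\trianglelefteq M$, the collection of $B$-ergodic measures is permuted by the $M$-action, and for $m\in M$ the push-forward $m_*\mu$ is again $B$-invariant and ergodic. I would consider the subgroup $\Sigma=\{g\in M:g_*\mu=\mu\}$; it is closed, it contains $B$, and it contains $\Lambda$ if and only if $\mu$ is $\Lambda$-periodic in the appropriate sense — but in fact what one shows is that $\mu$ is supported on the single orbit $\overline{B\Lambda}/\Lambda$, which forces $\Sigma\supseteq\overline{B\Lambda}$. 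The key point to extract is that $\overline{B\Lambda}$ is a closed subgroup of $M$ (it is the closure of a subgroup, hence a closed subgroup), and its image in $M/\Lambda$ is a closed orbit $\overline{B\Lambda}x_0$ where $x_0=e\Lambda$.

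The heart of the argument is to show $\mu$ is concentrated on this orbit. First I would observe that by ergodicity the support of $\mu$ is a single $\overline{B}$-minimal-type set; more precisely, because $B$ is generated by things that move points around, the $B$-action on $M/\Lambda$ has the property that the closed set $\mathrm{supp}(\mu)$ is $B$-invariant, and ergodicity means $\mu$ gives full measure to the closure of a generic $B$-orbit $\overline{Bx}$. Now for a $\mu$-generic point $x=g\Lambda$, the orbit closure $\overline{Bx}$ is the image of $\overline{Bg}=\overline{gB}\cdot$... — here is where normality enters decisively: since $B$ is normal, $Bg=gB$, so $\overline{Bg\Lambda}$ in $M/\Lambda$ is the image of $g\overline{B\Lambda}$ (using that $\Lambda$ is discrete and $B$ normal, $\overline{Bg\Lambda}=g\overline{g^{-1}Bg\Lambda}=g\overline{B\Lambda}$). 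Thus $\mathrm{supp}(\mu)$ is contained in a translate $g\,\overline{B\Lambda}x_0$. To pin down that this translate is $\overline{B\Lambda}x_0$ itself, i.e. that $g\in\overline{B\Lambda}$, one uses that $\mu$ is $B$-invariant together with the fact that distinct translates $g_1\overline{B\Lambda}x_0$ and $g_2\overline{B\Lambda}x_0$ are either equal or disjoint, and $B$ acts trivially on the coset space: ergodicity forces a single coset, and $\mu$-almost every point lies on it, so the coset is $\overline{B\Lambda}x_0$.

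Having localized $\mu$ to the closed orbit $N x_0$ with $N=\overline{B\Lambda}$, it remains to identify $\mu$ with the $N$-invariant probability measure on $Nx_0$. The orbit $Nx_0\cong N/(N\cap\Lambda)=N/\Lambda$ (since $\Lambda\subset N$), and on it $\mu$ is a $B$-invariant ergodic probability measure. Because $B$ is normal in $M$ it is in particular normal in $N$, and $B$ is unimodular; one then invokes the standard fact that a $B$-invariant ergodic probability measure on $N/\Lambda$, with $B$ normal and unimodular and $\overline{B\Lambda}=N$, must be the (unique) $N$-invariant Haar probability measure — this is essentially Moore's ergodicity / a Fubini-type argument using that the $B$-ergodic components of Haar measure on $N/\Lambda$ are permuted transitively by the connected... rather, by the fact that $\overline{B\Lambda}=N$ the $B$-orbit of $x_0$ is dense, and unimodularity of $B$ guarantees the push-forward of Haar measure on $B$ is genuinely $B$-invariant with no modular defect, so the only $B$-invariant ergodic probability on $N/\Lambda$ is Haar. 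This gives $\mu$ homogeneous with $\Sigma\supseteq N=\overline{B\Lambda}$; the reverse inclusion $\Sigma\subseteq\overline{B\Lambda}$ follows because any $g_*\mu=\mu$ preserves $\mathrm{supp}(\mu)=Nx_0$, hence $gN=N$, hence $g\in N$ (using $x_0=e\Lambda$). I expect the main obstacle to be the last identification — showing a $B$-invariant ergodic probability measure on the homogeneous space $N/\Lambda$ with $B$ normal, unimodular, and $\overline{B\Lambda}=N$ is necessarily the Haar measure — since this is where one must rule out exotic ergodic measures; the unimodularity hypothesis is precisely what makes the disintegration of Haar over $N/\overline{B\Lambda}=\{\mathrm{pt}\}$ behave correctly, but the argument needs care to avoid circularity, and I would model it on \cite[Lemma 10.1]{MT}.
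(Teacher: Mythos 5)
Your architecture is the right one (and is the one behind the cited [MT, Lemma 10.1]): normality makes $B\Lambda$ a group, so $N:=\overline{B\Lambda}$ is a closed subgroup; $B$ acts trivially on $M/N$, so ergodicity concentrates $\mu$ on a single fiber $gN/\Lambda$; and it then remains to identify $\mu$ with the invariant measure on that closed orbit. But the proposal has a genuine gap exactly at the step you yourself flag: you never prove that a $B$-invariant ergodic probability measure on $N/\Lambda$ with $B\Lambda$ dense in $N$ must be $N$-invariant. "Unimodularity guarantees\dots no modular defect, so the only $B$-invariant ergodic probability on $N/\Lambda$ is Haar" restates the conclusion rather than arguing for it, and the appeal to Moore ergodicity is off target (that concerns ergodicity of Haar measure, not uniqueness of invariant measures). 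The actual mechanism is a disintegration of the locally finite lift $\tilde\mu$ of $\mu$ to $N$ along $N\to N/B$: left $B$-invariance forces the conditionals on the fibers $Bx$ to be copies of the Haar measure of $B$ (unimodularity of $B$ enters here, so these conditionals are carried to one another by right translations); right $\Lambda$-invariance of $\tilde\mu$ then makes the quotient measure on the group $N/B$ invariant under the dense image of $\Lambda$, hence a Haar measure of $N/B$; reassembling, $\tilde\mu$ is a Haar measure of $N$, and finiteness of the probability measure $\mu$ kills the resulting modular character, so $\mu$ is $N$-invariant. Without some version of this, the proof is incomplete, and deferring to "[MT, Lemma 10.1]" is circular, since that is precisely the statement at hand.

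A second, smaller error: from "ergodicity forces a single coset" you conclude that the coset is $\overline{B\Lambda}x_0$ itself, i.e.\ that the generic $g$ lies in $\overline{B\Lambda}$. Nothing forces the point mass on $M/N$ to sit at the identity coset; in general $\mu$ lives on $gN/\Lambda$ and the stabilizer one obtains is $\Sigma=gNg^{-1}=\overline{B\,g\Lambda g^{-1}}$. This is harmless for homogeneity, and it is how the lemma is actually used later (note the conjugation by $g_0$ in Theorem~\ref{algeb-measure}), but your derivation of $\Sigma=\overline{B\Lambda}$ as literally stated is a non sequitur: either replace $\Lambda$ by $g\Lambda g^{-1}$ at the outset or record the conjugation.
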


\subsection{Modulus of conjugation}\label{sec;modulus}
Suppose ${\bbm}$ is a $\kt$-algebraic group. Then $M={\bbm}(\kt)$ is a locally compact group. 
Let $a\in M$ and suppose $B\subset M$ is a closed, with respect to the Hausdorff topology, 
subgroup which is normalized by $a.$ 
We let $\alpha(a, B)$ denote the modulus of the conjugation action of $a$ on $B,$ 
i.e. if $Y\subset B$ is a measurable set 
\[
\mbox{$\theta(aYa^{-1})=\alpha(a, B)\theta(Y)$}
\]
where $\theta$ denotes a Haar measure on $B.$ 

Note that $\alpha(\cdot, M)$ is the modular function of $M.$ 
In particular, if $a\in [M,M],$ the commutator subgroup of $M,$ then $\alpha(a, M)=1$.

\section{Structure of pseudo reductive groups }\label{sec:psd-red}
In this section we will record a corollary of the main results in~\cite{CGP}; 
this section is required  only for our study in \S\ref{sec:arithmetic}.

We begin by fixing some notation. 
Let $\field$ be a local field. Throughout this section we let $\bbm$ be a connected, 
simply connected, semisimple group defined over $\field.$   
Moreover, we assume that either ${\rm char}(\field)>3$ or if ${\rm char}(\field)=2,3,$ 
then all of the absolutely almost simple factors of $\bbm$ are of type $A$.

Let $\mathsf B\subset\bbm(\field)$ be a Zariski dense subgroup of $\bbm$. 
Let $\sfield\subset\field$ be a closed subfield and put
\[
\bbf:=\begin{array}{c}\text{The irreducible component of the}\\ \text{ identity in the Zariski closure of $\mathsf B$ in }\mathcal R_{\field/\sfield}(\bbm)\end{array}
\] 

We will investigate the structure of $\bbf$ in this section.

\begin{lem}\label{lem:f-m}
\begin{enumerate}
\item ${\bbf}$ is a connected $\sfield$-subgroup of $\mathcal R_{\field/\sfield}(\bbm).$
\item $\bbf(\sfield)\subset\bbm(k)$ and $\bbf(\sfield)$ is Zariski dense in $\bbm.$ 
\end{enumerate}
\end{lem}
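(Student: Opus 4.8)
The plan is to prove the two assertions of Lemma~\ref{lem:f-m} in sequence, using that $\bbf$ is by definition the identity component of a Zariski closure.

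For part (1): the Zariski closure of any subset of the $\field$-points of the $\sfield$-group $\rcal_{\field/\sfield}(\bbm)$ is an $\sfield$-closed subvariety, and since $\mathsf B$ is a subgroup, this closure is in fact an $\sfield$-subgroup; here I would cite~\cite[Lemma 11.2.4(ii)]{Sp} exactly as is done in the proof of Lemma~\ref{zd-measure1}. Passing to the identity component does not destroy being defined over $\sfield$, so $\bbf$ is a connected $\sfield$-subgroup of $\rcal_{\field/\sfield}(\bbm)$. I would also note that $\bbf$ has finite index in the full Zariski closure, a fact needed below.

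For part (2): the inclusion $\bbf(\sfield)\subset\bbm(\field)$ should follow from the universal property of Weil restriction, Definition~\ref{weil-rest}: taking $B=\sfield$ in~\eqref{e-weil-rest} gives $\rcal_{\field/\sfield}(\bbm)(\sfield)=\bbm(\field\otimes_\sfield\sfield)=\bbm(\field)$, and $\bbf(\sfield)$ sits inside this as a subset. For Zariski density of $\bbf(\sfield)$ in $\bbm$, I would argue as follows. The Zariski closure $\overline{\mathsf B}$ of $\mathsf B$ inside $\rcal_{\field/\sfield}(\bbm)$, viewed now over the larger field $\field$ after base change, projects onto a subgroup of $\bbm$ (via the canonical $\field$-morphism $\rcal_{\field/\sfield}(\bbm)\times_\sfield\field\to\bbm$) whose $\field$-points contain $\mathsf B$; since $\mathsf B$ is Zariski dense in $\bbm$ by hypothesis, this projection is surjective. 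Because $\bbf$ has finite index in $\overline{\mathsf B}$ and $\bbm$ is connected, $\bbf$ must already surject onto $\bbm$ under this projection. Finally, since $\bbf$ is defined over $\sfield$ and — being the identity component of a Zariski closure of a subgroup of rational points — has a Zariski dense set of $\sfield$-points (again \cite[Lemma 11.2.4(ii)]{Sp} guarantees the closure is generated over $\sfield$ by the rational points we started from, so $\bbf(\sfield)$ is Zariski dense in $\bbf$), the image of $\bbf(\sfield)$ is Zariski dense in the image of $\bbf$, which is all of $\bbm$. Chasing through the identification $\bbf(\sfield)\subset\bbm(\field)$ shows this image is just $\bbf(\sfield)$ itself, giving the claim.

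The main obstacle I anticipate is the bookkeeping around the identity component and the finite-index issue: one must be careful that $\mathsf B\cap\bbf(\sfield)$ still has finite index in $\mathsf B$ and remains Zariski dense in $\bbm$ (not merely in $\bbf$), so that the surjectivity of the projection $\bbf\to\bbm$ genuinely follows rather than only surjectivity of the full closure. A clean way around this is to replace $\mathsf B$ by the finite-index subgroup $\mathsf B\cap\bbf(\sfield)$ at the outset and observe that a finite-index subgroup of a Zariski dense subgroup of the connected group $\bbm$ is again Zariski dense. The rest is a routine application of the universal property of $\rcal_{\field/\sfield}$ and standard facts about Zariski closures of groups of rational points.
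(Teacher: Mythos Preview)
Your proposal is correct and matches the paper's approach. In fact, the ``clean way around'' you describe at the end---replacing $\mathsf B$ by $\mathsf B\cap\bbf(\sfield)$ and using that a finite-index subgroup of a Zariski dense subgroup of the connected group $\bbm$ remains Zariski dense---is exactly the paper's entire argument for the density claim in part~(2); the projection-based detour you outline first is not needed.
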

\begin{proof} 
The group $\bbf$ is connected by its definition.
Also since
\[
\mathsf B\subset\bbm(\field)=\mathcal R_{\field/\sfield}(\bbm)(\sfield),
\] 
the Zariski closure of $\mathsf B$ in $\mathcal R_{\field/\sfield}(\bbm)$
is defined over $\sfield,$ see~\cite[Lemma 11.2.4(ii)]{Sp}. This implies part (1).

The first claim in part~(2) follows from the definition.
To see the second claim, note that $\bbm$ is connected and $\mathsf B\cap\bbf(\sfield)$ has finite index in $\mathsf B.$   
\end{proof}

Put $\bbf':=[\bbf,\bbf],$ the commutator subgroup of $\bbf.$ 

\begin{lem}\label{lem:F-pseudo-red}
\begin{enumerate}
\item $\bbf'$ is a connected $\sfield$-group. 
\item $\bbf'(\sfield)$ is Zariski dense $\bbm.$
\item $\bbf$ is an $\sfield$-pseudo reductive group. That is $R_{u,\sfield}(\bbf)=\{e\}$ where $R_{u,\sfield}(\bbf)$ is the largest connected, normal, unipotent $l$-subgroup.
\item $\bbf'$ is a perfect group. That is it equals its own commutator group. 
\end{enumerate}
\end{lem}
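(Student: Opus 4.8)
\textbf{Proof proposal for Lemma~\ref{lem:F-pseudo-red}.}

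The plan is to deduce all four parts from the structure of $\bbf$ as the identity component of a Zariski closure of a group inside $\rcal_{\field/\sfield}(\bbm)$, together with the fact (Lemma~\ref{lem:f-m}(2)) that $\bbf(\sfield)$ is Zariski dense in $\bbm$, combined with standard facts about commutator subgroups of connected algebraic groups over a field, and the structure theory for $\rcal_{\field/\sfield}(\bbm)$ when $\bbm$ is simply connected semisimple. First I would establish part~(1): $\bbf'=[\bbf,\bbf]$ is a connected closed $\sfield$-subgroup of $\bbf$ since $\bbf$ is connected and defined over $\sfield$ (the commutator subgroup of a connected smooth group is connected, smooth, and inherits the field of definition --- this is classical, e.g.\ via the image of the commutator morphisms, and over an imperfect field one should invoke Lemma~\ref{l;group-scheme} to pass to the smooth model if needed). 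For part~(2) I would argue that, since $\mathsf B$ is a group, $[\mathsf B,\mathsf B]\subset\bbf'(\sfield)$ up to the usual subtlety that the abstract commutator subgroup need not be Zariski closed; but its Zariski closure lies in $\bbf'$, and on the other hand $\bbm$ is simply connected semisimple, hence perfect, so $[\mathsf B,\mathsf B]$ is Zariski dense in $[\bbm,\bbm]=\bbm$ (using that $\mathsf B$ is Zariski dense in $\bbm$ and that commutator maps of a dense subgroup have dense image in the commutator subgroup of the closure). Therefore $\bbf'(\sfield)$ is Zariski dense in $\bbm$.

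Next, part~(4): once we know $\bbf'(\sfield)$ is Zariski dense in $\bbm$ and $\bbm$ is semisimple hence perfect, the same density/commutator argument applied one more level down shows $[\bbf',\bbf']$ is Zariski dense in $\bbm$ inside $\rcal_{\field/\sfield}(\bbm)$; but $[\bbf',\bbf']\subset\bbf'$ and $\bbf'$ is the smallest closed subgroup with this property among the relevant chain --- more precisely, I would show $\bbf'=[\bbf',\bbf']$ by exploiting that the descending commutator series of the connected group $\bbf'$ stabilizes (connectedness forces it to stabilize at a connected perfect subgroup) and that the stable term must still be Zariski dense in the perfect group $\bbm$, which pins it down. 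Alternatively, and perhaps more cleanly, invoke the structure of $\rcal_{\field/\sfield}(\bbm)$ directly here. For part~(3) I would argue that $R_{u,\sfield}(\bbf)$, being the maximal connected normal unipotent $\sfield$-subgroup, is killed by the projection $\rcal_{\field/\sfield}(\bbm)\to\bbm$ (any unipotent subgroup maps into the unipotent radical of the Zariski closure of $\bbf(\sfield)$ in $\bbm$, which is $R_u(\bbm)=\{e\}$ since $\bbm$ is semisimple); hence $R_{u,\sfield}(\bbf)$ lies in the kernel of this projection restricted to $\bbf$. One then uses that the kernel of $\rcal_{\field/\sfield}(\bbm)\to\bbm$ --- equivalently the fibers of the unit map --- has no nontrivial connected normal unipotent subgroup that is also normal in $\bbf$, because $\bbf$ surjects (after base change) densely onto $\bbm$ and such a subgroup would have to be normalized by a dense subgroup of $\bbm(\field)$, forcing it to be trivial; here is where one uses that $\rcal_{\field/\sfield}$ of a semisimple (indeed reductive) group is pseudo-reductive, \cite[Prop.~1.1.10]{CGP}, together with the fact that a closed $\sfield$-subgroup containing a Zariski dense set of $\sfield$-points of a pseudo-reductive group, and normalizing a putative unipotent radical, inherits pseudo-reductivity of the relevant subquotient.

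I expect the main obstacle to be part~(3): passing from ``$\bbf(\sfield)$ is Zariski dense in the semisimple group $\bbm$'' to ``$\bbf$ has no nontrivial $\sfield$-unipotent radical'' is not automatic, precisely because over the imperfect field $\sfield$ the group $\bbf$ sits inside the pseudo-reductive but non-reductive group $\rcal_{\field/\sfield}(\bbm)$, so one genuinely needs the Conrad--Gabber--Prasad theory rather than classical reductive-group arguments. The cleanest route is likely: show $\bbf\supset$ a pseudo-reductive subgroup with the same $\sfield$-points (Lemma~\ref{l;group-scheme} reduces to the smooth case), then observe $R_{u,\sfield}(\bbf)\cap\bbf'$ is normal unipotent in the perfect group $\bbf'$ and maps to $\{e\}$ in $\bbm$, and finally use \cite[Prop.~1.2.3 or Cor.~1.2.7]{CGP} --- on fields of definition and on the behavior of unipotent radicals under the Weil restriction --- to conclude it is trivial; the hypothesis on $\operatorname{char}(\field)$ and on the types of the simple factors of $\bbm$ is exactly what makes the relevant parts of that theory applicable and is where I would be most careful to cite the precise statement needed. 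The routine density and commutator-subgroup manipulations in parts~(1), (2), (4) I would present briefly, deferring the real work to the citation of \cite{CGP} for part~(3).
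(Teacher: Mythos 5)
Your parts (1) and (2) match the paper: (1) is the classical fact about commutator subgroups of connected groups (the paper cites \cite[\S 2.1]{B1}), and (2) is exactly the paper's argument, namely Lemma~\ref{lem:f-m}(2) plus perfectness of the semisimple group $\bbm$ via \cite[Ch.~I, \S2.1(e)]{B1}.

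The genuine gap is in part (3). Your plan is to show $R_{u,\sfield}(\bbf)$ base-changes into $\ker\bigl(q:\rcal_{\field/\sfield}(\bbm)\times_\sfield\field\to\bbm\bigr)$ and then to conclude triviality because ``such a subgroup would have to be normalized by a dense subgroup of $\bbm(\field)$, forcing it to be trivial.'' That inference is false: $\ker(q)$ itself is a nontrivial smooth connected unipotent subgroup, normal in all of $\rcal_{\field/\sfield}(\bbm)_{\field}$ (it \emph{is} the geometric unipotent radical when $\field/\sfield$ is inseparable), so being normalized by a Zariski dense subgroup carries no force here. The whole subtlety of pseudo-reductivity over imperfect fields is that this geometric unipotent radical fails to be defined over $\sfield$ and has trivial $\sfield$-points; your argument never uses the $\sfield$-rationality of $R_{u,\sfield}(\bbf)$, which is the decisive input. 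The paper's proof does: if $R_{u,\sfield}(\bbf)\neq\{e\}$, then by \cite[Ch.~1, Prop.~2.5.3]{Mar5} its $\sfield$-points are Zariski dense in it, in particular nontrivial; these points lie in $\bbf(\sfield)\subset\bbm(\field)$ and their Zariski closure in $\bbm$ is a nontrivial connected unipotent subgroup normalized by the dense subgroup $\bbf(\sfield)$, hence normal in $\bbm$ --- contradicting semisimplicity. (Alternatively one may cite \cite[Prop.~4.2.5]{CGP} wholesale, as the paper notes; but the citations you float, \cite[Prop.~1.2.3, Cor.~1.2.7]{CGP}, concern normal subgroups of pseudo-reductive groups and do not apply to the non-normal subgroup $\bbf\subset\rcal_{\field/\sfield}(\bbm)$.) Your part (4) is also incomplete as written: the derived series of $\bbf'$ does stabilize at a perfect connected subgroup, but knowing that this stable term is still Zariski dense in $\bbm$ does not force it to equal $\bbf'$. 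The paper instead deduces (4) from (3) via \cite[Prop.~1.2.6]{CGP} (the derived group of a pseudo-reductive group is perfect) --- which is the route your ``alternatively'' gestures at and is the one that actually closes the argument.
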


\begin{proof}
Part~(1) follows from~\cite[\S 2.1]{B1}.

Part (2) is a consequence of Lemma~\ref{lem:f-m}(2) and~\cite[Ch.~I, \S2.1(e)]{B1} since $\bbm$ is semisimple.

Part (3) is a special case of a more general statement~\cite[Prop.\ 4.2.5]{CGP}. 
In the case at hand, however, a simpler proof is available as we now explicate.  Contrary to our claim, suppose that $R_{u,\sfield}(\bbf)$ is nontrivial. Then by~\cite[Ch.~1, Prop.\ 2.5.3]{Mar5} we have $R_{u,\sfield}(\bbf)(\sfield)$ is Zariski dense in $R_{u,\sfield}(\bbf)$; in particular, $R_{u,\sfield}(\bbf)(\sfield)$ is nontrivial. 
This, in view of Lemma~\ref{lem:f-m}(2), implies that $\bbm$ has a nontrivial unipotent radical, contradicting the fact
that $\bbm$ is semisimple. 

Part (4) follows from part (3) and~\cite[Prop.\ 1.2.6]{CGP}. 
\end{proof}

The following is the main result of this section; the proof is based on~\cite[Thm.~1.5.1 and Thm.~5.1.1]{CGP}.

\begin{thm}\label{thm:M-semisimple}
Let the notation be as above. Then,
\begin{itemize}
\item[(a)] there is a subfield $\sfield\subset\sfield'\subset \field$ with $\field/\sfield'$ a separable extension, and $\sfield'/\sfield$ a purely inseparable extension, 

\item[(b)] there is some $m\geq 1$ and for every $1\leq i\leq m$, there is a field $\sfield\subset\sfield_i\subset \sfield'$, in particular, $\sfield_i/\sfield$ is a purely inseparable extension,

\item[(c)] for every $1\leq i\leq m,$ there is an $\sfield_i$-simple, connected, simply connected, $\sfield_i$-group 
${\bbl}_i,$ and

\item[(d)] there is an isomorphism 
$
\iota: \prod_{i=1}^m\bbl_i\times_{\sfield_i}\sfield'\to\bbl,
$
where $\bbl$ is the irreducible component of the identity in the 
Zariski closure of $\mathsf B$ in $\mathcal R_{\field/\sfield'}(\bbm),$
\end{itemize}
so that the following hold.

\begin{enumerate} 
\item $\bbf'(\sfield)=\iota\Bigl(\prod_{i=1}^m{\bbl}_i(\sfield_i)\Bigr),$
 
\item $\bbf(\sfield)/\bbf'(\sfield)$ is a compact, abelian group. 
\end{enumerate}
\end{thm}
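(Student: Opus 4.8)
The plan is to obtain Theorem~\ref{thm:M-semisimple} as a consequence of the structure theory of pseudo-reductive groups in~\cite{CGP}, applied to the $\sfield$-group $\bbf$ from Lemma~\ref{lem:F-pseudo-red}. By Lemma~\ref{lem:F-pseudo-red}(3), $\bbf$ is $\sfield$-pseudo-reductive, so by~\cite[Thm.~1.5.1]{CGP} (the structure theorem classifying pseudo-reductive groups via a ``minimal field of definition of the geometric radical'' together with a standard presentation) there is a minimal purely inseparable extension $\sfield'/\sfield$ such that the image of $\bbf$ under the natural map to $\mathcal R_{\sfield'/\sfield}(\bbf_{\sfield'})$ has geometrically reductive quotient; concretely the derived group $\bbf'$ becomes, after base change to $\sfield'$, a product of $\sfield'$-simple semisimple groups, while the central/abelian ``Cartan'' part accounts for the quotient $\bbf/\bbf'$. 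First I would unwind the definition of $\bbl$ in (d): since $\bbf(\sfield)\subset\bbm(\sfield)$ sits inside $\mathcal R_{\field/\sfield}(\bbm)(\sfield)$ and $\sfield'$ is the purely inseparable extension furnished by the structure theory, the Zariski closure of $\mathsf B$ in $\mathcal R_{\field/\sfield'}(\bbm)$ has identity component $\bbl$, and one checks $\bbl' = [\bbl,\bbl]$ is semisimple and simply connected over $\sfield'$ (using that $\bbm$ is simply connected semisimple and the derived group of $\mathcal R_{\field/\sfield'}(\bbm)$ is $\mathcal R_{\field/\sfield'}$ of the simply connected cover, via~\cite[Thm.~5.1.1 and A.5.15]{CGP}); here is where the characteristic hypotheses (${\rm char}>3$, or type $A$ in small characteristic) enter, as they guarantee the relevant exactness and that no exotic pseudo-reductive phenomena obstruct the decomposition.

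Next I would decompose $\bbl'$ over $\sfield'$. Since $\field/\sfield'$ is separable and $\bbm$ is $\sfield$-simple-up-to-isogeny (the paper's hypotheses allow me to reduce to the almost simple case, or argue factorwise), $\mathcal R_{\field/\sfield'}(\bbm_{\field})$ is a product of Weil restrictions along the (separable) factor fields of $\field\otimes_{\sfield'}\field$, and $\bbl'$ is an $\sfield'$-form which decomposes as $\prod_{i}\bbl_i\times_{\sfield_i}\sfield'$ for suitable intermediate fields $\sfield\subset\sfield_i\subset\sfield'$ and $\sfield_i$-simple, simply connected $\bbl_i$; this is exactly~\cite[Thm.~5.1.1]{CGP} descended one more step, or can be extracted from the classification of semisimple groups over $\sfield'$ together with a Galois-descent bookkeeping of the simple factors. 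The fields $\sfield_i$ are the fields of definition of the individual simple factors, hence purely inseparable over $\sfield$, giving (b) and (c). Then (1) is the identification $\bbf'(\sfield) = \bbl'(\sfield')$: by minimality of $\sfield'$ and the fact that $\bbf'$ is perfect (Lemma~\ref{lem:F-pseudo-red}(4)), the canonical map $\bbf'\to\mathcal R_{\sfield'/\sfield}(\bbl')$ induces a bijection on $\sfield$-points, and then $\mathcal R_{\sfield'/\sfield}(\bbl')(\sfield)=\bbl'(\sfield')=\iota\bigl(\prod_i\bbl_i(\sfield_i)\bigr)$ by the universal property of Weil restriction.

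For (2), that $\bbf(\sfield)/\bbf'(\sfield)$ is compact abelian: abelianness is immediate since $\bbf/\bbf'$ is a commutative $\sfield$-group. For compactness I would argue that $\bbf/\bbf'$ is, up to isogeny, a product of a torus and a ``wound'' unipotent part; but $\bbf$ is pseudo-reductive so there is no $\sfield$-unipotent normal part, and in fact $\bbf/\bbf'$ is a commutative pseudo-reductive $\sfield$-group which is generated by its Cartan subgroups --- one can invoke~\cite[Prop.~1.2.6 or the commutative structure results in~\S1.2 of CGP]{CGP} to see it is an extension involving $\mathcal R_{\sfield'/\sfield}$ of a torus modulo a torus, whose $\sfield$-points form a compact group because $\sfield'/\sfield$ is purely inseparable (so the relevant $\sfield$-points are the bounded part), together with the general fact that the $\sfield$-points of an anisotropic-modulo-center commutative group are compact. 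The cleaner route, which I would actually take, is: $\bbf(\sfield)$ and $\bbf'(\sfield)$ are both Zariski dense in $\bbm$ (Lemmas~\ref{lem:f-m}, \ref{lem:F-pseudo-red}), they are both closed subgroups of $\bbm(\field)$, and the quotient $\bbf(\sfield)/\bbf'(\sfield)$ maps to the abelianization; finiteness of $\bbf(\sfield)\cap\bbf'(\sfield)$-index issues plus the structure of commutative pseudo-reductive groups force the quotient to be a compact abelian Lie group over $\sfield$.

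The main obstacle I expect is matching the abstract field $\sfield'$ produced by~\cite[Thm.~1.5.1]{CGP} (the minimal field of definition of the unipotent radical of $\bbf_{\bar\sfield}$) with the concrete geometric description of $\bbl$ as an identity component of a Zariski closure inside a Weil restriction $\mathcal R_{\field/\sfield'}(\bbm)$, and verifying that this $\sfield'$ genuinely sits between $\sfield$ and $\field$ with $\field/\sfield'$ separable --- this is claim (a), and it requires knowing that the ``defect of separability'' of $\bbf$ is captured entirely by the inseparability of $\field/\sfield$, which is where~\cite[Thm.~5.1.1]{CGP} on Weil restrictions and the hypothesis that $\bbm$ is semisimple simply connected are essential. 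A secondary technical point is the descent of the factorization into $\sfield_i$-simple pieces: ensuring the $\sfield_i$ are honest subfields of $\sfield'$ (not just abstract purely inseparable extensions) uses that the simple factors of $\bbl_\field$ are permuted by a Galois-type action whose orbits correspond to these subfields, as in the standard analysis of $\mathcal R_{\field/\sfield'}$ of an absolutely simple group.
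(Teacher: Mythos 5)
Your overall strategy --- run $\bbf$ through the standardness theorem of~\cite{CGP} and then descend the simple factors --- is the same as the paper's, but there are two genuine gaps, one of which you yourself flag without resolving.

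First, claim (a). You propose to take $\sfield'$ to be the field produced abstractly by~\cite[Thm.~1.5.1]{CGP} and then admit that ``matching'' it with the Weil restriction $\mathcal R_{\field/\sfield'}(\bbm)$ and verifying that $\field/\sfield'$ is separable is the main obstacle. The paper does not extract $\sfield'$ from the structure theory at all: it \emph{defines} $\sfield'=\sfield^{1/p^n}$ where $n$ is the largest integer with $\sfield^{1/p^n}\subset\field$, i.e.\ the maximal purely inseparable extension of $\sfield$ inside $\field$. With this choice (a) is immediate, and $\bbl$ is automatically connected, simply connected and semisimple over $\sfield'$ because $\field/\sfield'$ is separable. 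The compatibility you worry about then becomes a statement to be \emph{proved}, namely that the fields $\sfield_i$ coming from the standard presentation embed into this concrete $\sfield'$. The paper proves it by composing $j$ with the projections $\mathcal R_{\sfield'/\sfield}(\bbl)\times_\sfield\sfield'\to\bbl$ to get maps $f_i:\mathcal R_{\sfield_i/\sfield}(\bbl_i)\times_\sfield\sfield'\to\bbl$, observing that $\ker(f_i)$ is solvable (central kernel of $j$ composed with unipotent kernel of the purely inseparable Weil restriction), hence proper, and invoking~\cite[Prop.~A.7.8]{CGP} together with uniqueness of embeddings of purely inseparable extensions to conclude $\sfield_i\subset\sfield'$. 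This step is missing from your argument and cannot be waved away: it is exactly what makes the base change in (d) legitimate.

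Second, part (2). Abelianness is fine, but your compactness argument does not work as stated: a commutative pseudo-reductive $\sfield$-group can perfectly well have non-compact $\sfield$-points (a split torus is commutative and pseudo-reductive), so no general structure result about $\bbf/\bbf'$ will give compactness of $\bbf(\sfield)/\bbf'(\sfield)$. The paper's argument is specific: view $\bbf(\sfield)\subset\bbl(\sfield')$; conjugation by any $g\in\bbf(\sfield)$ preserves $\prod_i\bbl_i(\sfield_i)$ and, since $\bbl_i(\sfield_i)$ is Zariski dense in $\bbl_i$, acts fiberwise as an \emph{inner} $\sfield_i$-automorphism. Compactness then follows from~\cite[Ch.~1, Thm.~2.3.1]{Mar5}, i.e.\ from the compactness of the cokernel of $\bbl_i(\sfield_i)\to\bbl_i^{\rm ad}(\sfield_i)$. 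You would need to supply an argument of this kind; ``anisotropic-modulo-center'' is not a property you have established for the relevant quotient.

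Your treatment of part (1) is essentially right in outline but should be routed through the fact that $j$ in the standard presentation is an isomorphism, which the paper deduces from $\iota$ being an isomorphism (the kernel of $j$ is central and $\bbl$ is simply connected); the identity $\mathcal R_{\sfield_i/\sfield}(\bbl_i)(\sfield)=\bbl_i(\sfield_i)$ is then just the universal property of Weil restriction.
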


\begin{proof}
We give the proof in some steps. 

{\em Step 1.\ The field $\sfield'$.}\
Let $\sfield'=\sfield^{1/p^n}$ where $n$ is the largest positive integer so that $\sfield^{1/p^n}\subset\field$. 
Then $\sfield/\sfield'$ is purely inseparable and $\field/\sfield'$ is a separable extension.

Since $\mathsf B$ is Zariski dense in $\bbm$ and $\field/\sfield'$ is separable, 
we have that $\bbl$ is a connected, 
simply connected, semisimple group defined over $\sfield'.$   
Moreover, if ${\rm char}(\field)=2,3,$ 
then all of the absolutely almost simple factors of $\bbl$ are of type $A$.

Note also that since $\bbf$ is connected, we have
$
\bbf\subset\mathcal R_{\sfield'/\sfield}(\bbl).
$

{\em Step 2.\ The structure of $\bbf$}.\ Recall from Lemma~\ref{lem:F-pseudo-red}(3) 
that $\bbf$ is $\sfield$-pseudo reductive.
The map $\bbf\to \bbf/R_u(\bbf)$ factors as
\[
\bbf\hookrightarrow\mathcal R_{\sfield'/\sfield}(\bbl)\xrightarrow{q}\bbl
\] 
where the map $q$ is the natural projection, see~\cite[\S A.5]{CGP}. 
Therefore, if ${\rm char}(\field)=2,3,$ 
then all of the absolutely almost simple factors of $\bbf/R_u(\bbf)$ are of type $A.$  

The structure theory of $\bbf$ is extensively studied in~\cite{CGP}. 
The following is a corollary of~\cite[Thm. 1.5.1 and Thm.\ 5.1.1]{CGP} in the case at hand.
The group $\bbf$ is a standard pseudo reductive group, see~\cite[Def.\ 1.4.4]{CGP}. That is: 
\begin{itemize}
\item for all $1\leq i\leq m$, there is a (finite) purely inseparable extension $\sfield_i/\sfield,$
\item for all $1\leq i\leq m,$ there is an $\sfield_i$-simple, connected, simply connected, $\sfield_i$-group ${\bbl}_i,$
and a maximal $\sfield_i$-torus $\mathbb T_i,$
\item a commutative pseudo reductive $\sfield$-group $\bba$ which gives a factorization 
\[
\prod_{i=1}^m\mathcal R_{\sfield_i/\sfield}(\mathbb T_i)\xrightarrow{\phi} \mathbb A\to\prod_{i=1}^m\mathcal R_{\sfield_i/\sfield}(\bar{\mathbb T}_i),
\]
of the natural map induced from $\mathbb T_i\to\bar{\mathbb T}_i:=\mathbb T_i/Z(\bbl_i),$
\end{itemize}
so that $\bbf$ is isomorphic to
\be\label{eq:F-standard-rep}
\prod_{i=1}^m\mathcal R_{\sfield_i/\sfield}(\bbl_i)\rtimes\mathbb A/\prod_{i=1}^m\mathcal R_{\sfield_i/\sfield}(\mathbb T_i).
\ee
Moreover, $\bbf'=[\bbf,\bbf]$ is the image of the natural map  
\be\label{eq:F'-j}
j:\prod_{i=1}^m\mathcal R_{\sfield_i/\sfield}(\bbl_i)\to\prod_{i=1}^m\mathcal R_{\sfield_i/\sfield}(\bbl_i)\rtimes\mathbb A/\prod_{i=1}^m\mathcal R_{\sfield_i/\sfield}(\mathbb T_i),
\ee
and $\ker(j)=\ker(\phi)$ is central, see~\cite[Prop.\ 4.1.4 and Cor.\ A.7.11 ]{CGP}.

{\em Step 3.\ The proofs.}\ We now show that the collections $\{\sfield_i\}$ and $\{\bbl_i\}$ 
satisfy the claims in the proposition.

For all $1\leq i\leq m$, let us denote by $f_i:\mathcal R_{\sfield_i/\sfield}(\bbl_i)\times_{\sfield}\sfield'\to\bbl$ the map
\[
\mathcal R_{\sfield_i/\sfield}(\bbl_i)\times_{\sfield}\sfield'\xrightarrow{j}\bbf\times_{\sfield}\sfield'\hookrightarrow\mathcal R_{\sfield'/\sfield}(\bbl)\times_{\sfield}\sfield'\xrightarrow{q}\bbl
\]
where $q$ is the natural map.

Since $\sfield'/\sfield$ is purely inseparable, we have $\ker(q)$ is unipotent, see~\cite[Prop.\ A.5.11]{CGP}. 
Also recall that $\ker(j)$ is central. 
Therefore, $\ker(f_i)$ is a solvable group for all $i.$ Put 
\[
f=\prod_i f_i:\prod_{i=1}^m\mathcal R_{\sfield_i/\sfield}(\bbl_i)\times_{\sfield}\sfield'\to\bbl
\]
Then by Lemma~\ref{lem:F-pseudo-red}(2) and~\eqref{eq:F'-j}, $f$ is surjective with a solvable kernel.

Recall that $\sfield_i/\sfield$ is a purely inseparable extension for all $1\leq i\leq m.$
Therefore, $\ker(q_i)=R_u\Bigl(\mathcal R_{\sfield_i/\sfield}(\bbl_i)\Bigr)$ where
$q_i:\mathcal R_{\sfield_i/\sfield}(\bbl_i)\to\bbl_i$ is the natural surjection, see~\cite[Prop.\ A.5.11]{CGP}.
Moreover, $\bbl$ is a semisimple $\sfield'$-group, hence, 
\[
\ker(f_i)\supset R_u\Bigl(\mathcal R_{\sfield_i/\sfield}(\bbl_i)\Bigr).
\] 
Since $\ker(f_i)$ is a solvable and hence proper subgroup, we get from~\cite[Prop.\ A.7.8]{CGP}(1)
that
\[
\text{$\sfield_i\subset\sfield'$ for all $1\leq i\leq m;$}
\] 
here we also used the uniqueness of embedding of a purely inseparable extension.

As we mentioned above,
\[
\mathcal R_{\sfield_i/\sfield}(\bbl_i)\times_{\sfield}\sfield'/R_u\Bigl(\mathcal R_{\sfield_i/\sfield}(\bbl_i)\times_{\sfield}\sfield'\Bigr)=\bbl_i.
\]
Therefore, the map $f$ factors through a surjection
\[
\iota:\prod_i\bbl_i\times_{\sfield_i}\sfield'\to\bbl
\]
with central kernel. Recall, however, that $\bbl$ is simply connected; 
therefore, $\iota$ is an isomorphism.
This establishes claims (a)--(d). Note also that we get: $j$ is an isomorphism.

To see part~(1), note that 
\begin{align*}
\bbf'(\sfield)&=j\biggl(\textstyle\prod_{i=1}^m\mathcal R_{\sfield_i/\sfield}(\bbl_i)\biggr)(\sfield)&&\text{by~\eqref{eq:F'-j}}\\
&=j\biggl(\textstyle\prod_{i=1}^m\mathcal R_{\sfield_i/\sfield}(\bbl_i)(\sfield)\biggr)&&{\text{since $j$ is an isomorphism}}\\
&=j\Bigl(\textstyle\prod_{i=1}^m\bbl_i(\sfield_i)\Bigr)&&\text{since $R_{u,\sfield}\Bigl(\mathcal R_{\sfield_i/\sfield}(\bbl_i)\Bigr)=\{1\}$}\\
&=\iota\Bigl(\textstyle\prod_{i=1}^m\bbl_i(\sfield_i)\Bigr).
\end{align*}

We now turn to the proof of part (2).
Indeed, $\bbf(\sfield)/\bbf'(\sfield)$ is abelian as $\bbf/\bbf'$ is abelian.
To se it is also compact, consider $\bbf(\sfield)$ as a subgroup of $\bbl(\sfield').$ Then for every
$g\in \bbf(\sfield)$, conjugation by $g$ defines an automorphism of  $\bbl(\sfield')$ which preserves 
$\prod_i\bbl_i(\sfield_i).$
This automorphism is fiberwise an $\sfield_i$-algebraic automorphism. 
Since $\bbl_i(\sfield_i)$ is Zariski dense in $\bbl_i$, we get that this automorphism is fiberwise 
an inner $\sfield_i$-automorphism. Part (4) follows from this, in view of~\cite[Ch.~1, Thm.\ 2.3.1]{Mar5}.  
 \end{proof}

\section{An algebraic statement}\label{sec;algeb-lemma}
One of the remarkable features of Ratner's theorems is that 
they connect objects which are closely connected to the Hausdorff topology of the underlying group, like closure of a unipotent orbit or a measure invariant by a unipotent group, 
to objects which are described using the Zariski topology, e.g.\ algebraic subgroups. 
In positive characteristic setting these two topologies are rather far from each other\footnote{See~\cite{Pink} where structure of compact subgroups of semisimple groups of
adjoint type is described.}. 
From a philosophical stand point, this is one reason why the existing proofs in characteristic 
zero do not easily generalize to this case. 

Let us restrict ourselves to unipotent groups in order to highlight one of the differences. 
In characteristic zero, the group generated by one unipotent matrix already carries quite
a lot of information, e.g.\ it is Zariski dense in a one dimensional group. In positive characteristic,      
however, all unipotent elements are torsion. 
The situation improves quite a bit in the presence of a split torus action. In a sense, 
such an action can be used ``to redefine a notion of Zariski closure"
for the group generated by one element.
This philosophy is used in this section where we prove the following proposition
which is of independent interest. 

\begin{prop}\label{p:TorusInvariantUnipotent}
Let $k$ be a local field of characteristic $p>0$ and $\bbw$ 
a unipotent $k$-group equipped with a $k$-action by $\GL_1$. 
Assume that all the weights are positive integers. 
Let $\ucal$ be a subgroup of $\bbw(k)$ which 
is invariant under the action of $k^{\times}$. 
Then, there is $q$ a power of $p,$ which only depends on the set of weights, 
such that $\ucal=\bbw'(k^q)$, where $\bbw'$ is the Zariski-closure of 
$\ucal\subset \bbw(k)=\rcal_{k/k^q}(\bbw)(k^q)$ in $\rcal_{k/k^q}(\bbw)$.
Moreover, $\bbw'$ is connected.
\end{prop}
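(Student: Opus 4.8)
The plan is to reduce the statement to a pair of complementary inclusions: that $\ucal$ is the full $k^q$-point group of its ``redefined Zariski closure'' $\bbw'$, and that this $\bbw'$ is connected. The starting point is the weight decomposition. Since $\bbw$ is unipotent with a $\GL_1$-action all of whose weights are positive integers, I would first pass to the associated graded picture: using the structure theory of unipotent groups with $\GL_1$-action in positive characteristic (the Tits-style results recalled from \cite[App.\ B]{CGP}), one knows $\bbw$ has a $\GL_1$-equivariant filtration by normal $k$-subgroups whose successive quotients are vector groups on which $\GL_1$ acts by a single weight $n_i$. The key elementary fact to exploit is that a subgroup of the additive group $k$ that is stable under multiplication by $(k^\times)^{n}$ — equivalently, stable under multiplication by all $n$-th powers of units, hence by the subring they generate — must be a $k^{q}$-submodule for a suitable power $q$ of $p$ depending only on $n$ (the point being that $\{u^n : u\in\mathcal O_k^\times\}$ together with $0$ generates, as a ring, an order whose fraction field is $k^{q}$ for some $q\mid n$-dependent power of $p$; one needs the residue field's multiplicative structure plus the valuation to see $1$ is hit). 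Taking $q$ to be the least common multiple-type bound over the finite weight set gives the uniform $q$ in the statement.

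Next I would run an induction on the length of the weight filtration. The base case is a vector group $\bbw\cong\bbg_a^m$ with $\GL_1$ acting by a single weight $n$; here $\ucal\subset k^m$ is a subgroup stable under scaling by $(k^\times)^n$, so by the elementary lemma above it is a $k^q$-subspace, and one checks directly that $k^q$-subspaces of $k^m$ are exactly the $k^q$-points of $\bbf_q$-subgroups of $\rcal_{k/k^q}(\bbg_a^m)$, which are connected. For the inductive step, write $\bbv\triangleleft\bbw$ for the bottom piece of the filtration (a vector group with a single weight) and $\bar\bbw=\bbw/\bbv$. Then $\ucal\cap\bbv(k)$ is $k^\times$-stable in $\bbv(k)$ and the image $\bar\ucal$ of $\ucal$ in $\bar\bbw(k)$ is $k^\times$-stable there; apply the inductive hypothesis to both. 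The subtlety is that $\ucal$ need not be the full preimage of $\bar\ucal$ intersected with anything naive — one must show the extension $1\to \ucal\cap\bbv(k)\to\ucal\to\bar\ucal\to 1$ is ``algebraic'', i.e.\ that the set-theoretic section giving the cocycle is governed by a $k^q$-morphism. Here is where the torus action does the real work: the commutator and squaring (more generally $p$-power) maps relating $\bar\bbw$ and $\bbv$ are $\GL_1$-equivariant polynomial maps, and $k^\times$-invariance of $\ucal$ forces the relevant coordinates of the cocycle to again lie in $k^\times$-stable additive subgroups, hence in $k^q$ after possibly enlarging $q$ by a bounded amount (still a function of the weights only). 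Assembling, $\ucal$ is cut out inside $\rcal_{k/k^q}(\bbw)(k^q)$ by $\bbf_q$-algebraic conditions, so $\ucal=\bbw'(k^q)$ with $\bbw'$ the Zariski closure, and connectedness of $\bbw'$ follows because it is built as a successive extension of connected unipotent $\bbf_q$-groups (vector groups) — unipotent groups in characteristic $p$ over a field are connected, and an extension of connected groups is connected.

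I would also need to verify the identification $\bbw(k)=\rcal_{k/k^q}(\bbw)(k^q)$ used implicitly throughout, which is just the defining universal property of Weil restriction (Definition~\ref{weil-rest}) applied with $B=k$ over the base field $k^q$, together with the fact that $\rcal_{k/k^q}$ of a unipotent group is unipotent and that $\bbw'$, being Zariski-closed in $\rcal_{k/k^q}(\bbw)$ and contained in its $k^q$-points' closure, is defined over $k^q$ by \cite[Lemma 11.2.4(ii)]{Sp} (this also handles $k$-closedness versus defined-over issues, cf.\ \S\ref{sec:k-closed}).

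\textbf{Main obstacle.} I expect the hard part to be the inductive step — specifically, proving that the cocycle describing $\ucal$ over $\bar\ucal$ is algebraic over $k^q$ and that the power $q$ needed stays bounded in terms of the weight set alone, not in terms of $\ucal$ or the filtration length. Controlling how $q$ accumulates through the commutator and $p$-power interactions between different weight spaces, and ruling out that some ``wild'' additive subgroup sneaks in transverse to the graded pieces, is the delicate point; the positivity of all weights (which prevents the weight-zero phenomena that would break the multiplicative-closure argument) is what ultimately saves the day, and I would organize the proof so that this hypothesis is visibly used at each application of the elementary additive-subgroup lemma.
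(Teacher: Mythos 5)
Your outline has the right shape (weight filtration, an additive-subgroup lemma, induction on the length of the filtration, positivity of weights to contract everything to the identity), and you correctly locate the danger in subgroups ``transverse to the graded pieces.'' But that is exactly the point where the proposal has a genuine gap, and the mechanism you offer to close it is not the right one. The difficulty already appears in full in the commutative vector-group case, where there are no commutators and no group-extension cocycles at all: a $k^\times$-stable subgroup of $\bbw_{\alpha_1}\oplus\cdots\oplus\bbw_{\alpha_n}$ can be the graph of a nonzero additive map between weight spaces, and one must show this graph is algebraic over a fixed $k^q$. Two separate ideas are needed, neither of which appears in your proposal. First, writing each weight as $p^{l}m$ with $\gcd(m,p)=1$, one proves that $\ucal$ splits as a direct sum over the \emph{distinct prime-to-$p$ parts} $m_i$; this is Lemma~\ref{l:SplitWeightSpaces}, and its proof needs the multi-variable separability statement of Lemma~\ref{l:Separable} (a Vandermonde-type argument showing the map $(x_1,\dots,x_d)\mapsto\sum_j(x_j^{m_1},\dots,x_j^{m_d})$ is dominant and separable), so that one can rescale the blocks attached to different $m_i$ \emph{independently} using only sums of $k^\times$-translates of a single element. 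Your one-weight-at-a-time ``elementary additive-subgroup lemma'' cannot do this. Second, within a block where all weights are $p^{l_1}m,\dots,p^{l_n}m$, the key trick is that $\lambda\mapsto\lambda^{p^{l}}$ is \emph{additive}, so the $k^\times$-action extends to a twisted $k$-linear structure under which $\ucal$ is a subspace and the graph map $\theta$ is $k$-linear, hence extends to a $k$-morphism and yields $\ucal=\bbw'(k^{p^{l}})$ (Lemmas~\ref{l:PowerPWeight1} and~\ref{l:PowerPWeight2}). Your proposed route via ``$\GL_1$-equivariant commutator and $p$-power maps'' does not produce either of these facts, and without them the inductive step cannot control the transverse subgroups or the uniformity of $q$.

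Two smaller points. The uniform $q$ is governed by the \emph{largest} $p$-power occurring among the weights (because of the graph subgroups), not just by the $p$-part of each weight taken in isolation. And your connectedness argument leans on the false general statement that unipotent groups in characteristic $p$ are connected (constant finite $p$-groups are smooth, unipotent, and disconnected); the paper instead uses that the contracting $\GL_1$-action with positive weights sends every point of $\bbw'$ into every neighborhood of $e$, forcing a single component.
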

Groups like $\ucal$ arise naturally in our study, see \S\ref{sec;proof} for more details. 

We shall prove Proposition~\ref{p:TorusInvariantUnipotent} in several steps. 
First we prove it when $\bbw$ is a commutative $p$-torsion $k$-group. 
In the next step, the general commutative case is handled. 
In the final step, the general case is proved by induction on the nilpotency length. 

In order to prove the first step, we shall start with a few auxiliary lemmas. 
In Lemmas \ref{l:PowerPWeight1} and \ref{l:PowerPWeight2}, 
we assume that the weights are powers of $p$. 
In Lemmas \ref{l:Separable} and \ref{l:SplitWeightSpaces}, 
we get a convenient decomposition of $\ucal$ into certain subgroups, 
and finally in Lemma \ref{l:VectorGroupTorusAction}, we prove the first step.
Let begin with the following

\begin{lem}\label{l:PowerPWeight1}
Let $F$ be an infinite field of characteristic $p$ and  $0<l_1<\cdots<l_n$ positive integers. 
Assume $\GL_1$ acts linearly on a standard $F$-vector group $\bbw$ with weights equal to $p^{l_i}$. 
Let $\bbw_i$ be the weight space of $p^{l_i}$ and 
suppose $\ucal$ is a subgroup of $\bbw(k)$ which is invariant under $\GL_1(k)$. 
If $\ucal$ does not intersect $\oplus_{i=2}^{n}\bbw_i$, then 
\[
\ucal=\bbw'(F^{p^{l}}),
\]
for any $l\ge l_1$, where $\bbw'$ is the Zariski-closure of $\ucal$ in $\rcal_{F/F^{p^l}}(\bbw)$.
\end{lem}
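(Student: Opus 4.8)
The plan is to exploit the hypothesis that $\ucal$ avoids $\oplus_{i=2}^n \bbw_i$ together with the $\GL_1$-invariance to reduce everything to a statement about additive subgroups of $F$ that are closed under multiplication by $p^l$-th powers. First I would observe that projection $\pi_1:\bbw\to\bbw_1$ onto the first weight space restricts to an injective homomorphism on $\ucal$: its kernel is $\ucal\cap(\oplus_{i\ge2}\bbw_i)$, which is trivial by assumption. So $\ucal\cong\pi_1(\ucal)$, and it suffices to understand $\pi_1(\ucal)\subset\bbw_1(F)$. Writing $\bbw_1\cong\bbg_a^{d_1}$ with $\GL_1$ acting by the single weight $p^{l_1}$, the group $\Gamma_1:=\pi_1(\ucal)$ is an $\bbf_p$-subspace of $F^{d_1}$ invariant under $v\mapsto t^{p^{l_1}}v$ for all $t\in F^\times$; since $\{t^{p^{l_1}}:t\in F^\times\}$ generates the field $F^{p^{l_1}}$ (here I use that $F$ is infinite, hence $F^{p^{l_1}}$ is infinite, and an additive subgroup of a field invariant under multiplication by a set that additively generates the field is a module over that field), $\Gamma_1$ is in fact an $F^{p^{l_1}}$-subspace, a fortiori an $F^{p^l}$-subspace for any $l\ge l_1$.

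Next I would transport this back along $\pi_1^{-1}$. For each $j\ge2$, the composite $\pi_j\circ(\pi_1|_\ucal)^{-1}:\Gamma_1\to\bbw_j(F)$ is an additive map which is $\GL_1$-equivariant in the appropriate sense: an element $v\in\Gamma_1$ scaled by $t^{p^{l_1}}$ must map to $\pi_j$ of the corresponding element of $\ucal$ scaled by $t^{p^{l_j}}$, i.e. this map intertwines multiplication by $t^{p^{l_1}}$ on the source with multiplication by $t^{p^{l_j}}$ on the target. Since $l_j>l_1$, writing $t^{p^{l_j}}=(t^{p^{l_1}})^{p^{l_j-l_1}}$ shows that this map is a $p^{l_j-l_1}$-semilinear (Frobenius-twisted) map of $F^{p^{l_1}}$-vector spaces, hence is given by a matrix with entries in $F$ whose effect on coordinates is composition with the $(l_j-l_1)$-fold Frobenius. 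The Zariski closure $\bbw'$ of $\ucal$ inside $\rcal_{F/F^{p^l}}(\bbw)$ is then exactly the graph of the tuple of these semilinear maps over the $F^{p^l}$-span of $\Gamma_1$; one checks that this graph, as an $F^{p^l}$-subscheme of $\rcal_{F/F^{p^l}}(\bbw)$, is a linear (hence connected) subgroup and that its $F^{p^l}$-points are precisely $\ucal$. Here the key point is that passing to $\rcal_{F/F^{p^l}}$ turns the Frobenius twists, which obstruct linearity over $F$, into honest $F^{p^l}$-linear data, because $\rcal_{F/F^{p^l}}(\bbw)(F^{p^l})=\bbw(F)$ while the scheme structure is now over the smaller field $F^{p^l}$ over which all the relevant Frobenius powers become coordinates.

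Finally I would verify the two assertions "$\ucal=\bbw'(F^{p^l})$" and "$\bbw'$ connected" directly from this graph description: connectedness because a linear subgroup (vector group) over a field is connected; and the equality of point sets because $\bbw'(F^{p^l})$ is by construction the set of $(v,(\phi_j(v))_j)$ with $v$ ranging over $\Gamma_1$ — which is $\ucal$ — once one knows $\Gamma_1$ is already an $F^{p^l}$-subspace and the $\phi_j$ are defined on all of it, so no new points appear. The main obstacle I anticipate is the bookkeeping in the second paragraph: making precise that a $\GL_1$-equivariant additive map between weight spaces of weights $p^{l_1}$ and $p^{l_j}$ is necessarily a matrix of $(l_j-l_1)$-power Frobenius-twisted linear forms over $F$ (rather than something wilder), and then identifying the scheme-theoretic Zariski closure in the Weil restriction with the naive graph; both require a careful argument that the only $\GL_1$-equivariant maps are the "obvious" semilinear ones, using again that $F$ is infinite so that equivariance over $F^\times$ forces polynomial identities to hold identically.
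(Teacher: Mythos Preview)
Your approach is essentially the same as the paper's: both exploit injectivity of $\pi_1|_\ucal$ to write $\ucal$ as the graph of a map $\theta$ over $\Gamma_1=\pi_1(\ucal)$, identify $\Gamma_1$ as an $F^{p^{l_1}}$-subspace, and then realize the graph as an $F^{p^l}$-subvariety of the Weil restriction. The only difference is packaging: the paper observes that the $\GL_1$-action endows $\bbw(F)$ with a (twisted) $F$-vector space structure in which $\theta$ is genuinely $F$-linear, whereas you unwind this into the explicit $p^{l_j-l_1}$-semilinear description---your version is slightly more explicit about why the graph is algebraic over $F^{p^l}$, but the content is identical.
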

\begin{proof}
Via the action of $\GL_1(F),$ and in view of our assumption of the weights, 
we can view $\bbw(F)$ as an $F$-vector space and $\ucal$ as a $F$-subspace. 
Since $\ucal$ does not intersect $\oplus_{i=2}^n \bbw_i$, 
we get an $F$-linear map $\theta$ from ${\rm pr}_1(\ucal)$ to $\oplus_{i=2}^n \bbw_i$, 
where ${\rm pr}_1:\bbw\rightarrow \bbw_1$ is the projection map, and we have
\[
\ucal=\{(\xbf,\theta(\xbf))|\h\xbf\in {\rm pr}_1(\ucal)\}.
\]
It is clear that ${\rm pr}_1(\ucal)$ is a $F^{p^{l_1}}$-subspace of $\bbw_1(F)$ 
with respect to the standard scalar multiplication. 
It is also clear that $\theta$ can be extended to an $F$-morphism from $\bbw_1$ to $\oplus_{i=2}^n\bbw_i$. 
Hence there is a standard $F^{p^{l_1}}$-vector subgroup $\bbw_1'$ of $\rcal_{F/F^{p^{l_1}}}(\bbw_1)$ such that
\[
\ucal=\{(\xbf,\ybf)|\h\xbf \in \bbw'_1(F^{p^{l_1}}),\h\ybf=\rcal_{F/F^{p^{l_1}}}(\theta)(\xbf)\}.
\]
Since $F$ is an infinite field, the Zariski-closure $\bbw'$ of $\ucal$ in $\rcal_{F/F^{p^{l_1}}}(\bbw)$ is equal to
\[
\{(\xbf,\ybf)|\h \xbf\in \bbw_1', \h\ybf=\rcal_{F/F^{p^{l_1}}}(\theta)(\xbf)\},
\]
which shows that $\ucal=\bbw'(F^{p^{l_1}})$. Now, one can easily deduce the same result for any $l\ge l_1$.
\end{proof}

\begin{lem}\label{l:PowerPWeight2}
Let $F$ be an infinite field of characteristic $p$ and  $0<l_1<\cdots<l_n$ positive integers. 
Assume $\GL_1$ acts linearly on a standard $F$-vector group $\bbw$ with weights equal to $p^{l_i}$. 
Let $\ucal$ be a subgroup of $\bbw(F)$ which is invariant under $\GL_1(F)$. Then 
\[
\ucal=\bbw'(F^{p^{l}}),
\]
for any $l\ge  l_n$, where $\bbw'$ is the Zariski-closure of $\ucal$ in $\rcal_{F/F^{p^l}}(\bbw)$.
\end{lem}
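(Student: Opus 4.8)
The idea is to reduce to Lemma~\ref{l:PowerPWeight1} by induction on the number $n$ of distinct weights. The base case $n=1$ is immediate: then $\ucal$ trivially does not intersect $\oplus_{i=2}^n\bbw_i=\{0\}$, so Lemma~\ref{l:PowerPWeight1} applies directly with $l_1=l_n$. For the inductive step, I would first record the role of the $\GL_1(F)$-action: since all weights are powers of $p$, the scalar action $a\cdot(x_1,\dots,x_n)=(a^{p^{l_1}}x_1,\dots,a^{p^{l_n}}x_n)$ turns $\bbw(F)$ into an $F$-vector space (after reparametrizing $a\mapsto a^{p^{l_i}}$ on each factor one sees each coordinate carries honest $F$-scalar multiplication) on which $\ucal$ is an $F$-subspace — more precisely, $\ucal$ is stable under the twisted scalar multiplication, which is what we will use.

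The key decomposition step is this. Let $\mathcal V=\ucal\cap\bigl(\oplus_{i=2}^n\bbw_i\bigr)$; this is a $\GL_1(F)$-invariant subgroup of the vector group $\oplus_{i=2}^n\bbw_i$, which carries weights $p^{l_2}<\cdots<p^{l_n}$, so by the inductive hypothesis (applied in $n-1$ weights) there is a connected $F^{p^{l}}$-subgroup $\bbv'$ of $\rcal_{F/F^{p^l}}\bigl(\oplus_{i=2}^n\bbw_i\bigr)$ with $\mathcal V=\bbv'(F^{p^l})$ for any $l\ge l_n$. On the other hand, consider the projection $\mathrm{pr}_1:\bbw\to\bbw_1$ and the image $\mathrm{pr}_1(\ucal)$. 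The subgroup $\ucal$ need not split as a graph over $\mathrm{pr}_1(\ucal)$, but $\ucal/\mathcal V$ does inject into $\bbw_1$ via $\mathrm{pr}_1$, so $\ucal/\mathcal V$ is $\GL_1(F)$-equivariantly isomorphic to the subgroup $\mathrm{pr}_1(\ucal)\subset\bbw_1(F)$. I would then choose an $F$-linear (for the twisted scalar action) section $\theta:\mathrm{pr}_1(\ucal)\to\oplus_{i=2}^n\bbw_i$ of $\mathrm{pr}_1|_\ucal$, so that every element of $\ucal$ is uniquely $(\xbf,\theta(\xbf)+\vbf)$ with $\xbf\in\mathrm{pr}_1(\ucal)$ and $\vbf\in\mathcal V$; here $\mathrm{pr}_1(\ucal)$ is an $F^{p^{l_1}}$-subspace of $\bbw_1(F)$ and $\theta$ extends to an $F$-morphism $\bbw_1\to\oplus_{i=2}^n\bbw_i$. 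As in the proof of Lemma~\ref{l:PowerPWeight1}, $\{(\xbf,\theta(\xbf)):\xbf\in\mathrm{pr}_1(\ucal)\}$ is the $F^{p^{l_1}}$-points of a standard $F^{p^{l_1}}$-vector subgroup $\bbw_1'$ of $\rcal_{F/F^{p^{l_1}}}(\bbw)$, hence (passing to $l\ge l_n\ge l_1$) the $F^{p^l}$-points of $\rcal_{F^{p^{l_1}}/F^{p^l}}(\bbw_1')$. Adding the two pieces, $\ucal=\rcal_{F^{p^{l_1}}/F^{p^l}}(\bbw_1')(F^{p^l})+\bbv'(F^{p^l})$, and since $F$ is infinite the Zariski closure $\bbw'$ of $\ucal$ in $\rcal_{F/F^{p^l}}(\bbw)$ is exactly the (connected) sum of these two subgroups, so $\ucal=\bbw'(F^{p^l})$.

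The main obstacle I anticipate is \emph{not} the algebra but getting the bookkeeping of restrictions of scalars consistent: one must make sure the $F^{p^{l_1}}$-structure produced for the "graph" part and the $F^{p^l}$-structure produced inductively for $\mathcal V$ can be glued over the common ground field $F^{p^l}$, which forces the ``for any $l\ge l_n$'' formulation and uses that $\rcal$ is compatible with towers of finite (here purely inseparable) extensions, together with the fact that for an infinite field taking $F^{p^l}$-points commutes with the sum of two such subgroups inside $\rcal_{F/F^{p^l}}(\bbw)$. A secondary point to handle carefully is the existence of the section $\theta$: abstractly $\ucal\to\mathrm{pr}_1(\ucal)$ is a surjection of $F$-vector spaces (for the twisted action) with kernel $\mathcal V$, so a linear section exists, but one should check that this section is compatible with the $F^{p^{l_1}}$-structure on $\mathrm{pr}_1(\ucal)$ so that it extends to an algebraic morphism, exactly as in Lemma~\ref{l:PowerPWeight1}. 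Connectedness of $\bbw'$ is automatic since it is a vector group (a sum of standard vector subgroups).
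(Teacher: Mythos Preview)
Your proposal is correct and follows essentially the same approach as the paper. The paper carries out the filtration all at once---setting $\ucal_i=\ucal\cap(\oplus_{j\ge i}\bbw_j)$, choosing $\GL_1(F)$-invariant complements $\ucal_i'$ of $\ucal_i$ in $\ucal_{i-1}$, and applying Lemma~\ref{l:PowerPWeight1} to each $\ucal_i'$---whereas you phrase the same decomposition recursively, peeling off the complement of $\ucal\cap(\oplus_{i\ge 2}\bbw_i)$ (your graph $\{(\xbf,\theta(\xbf))\}$ is exactly the paper's $\ucal_2'$) and invoking the inductive hypothesis on the rest; the two arguments are equivalent.
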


\begin{proof}
We denote the weight space of $p^{l_i}$ by $\bbw_i$. 
Let $\ucal_i=\ucal\cap (\oplus_{j=i}^n \bbw_j)$ and define $\ucal_i'$ 
to be a $\GL_1(F)$-invariant complement of 
$\ucal_i$ in $\ucal_{i-1}$. So we have
\begin{align*}
&\ucal_i'\subset \oplus_{j=i-1}^n \bbw_j\\
&\ucal_i'\cap (\oplus_{j=i}^n \bbw_j)= \{0\}\\
&\ucal_{i-1}=\ucal_{i}\oplus \ucal_i'\\
&\ucal=\ucal_2'\oplus \ucal_3' \oplus\cdots \oplus \ucal_{n}'\oplus \ucal_n.
\end{align*}
By Lemma \ref{l:PowerPWeight1}, we have that $\ucal_i'=\bbw_i'(F^{p^l})$, for all $i$ and any $l\ge l_{i-1}$, 
where $\bbw_i'$ is the Zariski-closure of $\ucal_i'$ in $\rcal_{k/k^{p^l}}(\oplus_{j=i-1}^n \bbw_j)$. 
Moreover, $\ucal_n$ is a subspace of $\bbw_n$ with respect to the standard action of $F^{p^n};$ 
one can easily conclude.
\end{proof}
\begin{lem}\label{l:Separable}
Let $m_1,\ldots,m_d$ be distinct positive integers which are coprime with $p$. 
Let $g(x)=(x^{m_1},\ldots,x^{m_d})$ be a morphism from $\bba^{1}$ to $\bba^d$. Then 
\[
G(x_1,\ldots,x_d):=g(x_1)+\cdots+g(x_d)
\]
is a separable function from $\bba^{d}$ to $\bba^{d}$ at a $F$-point, 
for any infinite field $F$ of characteristic $p$.
\end{lem}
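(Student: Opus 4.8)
The plan is to compute the Jacobian (differential) of the map $G$ at a suitable $F$-point and show it is invertible, since a morphism between smooth varieties of the same dimension that has invertible differential at a point is separable (dominant with separable residue field extension) there. First I would write $G = (G_1,\dots,G_d)$ with $G_j(x_1,\dots,x_d) = \sum_{i=1}^d x_i^{m_j}$, the power-sum symmetric polynomials in the weights $m_1,\dots,m_d$. The partial derivative is $\partial G_j/\partial x_i = m_j x_i^{m_j-1}$, so the Jacobian matrix factors as
\[
\mathrm{Jac}(G) = \bigl(m_j x_i^{m_j-1}\bigr)_{i,j} = \mathrm{diag}(m_1,\dots,m_d)\cdot V^{t},
\]
where $V = \bigl(x_i^{m_j-1}\bigr)$ is a generalized Vandermonde-type matrix. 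Since each $m_j$ is coprime to $p$, the diagonal factor is invertible over $F$; so everything reduces to finding an $F$-point $(x_1,\dots,x_d)$ at which $\det\bigl(x_i^{m_j-1}\bigr)_{i,j}\neq 0$.

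The key step is therefore to show that the polynomial $P(x_1,\dots,x_d) := \det\bigl(x_i^{m_j-1}\bigr)_{i,j}$ is not identically zero on $\bba^d$ over $F$. I would argue as follows: expanding the determinant, $P$ is a nonzero polynomial over the prime field $\bbf_p$ \emph{provided} it is nonzero as a formal expression. To see it is a genuinely nonzero polynomial, specialize $x_i = t^{c_i}$ for generic integers $c_1 < \cdots < c_d$; then the entry $x_i^{m_j-1}$ becomes $t^{c_i(m_j-1)}$, and the exponents $c_i(m_j-1)$ can be chosen (e.g.\ $c_i$ growing fast enough, or via a weighting argument) so that in the Leibniz expansion $\sum_{\sigma}\mathrm{sgn}(\sigma)\prod_i t^{c_i(m_{\sigma(i)}-1)}$ there is a unique permutation $\sigma$ achieving the maximal total degree; hence no cancellation and $P\not\equiv 0$. (Alternatively one invokes the classical fact that generalized Vandermonde / Schur-polynomial determinants in distinct exponents are nonzero; here the exponents $m_j - 1$ are distinct because the $m_j$ are distinct.) Once $P$ is a nonzero polynomial over $\bbf_p \subset F$ and $F$ is infinite, $P$ has a nonvanishing $F$-point, and at that point $\mathrm{Jac}(G)$ is invertible.

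Finally I would conclude: at such an $F$-point $x$, the differential $dG_x : T_x\bba^d \to T_{G(x)}\bba^d$ is an isomorphism of $F$-vector spaces, which is precisely the condition for $G$ to be étale — in particular separable — at $x$, giving the claim.

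\medskip
The main obstacle I expect is the non-vanishing of the generalized Vandermonde determinant $P$ in characteristic $p$: one must be careful that the characteristic does not force extra cancellations. The cleanest way around this is the degree/valuation argument above — picking a one-parameter specialization $x_i = t^{c_i}$ with the $c_i$ chosen so that the highest-degree term in $t$ comes from a single permutation — since this works uniformly over $\bbf_p$ and hence over any field of characteristic $p$, and uses only that the $m_j - 1$ are pairwise distinct. The coprimality of the $m_j$ with $p$ is then used only to handle the harmless diagonal factor $\mathrm{diag}(m_1,\dots,m_d)$.
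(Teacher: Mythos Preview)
Your proposal is correct and follows essentially the same structure as the paper's proof: compute the Jacobian of $G$, factor out the diagonal $\mathrm{diag}(m_1,\dots,m_d)$ (harmless since each $m_j$ is prime to $p$), and reduce to showing that the generalized Vandermonde matrix $\bigl(x_i^{m_j-1}\bigr)$ is nonsingular at some $F$-point.

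The only difference is in how this last nonvanishing is established. You argue that the determinant is a nonzero polynomial by a leading-term/rearrangement argument on a one-parameter specialization $x_i=t^{c_i}$, and then invoke infiniteness of $F$ to find a nonvanishing $F$-point. The paper instead chooses an explicit point: it picks $x\in F^\times$ of multiplicative order exceeding $\max_i m_i$ (which exists since $F$ is infinite), sets $x_j=x^{j-1}$, and observes that a nontrivial kernel vector would give a nonzero polynomial of degree at most $d-1$ vanishing at the $d$ distinct values $x^{m_1-1},\dots,x^{m_d-1}$, a contradiction. Both arguments are short and elementary; yours shows the determinant is generically nonzero, while the paper exhibits a concrete $F$-point directly.
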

\begin{proof}
It is enough to show that the Jacobian of $G$ is invertible at some $F$-point. 
Thus, thanks to our assumption: all of $m_i$ are coprime with $p$, 
it suffices to show that the kernel of $D=[x_j^{m_i-1}]$ is trivial for some $x_j\in F$.  
Now since $F$ is an infinite field, there is an element $x\in F^{\times}$ of multiplicative order larger than $\max_i m_i$. 
Set $x_j=x^{j-1};$ if $D$ has a non-trivial kernel, then there is non-zero polynomial $Q$ 
of degree at most $d-1$ with coefficients in $F$, such that
\[
Q(x^{m_1-1})=Q(x^{m_2-1})=\cdots=Q(x^{m_d-1})=0.
\]
This is a contradiction as $x^{m_i-1}$ are distinct and the degree of 
$Q$ is at most $d-1$. 
\end{proof}

\begin{lem}\label{l:SplitWeightSpaces}
Let $k$ be a local field of characteristic $p$ and $m_1,\cdots,m_d$ distinct positive integers which are coprime with $p$. 
Let $\bbw$ be a standard $k$-vector group equipped with a linear $k$-action by $\GL_1$. 
Assume that the set of weights 
$\Phi= \Phi_1\sqcup \cdots \sqcup\Phi_d$, $\Phi_i=\{p^lm_i\in\Phi|\h l\in\bbn\},$ and moreover
$\Phi_i$ is non-empty. Let $\ucal$ be a subgroup of $\bbw(k)$, 
which is invariant under the action of $\GL_1(k)$. Then 
\[
 \ucal=\ucal_1\oplus \cdots \oplus \ucal_d,
\]
where $\ucal_i=\ucal\cap (\oplus_{\alpha\in \Phi_i} \bbw_\alpha)$ and $\bbw_\alpha$ 
is the weight space corresponding to $\alpha$. 
Furthermore, if $\Phi_i=\{p^{l_{i1}}m_i,p^{l_{i2}}m_i,\ldots, p^{l_{in_i}}m_i\}$ and $\xbf=(x_1,\ldots,x_{n_i})\in \ucal_i$, then 
\[
 \mbox{$(\lambda^{p^{l_{i1}}} x_1,\ldots, \lambda^{p^{l_{in_i}}} x_{n_i})\in \ucal_i,\;$ 
 for any $\lambda\in k^{\times}$.}
\]
\end{lem}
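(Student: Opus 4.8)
The plan is to fix an arbitrary $v\in\ucal$, expand it over the weight spaces, and exploit the $\GL_1(k)$-invariance of $\ucal$ together with additivity of the Frobenius in characteristic $p$ to separate the blocks indexed by $m_1,\dots,m_d$. Write $v=\sum_{i=1}^d\sum_{l\in L_i}v_{i,l}$, where $v_{i,l}$ lies in the weight space $\bbw_{p^lm_i}(k)$ and $L_i\subset\bbn$ is the (nonempty) set of exponents occurring in $\Phi_i$; recall $\ucal$ sits inside the commutative group $\bbw(k)$, written additively. For $t_1,\dots,t_d\in k^\times$ the element $\sum_{j=1}^d t_j\cdot v$ again belongs to $\ucal$, and since $x\mapsto x^{p^l}$ is additive one computes
\[
\sum_{j=1}^d t_j\cdot v=\sum_{i=1}^d\sum_{l\in L_i}\Bigl(\sum_{j=1}^d t_j^{m_i}\Bigr)^{p^l}v_{i,l}=\sum_{i=1}^d\sum_{l\in L_i}G_i(t_1,\dots,t_d)^{p^l}\,v_{i,l},
\]
where $G=(G_1,\dots,G_d)$ is precisely the map of Lemma~\ref{l:Separable}.

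Next I would invoke Lemma~\ref{l:Separable}: $G$ has invertible differential at a point of $(k^\times)^d$ (the explicit point exhibited in its proof has all coordinates in $k^\times$), so by the implicit function theorem over the local field $k$ the image $G\bigl((k^\times)^d\bigr)$ contains an open neighbourhood $\Omega$ of the image of that point in $k^d$. Because $\Omega$ is open I may vary a single coordinate: choosing two points of $\Omega$ that differ only in their $i$-th coordinate, realizing each as $G$ of an argument in $(k^\times)^d$, and subtracting the corresponding elements of $\ucal$, the identity $s^{p^l}-s'^{p^l}=(s-s')^{p^l}$ gives $\sum_{l\in L_i}u^{p^l}v_{i,l}\in\ucal$ for every $u$ in some neighbourhood of $0$ in $k$. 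Applying the $k^\times$-action by $\mu$ then replaces $u$ by $\mu^{m_i}u$, and since $\gcd(m_i,p)=1$ the subgroup $(k^\times)^{m_i}$ is open of finite index in $k^\times$, so as $\mu$ and $u$ vary the products $\mu^{m_i}u$ exhaust $k^\times$. Hence $\sum_{l\in L_i}z^{p^l}v_{i,l}\in\ucal$ for all $z\in k$.

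Finally both assertions drop out by specialization. Putting $z=1$ shows the $\Phi_i$-component $\sum_{l\in L_i}v_{i,l}$ of $v$ lies in $\ucal$, hence in $\ucal_i$; since the blocks $\bigoplus_{\alpha\in\Phi_i}\bbw_\alpha$ are in direct sum, writing $v$ as the sum of its blocks yields $\ucal=\ucal_1\oplus\cdots\oplus\ucal_d$. Putting $z=\lambda$ and rewriting $\sum_{l\in L_i}\lambda^{p^l}v_{i,l}$ in the coordinates $\xbf=(x_1,\dots,x_{n_i})$ with $x_r=v_{i,l_{ir}}$ gives exactly $(\lambda^{p^{l_{i1}}}x_1,\dots,\lambda^{p^{l_{in_i}}}x_{n_i})\in\ucal_i$. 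The step carrying the real content is this use of Lemma~\ref{l:Separable}: it is what makes the $d$ base scalars $\sum_j t_j^{m_i}$ vary independently over a Hausdorff-open subset of $k^d$, which is precisely what lets one isolate each $\Phi_i$-block; the remaining ingredients — additivity of Frobenius, openness of images of maps with invertible differential over local fields, and finiteness of $k^\times/(k^\times)^{m_i}$ — are routine.
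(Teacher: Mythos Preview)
Your argument is correct and follows essentially the same route as the paper: both hinge on Frobenius additivity to rewrite the acting scalars as $G_i(t)^{p^l}$, then on Lemma~\ref{l:Separable} together with the inverse function theorem to produce an open set of attainable scalar tuples, and finally on a scaling step to reach arbitrary scalars. The only organizational difference is that the paper observes directly that the image of $G-G$ contains a neighbourhood of the origin and, by scale invariance of this image under $(y_i)\mapsto(c^{m_i}y_i)$, is all of $k^d$; this lets one set $(\lambda_1',\dots,\lambda_d')$ to any prescribed vector in one stroke, whereas you first isolate a single coordinate by differencing inside the open image and then scale that coordinate separately.
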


\begin{proof}
Take an arbitrary element $\xbf=(x_\alpha)_{\alpha\in \Phi}\in \ucal$. 
Since $\ucal$ is invariant under the action of $\GL_1(k)$, we have
$(\lambda^\alpha x_\alpha)_{\alpha\in\Phi}$ is also in $\ucal$, for any $\lambda\in k^{\times}$. 
On the other hand, as $\ucal$ is a group, 
\begin{align}
\label{e:Weight}((\pm\lambda_1^\alpha\pm\lambda_2^\alpha\pm\cdots\pm\lambda_n^\alpha)x_\alpha)_{\alpha\in\Phi}\in \ucal,
\end{align}
for any $\lambda_1,\ldots,\lambda_n\in k^{\times}$. 
On the other hand, by Lemma~\ref{l:Separable} and the Inverse Function Theorem, 
the image of $G-G$ has an open neighborhood of the origin. Therefore, thanks
to scale invariance of the image, we have:  
for any $\lambda_1',\ldots,\lambda_d' \in k$, one can find $\lambda_i,\mu_i\in k$ such that
\begin{equation}
\label{e:Surjective}\mbox{$\lambda_i'=\sum_{j=1}^d \lambda_j^{m_i}-\sum_{j=1}^d\mu_j^{m_i},$}
\end{equation}
for any $1\le i\le d$. 
Now since $p$ is the characteristic of $k,$ using \eqref{e:Weight} and \eqref{e:Surjective} 
one can easily finish the argument.
\end{proof}

\begin{lem}\label{l:VectorGroupTorusAction}
Let $k$ be a local field of characteristic $p$. 
Let $\bbw$ be a $p$-torsion commutative unipotent $k$-group equipped 
with a linear $k$-action by $\GL_1;$ further, 
assume that all the weights are positive. 
Let $\ucal$ be a subgroup of $\bbw(k)$, which is invariant under the action of $\GL_1(k)$. 
Then, there exists some $l_0,$ depending only on the weights, 
such that for any integer $l\ge l_0$
\[
\ucal=\bbw'(k^{p^{l}}),
\]
where $\bbw'$ is the Zariski-closure of $\ucal$ in $\rcal_{k/k^{p^l}}(\bbw)$.
\end{lem}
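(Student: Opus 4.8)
The plan is to reduce the $p$-torsion commutative case (Lemma~\ref{l:VectorGroupTorusAction}) to the situation already handled in Lemma~\ref{l:PowerPWeight2}, where the weights are all powers of $p$. The bridge is Lemma~\ref{l:SplitWeightSpaces}: since $\bbw$ is $p$-torsion commutative, $\bbw(k)$ is a vector group and every weight $\alpha$ of the $\GL_1$-action is a positive integer, so we may write each $\alpha$ uniquely as $p^{l}m$ with $m$ coprime to $p$. Grouping the weights according to the value of $m$ gives a partition $\Phi=\Phi_1\sqcup\cdots\sqcup\Phi_d$ with distinct $p$-coprime ``base'' exponents $m_1,\ldots,m_d$, and Lemma~\ref{l:SplitWeightSpaces} tells us that $\ucal$ splits compatibly as $\ucal=\ucal_1\oplus\cdots\oplus\ucal_d$ with $\ucal_i=\ucal\cap(\oplus_{\alpha\in\Phi_i}\bbw_\alpha)$, and moreover that each $\ucal_i$ is invariant under the ``twisted'' $\GL_1$-action $\lambda\cdot(x_1,\ldots,x_{n_i})=(\lambda^{p^{l_{i1}}}x_1,\ldots,\lambda^{p^{l_{in_i}}}x_{n_i})$.

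The next step is to observe that on the $i$-th block the twisted action is, after substituting $\mu=\lambda^{m_i}$ (which is still surjective onto $k^{\times}$, indeed onto an open subgroup, and by scale-invariance this is enough, exactly as in the proof of Lemma~\ref{l:SplitWeightSpaces}), precisely a linear $\GL_1$-action with weights $p^{l_{i1}},\ldots,p^{l_{in_i}}$ — that is, honest powers of $p$. Hence Lemma~\ref{l:PowerPWeight2} applies to the pair $(\oplus_{\alpha\in\Phi_i}\bbw_\alpha,\ucal_i)$ with this renormalized action: there is some $l_0^{(i)}$, depending only on the exponents $l_{ij}$ (equivalently only on $\Phi_i$), such that for every $l\ge l_0^{(i)}$ one has $\ucal_i=\bbw_i'(k^{p^l})$, where $\bbw_i'$ is the Zariski closure of $\ucal_i$ in $\rcal_{k/k^{p^l}}(\oplus_{\alpha\in\Phi_i}\bbw_\alpha)$.

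Finally I would assemble the pieces. Set $l_0=\max_i l_0^{(i)}$, which depends only on $\Phi$. For $l\ge l_0$ we have $\ucal=\bigoplus_{i=1}^d\ucal_i=\bigoplus_{i=1}^d\bbw_i'(k^{p^l})$. Since Weil restriction commutes with finite direct products and the Zariski closure of a direct sum of subgroups in a direct sum of vector groups is the direct sum of the Zariski closures (the $\ucal_i$ sit in complementary coordinate subspaces), the group $\bbw':=\prod_{i=1}^d\bbw_i'$ is exactly the Zariski closure of $\ucal$ in $\rcal_{k/k^{p^l}}(\bbw)$, and $\bbw'(k^{p^l})=\bigoplus_i\bbw_i'(k^{p^l})=\ucal$. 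This gives the claim with $q=p^l$, $l\ge l_0$.

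The only genuinely delicate point is making sure that the reparametrization $\lambda\mapsto\lambda^{m_i}$ legitimately converts the twisted action into a bona fide linear $\GL_1$-action to which Lemma~\ref{l:PowerPWeight2} literally applies — the subtlety being that $\lambda\mapsto\lambda^{m_i}$ is not surjective on $k^{\times}$ in general, only onto a finite-index (open) subgroup, so one must invoke the same scale-invariance/open-image trick used inside Lemma~\ref{l:SplitWeightSpaces} to conclude that invariance under that subgroup forces the subspace structure over $k^{p^{l_{ij}}}$. Everything else is bookkeeping: tracking how the various field-of-definition thresholds $l_0^{(i)}$ depend only on the weights, and checking that Zariski closure behaves well under the finite direct sum decomposition, which is immediate since the blocks involve disjoint sets of coordinates.
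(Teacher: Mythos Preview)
Your approach is correct and matches the paper's: reduce to a vector group (the paper cites \cite[Prop.~B.4.2]{CGP} for this step, which you assert without reference), apply Lemma~\ref{l:SplitWeightSpaces} to split $\ucal$ into blocks carrying a new $\GL_1$-action with power-of-$p$ weights, then invoke Lemma~\ref{l:PowerPWeight2} on each block and reassemble. Your worry about the reparametrization $\lambda\mapsto\lambda^{m_i}$ is unnecessary --- the ``Furthermore'' clause of Lemma~\ref{l:SplitWeightSpaces} already asserts invariance of $\ucal_i$ under the new action for \emph{all} $\lambda\in k^\times$, so Lemma~\ref{l:PowerPWeight2} applies directly with no surjectivity issue to finesse.
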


\begin{proof}
By \cite[Prop.~B.4.2]{CGP}, we can assume that $\bbw$ is a 
$k$-vector group equipped with a $k$-linear action of $\GL_1$. 
Applying Lemma~\ref{l:SplitWeightSpaces}, we can decompose $\ucal$ into subgroups 
$\ucal_i$ and get a new $\GL_1$ action on $\oplus_{\alpha\in \Phi_i} \bbw_\alpha$ such that 
all the new weights are powers of $p$ and $\ucal_i$ is invariant under this new action of $\GL_1(K)$. 
The lemma now follows from Lemma \ref{l:PowerPWeight2}.
\end{proof}

\begin{lem}\label{l:CommutativeTorusAction}
Let $k$ be a local field of characteristic $p$. 
Let $\bbw$ be a commutative unipotent $k$-group equipped with a 
$k$-action by $\GL_1$ such that $Z_{\GL_1}(\bbw)=\{1\}$. 
Suppose $\ucal$ is a subgroup of $\bbw(k)$, which is invariant under the action of $\GL_1(k)$. 
Then, there is some $l_0,$ depending only on the weights of the action of $\GL_1$ on $\Lie(\bbw),$ 
such that for any integer $l\ge l_0$
\[
\ucal=\bbw'(k^{p^{l}}),
\]
where $\bbw'$ is the Zariski-closure of $\ucal$ in $\rcal_{k/k^{p^l}}(\bbw)$.
\end{lem}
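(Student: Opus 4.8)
The plan is to induct on the exponent $n\geq 1$ of $\bbw$, i.e.\ the least integer with $p^n\bbw=0$; this is finite because $\bbw$ is a commutative unipotent group in characteristic $p$. The base case $n=1$ is precisely Lemma~\ref{l:VectorGroupTorusAction}: the hypothesis forces the $\GL_1$-action on $\Lie(\bbw)$ to have no zero weight, and in the situation we shall actually use (that of Proposition~\ref{p:TorusInvariantUnipotent}) all weights are positive, so by \cite[Prop.~B.4.2]{CGP} the group $\bbw$ is a vector group with linear $\GL_1$-action and Lemma~\ref{l:VectorGroupTorusAction} applies. These properties are inherited by all the subquotients appearing below.

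For the inductive step put $\bbw_1:=p^{n-1}\bbw$, the scheme-theoretic image of multiplication by $p^{n-1}$; this is a nontrivial, smooth, connected, $p$-torsion, $\GL_1$-stable $k$-subgroup, and $\bbw_2:=\bbw/\bbw_1$ is a commutative unipotent $k$-group of exponent at most $n-1$ carrying the induced $\GL_1$-action. The weights of $\GL_1$ on $\Lie(\bbw_1)$ (resp.\ on $\Lie(\bbw_2)$) form a sub- (resp.\ quotient-) multiset of those on $\Lie(\bbw)$, so Lemma~\ref{l:VectorGroupTorusAction} applies to $\bbw_1$ and the induction hypothesis to $\bbw_2$. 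Let $q\colon\bbw\to\bbw_2$ be the quotient map, and put $\ucal_1:=\ucal\cap\bbw_1(k)$ and $\ucal_2:=q(\ucal)$, both $\GL_1(k)$-invariant. We obtain an integer $l_1$, depending only on the weights of $\GL_1$ on $\Lie(\bbw)$, such that for all $l\geq l_1$
\[
\ucal_1=\bbw_1'(k^{p^l})\quad\text{and}\quad\ucal_2=\bbw_2'(k^{p^l}),
\]
where $\bbw_1'$ and $\bbw_2'$ are the Zariski closures of $\ucal_1$ and $\ucal_2$ in $\rcal_{k/k^{p^l}}(\bbw_1)$ and $\rcal_{k/k^{p^l}}(\bbw_2)$ respectively. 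As $k/k^{p^l}$ is finite and all groups involved are smooth, $\rcal_{k/k^{p^l}}$ takes the exact sequence $1\to\bbw_1\to\bbw\to\bbw_2\to1$ to a $\GL_1$-equivariant exact sequence of $k^{p^l}$-groups.

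Let $\bbw'$ be the Zariski closure of $\ucal$ in $\rcal_{k/k^{p^l}}(\bbw)$; by Lemma~\ref{l;group-scheme} it is a $\GL_1$-stable $k^{p^l}$-subgroup, and since all weights are positive it is contracted to $e$ by the $\GL_1$-action, hence connected --- which, in passing, gives the connectedness assertion of Proposition~\ref{p:TorusInvariantUnipotent}. The inclusion $\ucal\subseteq\bbw'(k^{p^l})$ is clear. Conversely, the image of $\bbw'$ under the induced quotient $\rcal_{k/k^{p^l}}(\bbw)\to\rcal_{k/k^{p^l}}(\bbw_2)$ is the Zariski closure of $\ucal_2$, namely $\bbw_2'$; so any $g\in\bbw'(k^{p^l})$ maps into $\bbw_2'(k^{p^l})=\ucal_2$, and after multiplying $g$ on the right by a suitable element of $\ucal$ we may assume $g\in\bigl(\bbw'\cap\rcal_{k/k^{p^l}}(\bbw_1)\bigr)(k^{p^l})$. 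Thus everything reduces to the identity
\[
\bbw'\cap\rcal_{k/k^{p^l}}(\bbw_1)=\bbw_1',
\]
since then $g\in\bbw_1'(k^{p^l})=\ucal_1\subseteq\ucal$. Because $\bbw_1'$ is contained in the left-hand side --- which is $\GL_1$-stable, hence (all weights being positive) connected and therefore, as a group scheme, irreducible --- the identity is in turn equivalent to the dimension equality $\dim\bbw'=\dim\bbw_1'+\dim\bbw_2'$; indeed equality of dimensions forces $\bbw_1'$ to be the underlying reduced scheme of $\bbw'\cap\rcal_{k/k^{p^l}}(\bbw_1)$, and these have the same $k^{p^l}$-points.

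The dimension equality is the main obstacle and the only non-formal point. A priori $\bbw'$ could gain dimension: the extension $1\to\bbw_1\to\bbw\to\bbw_2\to1$ need not split, so a set-theoretic lift of $\ucal_2$ into $\ucal$ is non-canonical and could have Zariski closure of dimension larger than $\dim\bbw_2'$. The resolution is to use the $\GL_1$-invariance of $\ucal$ in an essential way: since $\bbw$ has only positive weights it is $\GL_1$-split (see \cite[App.~B]{CGP}), so one may choose a $\GL_1$-equivariant section of $q$ as $k$-varieties, and then $\ucal$ is encoded by a $\GL_1$-equivariant $2$-cocycle $\bbw_2\times\bbw_2\to\bbw_1$ together with a $\GL_1$-equivariant lifting map $\ucal_2\to\bbw_1(k)/\ucal_1$. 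Analyzing this along the weight decomposition of the graded pieces of $\bbw$, exactly as in the proofs of Lemmas~\ref{l:Separable}, \ref{l:SplitWeightSpaces}, \ref{l:PowerPWeight1} and \ref{l:PowerPWeight2} --- and invoking the Inverse Function Theorem over the local field $k$ to pass from scale-invariant additive relations to genuine $k^{p^l}$-linearity once $l$ is large (depending only on the weights) --- shows that this equivariant lift is already defined over $k^{p^l}$, which yields the dimension equality and closes the induction.
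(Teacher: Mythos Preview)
Your overall strategy --- induction on the exponent of $\bbw$, with the base case furnished by Lemma~\ref{l:VectorGroupTorusAction} --- is exactly the paper's. The inductive step, however, is organized differently, and the difference matters. You filter $\bbw$ by $\bbw_1=p^{n-1}\bbw$ and must then compare two Zariski closures: $\bbw_1'$ (the closure of $\ucal\cap\bbw_1(k)$) and $\bbw'\cap\rcal_{k/k^{p^l}}(\bbw_1)$. You correctly isolate this as the crux, but your final paragraph does not prove it. Saying that one should ``analyze the $\GL_1$-equivariant lift along the weight decomposition, exactly as in Lemmas~\ref{l:PowerPWeight1}--\ref{l:SplitWeightSpaces}'' is not a proof: those lemmas treat $p$-torsion vector groups with a \emph{linear} action, and the putative lifting map $\ucal_2\to\bbw_1(k)/\ucal_1$ is neither additive nor, a priori, governed by any such linear structure. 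The cocycle/Inverse-Function-Theorem sketch would, if carried out, essentially reprove the lemma from scratch rather than use the induction hypothesis; as written it is a genuine gap.

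The paper sidesteps this difficulty by choosing the filtration differently. Instead of intersecting $\ucal$ with a fixed subgroup of $\bbw$, it takes $\ucal[p]=\{u\in\ucal:u^p=1\}$, lets $\bbw^{(p)}$ be its Zariski closure, and lets $\bbw''$ be the Zariski closure of $\ucal$ itself (already inside $\rcal_{k/k^{q'}}(\bbw)$). The short exact sequence is then $1\to\bbw^{(p)}\to\bbw''\to\bbw''/\bbw^{(p)}\to 1$, built \emph{from} $\bbw''$, so there is no second closure to compare with: the kernel is $\bbw^{(p)}$ by fiat, and $\pi(\ucal)$ is Zariski dense in the quotient by construction. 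Since every $u\in\ucal$ satisfies $u^{p^{n-1}}\in\ucal[p]$, the image $\pi(\ucal)$ has exponent $p^{n-1}$ and the induction hypothesis applies to it. One then passes to $k^q$-points (all groups being $k^q$-split) and reads off $\ucal=\bbw'(k^q)$ from the resulting exact sequence $1\to\ucal[p]\to\bbw'(k^q)\to\pi(\ucal)\to 1$ together with $\ucal\subset\bbw'(k^q)$. No dimension comparison is needed.

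In short: replace your $\bbw_1=p^{n-1}\bbw$ by the Zariski closure of $\ucal[p]$ inside the closure of $\ucal$, and the obstacle you identified disappears.
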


\begin{proof}
Since $\bbw$ is unipotent, it is a torsion group. 
We now proceed by induction on the exponent of $\bbw;$ 
if it is $p$, by Lemma~\ref{l:VectorGroupTorusAction}, we are done. 
Thus, assume that the exponent of $\bbw$ is $p^l$. 
Let $\ucal[p]=\{u\in \ucal| u^{p}=1\};$ 
since $\ucal$ is commutative, $\ucal[p]$ is a subgroup of $\ucal$ 
which is clearly invariant under the action of $\GL_1(k)$. 
Let $\bbv$ be the Zariski-closure of $\ucal[p]$ in $\bbw;$ 
then, $\bbv$ is a $p$-torsion commutative unipotent group 
which in view of Lemma~\ref{l;group-scheme} is defined over $k$. 
Therefore, by Lemma \ref{l:VectorGroupTorusAction} for a large enough power of $p$ (depending only on the weights), which we denote by $q',$ we have 
$\ucal[p]=\bbw^{(p)}(k^{q'})$, where $\bbw^{(p)}$ is the Zariski-closure of $\ucal[p]$ in 
$\rcal_{k/k^{q'}}(\bbv)\subset\rcal_{k/k^{q'}}(\bbw)$. 

Let $\bbw''$ be the Zariski-closure of $\ucal$ in $\rcal_{k/k^{q'}}(\bbw)$.
Note that $\GL_1$ acts on $\bbw''$ with no trivial weights, 
and $\bbw^{(p)}$ is invariant under this action. 
Hence both of these groups and their quotient are $k^{q'}$-split unipotent groups. 
We now consider the following exact sequence of $k^{q'}$-split unipotent groups,
\begin{align}
\notag 1\rightarrow \bbw^{(p)} \rightarrow \bbw'' \xrightarrow{\pi} \bbw''/\bbw^{(p)} \rightarrow 1. 
\end{align}
Note that $\pi(\ucal)$ is Zariski-dense in $\bbw''/\bbw^{(p)}$ and $p^{l-1}$-torsion, which implies  $\bbw''/\bbw^{(p)}$ is $p^{l-1}$-torsion. Hence, by induction hypothesis, there exists $q\geq q'$ which is a large enough power of $p,$ depending only on the weights, such that
\begin{align}
 \pi(\ucal)=\overline{\bbw}'(k^{q}),\label{e:Image}
\end{align} 
where $\overline{\bbw}'$ is the Zariski-closure of $\pi(\ucal)$ 
in $\rcal_{k^{q'}/k^{q}}(\bbw''/\bbw^{(p)})$. On the other hand, 
\begin{align}
\ucal[p]=\bbw^{(p)}(k^{q'})=\rcal_{k^{q'}/k^{q}}(\bbw^{(p)})(k^{q}) \label{e:Ker}
\end{align}
 and, by \cite[Cor.~A.3.5]{Oes}, $\rcal_{k^{q'}/k^{q}}(\bbw^{(p)})$ is 
 $k^q$-split unipotent group. Thus $\ucal[p]$ is Zariski-dense in 
 $\rcal_{k^{q'}/k^{q}}(\bbw^{(p)})$. By \cite[Prop.~A.3.8]{Oes}, we also know that
 the following is exact
 \begin{align}
\notag 1\rightarrow  \rcal_{k^{q'}/k^q}(\bbw^{(p)})\rightarrow \rcal_{k^{q'}/k^q}(\bbw'') 
\xrightarrow{\pi'} \rcal_{k^{q'}/k^q}(\bbw''/\bbw^{(p)}) \rightarrow 1.
 \end{align}
 Now let $\bbw'$ be the Zariski-closure of $\ucal$ in $\rcal_{k/k^q}(\bbw'')$. 
 By the above discussion, it is easy to get the following short exact sequence and 
 show that all of the involved groups are $k^q$-split unipotent groups,
 \begin{align}
 \notag 1\rightarrow  \rcal_{k^{q'}/k^q}(\bbw^{(p)}) \rightarrow \bbw' \xrightarrow{\pi} 
 \overline{\bbw}' \rightarrow 1.
 \end{align}
  By (\ref{e:Image}) and (\ref{e:Ker}) and the fact that these groups are 
  $k^q$-split unipotent groups, we get the following exact sequence,
 \begin{align}
  1\rightarrow  \ucal[p] \rightarrow \bbw'(k^q) \xrightarrow{\pi} \pi(\ucal) \rightarrow 1.\label{e:ExactSeq}
 \end{align}
 So, by (\ref{e:ExactSeq}) and $\ucal\subset \bbw'(k^q)$, 
 one can easily deduce that $\ucal=\bbw'(k^q)$, which finishes the proof.
\end{proof}

\begin{proof}[Proof of Proposition~\ref{p:TorusInvariantUnipotent}]
We proceed by induction on the nilpotency length of $\bbw$. If it is commutative, 
by Lemma \ref{l:CommutativeTorusAction}, we are done.
Assume $\bbw$ is of nilpotency length $c$. Then $[\ucal,\ucal]\subset [\bbw,\bbw](k)$, 
where $[\bullet,\bullet]$ is the derived subgroup of $\bullet$. 
The nilpotency length of $[\bbw,\bbw]$ is $c-1;$ hence, by induction hypothesis, 
for any $q'$ which is a large enough power of $p$ (depending only on the weights), we have
\begin{align}
\notag [\ucal,\ucal]=\widetilde{\bbw}(k^{q'}),
\end{align}
where $\widetilde{\bbw}$ is the Zariski-closure of $[\ucal,\ucal]$ in $\rcal_{k/k^{q'}}(\bbw)$. 
Let $\bbw''$ be the Zariski-closure of $\ucal$ in $\rcal_{k/k^{q'}}(\bbw);$ 
since $\GL_1$ acts on $\bbw''$ with no trivial weights and since $\widetilde{\bbw}$ 
is invariant under this action, both of these groups and the quotient group are $k^{q'}$-split groups. 
We consider the following short exact sequence
\begin{align}
1\rightarrow \widetilde{\bbw} \rightarrow \bbw'' \xrightarrow{\pi} \bbw''/\widetilde{\bbw} \rightarrow 1; \label{e:ExactDerive}
\end{align}
since $\pi(\ucal)$ is commutative and Zariski-dense in $\bbw''/\widetilde{\bbw}$, 
we get that $\bbw''/\widetilde{\bbw}$ is commutative. 
Therefore, by Lemma \ref{l:CommutativeTorusAction}, if $q\geq q'$ is a large enough power of $p$ 
(depending only on the weights), we have
\begin{align}
\pi(\ucal)=\overline{\bbw}'(k^q), \label{e:Abelianization}
\end{align}
where $\overline{\bbw}'$ is the Zariski-closure of $\pi(\ucal)$ in $\rcal_{k^{q'}/k^q}(\bbw''/\widetilde{\bbw})$. We also have 
\begin{align}
[\ucal,\ucal]=\widetilde{\bbw}(k^{q'})=\rcal_{k^{q'}/k^q}(\widetilde{\bbw}')(k^q).\label{e:DerivedSubgroup}
\end{align}
By \cite[Prop.~A.3.8]{Oes} and (\ref{e:ExactDerive}), we have the exact sequence
\begin{align}
1\rightarrow \rcal_{k^{q'}/k^q}(\widetilde{\bbw}) \rightarrow  
\rcal_{k^{q'}/k^q}(\bbw'') \xrightarrow{\pi'}  \rcal_{k^{q'}/k^q}(\bbw''/\widetilde{\bbw}) \rightarrow 1. \label{e:[U,U]}
\end{align}
Let $\bbw'$ be the Zariski-closure of $\ucal$ in $\rcal_{k^{q'}/k^q}(\bbw'')$. 
Since $\widetilde{\bbw}$ is a $k^{q'}$-split unipotent group, 
by (\ref{e:Abelianization}) and (\ref{e:[U,U]}), we have the following exact sequence
\begin{align}
\notag 1\rightarrow  \rcal_{k^{q'}/k^q}(\widetilde{\bbw})\rightarrow \bbw' \xrightarrow{\pi'} \overline{\bbw}'\rightarrow 1
\end{align}
and so, by (\ref{e:DerivedSubgroup}), (\ref{e:[U,U]}) and the fact that all the 
involved groups are $k^q$-split unipotent groups, we have
\begin{align}
1\rightarrow [\ucal,\ucal] \rightarrow \bbw'(k^q) \rightarrow \pi(\ucal) \rightarrow 1;
\end{align}
this together with $\ucal\subset\bbw'(k^q)$ finishes the proof except connectedness.

To see $\bbw'$ is connected, note that in view of our assumption 
that all the weights are positive, there exists some
$r\in k$ so that every element in $\ucal$ is contracted to the identity by $r.$  
\end{proof}





\section{Polynomial like behavior and the basic lemma}\label{sec;quasi}

In this section we assume $\mu$ is a probability measure on $X=G/\Gamma$ 
which is invariant under the action of some $\kt$-split, unipotent $\kt$-subgroup of $G.$ 

We will recall an important construction 
based on the slow divergence of two nearby unipotent orbits in $X$.
Then, we will use this to acquire new elements in the stabilizer of $\mu$. 
Investigating the polynomial like behavior of two diverging unipotent orbits in the {\em intermediate range} 
dates back to several important works, 
e.g.\ Margulis' celebrated proof of the Oppenheim conjecture~\cite{Mar2}, using topological arguments, 
and Ratner's seminal work on the proof of the measure rigidity conjecture~\cite{Rat2, Rat3, Rat4}. 

\subsection{Construction of quasi-regular maps}
This section follows the construction in~\cite[\S5]{MT}.
It is written in a more general setting than what is needed for the proof of Theorem~\ref{t;U-measure-class}, namely {\em $\mu$ is not assumed to be ergodic for the action of the unipotent group $\ucal$ 
which is used in the construction.}
We first recall the definition of a quasi-regular map. 
Here the definition is given in the case of a local field, which is what we need later, 
the $\mathcal{T}$-arithmetic version is a simple modification.  
It is worth mentioning that we have a simplifying assumption here compare to the situation in~\cite{MT}:
our group $\ucal$ is normalized and expanded by an element from class $\acal.$ This is
used in order to define nice Folner sets in $\ucal.$ 
In view of this, we do not need the construction of the group $U_0$ in~\cite{MT}.

\begin{definition}[Cf.~\cite{MT}, Definition 5.3]\label{quasiregular}
Let $k$ be a local field.
\begin{enumerate}
\item Let $\mathbb{E}$ be a $k$-algebraic group, 
$\mathcal{U}$ a $k$-subgroup of $\mathbb{E}(k),$ 
and $\mathbb{M}$ a $k$-algebraic variety. 
A $k$-rational map $f:\mathbb{M}(k)\rightarrow\mathbb{E}(k)$ is called 
$\mathcal{U}$-{\it quasiregular} if the map from $\mathbb{M}(k)$ to $\mathbb{V},$ 
given by $x\mapsto\rho(f(x))\qpz,$ is $k$-regular 
for every $k$-rational representation $\rho:\mathbb{E}\rightarrow\mbox{GL}(\mathbb{V}),$ 
and every point $\qpz\in\mathbb{V}(k)$ such that $\rho(\mathcal{U})\qpz=\qpz.$
\item Let $E=\mathbb{E}(k)$ and suppose $\mathcal{U}\subset E$ is a 
$k$-split unipotent subgroup. A map $\phi:\mathcal{U}\rightarrow E$ 
is called {\it strongly} $\mathcal{U}$-{\it quasiregular} if there exist
\begin{itemize}
\item[(a)] a sequence $g_n\in E$ such that $g_n\rightarrow e,$
\item[(b)]  a sequence $\{\alpha_n:\mathcal{U}\rightarrow\mathcal{U}\}$ of 
$k$-regular maps of bounded degree,
\item[(c)]  a sequence $\{\beta_n:\mathcal{U}\rightarrow\mathcal{U}\}$ of 
$k$-rational maps of bounded degree, and
\item[(d)] a Zariski open, dense subset $\mathcal{X}\subset\mathcal{U},$ 
\end{itemize}
such that $\phi(u)=\lim_{n\rightarrow\infty}\alpha_n(u)g_n\beta_n(u),$ 
and the convergence is uniform on the compact subsets of $\mathcal{X}.$
\end{enumerate}
\end{definition}


\noindent
We note that if $\phi$ is strongly $\mathcal{U}$-quasiregular, 
then it indeed is $\mathcal{U}$-quasiregular.
To see this, let $\rho:E\rightarrow\mbox{GL}(V)$ be a $k$-rational representation, 
and let $\qpz\in V$ be a $\mathcal{U}$-fixed vector. 
For any $u\in\mathcal{X}$ we have 
\begin{equation}\label{e;rho}
\rho(\phi(u))\qpz=\lim_{n\rightarrow\infty}\rho(\alpha_n(u)g_n)\qpz.
\end{equation}
Thanks to the fact that $\mathcal{U}$ is split we can identify $\mathcal{U}$ with an affine space.
Then 
\[
\mbox{$\psi_n:\mathcal{U}\rightarrow V\;$ given by $\psi_n(u)=\rho(\alpha_n(u)g_n)\qpz$} 
\]
is a sequence of polynomial maps of bounded degree. 
Moreover, this family is uniformly bounded on compact sets of $\mathcal{X}.$ 
Therefore, it converges to a polynomial map with coefficients in $k.$ 
This shows $\phi$ is $\mathcal{U}$-quasiregular.

For the rest of this section we assume the following
\begin{itemize}
\item $k$ is a local field, 
\item $G$ is the group of $k$-points of a $k$-group,
\item $\ucal$ is a connected $k$-split, unipotent $k$-subgroup of $G,$
\item there is an element $s\in G$ from class $\mathcal A$ so that 
$\ucal\subset W^+_G(s)$ and $\ucal$ is normalized by $s.$
\end{itemize}

In view of these assumptions,~\cite[Prop.~9.13]{BS} implies that there a 
regular cross section, $\mathcal V$, for $\ucal$ in $W^+_G(s)$ which is invariant under conjugation by $s$.
Put 
\[
L:=W^-_G(s)Z_G(s)\mathcal V.
\]
Then $L$ is a rational cross section for $\ucal$ in $G.$

We fix, $\mathfrak{B}^+$ and $\mathfrak{B}^-$, relatively compact neighborhoods of $e$ in 
${\wpg}$ and $\wmg$ respectively, with the property that 
\[
\mathfrak{B}^+\subset s\mathfrak{B}^+s^{-1}\text{ and } \mathfrak{B}^-\subset s^{-1}\mathfrak{B}^-s.
\] 
Using these, we define a filtration in ${\wpg}$ and ${\wmg}$ as follows 
\[
\mathfrak{B}_n^+= s^n\mathfrak{B}^+s^{-n}\text{ and } \mathfrak{B}_n^-= s^{-n}\mathfrak{B}^-s^{n}. 
\] 
Define $\ell^{\pm}:W^{\pm}(s)\rightarrow\bbz\cup\{-\infty\}$ by
\begin{itemize}
\item $\ell^+(x)=j$ if $x\in\mathfrak{B}_j^+\setminus\mathfrak{B}_{j-1}^+,$ and $\ell^+(e)=-\infty,$
\item $\ell^-(x)=j$ if $x\in\mathfrak{B}_j^-\setminus\mathfrak{B}_{j-1}^-,$ and $\ell^-(e)=-\infty.$
\end{itemize}
For any integer $n$, set $\ucal_n=\mathfrak{B}_n^+\cap\ucal.$  

Let $\{g_n\}\subset L\hh\ucal\setminus N_G(\ucal)$ be a sequence with $g_n\rightarrow e.$ 
Since $L$ is a rational cross-section for $\ucal$ in $G,$ we get rational morphisms 
\[
\tphi_n:\ucal\rightarrow L\text{ and }{\omega}_n:\ucal\rightarrow\ucal
\] 
such that $ug_n=\tphi_n(u){\omega}_n(u)$ holds for all $u$ in a Zariski open, dense subset of $\ucal.$ 

Recall that by a theorem of Chevalley, there exists a $\kt$-rational representation 
$\rho:G\rightarrow\GL(\Psi)$ and a unit vector $\qpz\in \Psi$ such that 
\begin{equation}\label{e;chevalley1}
\ucal=\{g\in G:\h\rho(g) \qpz=\qpz\}.
\end{equation}
According to this description we also have 
\begin{equation}\label{e;chevalley2}
\rho(N_G(\ucal))\qpz=\{\mathpzc{z}\in \rho(G)\qpz:\h\rho(\ucal)\mathpzc{z}=\mathpzc{z}\}.
\end{equation} 
Fix a bounded neighborhood $\bcal(\qpz)$ of $\qpz$ in $\Psi$ such that 
\begin{equation}\label{e;nbhd}
\rho(G)\qpz\cap\bcal(\qpz)=\overline{\rho(G)\qpz}\cap\bcal(\qpz),
\end{equation}
where the closure is taken with respect to the Hausdorff topology of $\Psi.$ 

Recall that $g_n\notin N_G(\ucal)$. Thus, in view of~\eqref{e;chevalley2},
there is a sequence of integers $\{b(n)\}$ such that 
\begin{itemize}
\item $b(n)\rightarrow\infty$, 
\item $\rho(\ucal_{b(n)+1}g_n)\qpz\not\subset\bcal(\qpz),$ and  
\item $\rho(\ucal_{m}g_n)\qpz\subset\bcal(\qpz)$ for all $m\leq b(n).$
\end{itemize}
Define $k$-regular isomorphisms $\tau_n:\ucal\rightarrow\ucal$ as follows. 
For every $u\in\ucal$ put
\begin{equation}\label{e;exp-quai}
\tau_n=\la_{b(n)}\text{ where }\lambda_n(u)=s^{n}us^{-n}.
\end{equation} 
Given $n\in\bbn,$ we now define the $k$-rational map 
$\phi_n:\ucal\rightarrow L$ by $\phi_n:=\tphi_n\circ\tau_n.$ 

Let $\rho_L$ be the restriction of the orbit map $g\mapsto \rho(g)\qpz$ to $L$  
and define 
\begin{equation}\label{e;quasi1}
\phi'_n:=\rho_L\circ\phi_n:\ucal\rightarrow \Psi.
\end{equation}
It follows from the definition of $b(n)$ that 
$\phi'_n(\mathfrak{B}_0)\subset\mathcal{B}(\qpz),$ but $\phi'_n(\mathfrak{B}_1)\not\subset\mathcal{B}(\qpz)$. 

Note that $\phi'_n(u)=\rho(\alpha_n(u)g_n)\qpz.$ 
Hence $\phi_n':\ucal\rightarrow \Psi$ is a $k$-regular morphism. 

Since 
\begin{itemize}
\item $\ucal$ is a connected $k$-group, 
\item $\ucal$ is normalized by $S,$ and 
\item $Z_G(S)\cap\ucal=\{e\}$
\end{itemize}
we get from~\cite[Cor.~9.12]{BS} that $\ucal$ and its Lie algebra are 
$S$-equivariantly isomorphic as $k$-varieties. 
Hence, $\{\phi_n'\}$ is a sequence of equicontinuous polynomials of bounded degree. 
Therefore, after possibly passing to a subsequence, we assume that 
there exists a $k$-regular morphism $\phi':\ucal\rightarrow \Psi$ such that
\begin{equation}\label{conv-eq}
\phi'(u)=\lim_{n\rightarrow\infty}\phi'_n(u)\mbox{ for every }u\in\ucal.
\end{equation}
The map $\phi'$ is non-constant since 
$\phi'(\overline{\mathfrak{B}_1})$ in not contained in $\mathcal{B}(\qpz);$ 
moreover, since $g_n\rightarrow e$ we have $\phi_n'(e)\rightarrow \qpz,$ hence, $\phi'(e)=\qpz.$ 

Let $\mcal=\rho(L)\qpz$. Since $L$ is a rational cross section for $\ucal$ in $G$ which contains $e,$ 
we get that $\mcal$ is a Zariski open dense subset of $\rho(G)\qpz$ and $\qpz\in\mcal.$ 

Let now $\phi:\ucal\rightarrow L$ be the $\kt$-rational morphism defined by
\begin{equation}\label{e;quasi2}
\phi(u):=\rho_L^{-1}\circ\phi'(u). 
\end{equation}
It follows from the construction that $\phi(e)=e$ and that $\phi$ is non-constant.

{\em Claim.} The map $\phi$ constructed above is strongly $\ucal$-quasiregular.

To see the claim, first note that by the definition of
$\phi_n$ and in view of~\eqref{conv-eq} and~\eqref{e;quasi2} we have 
\begin{equation}\label{e;quasi3}
\phi(u)=\lim_{n\rightarrow\infty}\phi_n(u)\text{ for all } u\in\phi'^{-1}(\mathcal{M}).
\end{equation}
Now since the convergence in \eqref{conv-eq} is uniform
on compact subsets and since $\rho_L^{-1}$ is continuous on compact subsets of $\mathcal{M},$ 
we get that the convergence in \eqref{e;quasi3} 
is also uniform on compact subsets of $\phi'^{-1}(\mathcal{M}).$ 
Recall that 
\[
\tau_n(u)g_n=\phi_n(u){w}_n(\tau_n(u)).
\] 
Hence, for any $u\in\phi'^{-1}(\mathcal{M})$ we can write
\begin{equation}\label{e;quasi4}
\phi(u)=\lim_{n\rightarrow\infty}\tau_n(u)g_n({w}_n(\tau_n(u)))^{-1};
\end{equation} 
the claim follows.

\subsection{Properties of quasi-regular maps and the Basic Lemma}\label{sec:using-quasi-reg}
We will need some properties of the map $\phi$ constructed above. 
The proofs of these facts are mutandis mutatis of the proofs in characteristic zero 
in~\cite{MT}; we will only highlight the required modifications here.

\begin{prop}[Cf.~\cite{MT}, \S6.1 and \S6.3]\label{normalizer}
The map $\phi$ is a rational map from $\ucal$ into $N_G(\ucal)$. 
Furthermore, there is no compact subset $\mathcal{K}$ of $G$ such that ${\rm Im}(\phi)\subset \mathcal{K}\ucal.$
\end{prop}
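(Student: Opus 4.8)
The plan is to follow the two standard computations from \cite[\S6]{MT}, adapting them to positive characteristic. For the first assertion—that $\phi$ maps $\ucal$ into $N_G(\ucal)$—the key point is the defining property of strong $\ucal$-quasiregularity together with the description \eqref{e;chevalley2} of $\rho(N_G(\ucal))\qpz$. First I would recall that $\phi'(u)=\rho(\phi(u))\qpz$ lies in $\mcal=\rho(L)\qpz\subset\overline{\rho(G)\qpz}$, and that by construction $\phi'(u)=\lim_n\rho(\alpha_n(u)g_n)\qpz=\lim_n\rho(\tau_n(u))\rho(g_n)\qpz$ where $\tau_n(u)=\lambda_{b(n)}(u)\in\ucal$. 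Since $\rho(\ucal)\qpz=\qpz$, the whole sequence $\rho(\tau_n(u)g_n)\qpz$ equals $\rho(\tau_n(u))\rho(g_n)\qpz$, and one wants to show the limit is fixed by $\rho(\ucal)$. This is exactly where one exploits the intermediate-range behaviour: the integer $b(n)$ is chosen so that $\rho(\ucal_{b(n)}g_n)\qpz\subset\bcal(\qpz)$ while $\rho(\ucal_{b(n)+1}g_n)\qpz\not\subset\bcal(\qpz)$, and translating by $\tau_n=\lambda_{b(n)}$ turns the growing Folner set $\ucal_{b(n)}$ into the fixed set $\ucal_0=\mathfrak B_0^+\cap\ucal$, while $\ucal_{b(n)+1}$ becomes (roughly) $s\ucal_0 s^{-1}$. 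Passing to the limit, $\phi'(\ucal_0)\subset\bcal(\qpz)$, and an exhaustion argument using that $\ucal=\bigcup_m s^m\ucal_0 s^{-m}$ together with the $s$-equivariance shows $\rho(v)\phi'(u)=\phi'(vu\cdots)$ stays bounded for every $v\in\ucal$; hence $\rho(\ucal)\phi'(u)$ is bounded inside $\rho(G)\qpz$, which by \eqref{e;nbhd} forces it to lie in $\overline{\rho(G)\qpz}$, and a Zariski-closure argument (the orbit $\rho(\ucal)\phi'(u)$ is a $\ucal$-orbit, all of whose points lie in a bounded piece of the affine variety) yields that $\phi'(u)$ is actually $\rho(\ucal)$-fixed. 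By \eqref{e;chevalley2} this means $\phi(u)\in N_G(\ucal)$.

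For the second assertion—that $\mathrm{Im}(\phi)$ is not contained in any $\mathcal K\ucal$ with $\mathcal K\subset G$ compact—I would argue by contradiction using the choice of $b(n)$ in the opposite direction. If $\mathrm{Im}(\phi)\subset\mathcal K\ucal$, then $\rho(\phi(u))\qpz=\phi'(u)$ ranges in the compact set $\rho(\mathcal K)\qpz$ (since $\rho(\ucal)\qpz=\qpz$), so $\phi'(\ucal)$ is bounded in $\Psi$. But $\phi'$ is a nonconstant polynomial map of bounded degree from an affine space $\ucal$ into $\Psi$; after identifying $\ucal$ with affine space via the $S$-equivariant isomorphism with its Lie algebra, a nonconstant polynomial on affine space over a local field is unbounded. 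This is the contradiction. Concretely one uses that $\phi'(\overline{\mathfrak B_1})\not\subset\bcal(\qpz)$ and the scaling behaviour $\phi'_n\circ\lambda_1$ vs.\ $\phi'_n$ to see that applying $s$-conjugation repeatedly pushes $\phi'$ outside any fixed bounded set; equivalently the leading-order term of the polynomial $\phi'$ is nonzero, so $\phi'(s^m u s^{-m})$ leaves every compact set as $m\to\infty$.

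The main obstacle is the first step, and specifically the passage from ``$\rho(\ucal)\phi'(u)$ is bounded'' to ``$\phi'(u)$ is $\rho(\ucal)$-fixed.'' In characteristic zero \cite{MT} one uses that a unipotent orbit of a point, if bounded, must be a single point (a polynomial orbit map with bounded image is constant). In positive characteristic this must be handled with care because $\ucal$ need not be algebraic and unipotent elements are torsion; the correct replacement is to work with the algebraic orbit $\rho(\bbw)\qpz$ for an ambient algebraic unipotent group—or simply to observe that for a single $k$-split one-parameter unipotent subgroup the orbit map $t\mapsto\rho(\exp(tX))\phi'(u)$ is genuinely polynomial in $t$, hence bounded implies constant over the infinite field $k$—and then to run this over a generating set of one-parameter subgroups of $\ucal$. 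Once this replacement is in place, the rest of the argument is, as the authors indicate, a routine transcription of \cite[\S6.1 and \S6.3]{MT}.
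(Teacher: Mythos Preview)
Your argument for the second assertion is correct and matches the paper's: if $\mathrm{Im}(\phi)\subset\mathcal K\ucal$ then $\phi'(\ucal)=\rho(\phi(\ucal))\qpz\subset\rho(\mathcal K)\qpz$ is bounded, contradicting that $\phi'$ is a nonconstant polynomial map on the affine space $\ucal$.

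For the first assertion, however, you are taking an unnecessary detour, and the step you flag as the ``main obstacle'' is not actually needed. You propose to first show that $\rho(\ucal)\phi'(u)$ is bounded and then upgrade this to $\rho(\ucal)$-invariance, worrying (rightly) about how ``bounded unipotent orbit implies fixed'' behaves in positive characteristic. The paper avoids this entirely by a direct computation. For fixed $u_0\in\ucal$ and $u\in\phi'^{-1}(\mcal)$, use that $\tau_n$ is a group automorphism of $\ucal$ to write
\[
\rho(u_0)\phi_n'(u)=\rho(u_0\tau_n(u)g_n)\qpz=\rho\bigl(\tau_n(\tau_n^{-1}(u_0)\,u)\,g_n\bigr)\qpz=\phi_n'\bigl(\tau_n^{-1}(u_0)\,u\bigr).
\]
Since $\tau_n^{-1}(u_0)=s^{-b(n)}u_0s^{b(n)}\to e$ and the convergence $\phi_n'\to\phi'$ is uniform on compact sets, the right-hand side tends to $\phi'(u)$. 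Hence $\rho(u_0)\phi'(u)=\phi'(u)$ directly, and by \eqref{e;chevalley2} this gives $\phi(u)\in N_G(\ucal)$. No boundedness step, no exhaustion, and no characteristic-sensitive passage from ``bounded orbit'' to ``fixed point'' is required. Your formula ``$\rho(v)\phi'(u)=\phi'(vu\cdots)$'' was in fact one step away from this; the missing piece is precisely the reparametrisation $v\tau_n(u)=\tau_n(\tau_n^{-1}(v)u)$ together with $\tau_n^{-1}(v)\to e$.
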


\begin{proof}
Recall from~\eqref{e;chevalley2} that 
\[
N_G(\ucal)=\{g\in G:\rho(\ucal)\rho(g)\qpz=\rho(g)\qpz\}.
\] 
Thus, we need to show that for any $u_0\in \ucal$ and any $u\in\phi'^{-1}(\mathcal{M})$, we have 
\[
\rho(u_0)\rho(\phi(u))\qpz=\rho(\phi(u))\qpz;
\]
this suffices as $\phi'^{-1}(\mathcal{M})$ is a Zariski open, dense subset of $\ucal.$
 
Let $u\in\phi'^{-1}(\mathcal{M})$, then by~\eqref{e;quasi4} we have
\[
\phi(u)=\lim_{n\rightarrow\infty}\tau_n(u)g_n({w}_n(\tau_n(u)))^{-1}. 
\]
On the other hand, we have
$
\rho(u_0\tau_n(u)g_n)\qpz=\rho(\tau_n(\tau_{n}^{-1}(u_0)u)g_n)\qpz.
$

Note now that $\tau_n^{-1}(u_0)\rightarrow e$ as $n\rightarrow\infty.$ 
This, in view of the above discussion, implies that $\phi(u)\in N_H(\ucal)$ 
for all $u\in\phi'^{-1}(\mathcal{M}).$ The first claim follows.

To see the second assertion, note that $\phi=\rho_L^{-1}\circ\phi'.$
The claim thus follows since $\phi'$ is a non-constant polynomial map 
and $\rho_L$ is an isomorphism from $L$ onto a Zariski open, dense subset of the 
quasi affine variety $\rho(G)\qpz.$   
\end{proof}

In the sequel we will utilize a quasi-regular map, $\phi$, which is constructed 
using a sequence of elements $g_n\rightarrow e$ with the following property.

\begin{definition}[Cf.~\cite{MT}, Definition 6.6]\label{cstar}
A sequence $\{g_n\}$ is said to satisfy the condition $(*)$ with respect to $s$ 
if there exists a compact subset $\mathcal{K}$ of $G$ such that for all $n\in\bbn$ 
we have $s^{-b(n)} g_ns^{b(n)}\in \mathcal{K}.$
\end{definition}
This technical condition is used in the proof of the Basic Lemma.
It is also essential in the proof of Proposition~\ref{p;star}. 

We also recall the following

\begin{definition}\label{aver}
A sequence of measurable, non-null subsets $A_n\subset \ucal$ is called an 
{\em averaging net} for the action of $\ucal$ on $(X,\mu)$ 
if the following analog of the Birkhoff pointwise ergodic theorem holds. 
For any continuous, compactly supported function $f$ on $X$ and for almost all $x\in X$ one has
\begin{equation}\label{e;aver}
\lim_{n\rightarrow\infty}\frac{1}{\mu(A_n)}\int_{A_n}f(ux)d\theta(u)=
\int_Xf(h)d\mu_{y(x)}(h),
\end{equation}
where $\mu_{y(x)}$ denotes the $\ucal$-ergodic component 
corresponding to $x.$
\end{definition}

The proof of the following is standard. 

\begin{lem}[Cf.~\cite{MT}, \S7.2]\label{averlem}
Let $A\subset\ucal$ be open, relatively compact, and non-null. 
Let $A_n=\la_n(A).$ Then  $\{A_n\}$ is an averaging net for the action of $\ucal$ on $(X,\mu).$
\end{lem}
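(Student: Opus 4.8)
The plan is to reduce the lemma to the reverse martingale convergence theorem along an increasing chain of compact open subgroups of $\ucal$ produced by $s$, and then to pass to a general set $A$ by a soft maximal inequality combined with an equicontinuity estimate on a dense class of functions.

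First I would fix a compact open subgroup $\ucal_0\subset\ucal$ with $s^{-1}\ucal_0 s\subseteq\ucal_0$; such a subgroup exists by a routine argument, since conjugation by $s^{-1}$ contracts $\ucal$ toward $e$ (as $\ucal\subset\wpg$). Set $\ucal_n:=s^n\ucal_0 s^{-n}=\lambda_n(\ucal_0)$ for $n\in\bbz$, so that $\{\ucal_n\}$ is an increasing chain of compact open subgroups with $\bigcup_n\ucal_n=\ucal$ and $\lambda_n(\ucal_m)=\ucal_{m+n}$. Averaging a function over $\ucal_n$ is the conditional expectation $\bbe[\,\cdot\mid\mathcal F_n]$ onto the $\sigma$-algebra $\mathcal F_n$ of $\ucal_n$-invariant sets; as $(\mathcal F_n)_{n\ge 0}$ is decreasing with $\bigcap_n\mathcal F_n=\mathcal I$, the $\ucal$-invariant $\sigma$-algebra, the reverse martingale convergence theorem --- together with the identification $\bbe[f\mid\mathcal I](x)=\int_X f\,d\mu_{y(x)}$ furnished by the $\ucal$-ergodic decomposition --- gives, for every $f\in L^1(X,\mu)$ and a.e. $x$,
\[
\frac{1}{\theta(\ucal_n)}\int_{\ucal_n} f(ux)\,d\theta(u)\;\longrightarrow\;\int_X f\,d\mu_{y(x)}\qquad(n\to\infty).
\]
This is the assertion in the special case $A=\ucal_0$, and Doob's maximal inequality shows that $M_0 f(x):=\sup_n\frac{1}{\theta(\ucal_n)}\int_{\ucal_n}|f(ux)|\,d\theta(u)$ defines an operator of weak type $(1,1)$.

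For a general open, relatively compact, non-null $A$, put $\mathcal A_n^A f(x):=\frac{1}{\theta(A_n)}\int_{A_n} f(ux)\,d\theta(u)$. Since $\mu$ is $\ucal$-invariant and $\ucal$ is unimodular, each $\mathcal A_n^A$ is a positive $L^1$- and $L^2$-contraction that fixes the $\ucal$-invariant functions, so I would invoke the Banach principle with two inputs. (i) \emph{A maximal inequality}: choosing $c_0$ with $A\subseteq\ucal_{c_0}$ (possible as $A$ is relatively compact) gives $A_n=\lambda_n(A)\subseteq\ucal_{n+c_0}$, hence $\mathcal A_n^A|f|(x)\le\frac{\theta(\ucal_{c_0})}{\theta(A)}\cdot\frac{1}{\theta(\ucal_{n+c_0})}\int_{\ucal_{n+c_0}}|f(ux)|\,d\theta(u)$, so $\sup_n|\mathcal A_n^A f|\le\frac{\theta(\ucal_{c_0})}{\theta(A)}\,M_0 f$ is of weak type $(1,1)$. (ii) \emph{A.e. convergence on a dense class}: $\ucal$-invariant functions are fixed by $\mathcal A_n^A$, and for a coboundary $f=g-g\circ u_0$ with $g\in C_c(X)$, the substitution $u\mapsto uu_0$ gives
\[
|\mathcal A_n^A f(x)|\le\|g\|_\infty\,\frac{\theta(A_n\triangle A_nu_0)}{\theta(A_n)}=\|g\|_\infty\,\frac{\theta\bigl(A\triangle A\lambda_{-n}(u_0)\bigr)}{\theta(A)}\longrightarrow 0
\]
uniformly in $x$, because $\lambda_{-n}(u_0)=s^{-n}u_0 s^n\to e$ and right translation is continuous on $L^1(\ucal)$. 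Since $\ucal$-invariant functions together with such coboundaries span a dense subspace of $L^2(\mu)\subset L^1(\mu)$ on which $\mathcal A_n^A h\to\int_X h\,d\mu_{y(\cdot)}$ a.e., the Banach principle upgrades this to a.e. convergence $\mathcal A_n^A f\to\int_X f\,d\mu_{y(\cdot)}$ for all $f\in L^1(X,\mu)$, in particular for all $f\in C_c(X)$, which is exactly the averaging property.

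The only genuine point I expect to be the transfer in step (i): the crude inclusion $A_n\subseteq\ucal_{n+c_0}$ is what replaces the geometric averaging sets $A_n$ by the algebraically transparent subgroups $\ucal_n$, for which the pointwise theorem is merely reverse-martingale convergence; the rest is standard maximal-function-plus-dense-class bookkeeping. When $k$ is archimedean there are no compact open subgroups, and one instead runs the classical Wiener covering/maximal argument for the dilation net $\{\mathfrak B_n^+\cap\ucal\}$ directly; this is the situation treated in~\cite{MT}.
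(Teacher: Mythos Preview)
Your argument is correct. The paper itself omits the proof entirely, writing only that it is standard and citing \cite[\S7.2]{MT}, so there is no proof in the paper to compare against directly. Your reverse-martingale approach via a nested sequence of compact open subgroups $\ucal_n=\la_n(\ucal_0)$ is exactly the natural one in the non-archimedean setting that is the paper's main focus; the transfer to a general $A$ through the domination $A_n\subset\ucal_{n+c_0}$ together with the Banach principle on the dense class of invariant functions plus coboundaries is clean and goes through as you describe. One cosmetic remark: in the coboundary estimate the substitution actually produces the \emph{left} translate $u_0A_n$ rather than $A_nu_0$, so the symmetric difference one obtains is $A\triangle\la_{-n}(u_0)A$ rather than $A\triangle A\la_{-n}(u_0)$; since both tend to zero by continuity of translation on $L^1(\ucal)$ this does not affect the conclusion.
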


Ergodic theorems hold on full measure subsets of the space (with the exception of uniquely ergodic systems). 
The following is a uniform and quantitive version of {\em full measure} sets, and it is better adapted to limiting arguments.    

\begin{definition}\label{unifconv}
A compact subset ${\Omega}\subset X$ is said to be 
{\it a set of uniform convergence relative to $\{A_n\}$} if the following holds. 
For every $\vare>0$ and every continuous, compactly supported function 
$f$ on $X$ one can find a positive number $N(\vare,f)$ 
such that for every $x\in{\Omega}$ and $n>N(\vare,f)$ one has
\[
 \left|\frac{1}{\theta(A_n)}\int_{A_n}f(ux)d\theta(u)-\int_Xf(h)d\mu_{y(x)}(h)\right|<\vare.
\]
\end{definition}

The following is a consequence of Egoroff's Theorem and 
the second countability of the spaces under consideration, see~\cite[\S7.3]{MT}. 
\begin{lem}
For any $\vare>0$ one can find a measurable set ${\Omega}$ 
with $\mu({\Omega})>1-\vare$ which is  a set of uniform convergence relative to 
$\{A_n=\la_n(A)\}$ for every open, relatively compact, and non-null subset $A$ of $\ucal.$
\end{lem}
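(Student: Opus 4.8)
The plan is to deduce this from Egoroff's theorem together with a countable exhaustion argument, following the outline in~\cite[\S7.3]{MT}. First I would fix $\vare>0$ and observe that the space $X=G/\Gamma$ is second countable and locally compact, so the algebra of continuous compactly supported functions on $X$ has a countable subset $\{f_j\}_{j\in\bbn}$ which is dense in the uniform norm on every compact subset of $X$; likewise the set of open, relatively compact, non-null subsets $A\subset\ucal$ can be reduced, up to the required estimates, to a countable cofinal family, since $\ucal$ with its affine structure is second countable and one may take $A$ from a countable basis of such sets. For each such pair $(j,A)$, Lemma~\ref{averlem} guarantees that $\{A_n=\la_n(A)\}$ is an averaging net, so the Birkhoff-type limit in~\eqref{e;aver} holds for $\mu$-a.e.\ $x$; that is, the sequence of functions
\[
x\mapsto \frac{1}{\theta(A_n)}\int_{A_n}f_j(ux)\,d\theta(u)
\]
converges pointwise $\mu$-a.e.\ to $x\mapsto\int_X f_j\,d\mu_{y(x)}$ as $n\to\infty$.

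Next I would apply Egoroff's theorem to each of these countably many sequences of measurable functions: for each $(j,A)$ and each $m\in\bbn$ there is a measurable set of measure greater than $1-\vare\cdot 2^{-(j,A,m)}$ (indexing the countable collection of triples by a single bijection with $\bbn$ and distributing the total error geometrically) outside of which the convergence is uniform. Intersecting all these sets over the countable index set yields a measurable set $\Omega_0$ with $\mu(\Omega_0)>1-\vare$ on which, for every $f_j$ and every $A$ in the chosen countable family, the convergence in~\eqref{e;aver} is uniform. Since $\Omega_0$ is measurable with positive measure, by inner regularity of $\mu$ we may shrink it to a compact set $\Omega\subset\Omega_0$ with $\mu(\Omega)>1-\vare$, and uniformity is inherited by subsets.

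Finally I would upgrade from the countable dense family $\{f_j\}$ to all continuous compactly supported $f$, and from the countable family of sets $A$ to all open, relatively compact, non-null $A$. For the functions this is a routine $3\vare$-argument: given arbitrary $f$ supported in a compact set $C$, pick $f_j$ with $\sup_C|f-f_j|$ small; the two averaging integrals differ from the corresponding ones for $f_j$ by at most $\sup|f-f_j|$ (using that the orbit averages are probability averages and the ergodic components are probability measures), and the $f_j$-term is uniformly controlled for large $n$ on $\Omega$. For the sets $A$, one compares a general $A$ with nearby members of the chosen basis and controls the error using the doubling/quasi-invariance of Haar measure $\theta$ under the cross-section structure, exactly as in~\cite[\S7.3]{MT}. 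The main obstacle is bookkeeping the error budget so that a single set $\Omega$ works simultaneously for all $f$ and all $A$: the key point making this possible is that only a countable dense/cofinal subcollection needs to be handled by Egoroff, and the passage to the general case is a uniform approximation that does not further shrink $\Omega$.
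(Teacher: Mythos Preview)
Your proposal is correct and follows precisely the approach the paper indicates: Egoroff's theorem combined with the second countability of $X$ and $\ucal$, with a reference to~\cite[\S7.3]{MT}. You have simply fleshed out the details the paper omits; in particular, your reduction from a countable cofinal family of sets $A$ to arbitrary $A$ works because $\lambda_n$ scales the Haar measure $\theta$ by a constant, so $\theta(A_n\setminus A'_n)/\theta(A_n)=\theta(A\setminus A')/\theta(A)$ independently of $n$.
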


\subsection{}\label{sec:basic-lemma}
The following is the main application of the construction of the quasi-regular maps.
It provides us with the anticipated {\em extra invariance property}.

\begin{lem*}[Cf.~\cite{MT}, Basic Lemma, \S7.5]
\label{basic}
Let ${\Omega}$ be a set of uniform convergence relative to all averaging nets 
$\{A_n=\la_n(A)\}$ for all $A\subset\ucal$ which are open, relatively compact, and non-null. 
Let $\{x_n\}$ be a sequence of points in ${\Omega}$ with $x_n\rightarrow x\in{\Omega}.$ 
Let $\{g_n\}\subset G\setminus N_G(\ucal)$ be a sequence which satisfies condition $(*)$ with respect to $s.$ 
Assume further that $g_nx_n\in{\Omega}$ for every $n.$ 
Suppose $\phi$ is the $\ucal$-quasiregular map corresponding to $\{g_n\}$ constructed above. 
Then the ergodic component $\mu_{y(x)}$ is invariant under ${\rm{Im}}(\phi).$
\end{lem*}

\begin{proof}
The proof in~\cite[Basic Lemma]{MT} works the same here.
Indeed, the analysis simplifies in our situation as $U=U_0=\ucal$; we present a sketch here. 

Let the notation be as in the construction of $\phi_n$; in particular, $\tau_n=\la_{b(n)}$. 
Then, the condition $(*)$ allows one to write 
\[
\omega_n\circ\tau_n=\tau_n\circ\eta_n
\] 
where $\eta_n:\ucal\to\ucal$
is a rational map and locally a diffeomorphism. 
Furthermore, given $u_0\in\phi'^{-1}({\mcal}),$ the sequence $\{\eta_n\}$
uniformly converges to a diffeomorphism, $\eta$, on a neighborhood of $u_0.$ 

Let $A=\{u: \|\phi_n(u)f-\phi(u_0)f\|_\infty<\vare\}$ 
for $f\in C_c(X)$ and $\vare>0$ is small enough.
One then uses $\eta$ to compare the ergodic average 
\[
\frac{1}{\theta(A)}\int_{A} f\Bigl(\tau_n(u)g_nx\Bigr)\operatorname{d}\!\theta(u)
\]
with $\tfrac{1}{\theta(A)}\displaystyle\int_A f\Bigl(\phi_n(u)\omega_n(\tau_u(u))x\Bigr)\operatorname{d}\!\theta(u).$

Passing to the limit, we get that $\mu$ is $\phi(u_0)$-invariant, as was claimed.
\end{proof}

We finish this section with the following remark which will be used
in the proof of Theorem~\ref{t;U-measure-class}, see {\it Step 4} in the proof.

\begin{remark}\label{r;norm-inv}
The construction above assumed $\{g_n\}\not\in N_G(\ucal),$ 
however, we make the following observation. 
Let $\ucal, \mu$ and $\Omega$ be as in the Basic Lemma. 
Further, let us assume that $\mu$ is $\ucal$-ergodic.
Suppose\footnote{In general, $N_G(\ucal)\subset\zg\wpg$ can be arranged 
by embedding $G$ in some $\SL_n$ and choosing the element $s$ more carefully.
However, the more restrictive statement given here already suffices for our application.} 
\[
g\in N_G(\ucal)\cap\zg\wpg
\] 
is so that $gx\in\Omega$ for some $x\in\Omega.$ 
Then $\mu$ is invariant by $g.$

To see this, put $A=\Bfrak_0\cap \ucal$ and let $A_n=\la_n(A)$. 
For all $n\geq0$ and any continuous compactly supported function $f$ we have
\begin{align*}
\frac{1}{\theta(A_n)}\int_{A_n}f(ugx)d\theta(u)&=\frac{1}{\theta(A)}\int_{A}f(\la_n(u)gx)d\theta(u)\\
&=\frac{1}{\theta(A)}\int_{A}f(gg^{-1}\la_n(u)gx)d\theta(u)\\
&=\frac{1}{\theta(A)}\int_{A}f(g\la_n(g_n^{-1}ug_n)x)d\theta(u)\\
&=\frac{1}{\theta(B(n))}\int_{B(n)}f(g\la_n(u)x)d\theta(u),
\end{align*}
where $g_n=\lambda_n^{-1}(g)$ and $B(n)=g_n^{-1}Ag_n.$
In the last equality we used the fact that the Jacobian of the conjugation by $g_n$ is constant.  

Now let $g_0\in N_G(U)$ be so that $g_n\to g_0$ as $n\to\infty.$ 
Put $B=g_0^{-1}Ag_0,$ then $\theta(B(n)\bigtriangleup B)\to 0$ as $n\to\infty.$ 
Hence, for any $\vare>0$ and all large enough $n$, we have
\[
\left|\frac{1}{\theta(B(n))}\int_{B(n)}f(g\la_n(u)x)d\theta(u)-\frac{1}{\theta(B)}\int_{B}f(g\la_n(u)x)d\theta(u)\right|\leq \vare
\]
On the other hand since $x,gx\in\Omega,$ for all large enough $n$ we have
\[
\begin{array}{c}\left|\frac{1}{\theta(A_n)}\int_{A_n}f(ugx)d\theta(u)-\int_Xf(h)d\mu(h)\right|<\vare\\
\left|\frac{1}{\theta(B_n)}\int_{B_n}f(gux)d\theta(u)-\int_Xf(gh)d\mu(h)\right|<\vare.
\end{array}
\]
Putting all these together we get that
$
|\mu(f)-g\mu(f)|\leq 3\vare.
$
This implies the claim if we let $\vare\to0$.
\end{remark}


\section{Proof of Theorem~\ref{t;U-measure-class}}\label{sec;proof}
Let us recall the setup from the introduction. 
We fixed 
\begin{itemize}
\item a $\kt$-algebraic group $\bbg$, 
\item a closed subfield $k'\subset k_w$,
\item a $k'$-group $\bbh$, and 
\item a $k_w$-homomorphism $\iota:\bbh\times_{k'}k_w\to\bbg_{w}$. 
\end{itemize}
Therefore, if we replace $\bbg_{w}$ by $\bbg'_{w}:=\rcal_{k_w/k'}(\bbg_{w}),$ 
we get a $k'$-group, $\bbg'_w,$ such that $\bbg_w'(k')=\bbg_w(k_{w}).$ 
Furthermore, it follows from the universal property of Weil's restriction of scalars that 
$\bbh$ is a $k'$-subgroup of $\bbg_w'.$ 
Hence, we may and will assume that $k'=k_w$ and $\bbg_{w}=\bbg'_{w}.$ 
To simplify the notation, we will denote $k_{w}=k$ for the rest of this section.

We also have fixed a non central $k$-homomorphism, $\lambda:\bbg_m\to\bbh.$ 
Recall the one dimensional $k$-split tours $S\subset G$ defined using $\la$. 
Let $s=\iota(s')$, then $s\in S$ is an element from class $\mathcal A.$ 

\subsection{The subgroup $\ucal$}\label{sec:ucal}
As in the statement of Theorem~\ref{t;U-measure-class},
$\mu$ is a probability measure on $X=G/\Gamma$ which is $SU$-invariant and $U$-ergodic.
Define 
\be\label{eq:ucal-def}
\mbox{$\ucal\subset\wpg$ to be the maximal subgroup 
which leaves $\mu$ invariant.}
\ee  
Note that $U\subset\ucal.$
Since $\mu$ is $S$-invariant and $\wpg$ is normalized
by $S,$ the group $\ucal$ is a closed, in Hausdorff topology, subgroup of $\wpg$  
which is normalized by $S$. 

Therefore, Proposition~\ref{p:TorusInvariantUnipotent} implies the following. 
There exists some $q=p^n,$ depending on
the action of $\bbs$ on $\bbw^+_{\bbg_{w}}(s),$ 
such that $\ucal$ is the group of $k^q$-points of a connected, $k^q$-split, 
unipotent, $k^q$-subgroup of $\rest\Bigl(\bbw^+_{\bbg}(s)\Bigr).$   

We fix a $k^q$-homomorphism 
\[
\lambda':\bbg_m\to \rcal_{k/k^q}(\bbh)
\] 
and an element $s_0\in\la'(\bbg_b(k^q))$ so that $W^+_G(s)=W^+_G(s_0),$ see~\cite[App.\ C]{CGP}.

Replacing $\bbg_{w}$ with $\rest(\bbg_{w}),$
which we continue to denote by $\bbg_w,$ 
we have $\ucal$ is an algebraic subgroup of $G_{w}.$

Also replace $\bbs$ by $\lambda'(\bbg_m),$ $S$ by $\rcal_{k/k^q}(\iota)(\lambda'(\bbg_m)(k^q))$
and $s$ by $s_0.$
Finally, we replace $k$ by $k^q$ in $\kt$ and continue to denote this by $\kt.$

In particular, we have the following 

\begin{itemize}
\item The group $\ucal$ is the set of $\kt$-points of a connected, $\kt$-split, unipotent, $\kt$-subgroup of $\wpg.$
\item The group $S$ is a $k$-split one dimensional $k$-torus, $s\in S$, and $\ucal\subset\wpg$ is normalized by $S.$ 
\end{itemize}

\subsection{The subgroup $\fcal(s)$}\label{sec:fs} 
Following~\cite{MT}, we define
\begin{equation}\label{e;counterpart1}
\fcal(s)=\{g\in G:\h\ucal g\subset{\overline{\wmg\zg\ucal}}^z\};
\end{equation}
as indicated, the closure is the Zariski closure. 
Since $\wmg\zg$ is a subgroup of $G$ the above can be written as 
\begin{equation}\label{e;counterpart2}
\{g\in G:\h{\overline{\wmg\zg\ucal}}^z g\subset{\overline{\wmg\zg\ucal}}^z\}.
\end{equation}
Thus the inclusion in~\eqref{e;counterpart2} may be replaced by equality. 
This implies 
\[
 \mbox{$\fcal(s)$ is a $k$-closed subgroup of $G.$}
\]
Note that $S\subset\fcal(s)$ and $\fcal(s)\cap\wpg=\ucal.$ 

Put $\ucal^-=\fcal(s)\cap\wmg.$ 
This a $k$-closed subgroup of $\wmg$ which is normalized by $S.$ 

\begin{lem}\label{lem:ucal-cross}
The group $\ucal^-$ is the $k$-points of a connected, unipotent, 
$k$-subgroup of $W^-_G(s).$ Moreover, there is a regular cross-section, $\vcal^-$, for $\ucal^-$ in $W^-_G(s)$
and $\vcal^-$ is invariant under conjugation by $s$. 
\end{lem}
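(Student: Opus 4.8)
The strategy is to reduce both assertions to Proposition~\ref{p:TorusInvariantUnipotent} applied to the opposite expanding subgroup $\wmg$, together with the cross-section result~\cite[Prop.~9.13]{BS} that was already invoked for $\ucal$ in $\wpg$. First I would observe that $\ucal^-=\fcal(s)\cap\wmg$ is a topologically closed subgroup of $\wmg$ which is normalized by $S$: this is immediate since $\fcal(s)$ is $k$-closed and $S$-stable (as noted right after its definition) and $\wmg$ is $S$-stable. Now $S$ acts on the unipotent $k$-group $\bbw^-_{\bbg}(s)$ with all weights \emph{negative} integers; replacing $s$ by $s^{-1}$ (equivalently $\la$ by $-\la$) turns these into positive weights, so Proposition~\ref{p:TorusInvariantUnipotent} applies verbatim. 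It yields a power $q_0$ of $p$ such that $\ucal^-$ is the group of $k^{q_0}$-points of a connected, $k^{q_0}$-split, unipotent $k^{q_0}$-subgroup $\bbv$ of $\rcal_{k/k^{q_0}}(\bbw^-_{\bbg}(s))$, and $\bbv$ is connected.

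Here one must reconcile this with the normalization already performed in \S\ref{sec:ucal}, where $k$ was replaced by $k^q$ so that $\ucal$ became algebraic over $\kt$. The clean way is to note that one may take a common power of $p$ handling both $\ucal$ and $\ucal^-$ simultaneously: run Proposition~\ref{p:TorusInvariantUnipotent} for $\ucal\subset\wpg$ and for $\ucal^-\subset\wmg$, take the larger of the two exponents, and perform the field change with that exponent. After this (harmless, since it only shrinks the ground field and was the content of the reductions in \S\ref{sec:ucal}), $\ucal^-$ is the set of $k$-points of a connected, unipotent, $k$-subgroup of $W^-_G(s)$; this gives the first assertion. I would phrase the lemma's proof so that it refers back to the already-agreed normalization and simply remarks that the same proposition, applied to the negative weights, gives $\ucal^-$ its algebraic structure.

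For the cross-section claim, I would invoke~\cite[Prop.~9.13]{BS} exactly as it was used to produce $\vcal$ for $\ucal$ in $\wpg$: the hypotheses there are that $\ucal^-$ is a connected $k$-subgroup of the connected $k$-split unipotent group $\bbw^-_{\bbg}(s)$ which is normalized by the torus $S$ acting with nonzero weights (here $Z_G(S)\cap\wmg=\{e\}$, so no trivial weights occur on $\bbw^-_{\bbg}(s)$). That proposition produces an $S$-stable regular cross-section $\vcal^-$ for $\ucal^-$ in $W^-_G(s)$, which is precisely the second assertion.

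The main obstacle I anticipate is purely bookkeeping rather than mathematical: making sure that the field descent used to algebraize $\ucal$ in \S\ref{sec:ucal} is the \emph{same} one (or a common refinement) that algebraizes $\ucal^-$, so that after all the replacements $\ucal$, $S$, $s$, and $\ucal^-$ all live coherently over a single $\kt$. Once the exponents are chosen jointly, everything else is a direct citation of Proposition~\ref{p:TorusInvariantUnipotent} and~\cite[Prop.~9.13]{BS}; there is no new estimate or construction required.
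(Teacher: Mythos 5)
Your treatment of the second assertion (the $S$-invariant regular cross-section via~\cite[Prop.~9.13]{BS}) is exactly what the paper does. But your route to the first assertion misses the key structural point and, as written, does not prove the statement.

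You propose to algebraize $\ucal^-$ by running Proposition~\ref{p:TorusInvariantUnipotent} on $\wmg$ with the weights inverted. That proposition only yields that $\ucal^-$ is the group of $k^{q_0}$-points of a connected subgroup of $\rcal_{k/k^{q_0}}(\bbw^-_{\bbg}(s))$ for some further power $q_0$ of $p$, whereas the lemma asserts that $\ucal^-$ is the set of $k$-points of a $k$-subgroup of $W^-_G(s)$ over the \emph{same} field $k$ fixed in \S\ref{sec:ucal}. Your proposed repair --- choosing a common exponent for $\ucal$ and $\ucal^-$ at the outset --- is circular: $\ucal^-=\fcal(s)\cap\wmg$ is only defined after $\fcal(s)$ is, and $\fcal(s)$ is defined via the Zariski closure $\overline{\wmg\zg\ucal}^{\,z}$, which in turn presupposes that the field normalization making $\ucal$ algebraic has already been carried out. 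You cannot anticipate a descent exponent for a group that does not yet exist, and if applying the proposition after the normalization forced a further descent, you would have to redefine $\ucal$, $\fcal(s)$ and $\ucal^-$ over the new field, with no termination guaranteed by your argument.

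The point you are missing is that no appeal to Proposition~\ref{p:TorusInvariantUnipotent} is needed for $\ucal^-$ at all. Unlike $\ucal$, which is defined measure-theoretically (as the maximal invariance subgroup of $\wpg$) and therefore genuinely requires that proposition, the group $\ucal^-$ is $k$-closed \emph{by construction}: once $\ucal$ is a $k$-algebraic subgroup, $\fcal(s)=\{g\in G:\ucal g\subset\overline{\wmg\zg\ucal}^{\,z}\}$ is a $k$-closed subgroup of $G$, and hence so is its intersection with $\wmg$. The paper then simply invokes Lemma~\ref{l;group-scheme} to produce a smooth $k$-group scheme $\mathbb B$ with $\mathbb B(k)=\ucal^-$, and obtains connectedness from the fact that conjugation by $s$ contracts every element of $\mathbb B$ to the identity. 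This is both shorter and gives the statement over the correct field.
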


\begin{proof} 
The second claim follows from the first claim
and~\cite[Prop.~9.13]{BS}.

To see the first claim, note that by Lemma~\ref{l;group-scheme},
there is a smooth group scheme $\mathbb B$ defined over $k$ so that $\mathbb B(k)=\ucal^-.$
Hence, $\mathbb B\subset \mathbb W^-_{\mathbb G}(s)$ is a unipotent group which is normalized 
by $s.$ Since $s$ contracts every element of $\mathbb B$ to the identity, 
we get that $\mathbb B$ is connected as was claimed.  
\end{proof}

Similarly, let $\vcal$ be a regular cross-section for $\ucal$ in $\wpg$ which is invariant under conjugation by $s$.

\subsection{Structure of $\mu$ along contacting leaves of $s$}
In this section we will use the maximality of $\ucal$
and the Basic Lemma to show that $\mu$ has a rather special 
structure along $\wmg.$ The main result is Proposition~\ref{p;contracting-leaf1}.
We first need some more notation. Put
\begin{equation}\label{e;w-zw+}
\mathcal{D}=\wmg\zg\wpg=\ucal^-\vcal^-Z_G(s)\vcal\ucal.
\end{equation} 
Then, $\mathcal{D}$ is a Zariski open dense subset of $G$ containing $e,$ see \S\ref{sec;alg-group}.
Moreover, for any $g\in\mathcal{D}$ we have a unique decomposition 
\begin{equation}\label{decomposition} 
g=w^-(g)z(g)w^+(g)=u^-(g)v^-(g)z(g)v(g)u(g)
\end{equation}
where $u^-(g)\in\ucal^-,\h v^-(g)\in\vcal^-,
\h z(g)\in\zg,\h u(g)\in\ucal,\h v(g)\in\vcal,\h w^-(g)=u^-(g)v^-(g),$ and $w^+(g)=v(g)u(g).$

Note that for every $w^{\pm}\in \wpmg$ we have 
\begin{equation}\label{e;ell-pm}
\ell^{\pm}(s^mw^{\pm}s^{-m})=\ell^{\pm}(w^{\pm}(g))\pm m.
\end{equation}
We need the following 

\begin{prop}
\label{p;star}
Suppose $\{g_n\}$ is a sequence converging to $e,$ and  
let $s$ and $\ucal$ be as above. Suppose one of the following holds 
\begin{enumerate}
\item the sequence $\ell^-(v^-(g_n))-\ell^-(u^-(g_n))$ is bounded from below, or 
\item $\{g_n\}\subset\zg\wpg\setminus N_G(\ucal).$
\end{enumerate}
Then, $\{g_n\}$ satisfies the condition $(*).$ 
Furthermore, if we let $\phi$ be the quasi-regular map constructed using $\ucal$ and this sequence $\{g_n\},$ 
then ${\rm Im}(\phi)\subset {\wpg}.$
\end{prop}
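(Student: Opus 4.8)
The plan is to verify the two assertions separately, following the template of~\cite[\S6.7]{MT} but exploiting the simplifications afforded by our setup (namely $\ucal\subset\wpg$ is normalized and expanded by $s$, and $U_0=\ucal$). First I would treat condition $(*)$. Writing $g_n=u^-(g_n)v^-(g_n)z(g_n)v(g_n)u(g_n)$ via the decomposition~\eqref{decomposition}, one must show that $s^{-b(n)}g_ns^{b(n)}$ stays in a fixed compact set. Recall that $b(n)$ is defined by the competition, inside the representation $\rho$, between the growth of $\rho(\ucal_{b(n)}g_n)\qpz$ and the fixed neighbourhood $\bcal(\qpz)$; the key point is that $b(n)\to\infty$ is forced precisely by how far $g_n$ is from $N_G(\ucal)$, and the rate is controlled by the $\ell^\pm$-functions through~\eqref{e;ell-pm}. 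In case (1), the hypothesis that $\ell^-(v^-(g_n))-\ell^-(u^-(g_n))$ is bounded below is exactly the input needed to bound $s^{-b(n)}u^-(g_n)s^{b(n)}$ (the $\ucal^-$-component contracts, the $\vcal^-$-component is the obstruction and it is controlled by the difference of the two $\ell^-$ values), while $s^{-b(n)}z(g_n)s^{b(n)}=z(g_n)\to e$ and the $\wpg$-components are handled since $s$ expands $\wpg$ and $b(n)$ is chosen minimally. In case (2) there is no $\wmg$-part at all, so $s^{-b(n)}g_ns^{b(n)}$ only involves $z(g_n)\to e$ and $\wpg$-components which are bounded by minimality of $b(n)$ together with~\eqref{e;chevalley2}; this case is in fact easier.

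Second I would prove $\mathrm{Im}(\phi)\subset\wpg$. By Proposition~\ref{normalizer} we already know $\mathrm{Im}(\phi)\subset N_G(\ucal)$, so it suffices to rule out components in $\ucal^-$, in $\vcal^-$, and in $\zg$. For this I would use the Basic Lemma: since $\{g_n\}$ satisfies $(*)$ (just established) and, after passing to a set of uniform convergence $\Omega$ and choosing $x_n\to x$ with $g_nx_n\in\Omega$ (possible by the argument of~\cite{MT}, using that $\Omega$ has measure close to $1$ and $g_n\to e$), the Basic Lemma gives that the ergodic component $\mu_{y(x)}$ — which here is just $\mu$ by $\ucal$-ergodicity — is invariant under $\mathrm{Im}(\phi)$. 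Now if $\phi$ had image meeting $\wmg$ or $\zg$ nontrivially, then $\langle\ucal,\mathrm{Im}(\phi)\rangle\cap\wpg$ would be a subgroup of $\wpg$ strictly larger than $\ucal$ which still leaves $\mu$ invariant: indeed, any element of $N_G(\ucal)$ with a nontrivial $\wmg$ or $\zg$ component, combined with $\ucal$ and with $s$, generates (using~\eqref{opposite-horo} and the contraction dynamics of $s$) extra invariance inside $\wpg$, contradicting the maximality~\eqref{eq:ucal-def} of $\ucal$. More precisely one argues that $\phi(u)\in N_G(\ucal)$ and $\mu$ is $\phi(u)$-invariant for $u$ in a Zariski-dense set; conjugating by powers of $s$ and taking limits (legitimate because $\phi$ is nonconstant and $\ucal$ is maximal) forces the $\wmg Z_G(s)$-component of $\phi(u)$ to normalize $\ucal$ and fix $\mu$ in a way that, by maximality, it must lie in $\ucal$, hence in $\wpg$.

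The main obstacle I expect is the bookkeeping in the first part: one has to be careful that the competing quantities controlling $b(n)$ really are governed by the $\ell^\pm$-filtration in the precise way claimed, since $\ucal$ is now a $\kt$-split unipotent group that need not be algebraic over the original field — this is exactly why the normalization in \S\ref{sec:ucal}, passing to $k^q$ and to $\rcal_{k/k^q}$, was carried out, and one must make sure the cross-sections $\vcal$, $\vcal^-$ from Lemma~\ref{lem:ucal-cross} interact correctly with the $s$-action. The second part is then comparatively formal: it is a maximality argument, and the only subtlety is to check that the extra invariance produced genuinely lands inside $\wpg$ rather than merely in $N_G(\ucal)$, which is where one uses that $\ucal$ was defined as the \emph{maximal} $\mu$-invariant subgroup \emph{of $\wpg$} together with the fact that $\mu$ is $S$-invariant so that any new invariance can be pushed into $\wpg$ by conjugating with $s$ and passing to a limit.
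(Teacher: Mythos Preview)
Your treatment of condition $(*)$ is essentially fine and matches the paper: case~(1) is the content of~\cite[Prop.~6.7]{MT}, and in case~(2) one simply notes $g_n\in\zg\wpg$ gives $s^{-b(n)}g_ns^{b(n)}=z(g_n)\cdot s^{-b(n)}w^+(g_n)s^{b(n)}\to e$, since $s^{-1}$ contracts $\wpg$. (The appeal to ``minimality of $b(n)$'' is not needed here; contraction alone suffices.)

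The second half of your plan, however, has a genuine gap. You propose to deduce $\mathrm{Im}(\phi)\subset\wpg$ from the Basic Lemma together with the maximality of $\ucal$. But maximality is a statement \emph{inside} $\wpg$: it says no strictly larger subgroup of $\wpg$ preserves $\mu$. If some $\phi(u_0)$ had a nontrivial $\wmg$- or $\zg$-component, invariance of $\mu$ under $\phi(u_0)$ (and under all $s^n\phi(u_0)s^{-n}$) does not by itself produce any new $\mu$-invariant element of $\wpg$. Your sketch ``combined with $\ucal$ and with $s$, generates \ldots\ extra invariance inside $\wpg$'' is precisely the step that fails: for instance $s$ itself lies in $\zg\cap N_G(\ucal)$ and leaves $\mu$ invariant, yet generates nothing new in $\wpg$. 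No commutator or limiting trick you allude to is spelled out, and there is no general mechanism of this kind. There is also a structural issue: the proposition is a statement about the map $\phi$ attached to any sequence $\{g_n\}$ satisfying (1) or (2); your route makes the conclusion depend on the measure $\mu$ and on the existence of points $x_n\in\Omega$, which is extraneous.

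The paper's argument is instead a direct computation and does not touch $\mu$ at all. From~\eqref{e;quasi3} one has $\phi_n(u)=\tau_n(u)\,g_n\,\omega_n(\tau_n(u))^{-1}$. Conjugating by $s^{-b(n)}$ gives three factors: the outer two, $s^{-b(n)}\tau_n(u)s^{b(n)}=u$ and $s^{-b(n)}\omega_n(\tau_n(u))^{-1}s^{b(n)}$, are bounded in $\ucal$ by the choice of $b(n)$ and condition~$(*)$, while the middle factor $s^{-b(n)}g_ns^{b(n)}$ stays in a compact set (and in case~(2) tends to $e$). Hence along a subsequence $s^{-b(n)}\phi_n(u)s^{b(n)}$ converges, and in case~(2) the limit lies in $\ucal$. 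Since $\phi_n(u)\to\phi(u)\in L=\wmg\,\zg\,\vcal$, comparing the two descriptions forces the $\wmg$- and $\zg$-components of $\phi(u)$ to be trivial (a nontrivial $\wmg$-part would make $s^{-b(n)}\phi_n(u)s^{b(n)}$ unbounded; a nontrivial $\zg$-part would make the limit lie outside $\wpg$). This yields $\phi(u)\in\wpg$ on the Zariski-dense set $\phi'^{-1}(\mcal)$, hence everywhere. The point is that $\mathrm{Im}(\phi)\subset\wpg$ is an \emph{input} to the maximality contradiction in Proposition~\ref{p;contracting-leaf1}, not a consequence of it.
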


\begin{proof}
The fact that the conclusion holds under condition (1) is proved in
~\cite[Prop.~6.7]{MT}. We show (2) also implies the conclusion. 

Under assumption (2), we have 
$s^{-b(n)}g_ns^{b(n)}\to e,$ hence, $\{g_n\}$ satisfies the condition $(*)$. We now 
use an argument similar to
~\cite[Prop.~6.7]{MT} to show ${\rm Im}(\phi)\subset\wpg$ when condition (2)
above holds.
By~\eqref{e;quasi3} we have
\[
\phi(u)=\lim_{n\to\infty}\phi_n(u)=\lim_{n\to\infty}\tau_n(u)g_n{\omega}_n(\tau_n(u))^{-1}\mbox{ for all $u\in\phi'^{-1}(\mcal).$ }
\] 
It follows from the choice of $b(n)$
that 
\[
 \mbox{$\{s^{-b(n)}\bullet s^{b(n)}\}$ are bounded in $\ucal$
for $\bullet=\tau_n(u)$ and ${\omega}_n(\tau_n(u))^{-1}.$}
\]
In view of this and the condition $(*)$, we get the following. 
After possibly passing to a subsequence, we have  
\[
\lim_{n\to\infty}s^{-b(n)}\phi_n(u)s^{b(n)}=
\lim_{n\to\infty} s^{-b(n)}\tau_n(u)g_n{\omega}_n(\tau_n(u))^{-1}s^{b(n)}\in\ucal.
\]
This implies that $\phi(u)\in\wpg$ for all $u\in\phi'^{-1}(\mcal).$ 
Together with the fact that $\phi'^{-1}(\mcal)$ is Zariski dense in $\ucal$, this finishes the proof.
\end{proof}

The following is an important consequence of the above proposition 
and the construction of quasi-regular maps in \S\ref{sec;quasi}. 
It describes the local structure of the set of uniform convergence. 
Our formulation here is taken from~\cite{MT};
let us remark that obtaining such description is also essential in~\cite{Rat3}.  

\begin{prop}
\label{p;contracting-leaf1}
For every $\vare>0$, there exists a compact subset ${\Omega}_{\vare}\subset X$ 
with the following properties. 
\begin{enumerate}
\item $\mu({\Omega}_{\vare})>1-\vare$, and
\item if $\{g_n\}\subset G\setminus N_G(U^+(s))$ is a sequence so that $g_n\rightarrow e$ and
\[
 g_n{\Omega}_{\vare}\cap{\Omega}_{\vare}\neq\emptyset \mbox{ for every $n,$}
\]
then the sequence $\{\ell^-(v^-(g_n))-\ell^-(u^-(g_n))\}$ tends to $-\infty.$
\end{enumerate}
\end{prop}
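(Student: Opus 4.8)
The plan is to run the standard Ratner--Margulis--Tomanov argument by contradiction, using the Basic Lemma together with Proposition~\ref{p;star}(1) to produce extra invariance that violates the maximality of $\ucal$. First I would fix a set of uniform convergence ${\Omega}$ (relative to all averaging nets $\{A_n=\la_n(A)\}$) with $\mu({\Omega})>1-\vare/2$, as provided by the lemma at the end of \S\ref{sec:using-quasi-reg}. Next, using that $\{g\in G: g_*\mu=\mu\}\supset \ucal$ acts on ${\Omega}$ and that $\mu$ is inner regular, I would shrink ${\Omega}$ slightly to a compact ${\Omega}_{\vare}\subset{\Omega}$ with $\mu({\Omega}_{\vare})>1-\vare$ so that ${\Omega}_{\vare}$ also has positive measure of ``return points'': for a.e.\ $x\in{\Omega}_{\vare}$ the set of $u\in\ucal$ with $ux\in{\Omega}_{\vare}$ has positive density, and moreover $x$ is a point where the ergodic averages along $\{A_n\}$ converge to $\int_X f\,d\mu$ (recall $\mu$ is $\ucal$-ergodic, so $\mu_{y(x)}=\mu$). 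This is a routine extraction and I would state it as such.

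Now suppose the conclusion fails. Then there is a sequence $\{g_n\}\subset G\setminus N_G(U^+(s))$ with $g_n\to e$, $g_n x_n\in{\Omega}_{\vare}$ for some $x_n\in{\Omega}_{\vare}$, and $\{\ell^-(v^-(g_n))-\ell^-(u^-(g_n))\}$ \emph{not} tending to $-\infty$; passing to a subsequence, this sequence is bounded from below. By passing to a further subsequence I may assume $x_n\to x\in{\Omega}_{\vare}$ (compactness) and $g_nx_n\to y\in{\Omega}_{\vare}$. By Proposition~\ref{p;star}(1), $\{g_n\}$ satisfies condition $(*)$, so the quasi-regular map $\phi$ built in \S\ref{sec;quasi} from $\ucal$ and $\{g_n\}$ exists, is non-constant, has $\phi(e)=e$, takes values in $N_G(\ucal)$ (Proposition~\ref{normalizer}), and — again by Proposition~\ref{p;star}(1) — satisfies ${\rm Im}(\phi)\subset \wpg$. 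Applying the Basic Lemma to the sequence $\{g_n\}$ (with $x_n\to x$, $g_nx_n\to y$, both in ${\Omega}_{\vare}\subset{\Omega}$, using $U=U_0=\ucal$) yields that $\mu_{y(x)}=\mu$ is invariant under ${\rm Im}(\phi)$, hence under $\langle{\rm Im}(\phi)\rangle\subset\wpg$.

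The contradiction is then extracted from the maximality of $\ucal$: by Proposition~\ref{normalizer} the image of $\phi$ is \emph{not} contained in any set of the form $\mathcal K\ucal$ with $\mathcal K\subset G$ compact, so ${\rm Im}(\phi)\not\subset\ucal$; since ${\rm Im}(\phi)\subset\wpg$ and $\mu$ is ${\rm Im}(\phi)$-invariant, the closed group generated by $\ucal$ and ${\rm Im}(\phi)$ is a subgroup of $\wpg$, strictly larger than $\ucal$, leaving $\mu$ invariant — contradicting the definition~\eqref{eq:ucal-def} of $\ucal$ as the maximal such subgroup. The main obstacle, and the step requiring the most care, is the initial construction of ${\Omega}_{\vare}$: one must arrange simultaneously that it is compact, has measure $>1-\vare$, is a set of uniform convergence, and is ``sufficiently invariant'' that the hypothesis $g_n{\Omega}_{\vare}\cap{\Omega}_{\vare}\neq\emptyset$ can be upgraded to a genuine pair of approximating points $x_n,g_nx_n\in{\Omega}_{\vare}$ to which the Basic Lemma applies; this is exactly the point where one invokes, in the limiting argument, that the Basic Lemma only needs $x_n\to x$ and $g_nx_n$ staying in the fixed compact set ${\Omega}$. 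The rest — the existence and properties of $\phi$ — is quoted verbatim from \S\ref{sec;quasi} and Proposition~\ref{p;star}.
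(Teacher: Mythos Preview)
Your proposal is correct and follows essentially the same route as the paper's proof: take $\Omega_\vare$ to be a set of uniform convergence of measure $>1-\vare$, assume by contradiction the difference $\ell^-(v^-(g_n))-\ell^-(u^-(g_n))$ stays bounded below along a subsequence, invoke Proposition~\ref{p;star}(1) to get condition $(*)$ and ${\rm Im}(\phi)\subset\wpg$, apply the Basic Lemma to get $\mu$ invariant under ${\rm Im}(\phi)$, and contradict maximality of $\ucal$ via Proposition~\ref{normalizer}.

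One remark: the step you flag as ``the main obstacle'' is in fact immediate. By Definition~\ref{unifconv} a set of uniform convergence is already compact, and the hypothesis $g_n\Omega_\vare\cap\Omega_\vare\neq\emptyset$ \emph{is} the statement that there exist $x_n\in\Omega_\vare$ with $g_nx_n\in\Omega_\vare$; compactness then lets you pass to a subsequence with $x_n\to x\in\Omega_\vare$, which is exactly what the Basic Lemma needs. No additional ``return point'' or invariance property of $\Omega_\vare$ is required, so the preliminary shrinking and density arguments you outline can be dropped.
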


\begin{proof}
First note that $U\subset\ucal,$ therefore,
$\mu$ is $\ucal$-ergodic and invariant. 
Let $\vare>0$ be given. Let ${\Omega}_\vare$ be a set of uniform convergence 
for the action of $\ucal,$ in the sense of Definition~\ref{unifconv}, with $\mu({\Omega}_\vare)>1-\vare.$ 
We will show that ${\Omega}_\vare$ satisfies (2) in the proposition as well. 

Assume contrary to our claim that there is a sequence $\{g_n\}$ so that (2) fails.
Passing to a subsequence we may assume the sequence 
\[
\Bigl\{\ell^-(v^-(g_n))-\ell^-(u^-(g_n))\Bigr\}
\] 
is bounded from below. 
Therefore, Proposition~\ref{p;star} guarantees that $\{g_n\}$ satisfies condition $(*).$ 

Now construct the quasi-regular map $\phi$ corresponding to $\{g_n\}$ as in \S\ref{sec;quasi}.
Then,
\begin{itemize}
\item in view of Proposition~\ref{normalizer}, the image of $\phi$ is not contained in $\mathcal{K}\ucal$ 
for any bounded subset $\mathcal{K}\subset G$. 
\item It follows from the Basic Lemma that $\mu$ is invariant under ${\rm{Im}}(\phi).$ 
\item By Proposition~\ref{p;star}, we have ${\rm{Im}}(\phi)\subset {\wpg}.$ 
\end{itemize}
Therefore, $\mu$ is invariant under $\langle \ucal,{\rm{Im}(\phi)}\rangle$ 
which is contained in $W^+_G(s)$ and strictly contains $\ucal.$ 
This contradicts the maximality of $\ucal$ and finishes the proof.   
\end{proof}

We will use this proposition in the following form.

\begin{cor}
\label{c;contarcting-leaf2}
There exists a subset ${\Omega}\subset X$ with $\mu({\Omega})=1$ 
such that 
\[
{\wmg}x\cap{\Omega}\subset \ucal^-x
\] 
for every $x\in{\Omega}.$
\end{cor}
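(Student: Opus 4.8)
The plan is to deduce Corollary~\ref{c;contarcting-leaf2} from Proposition~\ref{p;contracting-leaf1} by a Borel--Cantelli style argument combined with the recurrence properties of the filtration $\{\mathfrak B_n^-\}$ on $\wmg$. First I would observe that Proposition~\ref{p;contracting-leaf1} already gives, for every $\vare>0$, a compact set $\Omega_\vare$ with $\mu(\Omega_\vare)>1-\vare$ such that any sequence $g_n\to e$ in $G\setminus N_G(U^+(s))$ with $g_n\Omega_\vare\cap\Omega_\vare\neq\emptyset$ satisfies $\ell^-(v^-(g_n))-\ell^-(u^-(g_n))\to-\infty$. I would then take $\Omega=\bigcup_{j\ge 1}\Omega_{1/j}$ after passing to $\mu$-conull subsets; more precisely I would set $\Omega$ to be the set of points lying in $\Omega_{1/j}$ for infinitely many $j$, or simply a single $\mu$-conull set contained in $\liminf$ of good sets, so that $\mu(\Omega)=1$.

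The core step is the following claim: for $x\in\Omega$ and $w^-\in\wmg$ with $w^-x\in\Omega$, we must have $w^-\in\ucal^-$. Suppose not. Write $w^-=u^-(w^-)v^-(w^-)$ with $v^-(w^-)\neq e$ (using the decomposition $\wmg=\ucal^-\vcal^-$ from Lemma~\ref{lem:ucal-cross} and \eqref{decomposition}); here I am using that $\ucal^-=\fcal(s)\cap\wmg$ and that $\vcal^-$ is a genuine cross-section, so $v^-(w^-)=e$ is equivalent to $w^-\in\ucal^-$. Now conjugate down by $s$: set $g_n=s^{-n}w^-s^{n}$ and $x_n=s^n x$. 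Then $g_n\to e$ because $s$ contracts $\wmg$, and $g_n\notin N_G(U^+(s))$ for large $n$ (one checks this using that $v^-(w^-)\neq e$ and that $\ucal^-$ is exactly the part of $\wmg$ normalizing—or at least ``compatible with''—$\ucal$; this is where the definition of $\fcal(s)$ is used). Moreover $g_n x_n=s^{-n}w^-x\in s^{-n}\Omega$. The subtlety is that $s^{-n}\Omega$ need not meet $\Omega_\vare$; so instead I would run the Poincaré recurrence / ergodic averaging more carefully along the $\ucal$-action, or—following the structure of \cite{MT}—choose the base point $x$ in the conull set where $s^n x$ returns to a fixed compact set along a density-one sequence of $n$, so that both $x_n$ and $g_nx_n$ lie in $\Omega_\vare$ for infinitely many $n$. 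Applying Proposition~\ref{p;contracting-leaf1} to this subsequence forces $\ell^-(v^-(g_n))-\ell^-(u^-(g_n))\to-\infty$; but by \eqref{e;ell-pm} we have $\ell^-(v^-(g_n))=\ell^-(v^-(w^-))+n$ and $\ell^-(u^-(g_n))=\ell^-(u^-(w^-))+n$, so the difference $\ell^-(v^-(w^-))-\ell^-(u^-(w^-))$ is constant in $n$, a contradiction unless $v^-(w^-)=e$, i.e.\ $w^-\in\ucal^-$.

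I expect the main obstacle to be the recurrence bookkeeping: arranging a single $\mu$-conull set $\Omega$ such that for every $x\in\Omega$ one has simultaneously (a) $s^nx$ returns to a prescribed compact ``good'' set for infinitely many $n$, and (b) the same holds after translating by an arbitrary $w^-$ with $w^-x\in\Omega$. This is handled by a standard argument: fix a countable family $\{\Omega_{1/j}\}$, use that for $\mu$-a.e.\ $x$ the orbit $\{s^nx\}$ is equidistributed (or at least recurrent) with respect to the $S$-action—note $\mu$ is $S$-invariant—and intersect over $j$; one also uses that $\ell^\pm$ only depends on the $\wmg$-component so the relevant quantities transform as in \eqref{e;ell-pm}. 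The remaining routine points are: verifying $g_n\notin N_G(U^+(s))$ from $v^-(w^-)\neq e$, which follows from the open immersion \eqref{opposite-horo} and the fact that $N_G(\ucal)\cap\wmg=\ucal^-$ (a consequence of the definition \eqref{e;counterpart1} of $\fcal(s)$ and Proposition~\ref{normalizer}), and checking that $g_n\to e$, which is immediate from contraction. Finally, $\mu(\Omega)=1$ follows since $\Omega$ is obtained as a countable intersection of $\mu$-conull sets.
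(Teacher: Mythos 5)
Your argument is essentially the paper's proof: by Mautner, $\mu$ is $s$-ergodic, so Birkhoff applied to $\chi_{\Omega_\vare}$ gives, for $x$ and $y=w^-x$ in a slightly smaller conull set $\Omega'_\vare\subset\Omega_\vare$, a common infinite sequence of times $n_i$ with $s^{n_i}x,\,s^{n_i}y\in\Omega_\vare$, and then the conjugation-invariance of $\ell^-(v^-(\cdot))-\ell^-(u^-(\cdot))$ contradicts Proposition~\ref{p;contracting-leaf1}. The one correction needed is the direction of renormalization: with your $g_n=s^{-n}w^-s^{n}$, equation \eqref{e;ell-pm} gives $\ell^-(v^-(g_n))=\ell^-(v^-(w^-))+n$, so $g_n$ \emph{expands} rather than tends to $e$ (and also $g_nx_n\neq s^{-n}w^-x$ if $x_n=s^nx$); you want $g_n=s^{n}w^-s^{-n}$ and $x_n=s^{n}x$, so that $g_nx_n=s^{n}y$, $g_n\to e$, and the difference $\ell^-(v^-(g_n))-\ell^-(u^-(g_n))$ is still constant in $n$, hence bounded below when $v^-(w^-)\neq e$. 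The remaining points you flag are handled exactly as you suggest: $g_n\notin N_G(\ucal)$ because $N_G(\ucal)\cap\wmg\subset\fcal(s)\cap\wmg=\ucal^-$ and $\ucal^-$ is $s$-invariant, and the final conull set is obtained from a sequence $\vare_j\to0$ (arrange the sets $\Omega'_{\vare_j}$ to be increasing so that any two points of $\Omega$ lie in a common $\Omega'_{\vare_j}$).
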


\begin{proof}
The proof follows the same lines as the proof of ~\cite[Cor.~8.4]{MT} 
using Proposition~\ref{p;contracting-leaf1}.
We recall the proof here for the convenience of the reader. 

First let us note that by Mautner's phenomenon, every $s$-ergodic component of $\mu$
is $\ucal$-invariant, thus, $\mu$ is $s$-ergodic. 
For any $\vare>0$, let ${\Omega}_\vare$ be as in Proposition~\ref{p;contracting-leaf1}. 
Let ${\Omega}'_\vare\subset\Omega_\vare$ be a compact subset with $\mu(\Omega_\vare')>1-2\vare$ 
so that the Birkhoff ergodic theorem for the action of $s$ and $\chi_{\Omega_\vare}$ 
holds for every $x\in{\Omega}'_\vare$. 

Suppose $x$ and $y=w^-x$ are in ${\Omega}'_\vare$
and assume that $w^-\notin\ucal^-.$ Let $n_i\to\infty$ be a subsequence so that both $s^{n_i}x\in{\Omega}_\vare$
and $s^{n_i}y\in{\Omega}_\vare,$ such sequence exists by Birkhoff ergdoic theorem.

Let $x_i=s^{n_i}x$ and 
\[
 y_i=s^{n_i}y=s^{n_i}ws^{-n_i}s^{n_i}x=w_ix_i
\]
where $w_i=s^{n_i}ws^{-n_i}.$
Our assumption on $w$ and~\eqref{e;ell-pm} imply
that 
\[
\{\ell^-(v^-(w_i))-\ell^-(u^-(w_i))\}
\] 
is bounded from below which contradicts Proposition~\ref{p;contracting-leaf1}. 

The corollary now follows if we apply the above argument to a sequence $\vare_n\to0$
and let $\Omega=\cup_n\Omega'_{\vare_n}.$
\end{proof}

\subsection{A lemma on finite dimensional representations}
We need certain properties of the subgroup $\fcal(s)$ which was defined 
in~\eqref{e;counterpart1}.
These will be used when we apply Theorem~\ref{entropy}
in the proof of Theorem~\ref{t;U-measure-class}. 
The main property needed is Lemma~\ref{l;exp-geq-cont} below which 
is a consequence of Lemma~\ref{l;epimorphic}.
It is worth mentioning that the latter is closely related to the notion of an {\it epimorphic group}
which was introduced by A.~Borel.

Retain the notation from \S\ref{sec;intro}. 
Recall also from the reductions in the beginning of this section that $k=k'.$ 
Put $H^+=\langle\wph,\wmh\rangle.$ 
Since $\bbh'$ is $k''$-almost simple and $k''$-isotropic we have
\[
H^+=\tilde\iota\Bigl(\rcal_{k''/k}(\widetilde{\bbh'})(k)\Bigr)
\]
where $\widetilde{\bbh'}\to \bbh'$ denotes the simply connected cover of $\bbh',$  
and $\tilde\iota$ is $\iota$ precomposed with the covering map, see~\cite[Prop.~1.5.4 and Thm.~2.3.1]{Mar5}.

\begin{lem}
\label{l;epimorphic}
Let $(\rho,\Phi)$ be a finite dimensional representation of $G_w$
defined over $k$ and let $\|\;\|$ denote a norm on $\Phi.$
Let $\qpz\in\Phi.$ 
\begin{enumerate}
\item If $\langle U,s\rangle\subset\{g\in G_w: \rho(g)\qpz\in k\cdot\qpz\},$ 
then $\|\rho(s^n)\qpz\|\geq \|\qpz\|$ for all $n\geq 1.$
\item If $\rho(s)\qpz=\qpz$ and $\rho(U)\qpz=\qpz,$ then $\rho(H^+)\qpz=\qpz.$
\end{enumerate}
\end{lem}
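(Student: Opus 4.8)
The plan is to prove both parts by exploiting the interplay between the $\GL_1$-action of $S$ (via $s$) and the group $H^+$, using the structure of $\bbh$ as a restriction of scalars of a $k''$-almost simple, $k''$-isotropic group.

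For part (1), first I would note that $\rho(U)\qpz \subset k\cdot\qpz$ and $\rho(s)\qpz \in k\cdot\qpz$ means that the line $k\cdot\qpz$ is stabilized by $\langle U, s\rangle$; write $\rho(s)\qpz = c\,\qpz$ for some $c\in k^\times$. The claim $\|\rho(s^n)\qpz\| \ge \|\qpz\|$ amounts to showing $|c| \ge 1$. Suppose instead $|c| < 1$, i.e. $s$ contracts the line $k\cdot\qpz$. The key point is that $s$ lies in the image of the cocharacter $\lambda$, and $U = \iota(\bbu(k'))$ is the unipotent radical $\bbw^+_\bbh(s')$ which is contracted by $s$; meanwhile the opposite group $\wmh$ is expanded by $s$. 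The strategy is to consider the decomposition of $\Phi$ into weight spaces for the $S$-action and use the epimorphic-type argument: since $H = ZUU^-$ (up to closure) with $Z = Z_\bbh(\lambda)$, and since $H^+ = \langle U, U^-\rangle$, the $\langle U, s\rangle$-invariant line, if contracted by $s$, would force (via considering $\rho(U^-)\qpz$ and the fact that $H^+$ acts almost simply through a simply connected factor) the line to generate a proper $H^+$-invariant subspace on which $H^+$ acts through a character — but $\widetilde{\bbh'}$ is simply connected and $k''$-simple, so it has no nontrivial characters, hence acts trivially, contradicting $|c|<1$ (or forcing $c=1$). So the heart of part (1) is a weight/contraction argument combined with the perfectness of the simply connected cover.

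For part (2), this is the cleaner statement: given $\rho(s)\qpz = \qpz$ and $\rho(U)\qpz = \qpz$, I want $\rho(H^+)\qpz = \qpz$. The plan is: the stabilizer $\Phi^{\qpz} := \{v : \rho(g)v = v \text{ for the relevant } g\}$ — more precisely let $\bbq$ be the stabilizer subgroup scheme of $\qpz$ in $G_w$; then $U \subset \bbq(k)$ and $s \in \bbq(k)$, so $\bbq$ contains $U$ and the torus $S$. Now apply part (1) in both directions: since $\rho(s)\qpz = \qpz$ we have $\rho(s^n)\qpz = \qpz$ for all $n\in\bbz$, so by part (1) applied to both $s$ and $s^{-1}$ (equivalently to $\pm\lambda$), the vector $\qpz$ is fixed. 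The Lie-algebra-level statement: $\Lie(H^+)$ is spanned by the positive and negative weight spaces of $\lambda$ together with their brackets, which generate everything including the zero-weight part since $\bbh'$ is $k''$-isotropic and $\widetilde{\bbh'}$ is generated by $\bbw^\pm$. Concretely: $U^-$ is conjugate to $U$ by an element normalizing $\qpz$'s line, or better — since $\qpz$ is actually fixed by all of $S$ (not just bounded), the $S$-equivariant structure lets me deduce $\rho(U^-)\qpz = \qpz$ from $\rho(U)\qpz = \qpz$ by a symmetry/weight argument, and then $H^+ = \langle U, U^-\rangle$ gives the result. Here I would invoke $\rcal_{k''/k}(\widetilde{\bbh'})$ being generated by the two opposite unipotent radicals $\bbw^\pm$ (true since $\widetilde{\bbh'}$ is $k''$-simply connected and $k''$-isotropic, cf. \cite[Ch.~II, Lemma 3.3]{Mar5} or \cite[App.~C]{CGP}).

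The main obstacle I anticipate is part (1): controlling the contraction constant $|c|$ requires careful bookkeeping of the $\GL_1$-weights on $\Phi$ and knowing that an $\langle U, s\rangle$-stable line cannot be strictly contracted. The clean way to see this is: if $\rho(s)\qpz = c\qpz$ with $|c| < 1$, then $\qpz$ lies in the span of positive-weight spaces of $\lambda$ on $\Phi$ (thinking of $\lambda$ as acting with the convention that $s = \lambda(\varpi^{-1})$ or similar contracts positive weights), and then $\rho(U)\qpz = \qpz$ together with $U$ being the positive-weight unipotent forces a contradiction with the representation being defined over $k$ and $\widetilde{\bbh'}$ being perfect — essentially because a perfect group has no nonzero semi-invariant vectors with nontrivial character. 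So the real content is reducing to: \emph{a $k$-rational representation of $\rcal_{k''/k}(\widetilde{\bbh'})$ has no line on which the group acts through a nontrivial character}, which follows from $\widetilde{\bbh'}(k'')$ being its own commutator (perfectness of simply connected isotropic simple groups over the relevant field, cf. \cite{Mar5}). I would set this up carefully and then let the contraction estimate fall out.
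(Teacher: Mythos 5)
Your proposal correctly identifies the ingredients (the weight decomposition of $\Phi$ under $s$, the generation of $H^+$ by unipotents, and the fact that $H^+=\langle \wph,\wmh\rangle$ with $H/H^+$ compact torsion), but in both parts the step that actually produces the conclusion is missing, and two of the steps you do propose would fail. In part (1), the contradiction cannot come from ``$H^+$ has no nontrivial characters'' applied to the line $k\cdot\qpz$: that line is only $\langle U,s\rangle$-stable, not $H^+$-stable, so perfectness of $\widetilde{\bbh'}$ says nothing about it directly. The missing mechanism is the conjugation--limit argument: assuming $\|\rho(s^n)\qpz\|\to 0$, one shows $\rho(w)\qpz\in\Phi^-$ for every $w\in\wmh$ by writing $\rho(s^n)\rho(w)\qpz=\rho(s^nws^{-n})\rho(s^n)\qpz\to 0$; combined with $\rho(U)\qpz=\qpz$ this puts the whole $k$-span $\Psi$ of $\rho(H^+)\qpz$ inside $\Phi^-$. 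Only then does the group-theoretic input enter, and it is a \emph{determinant} argument, not a character-on-a-line argument: since $H/H^+$ is torsion, some $s^{n_0}$ lies in $H^+$, and since $H^+$ is generated by (torsion) unipotent subgroups every character of $H^+$ --- in particular $\det$ on the $H^+$-invariant subspace $\Psi$ --- is trivial in absolute value; but $\Psi\subset\Phi^-$ forces $|\det(\rho(s^{n_0})|_\Psi)|<1$. Your sketch never establishes $\rho(\wmh)\qpz\subset\Phi^-$ and never invokes $s^{n_0}\in H^+$, so no contradiction is actually derived.

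In part (2), ``apply part (1) to $s^{-1}$'' is circular: the hypothesis of part (1) for $-\lambda$ requires the line $k\cdot\qpz$ to be stabilized by $W^+(-\lambda)=\wmh$, which is essentially what you are trying to prove. Likewise ``deduce $\rho(U^-)\qpz=\qpz$ from $\rho(U)\qpz=\qpz$ by a symmetry/weight argument'' has no content: there is no such symmetry available from the hypotheses. The paper's route is genuinely longer: the same limit argument gives $\rho(\wmh\zh)\qpz\subset\Phi^-+\Phi^0$, the density of the big cell $\wmh\zh\wph$ upgrades this to $\rho(H)\qpz\subset\Phi^-+\Phi^0$, the determinant argument (again via $s^{n_0}\in H^+$) forces the span $\Psi$ of $\rho(H)\qpz$ entirely into $\Phi^0$, and then one observes that since $s$ expands $U$ while acting with bounded powers on $\Phi^0$, the orbit $\varrho(U)\ppz$ of any $\ppz\in\Psi$ is bounded, hence trivial because $U$ is $k$-split unipotent; finally $U\subset\ker\varrho$, normality, and the fact that the only normal subgroup of $H^+$ containing $U$ is $H^+$ itself give $\rho(H^+)\qpz=\qpz$. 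These three steps --- the limit argument transferring control from $\qpz$ to $\rho(H^+)\qpz$, the determinant/torsion argument, and the boundedness-implies-triviality step for split unipotents --- constitute the actual proof and are absent from your proposal.
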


\begin{proof} 
The argument is similar to the one given in~\cite[Lemma 5.2]{Sh}.

First note that since $U$ is a unipotent subgroup of $G$
our assumption in (1) implies that $\rho(U)\qpz=\qpz,$
and that $\rho(s)\qpz=\chi(s)\qpz$ for some $k$-character $\chi.$  

Assume the contrary to (1). Then since $s$ acts by a character, we get that 
\be\label{eq:lim-q-0}
\lim_{n\to\infty}\rho(s^n)\qpz=0.
\ee
Let $\Phi^-$ denote the subspace of $\Phi$
corresponding to the negative weights of the action of $\rho(s)$ on $\Phi$. 
Therefore,~\eqref{eq:lim-q-0} is to say $\qpz\in\Phi^-.$ 

We claim~\eqref{eq:lim-q-0} implies the following
\be\label{eq:H-+-q}
\rho( H^+)\qpz\subset\Phi^-.
\ee 
Let us assume~\eqref{eq:H-+-q} and conclude the proof first.
Indeed~\eqref{eq:H-+-q} implies that
\[
\Psi:=\text{$k$-linear span of }\{\rho( H^+)\qpz\}\subset\Phi^-. 
\]
Let $\varrho:H^+\to\GL(\Psi)$ be the corresponding representation. 
Now by Lemma~\ref{lem:H+-co-torsion} there is some $n_0$ so that $s^{n_0}\in H^+.$
Hence 
\[
|{\rm det}(\varrho(s^{n_0}))|\neq1.
\]
This contradicts the fact that $H^+$ is a generated by 
$k$-unipotent subgroups and finishes the proof of~(1).

We now turn to the proof of~\eqref{eq:H-+-q}. 
Recall that $\qpz$ is fixed by $U,$ therefore,~\eqref{eq:H-+-q} 
follows if we show that 
\[
\rho(\wmh)\subset\Phi^-.
\] 
Let $w\in\wmh$ be arbitrary. Then $s^nws^{-n}\to e$ as $n\to\infty.$ 
Using this and~\eqref{eq:lim-q-0} we get that
\[
\lim_{n\to\infty}\rho(s^n)\rho(w)\qpz=\lim_{n\to\infty}\rho(s^nws^{-n})\rho(s^n)\qpz= 0.
\]
Hence, $\rho(w)\qpz\in\Phi^-$ for all $w\in\wmh$ as we wanted to show.

We now prove (2). The proof is similar to the above.
Decompose $\Phi$ according to the weights of the element $s\in S.$
Hence 
\[
\Phi=\Phi^-+\Phi^0+\Phi^+.
\] 
 
We claim that 
\be\label{eq:wm-z}
\rho(W^-_{H}(s)Z_{H}(s))\qpz\subset\Phi^-+\Phi^0
\ee
To see this, let $wz\in W^-_{H}(s)Z_{H}(s)$ be arbitrary. Then $s^nws^{-n}\to e$ as $n\to\infty$
and $sz=zs.$ Since $\rho(s)\qpz=\qpz$ we get that
\[
\lim_{n\to\infty}\rho(s^n)\rho(wz)\qpz=\lim_{n\to\infty}\rho(s^nws^{-n})\rho(z)\rho(s^n)\qpz=\rho(z)\qpz.
\]
Hence, $\rho(wz)\qpz\in\Phi^-+\Phi^0$ for all $wz\in W^-_{H}(s)Z_{H}(s)$ as we wanted to show. 

Our assumption that $\rho(U)\qpz=\qpz$ together with~\eqref{eq:wm-z} now implies that
\[
\rho(W^-_{H}(s)Z_{H}(s)W^+_{H}(s))\qpz\subset\Phi^-+\Phi^0.
\] 
This is to say 
$\rho\circ\iota\Bigl(\bbw^-_{\bbh}(s)(k)\bbz_{\bbh}(s)(k)\bbw^+_{\bbh}(s)(k)\Bigr)\qpz\subset\Phi^-+\Phi^0.$  
In view of~\eqref{opposite-horo} we thus get
\[
\rho(H^+)\qpz\subset\rho(H)\qpz=\rho\Bigl(\iota(\bbh(k))\Bigr)\qpz\subset\Phi^-+\Phi^0. 
\]

As above, define $\Psi$ to be the $k$-span of $\{\rho(H)\qpz\}.$ Note that $\Psi\subset\Phi^-+\Phi^0.$ 
Let $(\varrho,\Psi)$ denote the corresponding representation of $H$ on $\Psi.$  
Let $n_0$ be so that $s^{n_0}\in H^+.$ Since $H^+$ is generated by $k_w$-unipotent 
subgroups we get that
\[
|{\rm det}(\varrho(s^{n_0}))|=1,
\]
which implies that $\Psi\subset\Phi^0.$

Let now $\ppz\in\Psi$ be any vector. 
For any compact subset $B\subset U$ there is a compact subset $B'\subset \Psi$
so that $\varrho(B)\ppz\subset B'.$ Since $\Psi\subset\Phi^0,$ we get
\be\label{eq:exp-U}
\varrho(s^{nn_0}Bs^{-nn_0})\ppz=\varrho(s^{nn_0})\varrho(B)\ppz\subset B'.
\ee
Letting $n\to\infty$ we get from~\eqref{eq:exp-U} that $\rho(U)\ppz\subset B'$ 
is contained in a compact subset of $\Psi$.
Note, however, that $U$ is a $k$-split, unipotent, $k$-subgroup, therefore 
\[
\varrho(U)\ppz=\ppz.
\]
Hence, $U$ is in the kernel of $\varrho.$ 
Since the kernel is a normal subgroup of $H^+$
we get from Lemma~\ref{lem:H+-co-torsion} that 
$H^+\subset\ker(\varrho)$ which implies (2).
\end{proof}

\begin{lem}\label{lem:H+-co-torsion}
The only normal subgroup of $H^+$ which contains $U$ is $H^+.$
Moreover, $H/H^+$ is a compact and torsion group. 
\end{lem}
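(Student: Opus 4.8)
The plan is to exploit the structure theory of $H^+$ as the image under $\tilde\iota$ of $\rcal_{k''/k}(\widetilde{\bbh'})(k)$, where $\widetilde{\bbh'}$ is the simply connected cover of the $k''$-almost simple, $k''$-isotropic group $\bbh'$. Since the kernel of $\tilde\iota$ is solvable and $H^+$ is generated by $k$-unipotent subgroups, the kernel is in fact trivial on the subgroup generated by unipotents; thus $H^+$ is a quotient of $\widetilde{\bbh'}(k'')$ by a central subgroup, hence we may work inside $\widetilde{\bbh'}(k'')^+ = \langle \bbw^+_{\widetilde{\bbh'}}(s')(k''), \bbw^-_{\widetilde{\bbh'}}(s')(k'')\rangle$. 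First I would invoke~\cite[Ch.~I, Thm.~1.5.6 and Ch.~I, \S1.5]{Mar5} (or~\cite[Thm.~2.3.1]{Mar5}): for a simply connected, almost $k''$-simple, $k''$-isotropic group, the group generated by the $k''$-points of the unipotent radicals of the minimal parabolic $k''$-subgroups is \emph{normal} and its image in the adjoint group has no proper noncentral normal subgroups; more precisely, by Tits' simplicity theorem the quotient $\widetilde{\bbh'}(k'')^+/Z$ is abstractly simple, where $Z$ is its center. Consequently any normal subgroup $N$ of $H^+$ either is central or equals $H^+$.

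The next step is to rule out the central case when $N \supset U$. The center $Z$ of $H^+$ is contained in $Z_{H^+}(s)$, so if $U \subset N \subset Z$ then $U$ would be central, in particular normalized trivially by $s$. But $U = \iota(\bbu(k'))$ with $\bbu = \bbw^+_{\bbh}(s')$ nontrivial (as $\lambda$ is noncentral), and $s$ acts on $U$ with strictly positive weights, so $U$ is not centralized by $s$ and is nontrivial; hence $U \not\subset Z$. Therefore $N = H^+$, proving the first assertion. I would take care to check that $U$ is genuinely nontrivial and noncentral: this follows because $\bbu$ has positive-dimensional Lie algebra (the positive weight spaces of a noncentral cocharacter are nonzero, by~\cite[App.~C]{CGP}) and $\iota$ has solvable — in particular unipotent-radical-avoiding — kernel, so $\iota$ is injective on the unipotent group $\bbu(k')$.

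For the second assertion, that $H/H^+$ is compact and torsion: compactness is the standard fact that for a $k''$-isotropic, almost $k''$-simple group, $\bbh'(k'')/\bbh'(k'')^+$ is compact — indeed it is a quotient of a product of groups of $k''$-rational points of tori modulo the subgroup generated by unipotents, which by~\cite[Ch.~I, Thm.~2.3.1(b)]{Mar5} is compact; passing through the solvable-kernel homomorphism $\iota$ and the central covering $\widetilde{\bbh'}\to\bbh'$ preserves this, and $H$ differs from $\iota(\bbh(k'))=\iota(\bbh'(k''))$ by at most a finite group. That $H/H^+$ is torsion follows because $H^+$ contains the commutator subgroup of $H$ (again by Margulis' results, $\bbh'(k'')/\bbh'(k'')^+$ is abelian, in fact a torsion group since it is a quotient of $K^\times$-type factors in positive characteristic — every element of $k''^\times/(k''^\times)^{\mathrm{ray}}$ is torsion when $\mathrm{char}(k'')>0$, as the value group is $\bbz$ and the residue/unit part is profinite torsion). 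The main obstacle I anticipate is the bookkeeping of passing between $\bbh$, $\bbh'$, $\widetilde{\bbh'}$, and their images under $\iota$: one must verify that "generated by $k$-unipotent subgroups" is preserved and that the solvable kernel of $\iota$ does not interfere, which is exactly where the hypothesis that $\ker\iota$ is solvable (so unipotent modulo its identity component issues, hence meets the unipotent-generated part trivially) gets used.
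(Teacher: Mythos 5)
Your proposal is correct and follows essentially the same route as the paper: the paper simply identifies $H^+=\iota\bigl(\bbh'(k'')^+\bigr)$ and cites~\cite[Ch.~I, Prop.~1.5.4, Thms.~1.5.6 and 2.3.1]{Mar5} for both claims, which is exactly the Tits-simplicity plus compactness-and-torsion-of-$\bbh'(k'')/\bbh'(k'')^+$ package you spell out. Your additional verifications (that $U$ is noncentral, and that the solvable kernel of $\iota$ meets $\widetilde{\bbh'}(k'')^+$ only in a central subgroup) are correct elaborations of steps the paper leaves implicit.
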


\begin{proof}
Note that $\bbh(k)=\bbh'(k'')$ and $s\in\iota(\bbh(k)).$
Therefore, 
\[
H^+=\iota\Bigl(\bbh'(k'')^+\Bigr).
\]
Since $\bbh'$ is $k''$-almost simple and $k''$-isotropic, 
it follows from~\cite[Ch.~I, Prop.~1.5.4, Thms.~1.5.6, and Thm.~2.3.1]{Mar5}
that the claims in the lemma hold for $\bbh'(k'')^+.$
Hence, they hold for $H^+.$
\end{proof}

Let $\bbf$ denote the Zariski closure of the $k$-closed group $\fcal(s).$ 
By Lemma~\ref{l;group-scheme} we have 
$\bbf$ is a $k$-algebraic subgroup of $\bbg$ and $\fcal(s)=\bbf(k).$
It is worth mentioning that $\bbf$ is not necessarily connected we let $F=\bbf^\circ(k)$. 
Using the adjoint action of $s$ we have
\[
\Lie(\bbf)=\Lie(\ucal^-)\oplus\Lie(\bbf\cap\zg)\oplus\Lie(\ucal)\subset\Lie(\bbg).
\]
Recall from the introduction that the product map
\be\label{e;f-iwasawa}
\ucal^-\times (F\cap\zg)\times\ucal\to F
\ee
is a diffeomorphism onto a Zariski open dense subset which contains the identity.

Let us fix a norm $\|\;\|$ on $\Lie(\bbg).$
Put $\Phi=\wedge^{\dim\bbf}\Lie(\bbg),$ $\rho=\wedge^{\dim\bbf}\Ad$, and
let $\qpz\in k\cdot\wedge^{\dim\bbf}\Lie(\bbf)$ be a nonzero vector. Then
\be\label{eq:fs-qpz}
\fcal(s)\subset\{g\in G: \rho(g)\qpz\in k\cdot\qpz\}.
\ee
Moreover, since $\ucal$ is a unipotent subgroup $\rho(\ucal)q=q.$
Recall now that $U\subset\ucal,$ therefore, 
\be\label{eq:Uq=q}
\rho(U)\qpz=\qpz.
\ee

\begin{lem}\label{l;exp-geq-cont}
We have
\begin{equation}\label{e;exp=cont}
\alpha(s,\ucal)\geq\alpha(s^{-1},\ucal^-)
\end{equation} 
where the function $\alpha$ denotes the modulus of the conjugation action, 
see~\S\ref{sec;modulus}. 
\end{lem}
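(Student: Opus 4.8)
The plan is to exploit the representation $(\rho, \Phi)$ on $\Phi = \wedge^{\dim \bbf} \Lie(\bbg)$ together with the vector $\qpz \in k\cdot \wedge^{\dim\bbf}\Lie(\bbf)$ set up just above the statement, and apply Lemma~\ref{l;epimorphic}(1). Indeed, by~\eqref{eq:fs-qpz} we have $\langle U, s\rangle \subset \fcal(s) \subset \{g : \rho(g)\qpz \in k\cdot\qpz\}$, so Lemma~\ref{l;epimorphic}(1) gives $\|\rho(s^n)\qpz\| \geq \|\qpz\|$ for all $n \geq 1$; in particular $s$ acts on the line $k\cdot\qpz$ by a character $\chi$ with $|\chi(s)| \geq 1$. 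On the other hand, $\chi(s)$ is precisely the determinant of the adjoint action of $s$ on $\Lie(\bbf)$, since $\qpz$ spans $\wedge^{\dim\bbf}\Lie(\bbf)$ and $\rho = \wedge^{\dim\bbf}\Ad$. So the first step is to record that $|\det(\Ad(s)|_{\Lie(\bbf)})| \geq 1$.

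The second step is to decompose $\Lie(\bbf)$ under the adjoint action of $s$ using the filtration from~\S\ref{sec;quasi}, namely
\[
\Lie(\bbf) = \Lie(\ucal^-) \oplus \Lie(\bbf \cap \zg) \oplus \Lie(\ucal),
\]
which is the Lie algebra version of~\eqref{e;f-iwasawa}. On $\Lie(\bbf\cap\zg)$ the element $s$ acts with weight zero, so that block contributes determinant of absolute value $1$. Hence
\[
|\det(\Ad(s)|_{\Lie(\ucal)})| \cdot |\det(\Ad(s)|_{\Lie(\ucal^-)})| = |\det(\Ad(s)|_{\Lie(\bbf)})| \geq 1.
\]
The third step is to identify each of these two determinants with a modulus of conjugation. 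Since $\ucal$ (resp.\ $\ucal^-$) is a $\kt$-split unipotent group normalized by $s$, the conjugation action of $s$ on $\ucal$ is conjugate via the $S$-equivariant isomorphism $\ucal \cong \Lie(\ucal)$ (cf.~\cite[Cor.~9.12]{BS}, as used in~\S\ref{sec;quasi}) to the linear action $\Ad(s)$ on $\Lie(\ucal)$; therefore $\alpha(s,\ucal) = |\det(\Ad(s)|_{\Lie(\ucal)})|$, and likewise $\alpha(s^{-1}, \ucal^-) = |\det(\Ad(s^{-1})|_{\Lie(\ucal^-)})| = |\det(\Ad(s)|_{\Lie(\ucal^-)})|^{-1}$. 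Substituting into the displayed inequality yields $\alpha(s,\ucal) \geq \alpha(s^{-1}, \ucal^-)$, which is~\eqref{e;exp=cont}.

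The main obstacle is the second step's clean reduction to the weight-zero contribution of $\Lie(\bbf\cap\zg)$: one must make sure that the decomposition of $\Lie(\bbf)$ used is genuinely $\Ad(s)$-stable and matches the local product structure~\eqref{e;f-iwasawa}, and that passing from the group moduli $\alpha(s,\ucal)$, $\alpha(s^{-1},\ucal^-)$ to the linear determinants on the Lie algebras is legitimate in positive characteristic — this is exactly where the $S$-equivariant identification of a split unipotent group with its Lie algebra (already invoked in the construction of quasi-regular maps) is needed. Once that identification is in place, the rest is bookkeeping with determinants and absolute values.
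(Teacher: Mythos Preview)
Your proposal is correct and follows essentially the same approach as the paper: apply Lemma~\ref{l;epimorphic}(1) to obtain $\|\rho(s)\qpz\|\geq\|\qpz\|$, identify $\rho(s)$ on $\qpz$ with $\det(\Ad(s)|_{\Lie(\bbf)})$, and then use the $\Ad(s)$-stable decomposition $\Lie(\bbf)=\Lie(\ucal^-)\oplus\Lie(\bbf\cap\zg)\oplus\Lie(\ucal)$ together with the identification of the conjugation modulus with the absolute value of the determinant of the adjoint action. The only cosmetic difference is that the paper cites \cite[10.1.6]{Bour-Var} and \cite[Thm.~2.4]{Oes} for the modulus--determinant identification, whereas you invoke the $S$-equivariant isomorphism $\ucal\cong\Lie(\ucal)$ from \cite[Cor.~9.12]{BS}; both are valid in this setting.
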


\begin{proof}
We first note that in view of relations between the Haar measure and 
algebraic form of top degree, see~\cite[10.1.6]{Bour-Var} and~\cite[Thm.~2.4]{Oes}, 
the claim in the lemma is equivalent to the fact that
\be\label{eq:det-s}
|{\rm det}(\Ad (s))|_{\Lie(\ucal)}|_{w}\geq |{\rm det}(\Ad (s^{-1}))|_{\Lie(\ucal^-)}|_{w}.
\ee
In view of the definition of $\rho$ and $\qpz,$~\eqref{eq:det-s} follows if we show $\|\rho( s)\qpz\|\geq\|\qpz\|.$
The latter holds thanks to Lemma~\ref{l;epimorphic}(1) in view of~\eqref{eq:fs-qpz} and~\eqref{eq:Uq=q}.
\end{proof}

\subsection{Entropy argument and the conclusion of the proof}\label{sec:entropy-proof}
The following theorem is proved in~\cite{MT}; it serves as one of the main
ingredients in the proof of the measure classification theorem in~\cite{MT}. 
 
Given an element $s$ from class $\mathcal{A}$ 
which acts ergodically on a probability measure space $(X,\sigma)$, 
let $\mathsf h_\sigma(s)$ denote the measure theoretic entropy of $s.$  

\begin{thm}[Cf.~\cite{MT}, Theorem 9.7]\label{entropy}
Assume $s$ is an element from class $\mathcal{A}$ which acts ergodically on a measure space $(X,\sigma).$ 
Let $V$ be an algebraic subgroup of ${\wmg}$ which is normalized by $s$ and put $\alpha=\alpha(s^{-1},V).$
\begin{enumerate}
\item If $\sigma$ is $V$-invariant, then $\mathsf h_\sigma(s)\geq\log\alpha.$
\item Assume that there exists a subset ${\Omega}\subset X$ with $\sigma({\Omega})=1$ 
such that for every $x\in{\Omega}$ we have ${\wmg}x\cap{\Omega}\subset Vx.$ 
Then $\mathsf h_\sigma(s)\leq\log(\alpha)$ and the equality holds if and only if $\sigma$ is $V$-invariant.
\end{enumerate}
\end{thm}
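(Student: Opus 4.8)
This statement is precisely Theorem~9.7 of~\cite{MT}, and the plan is to verify that the argument given there is characteristic free; I only indicate its structure. Write $T=s^{-1}$ and $W=\wmg$, and recall that $s$ contracts $W$, so $T$ expands $W$, and $W$ is the unstable horospherical subgroup of $T$; since $V\subseteq W$ is an algebraic subgroup normalized by $s$, $T$ also expands $V$. By the definition of the modulus of conjugation in~\S\ref{sec;modulus}, the Jacobian of the conjugation action of $T$ on $V$ equals $\alpha=\alpha(s^{-1},V)$, and the one on $W$ equals $\alpha(s^{-1},W)\geq\alpha$. Since $\mathsf h_\sigma(s)=\mathsf h_\sigma(T)$, it suffices to run the usual entropy-versus-unstable-foliation machinery for $T$.

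For part~(1) I would fix a countably generated, finite-entropy measurable partition $\mathcal Q$ of $X$ subordinate to the $V$-foliation: for $\sigma$-a.e.\ $x$ the atom $\mathcal Q(x)$ is a bounded open neighborhood of $x$ inside $Vx$, and the partitions $T^n\mathcal Q$ become coarser along $V$ as $n$ increases, so that $\bigvee_{n\geq1}T^n\mathcal Q=T\mathcal Q$ modulo $\sigma$. Such $\mathcal Q$ is built exactly as in~\cite[\S9]{MT} out of $s$-invariant filtrations of the type $\mathfrak B^\pm_n$ from~\S\ref{sec;quasi}, using that the local fields $k_v$ have well-behaved balls via the uniformizers $\varpi_v$. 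Since $\sigma$ is $V$-invariant, the standard characterization of invariance in terms of conditional measures shows that the conditionals $\{\sigma^{T\mathcal Q}_x\}$ of $\sigma$ on the atoms of $T\mathcal Q$ are, $\sigma$-a.e., proportional to Haar measure on the corresponding pieces of $V$-orbits. Hence $\mathsf h_\sigma(s)\geq\mathsf h_\sigma(T,\mathcal Q)=H_\sigma(\mathcal Q\mid T\mathcal Q)$, and the last quantity equals the average over $x$ of $-\log\sigma^{T\mathcal Q}_x(\mathcal Q(x))$; since $T$ multiplies Haar measure on $V$ by $\alpha$ and the conditionals are Haar, this average is exactly $\log\alpha$, giving $\mathsf h_\sigma(s)\geq\log\alpha$.

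For part~(2) I would instead use a finite-entropy partition $\mathcal P$ of $X$ subordinate to the \emph{full} $W$-foliation, built in the same way from the decomposition $\mathcal D=\wmg\zg\wpg$ of~\eqref{e;w-zw+} and the filtrations $\mathfrak B^\pm_n$; by the standard relation between entropy and the unstable foliation one has $\mathsf h_\sigma(s)=\mathsf h_\sigma(T,\mathcal P)=H_\sigma(\mathcal P\mid T\mathcal P)$, computed from the conditional measures $\sigma^{T\mathcal P}_x$ along the $W$-leaves. The hypothesis $\wmg x\cap\Omega\subset Vx$ for $\sigma$-a.e.\ $x$ forces each $\sigma^{T\mathcal P}_x$ to be supported on the $V$-orbit through $x$ inside that atom; since along a $V$-orbit the only expansion performed by $T$ is by the factor $\alpha$, one gets $\mathsf h_\sigma(s)=H_\sigma(\mathcal P\mid T\mathcal P)\leq\log\alpha$, with equality precisely when each $\sigma^{T\mathcal P}_x$ is the normalized Haar measure on the corresponding piece of $V$-orbit. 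When this equality of conditionals holds, iterating the expanding map $T$ on $V$ together with the $s$-ergodicity of $\sigma$ (note $\sigma$ is $s$-ergodic by the Mautner phenomenon in our situation) promotes "$\sigma^{T\mathcal P}_x$ is Haar on a neighborhood in $Vx$'' to genuine $V$-invariance of $\sigma$, via the increasing-martingale argument at the end of~\cite[\S9]{MT}.

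The main point to stress is that there is strictly speaking no new obstacle: the box-partition construction, the abstract conditional-entropy formalism, the identification of conditional measures with Haar, and the martingale upgrade all use only the locally compact group structure of $G$ and the \emph{algebraic} modulus of conjugation, both available in arbitrary characteristic. The one step that does require care is checking that the leafwise geometry used to build $\mathcal Q$ and $\mathcal P$ — the comparability of Haar measure on the $\mathfrak B^\pm_n$ with the combinatorics of the functions $\ell^\pm$ — remains valid over $\kt$, and this is exactly what the material assembled in~\S\ref{sec;quasi} provides. So the proof is a verification rather than a new argument, which is why~\cite[Theorem 9.7]{MT} can be invoked directly.
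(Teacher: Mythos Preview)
The paper does not give a proof of this theorem at all; it simply states it as a citation of~\cite[Theorem~9.7]{MT} and uses it as a black box. Your proposal --- to sketch the~\cite{MT} argument and observe that every step (the box-partition construction, the conditional-measure formalism, the Haar identification, the martingale upgrade) goes through over arbitrary local fields --- is exactly the right thing to do, and your outline of the two parts is essentially correct. One small redundancy: you invoke the Mautner phenomenon to get $s$-ergodicity of $\sigma$, but this is already part of the hypotheses of the theorem, so that remark is unnecessary.
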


\begin{proof}[Proof of Theorem~\ref{t;U-measure-class}]
Let $\mu$ be as in the statement of Theorem~\ref{t;U-measure-class}. 
First note that $\mu$ is $s$-ergodic. Indeed by Mautner phenomenon 
any $s$-ergodic component of $\mu$ is $U$-invariant. 
Therefore, $s$-ergodicity of $\mu$ follows its $U$-ergodicity.  

Let $\ucal$ be as in~\eqref{eq:ucal-def}, i.e.~$\ucal$ is the maximal subgroup of $\wpg$ which leaves $\mu$ invariant. 
We complete the proof in some steps. 

\vspace{1mm}
{\it Step 1.} $\mu$ is invariant under $\ucal^-.$

By Corollary~\ref{p;contracting-leaf1}, there exists a full measure subset ${\Omega}\subset X$ such that 
\[
{\wmg}x\cap{\Omega}\subset \ucal^-x \text{ for every }x\in{\Omega}.
\] 
Recall that $\mu$ is $s$-ergodic. 
Applying Theorem~\ref{entropy}, we get that
\begin{equation}\label{e;entropy1}
\log\alpha(s,\ucal)\leq \mathsf h_\mu(s)=\mathsf h_{\mu}(s^{-1})\leq\log\alpha(s^{-1},\ucal^-).
\end{equation}
Note, however, that by Lemma~\ref{l;exp-geq-cont} we have  
\[
\alpha(s,\ucal)\geq\alpha(s^{-1},\ucal^-).
\]
Therefore, the equality must hold in~\eqref{e;entropy1}.

Now another application of Theorem~\ref{entropy}(2) 
implies that $\mu$ is invariant under $\ucal^-.$

{\it Step 2.} Reduction to Zariski dense measures.

We now apply Lemma~\ref{zd-measure1} with $B=\langle \ucal^-,S,\ucal\rangle$ and $M=G.$ 
Hence, we get a connected $\kt$-subgroup $\bbg'\subset \bbg$ and a point $g\Gamma=x\in X$
such that $B\subset G'$ and $\mu(G'x)=1$ where $G':=\bbg'(\kt).$ 
Moreover, for every proper $\kt$-closed subset $D\subsetneq G'$ 
we have $\mu(\pi(D))=0,$ and $G'\cap g\Gamma g^{-1}$ is Zariski dense in $\bbg'.$  

Abusing the notation we let 
$\vcal$ (resp.~$\vcal^-$) denote an $S$-invaraint cross section for $\ucal$ (resp.~$\ucal^-$)
in $\wpgp$ (resp.\ $\wmgp$).

{\it Step 3.} $\mu$ is invariant under ${\wmgp}.$

We will show $\ucal^-={\wmgp}$ which implies the claim in view of {\em Step 1.} 

Assume the contrary. Then $\vcal^-\neq\{e\}.$ 
The definition of $\ucal^-,$ see~\eqref{e;counterpart1}, implies that 
\[
 \vcal^-\zgp{\wpgp}\not\subset N_{G'}(\ucal).
\]
In particular, 
\[
D':=\Bigl(\zgp{\wpgp}\cup N_{G'}(\ucal)\Bigr)\cap \vcal^-\zgp{\wpgp}
\] 
is a proper $\kt$-closed subset of $\vcal^-\zgp{\wpgp}.$

This together with {\em Step 1} and {\em Step 2} implies that conditions in Lemma~\ref{zd-measure2} 
are satisfied with $M=G',$ $B=\ucal^-,$ $L=\vcal^-\zgp{\wpgp},$ and $D=D'.$
Therefore, we get the following from the conclusion of that lemma. 
If $0<\vare<1$ is small enough and ${\Omega}_\vare$ 
is a measurable set with $\mu({\Omega}_\vare)>1-\vare,$ 
then there exists a sequence $\{g_n\}$ converging to $e$ such that
\begin{equation}\label{w-displacement}
\{g_n\}\subset \vcal^-\zgp{\wpgp}\setminus \Bigl(\zgp{\wpgp}\cup N_{G'}(\ucal)\Bigr),
\end{equation}
and $g_n{\Omega}_\vare \cap {\Omega}_\vare\neq\emptyset$ for all $n.$ 

In particular, we have $\ell^-(v^-(g_n))>-\infty$ and $\ell^-(u^-(g_n))=-\infty.$ 
This contradicts Proposition~\ref{p;contracting-leaf1}
and shows that $\ucal^-={\wmgp}$ as was claimed.

{\it Step 4.}  $\mu$ is invariant under ${\wpgp}.$

We will show $\ucal=\wpgp$ which implies the claim. 

Assume the contrary.
Then $\zgp\ucal$ is a proper subvariety of $\zgp\wpgp.$
Apply Lemma~\ref{zd-measure2} with $M=G',$ $B=\wmgp,$ $L=\zgp\wpgp$,
$D=\zgp\ucal,$ and ${\Omega}_\vare$ as in {\em Step 3.}
Therefore, we find 
\[
\mbox{$\{g_n\}\subset \zgp\wpgp\setminus \zgp\ucal$}
\]
so that $g_n{\Omega}_\vare\cap{\Omega}_\vare\neq\emptyset$ and $g_n\to e.$ 

We consider two cases.

{\it Case 1.} 
Suppose there is a subsequence $\{g_{n_i}\}$ such that $g_{n_i}\notin N_{G'}(\ucal)$
for all $i.$ Abusing the notation, we denote this subsequence by $\{g_n\}.$  
Construct the map $\phi$ using this sequence $\{g_n\}.$ 
Then, $\mu$ is invariant by $\langle\ucal,{\rm Im}(\phi)\rangle.$ 
On the other hand by Proposition~\ref{p;star}(2) we have ${\rm Im}(\phi)\subset\wpgp\setminus \ucal.$
This contradicts the maximality of $\ucal$ and finishes the proof.

{\it Case 2.} Suppose there exists some $n_0$ so that $g_n\in N_{G'}(\ucal)$
for all $n\geq n_0.$ Taking $n\geq n_0$, we assume that $g_n\in N_{G'}(\ucal)$ for all $n.$ 
Now by Remark~\ref{r;norm-inv}, $\mu$ is invariant under $g_n$ for all $n.$
Write 
\[
g_n=z(g_n)v(g_n)u(g_n)\in \zgp\vcal\ucal.
\] 
Since $\mu$ is $\ucal$-invariant, the above implies 
that $\mu$ is invariant under $z(g_n)v(g_n).$ 
Moreover, in view of the choice of $g_n$ we have
$v(g_n)\neq e.$ 
Recall also that $\mu$ is $s$-invariant. Therefore, $\mu$ is invariant under 
\[
\mbox{$s^\ell z(g_n)v(g_n) s^{-\ell}=z(g_n)s^\ell v(g_n) s^{-\ell}.$}
\]
for all $\ell\in\bbz.$
For each $n$ choose $\ell_n\in\bbz$ so that 
\[
v_n=s^{\ell_n}v(g_n)s^{-\ell_n}\in(\vcal\cap\Bfrak_0)\setminus\Bfrak_{-1}; 
\]
such $\ell_n$ exists since $v(g_n)\neq e$ and the cross section $\vcal$ 
is $S$-invariant. 
Now passing to a subsequence,
we get that $z(g_n)v_n\to v\in\vcal$ and $v\neq e.$ Therefore, $\mu$ is invariant under $v.$  
This again contradicts maximality of $\ucal.$ 

{\it Step 5.} Conclusion of the proof.

So far we have proved that $\mu(G'x)=1$ and $\mu$
is invariant and ergodic under $G'':=\langle {\wmgp},{\wpgp}\rangle.$

Hence, $\mu$ is a probability measure on $G'/G'\cap g\Gamma g^{-1}$ 
which is invariant and ergodic under $G''.$
By Lemma~\ref{lem:w-pm-normal} we have that $G''$ is a normal and unimodular subgroup of $G',$
see ~\eqref{eq:M-+-la} and~\eqref{eq:def-W-pm-s}.
This and Lemma~\ref{normal-unimodular} now imply Theorem~\ref{t;U-measure-class}.
\end{proof}

We now give a refinement of Theorem~\ref{t;U-measure-class}.

\begin{thm}\label{algeb-measure}
Let the notation and the assumptions be as in Theorem~\ref{t;U-measure-class}. 
Then, there exist
\begin{enumerate}
\item $\lt=\prod_{{v}\in\tcal}l_{v}\subset \kt$ where $l_{v}=k_{v}$ if $v\neq w$ and $l_{w}=(k')^q$ for some $q=p^n,$ moreover, $q$ depends only on the weights of the action of $S$ by conjugation, 
\item a connected $\lt$-subgroup $\bbf$ of $\rcal_{\kt/\lt}(\bbg)$ so that $\bbf(\lt)\cap \Gamma$
is Zariski dense in $\bbf,$ 
\item a point $x=g_0\Gamma\in X,$ 
\end{enumerate} 
such that $\mu$ is the $\Sigma$-invariant probability Haar measure on the closed orbit $\Sigma x$ 
with 
\[
\Sigma=g_0\overline{F^+(\la)(\bbf(\lt)\cap\Gamma)}g_0^{-1} 
\]
where  
\begin{itemize}
\item the closure is with respect to the Hausdorff topology, and 
\item $F^+(\la)$ is defined in~\eqref{eq:M-+-la} for a non central $\lt$-homomorphism $\la:\bbg_m\to\bbf.$ 
\end{itemize} 
Moreover, $g_0{\mathbb F}(\lt)\Gamma$
is the smallest set of the form $\bbm(\lt)\Gamma$ where $\bbm$ is an $\lt$-subvariety
so that $\mu(\bbm(\lt)\Gamma/\Gamma)>0.$
\end{thm}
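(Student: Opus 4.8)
The proof is a refinement of the bookkeeping already present in the proof of Theorem~\ref{t;U-measure-class}, combined with the minimality built into Lemma~\ref{zd-measure1}; the plan is to read off the objects $(\lt,\bbf,g_0)$ from that proof and then invoke Lemma~\ref{normal-unimodular}. First I would identify $\lt$ as the field produced by the two reductions of \S\ref{sec:ucal}: at the distinguished place one passes from $\bbg_w$ to $\rcal_{k_w/k'}(\bbg_w)$ and then, after applying Proposition~\ref{p:TorusInvariantUnipotent} to the $S$-normalised group $\ucal\subset\wpg$, to the restriction of scalars down to $(k')^q$ with $q=p^n$ depending only on the weights of $\bbs$ on $\bbw^+_{\bbg_w}(s)$; so $l_w=(k')^q$, $l_v=k_v$ for $v\ne w$, and by transitivity of Weil restriction the ambient group throughout \S\ref{sec;proof} is $\rcal_{\kt/\lt}(\bbg)$. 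Step~2 of that proof applies Lemma~\ref{zd-measure1} to $B=\langle\ucal^-,S,\ucal\rangle$ and returns a connected $\lt$-subgroup $\bbg'\subset\rcal_{\kt/\lt}(\bbg)$ and a point $x=g\Gamma$ with $\mu(G'x)=1$ (where $G'=\bbg'(\lt)$), with $\mu$ vanishing on every proper $\lt$-subvariety of $\bbg'g$, and with $G'\cap g\Gamma g^{-1}$ Zariski dense in $\bbg'$. I would then set $g_0:=g$ and $\bbf:=g_0^{-1}\bbg'g_0$; conjugating the density statement by the $\lt$-point $g_0^{-1}$ shows $\bbf(\lt)\cap\Gamma=g_0^{-1}(G'\cap g_0\Gamma g_0^{-1})g_0$ is Zariski dense in $\bbf$, which is item~(2), while $\lt$ and $x$ give items~(1) and~(3).

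Next I would extract the homogeneity statement. By Steps~3, 4 and~5 of the proof of Theorem~\ref{t;U-measure-class}, $\mu$ is invariant and ergodic under $G'':=\langle\wmgp,\wpgp\rangle$, which is the group denoted $M^+(\la')$ in \eqref{eq:M-+-la}--\eqref{eq:def-W-pm-s} for $M=G'$ and the non-central $\lt$-cocharacter $\la':\bbg_m\to\bbg'$ with $s\in\la'(\bbg_m(\lt))$, and $G''$ is a normal unimodular subgroup of $G'$ by Lemma~\ref{lem:w-pm-normal}. Transporting $\mu$ along the $G'$-equivariant identification $G'/(G'\cap g_0\Gamma g_0^{-1})\cong G'x$ and applying Lemma~\ref{normal-unimodular} with $M=G'$, $\Lambda=G'\cap g_0\Gamma g_0^{-1}$, $B=G''$, one obtains that $\mu$ is the invariant probability measure on a \emph{closed} orbit of
\[
\Sigma_0:=\overline{G''(G'\cap g_0\Gamma g_0^{-1})},
\]
and that $\Sigma_0$ is the stabiliser of $\mu$ inside $G'$. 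Since the Hausdorff closure and the passage $\bbg'\rightsquigarrow\bbg'(\lt)$ both commute with conjugation by the $\lt$-point $g_0$, this says $\Sigma_0=g_0\,\overline{F^+(\la)(\bbf(\lt)\cap\Gamma)}\,g_0^{-1}$ with $\la=\mathrm{conj}_{g_0^{-1}}\circ\la'$ a non-central cocharacter of $\bbf$ and $F^+(\la)$ as in \eqref{eq:M-+-la}; this is the asserted formula with $\Sigma=\Sigma_0$, together with the closedness of $\Sigma x$.

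It then remains to prove the ``moreover'' and that $\Sigma_0$ is the full stabiliser. For the minimal variety: if $\bbm$ is an $\lt$-subvariety with $\mu(\pi(\bbm(\lt)))>0$, then since $\mu(G'x)=1$ some $\Gamma$-translate $\bbg'g_0\gamma$ of the minimal variety $\bbg'g_0$ (it has the same projection $G'x$) meets $\bbm$ in an $\lt$-subvariety of positive $\mu$-measure, so minimality forces $\bbg'g_0\gamma\subseteq\bbm$ and hence $\pi(\bbm(\lt))\supseteq\pi(\bbg'(\lt)g_0)=g_0\bbf(\lt)\Gamma/\Gamma$; in particular $g_0\bbf(\lt)\Gamma/\Gamma=G'x$. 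For the full stabiliser, take $\sigma\in\Sigma$; then $\mu(\sigma^{-1}(G'x))=1$, so $\sigma^{-1}\bbg'g_0$ is again minimal with full $\mu$-measure projection, and comparing it with $\bbg'g_0$ by the same intersection argument forces $\sigma$ to normalise $\bbg'$. Writing $\sigma=h\delta$ with $h\in\Sigma_0$ chosen so that $hx=\sigma x$ and $\delta\in g_0\Gamma g_0^{-1}$, a further use of minimality gives $\delta\in\bbg'(\lt)$, so $\delta\in G'\cap g_0\Gamma g_0^{-1}\subset\Sigma_0$ and $\sigma=h\delta\in\Sigma_0$.

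The first two paragraphs are essentially a transcription of what the proof of Theorem~\ref{t;U-measure-class} and Lemma~\ref{normal-unimodular} already give, up to conjugation by $g_0$. The step I expect to be the main obstacle is the last claim of the third paragraph: promoting the stabiliser computed inside $G'$ to the genuine stabiliser in $G$, that is, excluding $\mu$-preserving elements of $G$ that normalise $\bbg'$ but do not lie in $\bbg'(\lt)$. This is exactly where one must exploit the \emph{minimality} of $\bbg'g_0$ coming from Lemma~\ref{zd-measure1}, rather than merely the existence of a sufficiently large invariance group.
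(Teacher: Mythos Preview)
Your first two paragraphs and the ``Moreover'' argument are correct and match the paper's proof essentially verbatim: the paper's proof of Theorem~\ref{algeb-measure} is precisely a transcription of points (a), (b), (c) from the proof of Theorem~\ref{t;U-measure-class} (the reduction of \S\ref{sec:ucal}, Step~2, and Step~5 respectively), followed by the conjugation $\bbf:=g_0^{-1}\mathbb E g_0$ and the remark that the final claim follows from Lemma~\ref{zd-measure1} and Step~2.

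However, the second half of your third paragraph and your diagnosed ``main obstacle'' are unnecessary. The theorem statement does \emph{not} assert that the displayed group $\Sigma$ is the full stabilizer $\{g\in G:g_*\mu=\mu\}$; it only asserts that $\Sigma x$ is closed and that $\mu$ is the $\Sigma$-invariant probability Haar measure on $\Sigma x$, where $\Sigma$ is \emph{defined} by the formula. Lemma~\ref{normal-unimodular}, applied with $M=G'$, gives exactly this (and, as you note, also identifies $\Sigma$ with the full stabilizer \emph{inside} $G'$, but nothing more is claimed). The paper's proof accordingly makes no attempt to promote $\Sigma$ to the full stabilizer in $G$. So the step you flag as the main obstacle is not part of what needs to be proved; you can simply drop it. (Incidentally, your intersection argument there is not quite complete: $\delta\bbg'g_0\cap\bbg'g_0$ can be empty unless $\delta\in G'$, so one cannot invoke minimality directly on that intersection. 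One would instead argue via the support of $\mu$ and the ``Moreover'' clause applied to $\delta\bbg'\delta^{-1}g_0$, but again this is not required.)
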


\begin{proof}
Indeed the above assertions are proved in the course of the proof of Theorem~\ref{t;U-measure-class}. 
We give a more detailed discussion here for the sake of completeness.

In view of the discussion in the beginning of \S\ref{sec;proof}, we have the following.
\begin{itemize}
\item[(a)] There is some $\lt\subset\kt$ as in (1) so that the group 
$\ucal$ which is defined in~\eqref{eq:ucal-def}
is an $\lt$-split, unipotent, $\lt$-subgroup of $\mathcal R_{\kt/\lt}(\bbg).$
\end{itemize}
We thus replace $\bbg$ with $\mathcal R_{\kt/\lt}(\bbg)$ and have $SU\subset G,$ see \S\ref{sec:ucal}.
By {\em Step 2} in the proof of Theorem~\ref{t;U-measure-class} we have the following.
\begin{itemize}
\item[(b)] There is a connected $\lt$-subgroup, $\mathbb E$, of minimal dimension
such that 
\[
S\ucal\subset E:=\mathbb E(\lt)
\] 
and a point $x=g_0\Gamma\in X$ with the following properties. 
$\mu$ is a probability measure on $E/E\cap g_0\Gamma g_0^{-1},$ moreover, $E\cap g_0\Gamma g_0^{-1}$ is Zariski dense in $\mathbb E.$
\end{itemize}
By {\em Step 5} in the proof of Theorem~\ref{t;U-measure-class} we have the following.
\begin{itemize}
\item[(c)] $\mu$ is the $\Sigma$-ergodic invariant measure on the closed orbit
the closed orbit $\Sigma x,$ where 
\[
\Sigma=\overline{E'(E\cap g_0\Gamma g_0^{-1})}.
\]
with $E':=\langle W^+_{E}(s),W^-_{E}(s)\rangle.$
\end{itemize}
  
Note that in view of (b) above $g_0^{-1}Eg_0\cap\Gamma$ is Zariski dense in $g_0^{-1}\mathbb Eg_0.$ 
Therefore, 
\[
\bbf:=g_0^{-1}\mathbb Eg_0\subset\rcal_{\kt/\lt}(\bbg),
\]
$\lt$ as in (a), and $g_0$ as in (b) satisfy the claims in the theorem.

The final claim follows from Lemma~\ref{zd-measure1} and {\em Step 2} in the proof of Theorem~\ref{t;U-measure-class}.
\end{proof}


\begin{cor}\label{c;alg-measure-S-arith}
The conclusion of Theorem~\ref{algeb-measure} holds in the setting of Theorem~\ref{c;H-measureclass}.
\end{cor}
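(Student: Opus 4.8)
The plan is to derive this as a direct consequence of Theorem~\ref{algeb-measure}: one only has to exhibit, in the setting of Theorem~\ref{c;H-measureclass}, a subgroup $SU$ for which $\mu$ is $SU$-invariant and $U$-ergodic, and then invoke Theorem~\ref{algeb-measure} verbatim. To produce such an $SU$ I would repeat the reduction from the proof of Theorem~\ref{c;H-measureclass}. By~\cite[Ch.~I, Prop.~1.6.3]{Mar5} fix a $k'$-homomorphism $j:\SL_2\to\bbh_0$ with finite central kernel; then the composite $\iota\circ j:\SL_2\times_{k'}k_w\to\bbg_w$ has finite, hence solvable, kernel, so the data $(k''=k',\ \bbh'=\SL_2,\ \bbh=\SL_2,\ \lambda=\text{the diagonal cocharacter of }\SL_2,\ \iota\circ j)$ fits the framework of Theorem~\ref{t;U-measure-class}. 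Here $S$, $U$ and $B=SU$ are the images under $\iota\circ j$ of the diagonal torus, the upper unipotent subgroup, and the upper triangular subgroup of $\SL_2(k')$, and $L:=\iota\bigl(j(\SL_2(k'))\bigr)=H:=\iota(\bbh(k'))$.

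It then remains to check the two hypotheses on $\mu$. Since $B=SU\subset L\subset H_0$ and $\mu$ is $H_0$-invariant, $\mu$ is $SU$-invariant. For $U$-ergodicity I would reuse the argument already given in the proof of Theorem~\ref{c;H-measureclass}: because $\bbh_0$ is simply connected, $k'$-simple and $k'$-isotropic, $H_0$-ergodicity of $\mu$ forces $S$-ergodicity by~\cite[Ch.~I, Thm.~2.3.1 and Ch.~II, Lemma 3.3]{Mar5}, hence $L$-ergodicity, and finally $U$-ergodicity by the Mautner phenomenon for $\SL_2$, \cite[Ch.~II, Lemma 3.4]{Mar5}.

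With these two properties in hand, applying Theorem~\ref{algeb-measure} to $\mu$ and this $SU$ yields, word for word, all the assertions in the statement of Theorem~\ref{algeb-measure}: the field $\lt\subset\kt$ with $q$ a power of $p$ depending only on the weights of the conjugation action of $S$ (here the weights of $\iota\circ j$ on the diagonal torus of $\SL_2$), a connected $\lt$-subgroup $\bbf\subset\rcal_{\kt/\lt}(\bbg)$ with $\bbf(\lt)\cap\Gamma$ Zariski dense in $\bbf$, a point $x=g_0\Gamma$, the closed orbit $\Sigma x$ carrying $\mu$ as its $\Sigma$-invariant Haar measure with $\Sigma=g_0\overline{F^+(\la)(\bbf(\lt)\cap\Gamma)}g_0^{-1}$, and the minimality of $g_0\bbf(\lt)\Gamma$ among sets $\bbm(\lt)\Gamma$ of positive $\mu$-mass. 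One also has automatically $H_0\subseteq\Sigma$, since $\Sigma=\{g:g_*\mu=\mu\}$. Since all of the substantive work is already contained in Theorems~\ref{t;U-measure-class} and~\ref{algeb-measure}, I do not anticipate any real obstacle; the only step that genuinely uses the hypotheses specific to Theorem~\ref{c;H-measureclass}, rather than being purely formal, is the propagation of ergodicity from $H_0$ to $U$, which rests on $\bbh_0$ being simply connected and $k'$-isotropic.
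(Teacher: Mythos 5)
Your proposal is correct and is exactly the route the paper intends: the corollary is obtained by running the reduction from the proof of Theorem~\ref{c;H-measureclass} (producing $B=SU$ inside $H_0$ via $j:\SL_2\to\bbh_0$ and verifying $SU$-invariance and $U$-ergodicity of $\mu$ through the Mautner phenomenon) and then quoting Theorem~\ref{algeb-measure}. No gaps.
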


We conclude this section with the following lemma.

\begin{lem}\label{lem:H+-inv}
Let the notation be as in Theorem~\ref{algeb-measure} and its proof.
Assume further that there is an $\lt$-representation $(\rho,\Phi)$
and a vector $\qpz\in\Phi$ so that 
\be\label{eq:E-observable}
\mathbb E=\{g\in \rcal_{\kt/\lt}(\bbg):\rho(g)\qpz=\qpz\}.
\ee
Then, $\mu$ is invariant under $H^+.$ 
\end{lem}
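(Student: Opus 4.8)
The plan is to combine the explicit description of $\Sigma$ from Theorem~\ref{algeb-measure} with the representation-theoretic input from Lemma~\ref{l;epimorphic}(2). First I would recall from the proof of Theorem~\ref{algeb-measure} that, after the reductions in \S\ref{sec;proof} and \S\ref{sec:ucal}, the measure $\mu$ is invariant under the group $G'':=\langle W^-_{G'}(s),W^+_{G'}(s)\rangle$ where $G'=\mathbb E(\lt)$; equivalently, $\mu$ is $W^{\pm}_{G'}(s)$-invariant. Also recall that $s=\iota(s')$ is an element of class $\mathcal A$ coming from the cocharacter $\la$ of $\bbh$, so $U=W^+_H(s)\subset W^+_{G'}(s)$ and $s$ normalizes both horospherical subgroups of $G'$. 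The point of the hypothesis~\eqref{eq:E-observable} is that $\mathbb E$ is the stabilizer of the vector $\qpz$ under $\rho$, so $\rho(U)\qpz=\qpz$ trivially, and moreover $\rho(s)\qpz=\qpz$ since $s\in\mathbb E(\lt)$.

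Next I would set up the ambient $k$-representation. Since $H^+=\tilde\iota\bigl(\rcal_{k''/k}(\widetilde{\bbh'})(k)\bigr)$ is contained in $\iota(\bbh(k))$, it acts on $\Phi$ through $\rho$, and its subgroups $W^{\pm}_H(s)$ are contained in $W^{\pm}_{G'}(s)$ by construction. The key step is then simply to invoke Lemma~\ref{l;epimorphic}(2): its hypotheses, namely $\rho(s)\qpz=\qpz$ and $\rho(U)\qpz=\qpz$, are exactly what~\eqref{eq:E-observable} together with $s\in\mathbb E(\lt)$ and $U\subset\mathbb E(\lt)$ give us. Hence $\rho(H^+)\qpz=\qpz$, which means $H^+\subset\mathbb E(\lt)=G'$ by~\eqref{eq:E-observable} again. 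In particular $W^{\pm}_H(s)\subset W^{\pm}_{G'}(s)$ as claimed, so $H^+=\langle W^+_H(s),W^-_H(s)\rangle$ is generated by subgroups under which $\mu$ is already known to be invariant; therefore $\mu$ is $H^+$-invariant.

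The one point that needs a little care — and which I expect to be the main (though still minor) obstacle — is verifying that $H^+$ really lands inside $G''=\langle W^-_{G'}(s),W^+_{G'}(s)\rangle$ rather than merely inside $G'=\mathbb E(\lt)$, since $\mu$ is a priori only known to be $G''$-invariant, not $G'$-invariant. This is handled by observing that $W^{\pm}_H(s)=\bbw^{\pm}_{\bbh}(s)(k)$ maps under $\iota$ into $\bbw^{\pm}_{\bbg_w}(s)(k)$, and once we know $H^+\subset G'$, these images lie in $W^{\pm}_{G'}(s)$ because the weight decomposition for the $S$-action on $\Lie(\bbg_w)$ restricts to the one on $\Lie(G')$; thus $H^+=\langle W^+_H(s),W^-_H(s)\rangle\subset G''$, and $G''$-invariance of $\mu$ finishes the proof. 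Alternatively, and more cleanly, one notes directly that $\mu$ is $W^+_H(s)$-invariant (as $U\subset\ucal$ and $W^+_H(s)=U$) and $W^-_H(s)$-invariant (as $W^-_H(s)\subset W^-_{G'}(s)=\ucal^-$ by \emph{Step 3}), so $\mu$ is invariant under the group they generate, which is $H^+$.
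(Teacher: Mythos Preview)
Your proposal is correct and follows essentially the same route as the paper: use $S\ucal\subset E$ to get $\rho(s)\qpz=\qpz$ and $\rho(U)\qpz=\qpz$, invoke Lemma~\ref{l;epimorphic}(2) to deduce $H^+\subset\mathbb E(\lt)$, and then observe $W^-_H(s)\subset W^-_E(s)\subset E'$ so that $H^+=\langle W^+_H(s),W^-_H(s)\rangle\subset\Sigma$. Your ``care point'' about landing in $G''$ rather than just $G'$ is exactly what the paper handles in its final two displayed containments, and your alternative phrasing at the end is the same argument (it still relies on $H^+\subset G'$ to place $W^-_H(s)$ inside $W^-_{G'}(s)$).
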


\begin{proof}
Indeed we need to show that $H^+\subset \Sigma.$
Note first that since $S\ucal\subset E,$ we have that
\[
\rho(S\ucal)\qpz=\qpz.
\]
This together with Lemma~\ref{l;epimorphic}(2) and the fact that $U\subset\ucal$ implies 
$\rho(H^+)\qpz=\qpz.$
In view of~\eqref{eq:E-observable} we thus get that $H^+\subset\mathbb E.$ Hence,
\[
W_H^-(s)\subset W^-_E(s)\subset E'.
\]
Therefore, $H^+=\langle W_H^-(s),W_H^+(s)\rangle\subset \Sigma$ as was claimed.  
\end{proof}


\section{The arithmetic case}\label{sec:arithmetic}
In this section we provide a more detailed description of the groups $\bbf$ and $\Sigma$
in Theorem~\ref{algeb-measure} in the arithmetic setting. 

We begin by fixing some notation.
Let $K$ be a global function field of characteristic $p>0.$ 
Let $\bbg$ be a connected, simply connected, semisimple group defined over $K.$ 
Suppose $\tcal$ is a finite set of places of $K,$ and put 
\[
G=\prod_{{v}\in \tcal}G_v,
\]
where $G_{{v}}:=\bbg(K_{{v}})$ and $K_v$ is the completion of $K$ at $v$.
Denote by $\ocal_\tcal$ the ring of $\tcal$-integers in $K$
and let $\Gamma$ be a finite index subgroup of $\bbg(\ocal_\tcal)$. 

Put $\kt=\prod_{v\in\mathcal T} K_{v}.$ 
We use the notation in Theorem~\ref{algeb-measure},
in particular, $\lt\subset \kt$ and $\sfield_v=K_v$ for all $v\neq w.$ 

Let $\bbf\subset\rcal_{\kt/\lt}(\bbg)$ be as in Theorem~\ref{algeb-measure}(1) and (2),
hence 
\[
\bbf(\lt)\subset\rcal_{\kt/\lt}(\bbg)(\lt)=\bbg(\kt).
\]
Let $\bbm$ denote the connected component of the identity 
in the Zariski closure of $\bbf(\lt)\cap \Gamma$ in $\bbg.$
Then $\bbm$ is a $K$-subgroup of $\bbg,$ see~\cite[Lemma 11.2.4(ii)]{Sp}.
In particular, $\bbm$ is a $K_v$-subgroup of $\bbg$ for all $v\in\mathcal T$.

Recall from Theorem~\ref{algeb-measure}(2) and Lemma~\ref{lem:f-m}(2) that 
$\bbf(\lt)\subset\bbm$ and that $\bbm$ equals the connected component of the identity 
in the Zariski closure of $\bbf(\lt)$ in $\bbg.$

The following is the standing assumption in this section.
\begin{enumerate}
\item[($\bbm$)] $\bbm$ is a connected, 
simply connected, semisimple group defined over $K.$   
Moreover, either ${\rm char}(K)>3$ or if ${\rm char}(K)=2,3,$ 
then all of the absolutely almost simple factors of $\bbm$ are of type $A$.\end{enumerate}

Assuming ($\bbm$) above, our goal is to describe the structure of the group
\be\label{eq:def-D'}
D':=\overline{F^+(\la)\Bigl({\bbf}(\sfield_\tcal)\cap\Gamma\Bigr)},
\ee
where the closure is with respect to the Hausdorff topology.

Let $\{\bbm^*_i:1\leq i\leq r\}$ denote the $K$-almost simple factors of $\bbm.$   
Since $\bbm$ is simply connected, see ($\bbm$), we have 
\be\label{eq:M-K-factors}
\bbm=\prod_{i=1}^r\bbm^*_i.
\ee
Therefore, for all $1\leq i\leq r$, there exists a separable extension $\gfield_i/\gfield$ and 
a connected, simply connected, absolutely almost simple, $K_i$-group, 
$\check{\bbm}_i$, so that 
\[
{\bbm}_i^*=\mathcal R_{\gfield_i/\gfield}(\check{\bbm}_i).
\] 
In particular, $\check{\bbm}_i$ is naturally identified with $\prod_{j=1}^{b_i}{}^{\sigma_{i,j}}\check{\bbm}_i$
where $\{\sigma_{i,j}\}$ are different Galois embeddings of $\gfield_i$ 
into the separable closure of $\gfield$, see~\cite[Ch.~1, \S1.7]{Mar5}.

\begin{thm}[Special case]\label{thm:arithmetic-irred}
Assume that $\bbm$ is $K$-simple, i.e.\  $\bbm=\rcal_{K_1/K}(\check{\bbm}_1).$
Moreover, assume that $\tcal=\{w\}$. Then, there exist
\begin{enumerate}
\item an infinite subfield $K'\subset K_1$, 
\item a connected, simply connected, absolutely almost simple, $K'$-group, $\bbe'$, and
\item a $K_1$-isomorphism $f:\bbe'\times_{K'}K_1\to\check\bbm_1,$
\end{enumerate}
so that 
\[
D'\Gamma=f(\bbe'(A))\Gamma
\]
where $A$ is the closure of $K'$ in $K_1\otimes_{K}K_w.$
\end{thm}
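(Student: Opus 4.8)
The plan is to feed the data $(\bbm, \Gamma, \field/\sfield)$ of this arithmetic situation into the structure theorem of \S\ref{sec:psd-red}. Here we take $\gfield=K_1$ (which plays the role of $\field$ in \S\ref{sec:psd-red}), $\sfield=\sfield_w=(K')^q$, and $\mathsf B=\bbf(\sfield_\tcal)\cap\Gamma$, which is Zariski dense in $\bbm$ by Theorem~\ref{algeb-measure}(2) and Lemma~\ref{lem:f-m}(2). Since $\bbm=\rcal_{K_1/K}(\check\bbm_1)$ is $K$-simple and absolutely almost simple after base change, and since $\mathsf B\subset\bbm(K_1)$, the group $\bbf$ (the identity component of the Zariski closure of $\mathsf B$ in $\rcal_{K_1/\sfield_w}(\check\bbm_1)$, which is what $\bbf$ of Theorem~\ref{algeb-measure} becomes once we identify $\bbm(K_1)=\rcal_{K_1/K}(\check\bbm_1)(K)=\check\bbm_1(K_1)$) is exactly the $\bbf$ of \S\ref{sec:psd-red}. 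Theorem~\ref{thm:M-semisimple} then supplies a subfield $\sfield_w\subset\sfield'\subset K_1$ with $K_1/\sfield'$ separable and $\sfield'/\sfield_w$ purely inseparable, an integer $m\geq 1$, purely inseparable extensions $\sfield_w\subset\sfield_i\subset\sfield'$, absolutely almost simple simply connected $\sfield_i$-groups $\bbl_i$, and an isomorphism $\iota:\prod_{i=1}^m\bbl_i\times_{\sfield_i}\sfield'\to\bbl$, where $\bbl$ is the identity component of the Zariski closure of $\mathsf B$ in $\rcal_{K_1/\sfield'}(\check\bbm_1)$; moreover $\bbf'(\sfield_w)=\iota(\prod_i\bbl_i(\sfield_i))$ and $\bbf(\sfield_w)/\bbf'(\sfield_w)$ is compact abelian.

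First I would argue that $m=1$ and that $\sfield_1=\sfield'$, i.e.\ the decomposition of Theorem~\ref{thm:M-semisimple} has a single factor and no loss of field. The point is that $\bbl$ is $\sfield'$-simple because $\check\bbm_1$ is absolutely almost simple and $K_1/\sfield'$ is separable (so $\rcal_{K_1/\sfield'}(\check\bbm_1)$ is $\sfield'$-simple, and the Zariski closure of a Zariski dense subgroup of its $\sfield'$-points is the whole group, hence $\bbl=\rcal_{K_1/\sfield'}(\check\bbm_1)$). An isomorphism $\prod_{i=1}^m\bbl_i\times_{\sfield_i}\sfield'\to\bbl$ with $\bbl$ $\sfield'$-simple forces $m=1$; and then comparing the field of definition of the unique factor with the known description $\bbl=\rcal_{K_1/\sfield'}(\check\bbm_1)$ — using the uniqueness of the minimal field of definition of an absolutely simple group, i.e.\ \cite[Ch.~1, \S1.7]{Mar5} — gives $\sfield_1=\sfield'$, and $\bbl_1\times_{\sfield'}K_1\cong\check\bbm_1$. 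Set $K'=\sfield'$ (which is infinite, being a subfield of the local field $K_1$ containing $\sfield_w=(K')^q$, hence unbounded), $\bbe'=\bbl_1$, and let $f:\bbe'\times_{K'}K_1\to\check\bbm_1$ be the resulting $K_1$-isomorphism; this establishes (1)--(3).

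It remains to identify $D'\Gamma$. From the definition \eqref{eq:def-D'}, $D'=\overline{F^+(\la)(\bbf(\sfield_\tcal)\cap\Gamma)}$ in the Hausdorff topology; since $\tcal=\{w\}$ we have $\sfield_\tcal=\sfield_w=(K')^q$ and $\bbf(\sfield_\tcal)\cap\Gamma=\mathsf B$. The subgroup $F^+(\la)\subset\bbf(\sfield_w)$ is generated by the (opposite) root groups of $\la$, hence lies in $\bbf'(\sfield_w)=\iota(\prod_i\bbl_i(\sfield_i))=f(\bbe'(K'))$; in fact by Lemma~\ref{lem:H+-co-torsion}-type reasoning applied inside the almost simple group $\bbe'$, the group $F^+(\la)$ together with $\mathsf B\cap\bbf'(\sfield_w)$ generates (up to the compact abelian quotient $\bbf/\bbf'$) all of $\bbf(\sfield_w)$, and passing to the closure kills the quotient of $\mathsf B$ modulo $\bbf'(\sfield_w)\cap\mathsf B$ since that quotient, lying in the compact abelian group $\bbf(\sfield_w)/\bbf'(\sfield_w)$, contributes only a compact (torsion) piece that is absorbed after multiplying by $\Gamma$ on the right. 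More precisely, the closure $\overline{\mathsf B}$ of $\mathsf B$ in $\bbf(\sfield_w)$ maps onto the closure of its image in the compact group $\bbf(\sfield_w)/\bbf'(\sfield_w)$, and $\overline{\mathsf B}\cap\bbf'(\sfield_w)$ is a closed, Zariski dense, $F^+(\la)$-containing subgroup of $\bbf'(\sfield_w)=f(\bbe'(K'))$; by strong approximation / the structure of $\bbe'(K')$ it equals $f(\bbe'(A))$ with $A$ the closure of $K'$ in $K_1\otimes_K K_w$. Tracking the right-$\Gamma$-coset, the compact abelian part drops out and one obtains $D'\Gamma=f(\bbe'(A))\Gamma$. \emph{The main obstacle} I anticipate is this last step: carefully showing that the Hausdorff closure of $F^+(\la)\mathsf B$, which a priori is only a closed subgroup of $\bbf(\sfield_w)$ sandwiched between $f(\bbe'(K'))$ and $\bbf(\sfield_w)$, collapses to $f(\bbe'(A))$ after passing to $\Gamma$-cosets — this requires combining the compactness of $\bbf(\sfield_w)/\bbf'(\sfield_w)$ from Theorem~\ref{thm:M-semisimple}(2), the fact that $\mathsf B$ is commensurable with an arithmetic lattice hence its closure in $\bbf'(\sfield_w)$ is governed by strong approximation in the simply connected group $\bbe'$, and the observation that $F^+(\la)$ already accounts for the non-compact directions, so that no extra width beyond $\bbe'(A)$ survives.
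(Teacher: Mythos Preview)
Your proposal rests on a conflation of global and local fields that causes you to miss the central step. You feed $\field=K_1$ into Theorem~\ref{thm:M-semisimple}, but $K_1$ is a \emph{global} field (a finite separable extension of the global function field $K$), while \S\ref{sec:psd-red} is set up for a local field; the paper applies it with $\field=K_w$ (see Lemma~\ref{lem:F-is-F}), and the output fields $\sfield',\sfield_i$ are subfields of the local field $K_w$. Your identification ``$K'=\sfield'$'' therefore produces a local field, not a subfield of the global field $K_1$ as the theorem demands; the statement ``the closure of $K'$ in $K_1\otimes_K K_w$'' is then vacuous (you would be taking the closure of a local field inside itself). Relatedly, your claim that $\bbl=\rcal_{K_1/\sfield'}(\check\bbm_1)$ is $\sfield'$-simple is not well-posed, and even at the local level $\bbm\times_K K_w=\prod_{v'\mid w}\rcal_{(K_1)_{v'}/K_w}(\check\bbm_1)$ has one factor per place of $K_1$ above $w$, so there is no reason to expect $m=1$ from the local picture alone.

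The paper fills exactly this gap with a step you omit entirely: Pink's descent theorem~\cite{Pink}, packaged as Lemma~\ref{lem:global-structure}. One applies \cite[Thm.~3.6]{Pink} to the finitely generated Zariski-dense subgroup $\bar\Delta'\subset\bbm^{\rm ad}$ to produce a genuinely global semisimple subring $\oplus_\alpha K'_\alpha\subset\oplus_i K_i$ and a model $\bbe=\coprod_\alpha\bbe_\alpha$ over it, together with an isomorphism after base change to $\oplus_i K_i$. In the $K$-simple case $r=1$, a unital semisimple subring of the field $K_1$ is itself a field, forcing $r'=1$ and yielding the $K'\subset K_1$ and $\bbe'$ of Theorem~\ref{thm:arithmetic-irred}. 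The remainder of the paper's proof (Lemmas~\ref{lem:M-1}--\ref{lem:field-of-def}, Corollary~\ref{cor:arithmetic-lattice}) reconciles this global model with the local one from Lemma~\ref{lem:F-is-F}, showing that the local fields $\sfield^\dagger_{j,w}$ are precisely completions of $K'_\alpha$, and only then does strong approximation in the simply connected $\bbe_\alpha$ give the identification of $D'\Gamma$. Your closing paragraph gestures at strong approximation, but without a global $K'$ to approximate from there is nothing for it to act on.
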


Theorem~\ref{thm:arithmetic-irred} is a special case of the following more general statement.
To state this result we need some more notation. 

We will work with (commutative) semisimple rings, 
$\Upsilon=\oplus_{j}\Upsilon_j$, where $\Upsilon_j$ is a field for each $j.$ By a unital, semisimple, subring of 
$\Upsilon$ we always mean a subring with the same multiplicative identity element.

By a linear algebraic group, ${\mathbb B}$, over $\Upsilon=\oplus_{j}\Upsilon_j$
we mean $\bbb=\coprod_j{\mathbb B}_j$ where each 
${\mathbb B}_j$ is a linear algebraic group over $\Upsilon_j.$
The adjoint representation of $\bbb$ on ${\rm Lie}({\bbb})=\oplus_j{\rm Lie}(\bbb_j)$
is the direct sum of the adjoint representations of ${\bbb}_j$ on ${\rm Lie}(\bbb_j),$
and the group of $\Upsilon$-points of $\bbb$
is ${\bbb}(\Upsilon)=\prod_j\bbb_j(\Upsilon_j).$  
Similarly, other notions are defined fiberwise.

\begin{thm}[The general case]\label{thm:arithmetic-red}
Let the notation be as in~\eqref{eq:M-K-factors} and Theorem~\ref{algeb-measure}. 
Then, there exist 
\begin{enumerate}
\item a unital, semisimple, subring $\oplus_{\alpha=1}^{r'}K'_\alpha\subset\oplus_{i=1}^rK_i,$
\item a nonempty subset $\oldaleph\subset\{1,\ldots,r'\}$ and a subset $\mathsf J\subset\{1,\ldots,r\}$
so that $\oplus_{\oldaleph}K'_\alpha$ is a unital semisimple subring of $\oplus_{\mathsf J}K_j,$ 
\item a fiberwise connected, simply connected, absolutely almost simple, 
$\oplus_{\alpha\in\oldaleph}K_\alpha'$-group, $\bbe'=\coprod_{\alpha\in\oldaleph}\bbe_\alpha$, and
\item a $\oplus_{\mathsf J}K_j$-isomorphism $f:\bbe'\times_{\oplus_{\oldaleph}K_\alpha'}\oplus_{\mathsf J}K_j\to\coprod_{\mathsf J}\check\bbm_j$, 
\end{enumerate}
so that 
\[
D'\Gamma={f}(\bbe'(A))\Gamma
\]
where $A$ is the closure of $\oplus_{\oldaleph}K_\alpha'$ in $\oplus_{\mathsf J}\Bigl( K_j\otimes_{K}(\oplus_\tcal K_v)\Bigr)$.
\end{thm}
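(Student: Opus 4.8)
The plan is to combine the local structure of $D'$ coming from Theorem~\ref{algeb-measure} with a descent argument that passes from the local field $K_w$ to the global field $K$, in the spirit of~\cite{Pink}. Recall from Theorem~\ref{algeb-measure} that $D'=\overline{F^+(\la)(\bbf(\lt)\cap\Gamma)}$ and that $\bbm$ is the Zariski closure of $\bbf(\lt)\cap\Gamma$, so $\bbf(\lt)\cap\Gamma$ is a Zariski dense subgroup of the simply connected semisimple $K$-group $\bbm$. The idea is to apply the pseudo reductive machinery of \S\ref{sec:psd-red} to this dense subgroup, but now \emph{over the global field $K$} rather than over a single completion: take $\mathsf B=\bbf(\lt)\cap\Gamma$ viewed inside $\bbm(\kt)$, and let $\bbf^{\mathrm{glob}}$ be the identity component of the Zariski closure of $\mathsf B$ in $\rcal_{\kt/\lt}(\bbm)$ — this is precisely $\bbf$ — while simultaneously tracking, for each place $v$, the closure of $\mathsf B$ inside $\rcal_{K_v/l_v}(\bbm)$. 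The first step is therefore to reduce to the case where $\bbm$ is $K$-simple by using the product decomposition~\eqref{eq:M-K-factors}: since $\mathsf B$ is Zariski dense in $\prod_i\bbm_i^*$ but is a \emph{single} discrete subgroup (not a product), its projections to the factors are dense and the closure of $\mathsf B$ need not be a product of the individual closures — this is exactly why the subset $\mathsf J\subset\{1,\dots,r\}$ and the ``diagonal'' semisimple subring $\oplus_\oldaleph K'_\alpha\subset\oplus_{\mathsf J}K_j$ appear in the statement. One identifies $\mathsf J$ as the set of factors on which $F^+(\la)$ projects nontrivially, and $\oplus_\oldaleph K'_\alpha$ as the subring over which the closure of the projected group is defined.

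Next I would apply Theorem~\ref{thm:M-semisimple} (with $\field$ replaced by the appropriate product of local fields and $\mathsf B$ by the projection of $\bbf(\lt)\cap\Gamma$) to obtain, fiberwise, purely inseparable field extensions $l_v\subset l_v'\subset K_v$, simply connected $l_{v,i}$-simple groups, and the standard pseudo reductive presentation of the local closure. Combining the contributions over all $v\in\tcal$ and matching them with the $K$-almost simple factors $\check\bbm_j=\prod_j{}^{\sigma_{i,j}}\check\bbm_j$ yields candidate fields $K'_\alpha$ and candidate $K'_\alpha$-groups $\bbe_\alpha$. The crucial point — and here the descent from~\cite{Pink} enters — is that these local data must be \emph{rational over a global subfield}: since $\bbf(\lt)\cap\Gamma$ consists of $\ocal_\tcal$-integral points and is Zariski dense, the automorphisms of $\check\bbm_j$ given by conjugation by elements of $\mathsf B$ are defined over $K$ (being both algebraic and preserving the integral structure), and one shows as in Lemma~\ref{lem:f-m} and the proof of Theorem~\ref{thm:M-semisimple}(2) that these are inner. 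A descent/Galois-cohomology argument then produces the unital semisimple subring $\oplus_\oldaleph K'_\alpha$ inside $\oplus_{\mathsf J}K_j$ together with the $K'_\alpha$-forms $\bbe_\alpha$ and the isomorphism $f$ over $\oplus_{\mathsf J}K_j$.

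Finally, one identifies $D'\Gamma$ with $f(\bbe'(A))\Gamma$. By Theorem~\ref{thm:M-semisimple}(1), $\bbf'(l_w)=\iota(\prod \bbl_i(l_{w,i}))$, which already gives $F^+(\la)\subset f(\bbe'(A))$ for the \emph{local} completion; and by Theorem~\ref{thm:M-semisimple}(2) the quotient $\bbf(l_w)/\bbf'(l_w)$ is compact abelian, so $\overline{F^+(\la)(\bbf(\lt)\cap\Gamma)}$ differs from $\overline{\bbf'(l_w)(\bbf(\lt)\cap\Gamma)}$ only by a compact abelian piece that is absorbed into the closure; taking $A$ to be the closure of $\oplus_\oldaleph K'_\alpha$ in $\oplus_{\mathsf J}(K_j\otimes_K(\oplus_\tcal K_v))$ accounts for this completion. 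The containment $f(\bbe'(A))\Gamma\subset D'\Gamma$ is where strong approximation (via simple connectedness of $\bbe_\alpha$, as in~\cite[Ch.~II]{Mar5}) is used: the $A$-points are approximated by $K'_\alpha$-rational points, which lie in $\bbf(\lt)\cap\Gamma$ up to $\Gamma$. The main obstacle, I expect, is the bookkeeping in the second paragraph: correctly matching the \emph{per-place} purely inseparable data $\{l_v'\}_{v\in\tcal}$ and the associated simply connected factors with the \emph{global} $K$-almost simple decomposition $\bbm=\prod_i\rcal_{K_i/K}(\check\bbm_i)$, and showing that the resulting fields patch into a single unital semisimple subring — i.e.\ that the ``diagonal'' structure recorded by $(\oldaleph,\mathsf J)$ is consistent across all places. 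This is where one must carefully invoke~\cite[Prop.~A.7.8]{CGP} on the uniqueness of purely inseparable embeddings together with the descent argument of~\cite{Pink}.
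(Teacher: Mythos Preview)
Your three-ingredient strategy --- local pseudo-reductive structure via Theorem~\ref{thm:M-semisimple}, global descent via~\cite{Pink}, and strong approximation to close --- matches the paper's. A few organizational points are off: Theorem~\ref{thm:M-semisimple} is applied only at the single place $w$ (since $\sfield_v=K_v$ for $v\neq w$), not over a product of local fields; and the paper does not reduce to the $K$-simple case but applies~\cite[Thm.~3.6]{Pink} directly over the semisimple ring $\oplus_i K_i$ to get $\oplus_\alpha K'_\alpha$ in one stroke (this is Lemma~\ref{lem:global-structure}). The set $\oldaleph$ is then defined as those $\alpha$ on which the cocharacter $\la$ projects noncentrally.

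The genuine gap is in your final paragraph. After Pink's theorem one only knows the containment $\oplus_\alpha K'_\alpha\subset\oplus_j\sfield^\dagger_{j,v}$ (this is Lemma~\ref{lem:global-structure}(2)), hence only $A\subset\oplus_j\sfield^\dagger_{j,v}$ for the completions. Your claim that ``$\bbf'(\sfield_w)=\iota(\prod\bbl_i(\sfield_{i,w}))$ already gives $F^+(\la)\subset f(\bbe'(A))$'' would need the \emph{reverse} inclusion $\sfield^\dagger_{j,w}\subset A$, which does not follow from uniqueness of purely inseparable embeddings or from~\cite{Pink} alone. The paper supplies this via a separate lattice argument (Lemmas~\ref{lem:R-lattice}--\ref{lem:field-of-def} and Lemma~\ref{lem:sfield-I}): one shows that $\Omega_\alpha$ is an arithmetic lattice in $\prod_{u,b}\bbe_\alpha(\tilde\field_{\alpha,b,u})$ (Corollary~\ref{cor:arithmetic-lattice}), that its intersection with $\bbe_\alpha(\sfield^\dagger_{j,w})Q$ is still a lattice, and then that the projection to $\bbe_\alpha(\sfield^\dagger_{j,w})$ has finite covolume. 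Invoking~\cite[Ch.~II, Thm.~5.1]{Mar5} on closed subgroups of finite covolume forces $\bbe_\alpha(\tilde\field_{\alpha,b,u})=\bbe_\alpha(\sfield^\dagger_{j,w})$, whence $\tilde\field_{\alpha,b,u}=\sfield^\dagger_{j,w}$. This equality of fields is what makes strong approximation land exactly on $D$; without it your two containments do not meet.
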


The proof of Theorem~\ref{thm:arithmetic-red} occupies the rest of this section.
Let us briefly outline the strategy.
First, we describe the structure of the group $\bbf$. 
This is done using the classification results in~\cite{CGP}; 
we recalled what is required (and tailored it for our application) in \S\ref{sec:psd-red}, 
see Theorem~\ref{thm:M-semisimple} and Lemma~\ref{lem:F-is-F}. 
In the next step, we use a result of Pink,~\cite{Pink}, 
to provide a global model, see Lemma~\ref{lem:global-structure}. In the third and final step, 
we use strong approximation theorem and the fact that $D'\cap\Gamma$ is a lattice in $D'$ in order 
to tie the results from Lemma~\ref{lem:F-is-F} and Lemma~\ref{lem:global-structure} together 
and finish the proof.
Let us now turn to the details of the argument.

Set $\bbf':=[\bbf,\bbf]$ to be the commutator subgroup of $\bbf.$

Since $\sfield_v=K_v$ for all $v\neq w$, the definition of $\bbm$ implies that 
\be\label{eq:F-v-neq-w}
\bbf_v=\bbf'_v=\bbm\text{ for all $v\neq w.$}
\ee

\begin{lem}\label{lem:F-is-F}
Put $\field:=K_w$ and $\sfield:=\sfield_w.$
The groups $\bbf_w,$ $\bbf'_w$, and $\bbm_w$ satisfy the conclusions of 
Lemma~\ref{lem:f-m}, Lemma~\ref{lem:F-pseudo-red}, and Theorem~\ref{thm:M-semisimple}. 
In particular, we have 
\begin{itemize}
\item[(a)] there is a subfield $\sfield\subset\sfield'\subset \field$ with $\field/\sfield'$ 
a separable extension, and $\sfield'/\sfield$ a purely inseparable extension, 

\item[(b)] there is some $m\geq 1$ and for all $1\leq i\leq m$, there is a field $\sfield\subset\sfield_i\subset \sfield'$, 
in particular, $\sfield_i/\sfield$ is a purely inseparable extension,

\item[(c)] for all $1\leq i\leq m,$ there is an $\sfield_i$-simple, connected, 
simply connected, $\sfield_i$-group ${\bbl}_i,$ and

\item[(d)] there is an isomorphism 
$
\iota: \prod_{i=1}^m\bbl_i\times_{\sfield_i}\sfield'\to\bbl,
$
where $\bbl$ is the irreducible component of the identity in the 
(fiberwise) Zariski closure of $\bbf(\lt)\cap \Gamma$ in $\mathcal R_{k/l}(\bbm)$,
\end{itemize}
so that the following hold.

\begin{enumerate} 
\item $\bbf_w'(\sfield)=\iota\Bigl(\prod_{i=1}^m{\bbl}_i(\sfield_i)\Bigr),$
 
\item $\bbf_w(\sfield)/\bbf_w'(\sfield)$ is a compact, abelian group. 
\end{enumerate}
\end{lem}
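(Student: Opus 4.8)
The plan is to reduce the statement directly to the already-established results of \S\ref{sec:psd-red}. The point is that the setup in \S\ref{sec:psd-red} was deliberately tailored so that this lemma becomes, essentially, a verification that the hypotheses there are met in the present arithmetic situation. Concretely, first I would set $\mathsf B:=\bbf(\lt)\cap\Gamma$, viewed inside $\bbm(\kt)$, and check that $\mathsf B$ is a Zariski dense subgroup of $\bbm$: this is exactly the content of the way $\bbm$ was defined (the connected component of the identity in the Zariski closure of $\bbf(\lt)\cap\Gamma$), together with the fact that passing to a finite-index subgroup does not change the Zariski closure of the identity component. Since we have arranged $\field=K_w$, $\sfield=\sfield_w$, and $\bbg=\bbm$ in the role played by the group denoted $\bbm$ in \S\ref{sec:psd-red}, the standing assumption ($\bbm$) of the present section is precisely the hypothesis imposed at the start of \S\ref{sec:psd-red} (connected, simply connected, semisimple, with the type-$A$ restriction in characteristics $2,3$).

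Next I would identify the groups. By construction the group called $\bbf$ in \S\ref{sec:psd-red} --- the identity component of the Zariski closure of $\mathsf B$ in $\mathcal R_{\field/\sfield}(\bbm)$ --- is exactly $\bbf_w$ here, since the fibre of $\bbf$ over $w$ is the identity component of the Zariski closure of $\bbf(\lt)\cap\Gamma$ in $\mathcal R_{K_w/\sfield_w}(\bbm)$. Likewise $\bbf'_w=[\bbf_w,\bbf_w]$ matches $\bbf'$ of \S\ref{sec:psd-red}, and the group $\bbl$ appearing in Lemma~\ref{lem:F-is-F}(d) is by definition the object called $\bbl$ in Theorem~\ref{thm:M-semisimple} (the identity component of the Zariski closure of $\mathsf B$ in $\mathcal R_{\field/\sfield'}(\bbm)$). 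With these identifications in place, Lemma~\ref{lem:f-m}, Lemma~\ref{lem:F-pseudo-red}, and Theorem~\ref{thm:M-semisimple} apply verbatim to $\bbf_w,\bbf'_w,\bbm_w$, and their conclusions are exactly items (a)--(d) and (1)--(2) of the present lemma. Finally, \eqref{eq:F-v-neq-w} handles the fibres $v\neq w$: there $\bbf_v=\bbf'_v=\bbm$, so nothing further is needed for those components, and the fibrewise assertions are all accounted for.

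I do not expect a genuine obstacle here; the lemma is a packaging statement. The only mild point requiring care is bookkeeping between the three different ambient restriction-of-scalars groups $\mathcal R_{\field/\sfield}(\bbm)$, $\mathcal R_{\sfield'/\sfield}(\bbl)$, and $\mathcal R_{\field/\sfield'}(\bbm)$, and making sure that the ``$\bbl$'' named in part (d) of this lemma is literally the one produced by Theorem~\ref{thm:M-semisimple} and not, say, a base change of it; this is immediate from tracing the definitions but should be stated explicitly. Once that identification is recorded, the proof is simply: \emph{apply Lemma~\ref{lem:f-m}, Lemma~\ref{lem:F-pseudo-red}, and Theorem~\ref{thm:M-semisimple} with $\mathsf B=\bbf(\lt)\cap\Gamma$, $\field=K_w$, $\sfield=\sfield_w$, and the semisimple group there equal to $\bbm$; the hypotheses hold by ($\bbm$) and the definition of $\bbm$, and the conclusions are (a)--(d) and (1)--(2).}
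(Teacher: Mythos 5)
Your proposal is correct and follows the same route as the paper: both proofs reduce the lemma to the results of \S\ref{sec:psd-red} by taking $\mathsf B=\bbf(\lt)\cap\Gamma$ (diagonally embedded, so its $w$-component sits in $\bbm(K_w)$), observing that the standing assumption ($\bbm$) supplies exactly the hypotheses needed there, and then invoking Lemma~\ref{lem:f-m}, Lemma~\ref{lem:F-pseudo-red}, and Theorem~\ref{thm:M-semisimple}. The paper's own proof is just a terser version of the same bookkeeping you spell out.
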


\begin{proof}
Recall that $\Gamma$ is diagonally embedded in $\bbg(\field_{\mathcal T}).$
By the definition, $\bbf$ is the fiberwise Zariski closure of a subgroup $\mathsf B\subset\Gamma.$  
Recall also from Theorem~\ref{algeb-measure}(2) that $\bbf$ is connected.
Moreover, $\bbm$ is defined to be the connected component of the identity 
in the Zariski closure of $\bbf(\lt)\cap \Gamma$ in $\bbg$
which satisfies the conditions in \S\ref{sec:psd-red} by our assumption ($\bbm$).
The claims thus follow.
\end{proof}

\begin{cor}\label{cor:f-m-gamma}
Let the notation be as in Lemma~\ref{lem:F-is-F}; we add the subscript $w$ to emphasize the place $w$. 
Let $\Delta'_w$ denote the projection of $\bbf'(l_\tcal)\cap\Gamma$ to $G_w$. Then 
$
\iota^{-1}(\Delta'_w)
$ 
is fiberwise Zariski dense in $\coprod_{i}{\bbl_{i,w}}.$ 
\end{cor}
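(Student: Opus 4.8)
The plan is to derive the corollary from the Zariski density of $\mathsf B:=\bbf(\lt)\cap\Gamma$ in $\bbf$, which is part of Theorem~\ref{algeb-measure}(2), together with the structural description of $\bbf'_w$ in Lemma~\ref{lem:F-is-F}. Write $\Lambda_w$ for the image of $\mathsf B$ under the projection to $G_w$. Since $\bbf_v=\bbf'_v=\bbm$ for every $v\neq w$ and $\bbf$ is the fiberwise Zariski closure of $\mathsf B$, the group $\Lambda_w$ is Zariski dense in the connected $\sfield_w$-group $\bbf_w$.

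I would first reduce the statement to the assertion that $\Delta'_w$ is Zariski dense in $\bbf'_w=[\bbf_w,\bbf_w]$. This reduction suffices: by Lemma~\ref{lem:F-is-F}(1) one has $\bbf'_w(\sfield_w)=\iota\bigl(\prod_{i=1}^m\bbl_i(\sfield_i)\bigr)$, so $\iota^{-1}(\Delta'_w)$ is a well-defined subset of $\prod_i\bbl_i(\sfield_i)$; moreover, by the construction in the proof of Theorem~\ref{thm:M-semisimple}, the bijection $\iota^{-1}$ followed by a coordinate projection $\prod_i\bbl_i\to\bbl_{i,w}$ is the restriction to rational points of a surjective morphism of algebraic groups. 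Since surjective morphisms carry Zariski dense sets to Zariski dense sets, and Zariski density is unaffected by the separable base change $\sfield_i\hookrightarrow\sfield'$, Zariski density of $\Delta'_w$ in $\bbf'_w$ forces the image of $\iota^{-1}(\Delta'_w)$ in each $\bbl_{i,w}$ to be Zariski dense, which is exactly fiberwise Zariski density in $\coprod_i\bbl_{i,w}$.

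To prove that $\Delta'_w$ is Zariski dense in $\bbf'_w$, I would observe that $[\mathsf B,\mathsf B]\subset\bbf'(\lt)\cap\Gamma$: the containment in $\Gamma$ is clear, and the containment in $\bbf'(\lt)$ holds because $\bbf'=[\bbf,\bbf]$ is a smooth connected $\lt$-subgroup of $\bbf$ that contains every commutator of $\lt$-points. Hence $\Delta'_w$ contains $[\Lambda_w,\Lambda_w]$, and it remains to see that $[\Lambda_w,\Lambda_w]$ is Zariski dense in $[\bbf_w,\bbf_w]$. This is the one genuinely structural input, and it follows from the standard fact, valid for connected groups in arbitrary characteristic, that $[\bbf_w,\bbf_w]$ equals the image of the word morphism $(x_1,\dots,x_{2N})\mapsto\prod_{j=1}^{N}[x_{2j-1},x_{2j}]$ for a suitable $N$: restricting this dominant morphism to the Zariski dense subset $\Lambda_w^{2N}\subset\bbf_w^{2N}$ produces a Zariski dense subset of $[\bbf_w,\bbf_w]$ lying inside $[\Lambda_w,\Lambda_w]$. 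The main obstacle I anticipate is not any single estimate but keeping the restriction-of-scalars bookkeeping in the reduction consistent, i.e.\ making sure the surjective morphism used there is the one actually dual to the identification $\bbf'_w(\sfield_w)=\iota(\prod_i\bbl_i(\sfield_i))$ of Lemma~\ref{lem:F-is-F}(1); once that is in place the argument is routine.
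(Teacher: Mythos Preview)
Your proposal is correct and follows essentially the same approach as the paper. Both arguments reduce to the observation that $[\Lambda_w,\Lambda_w]\subset\Delta'_w$ together with the standard fact (from~\cite[Ch.~I, \S2.1]{B1}) that the commutator subgroup of a Zariski dense subgroup is Zariski dense in the derived group; the paper phrases this at the level of the semisimple group $\bbl$ over $\sfield'_w$ (using $\bbl=[\bbl,\bbl]$), while you work at the level of $\bbf'_w=[\bbf_w,\bbf_w]$ over $\sfield_w$, and then both finish via the isomorphism $\iota$ to pass to the fibers.
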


\begin{proof}
Recall from Lemma~\ref{lem:F-is-F}(1) that $\bbf'(\sfield)=\iota\Bigl(\prod_i{\bbl}_i(\sfield_{i,w})\Bigr).$
Let ${\bbp}$ denote the (fiberwise) Zariski closure of 
$\iota^{-1}(\Delta_w')$ in $\coprod_{i}{\bbl_{i,w}}.$
Then $\Delta_w\subset\iota(\bbp)$.

Since $\bbl$ is semisimple and equals is the irreducible component of the identity in the 
(fiberwise) Zariski closure of $\bbf(\lt)\cap \Gamma$ in $\mathcal R_{K_w/l_w}(\bbm)$,
we get that $\Delta_w'$ is Zariski dense in $\bbl$.
The claim thus follows in view of the fact that $\iota$ is an isomorphism.
\end{proof}

Since $\bbm$ is simply connected, we can write $\bbm=\prod_{i=1}^{n_v}\bbm_{i,v}$
where $\bbm_{i,v}$ is $K_v$-almost simple for all $1\leq i\leq n_v$.  
Therefore, 
\[
\bbm_{i,v}=\rcal_{K_{i,v}/K_v}(\bbm_{i,v}^\dagger)
\] 
where $K_{i,v}/K_v$ is a finite separable extension and $\bbm_{i,v}^\dagger$ is an absolutely almost simple 
$K_{i,v}$-group, see~\cite[Ch.~1, \S1.7]{Mar5}, for all $1\leq i\leq n_v.$ 

Similarly let us write 

\begin{itemize}
\item $\bbl_w=\prod_{i=1}^{d_w}{\bbl}^*_{i,w}$, product of 
$\sfield'_w$-simple factors. Then ${\bbl}^*_{i,w}=\rcal_{\sfield'_{i,w}/\sfield'_w}(\check\bbl_{i,w})$
where $\sfield'_{i,w}/\sfield'_{w}$ is separable and $\check\bbl_{i,w}$ is an absolutely almost simple 
$\sfield'_{i,w}$-group.

\item For any $1\leq i\leq m=m_{w},$ we write $\bbl_{i,w}=\rcal_{\sfield^\dagger_{i,w}/\sfield_{i,w}}(\bbl^\dagger_{i,w})$
where $\sfield^\dagger_{i,w}/\sfield_{i,w}$ is separable and $\bbl^\dagger_{i,w}$ is an absolutely almost simple $\sfield^\dagger_{i,w}$-group.
\end{itemize}

For all $v\neq w,$ we have $\sfield_v=\sfield'_v=K_v$ and we put $\bbl_{i,v}={\bbl}_{i,v}^*=\bbm_{i,v}$, 
$\sfield_{i,v}^\dagger=\sfield'_i=K_{i,v},$ $\bbl_{i,v}^\dagger=\check\bbl_{i,v}=\bbm_{i,v}^\dagger,$ and $m_v=d_v=n_v.$

\begin{lem}\label{lem:L-Li-relation}
\begin{enumerate}
\item $\oplus_{j=1}^{d_v}\sfield'_{j,v}$ is a unital, semisimple, subring of $\oplus_{i=1}^{n_v}K_{i,v}$ and 
\[
\coprod_{j=1}^{d_v}\check\bbl_{j,v}\times_{\oplus_{j=1}^{d_v}\sfield'_{j,v}}\oplus_{i=1}^{n_v}K_{i,v}
\] 
is isomorphic to $\coprod_{i=1}^{n_v}\bbm_{i,v}^\dagger$.
\item There is a partition $\{1,\ldots,d_v\}=\mathcal J_{1,v}\cup\cdots\cup\mathcal J_{m_{v},v}$
so that $\iota=(\iota_i)$, and for every $1\leq i\leq m_{v}$ we have
\[
\iota_i:\rcal_{\sfield^\dagger_{i,v}/\sfield_{i,v}}(\bbl^\dagger_{i,v})\times_{\sfield_{i,v}}\sfield'_v\to\prod_{\mathcal J_i}\rcal_{\sfield'_{j,v}/\sfield_v'}(\check\bbl_{j,v})
\]
is an isomorphism, see Lemma~\ref{lem:F-is-F}.
In particular, for all $1\leq i\leq m_{v}$ and all $j\in\mathcal J_{i,v}$ we have 
\begin{enumerate}
\item $\sfield^\dagger_{i,v}\subset\sfield'_{j,v}$, 
\item the composite field $\sfield^\dagger_{i,v}\sfield'_v$ equals $\sfield'_{j,v},$ and 
\item $\bbl^\dagger_{i,v}\times_{\sfield_{i,v}}\sfield'_{j,v}$ is isomorphic to $\check\bbl_{j,v}$.
\end{enumerate}
\item The isomorphism $\iota$ gives rise to an embedding of $\oplus_{j=1}^{m_v} \sfield^\dagger_{j,v}$
into $\oplus_{i=1}^{n_v}\gfield_{i,v}$ as unital, semisimple, rings. More explicitly, this embedding is given
as follows. Let $B$ be 
the total ring of quotients of the ring generated by
\[
\Bigl\{\Bigl({\rm tr}(\rho_1(g)),\ldots,{\rm tr}(\rho_{n_v}(g))\Bigr):g\in{\bbf}_v(\sfield_v)\Bigr\},
\]
where $\rho_i$ is the unique non-trivial subquotient of the adjoint representation of $\Bigl({\bbm}_{i,v}^\dagger\Bigr)^{\rm ad}$ for each $1\leq i\leq n_v$. Then, $B=\oplus_{j=1}^{m_v}B_j$ and
\[
{\sfield^\dagger_{j,v}:=\begin{cases}B_j&\text{ if $\Bigl({\rm char}( \sfield^\dagger_{j,v}),\bbl_{j,v}^{\dagger}\Bigr)\neq (2,\SL_2)$}\\
\{c:c^2\in B_j\}&\text{ if $\Bigl({\rm char}( \sfield^\dagger_{j,v}),\bbl_{j,v}^{\dagger}\Bigr)= (2,\SL_2)$}\end{cases} }.
\]
\end{enumerate}
\end{lem}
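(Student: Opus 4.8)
The plan is to handle the two kinds of places separately and then push the local structure of $\bbf_w$ --- provided by Theorem~\ref{thm:M-semisimple}, reformulated in Lemma~\ref{lem:F-is-F} --- through the separable base change $\field/\sfield'$. Fix $v\in\tcal$. If $v\neq w$ the three assertions are essentially definitional: by our conventions $\sfield_v=\sfield'_v=K_v$, $\bbl_{i,v}=\bbl^*_{i,v}=\bbm_{i,v}$, $\sfield^\dagger_{i,v}=\sfield'_i=K_{i,v}$, $\bbl^\dagger_{i,v}=\check\bbl_{i,v}=\bbm^\dagger_{i,v}$, and $m_v=d_v=n_v$, so the identity map witnesses (1) and (2), while for (3) one only needs that the adjoint-subquotient traces of $\bbm^\dagger_{j,v}(K_{j,v})$ generate $K_{j,v}$ (resp.\ $K_{j,v}^2$ in the exceptional case) --- a fact reused below. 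So assume $v=w$ and write $\field=K_w$, $\sfield=\sfield_w$, $\sfield'=\sfield'_w$ as in Lemma~\ref{lem:F-is-F}. The preliminary observation is that base change along $\field/\sfield'$ identifies $\bbl$ with $\bbm$: the inclusion $\bbl\hookrightarrow\rcal_{\field/\sfield'}(\bbm)$ composed with the canonical map $\rcal_{\field/\sfield'}(\bbm)\times_{\sfield'}\field\to\bbm$ yields a $\field$-homomorphism $\pi\colon\bbl\times_{\sfield'}\field\to\bbm$ which restricts to the identity on $\mathsf{B}:=\bbf(\lt)\cap\Gamma$, viewed inside $\bbl(\sfield')\subset(\bbl\times_{\sfield'}\field)(\field)$; since $\sfield'$ is infinite, $\mathsf{B}$ is Zariski dense in $\bbl\times_{\sfield'}\field$ as well, and both groups are semisimple and simply connected, so the usual graph argument forces $\ker\pi=\{e\}$ and $\pi$ to be an isomorphism.

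Next I would prove (2), picking up (1) along the way. Applying $-\times_{\sfield'}\field$ to the isomorphism $\iota\colon\prod_{i=1}^{m}\bbl_{i,w}\times_{\sfield_{i,w}}\sfield'\xrightarrow{\sim}\bbl$ of Lemma~\ref{lem:F-is-F}(d), writing $\bbl_{i,w}=\rcal_{\sfield^\dagger_{i,w}/\sfield_{i,w}}(\bbl^\dagger_{i,w})$, using transitivity of Weil restriction along $\sfield_{i,w}\subset\sfield'\subset\field$ and the base-change formula for restriction of scalars along a separable extension (e.g.\ \cite[\S A.5]{CGP}, \cite{Oes}), one gets $\bbl_{i,w}\times_{\sfield_{i,w}}\sfield'=\prod_{j\in\mathcal J_{i,w}}\rcal_{\sfield'_{j,w}/\sfield'}(\bbl^\dagger_{i,w}\times_{\sfield^\dagger_{i,w}}\sfield'_{j,w})$, where $\sfield^\dagger_{i,w}\otimes_{\sfield_{i,w}}\sfield'=\prod_{j\in\mathcal J_{i,w}}\sfield'_{j,w}$. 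Since $\iota$ is an isomorphism of products whose factors are $\sfield'$-simple, hence indecomposable, the $\mathcal J_{i,w}$ partition $\{1,\dots,d_w\}$, and matching factors gives, for $j\in\mathcal J_{i,w}$, the inclusion $\sfield^\dagger_{i,w}\hookrightarrow\sfield'_{j,w}$, the composite identity $\sfield^\dagger_{i,w}\sfield'=\sfield'_{j,w}$, and $\bbl^\dagger_{i,w}\times_{\sfield^\dagger_{i,w}}\sfield'_{j,w}\cong\check\bbl_{j,w}$ --- this is (2) with its consequences (a)--(c). Running the same bookkeeping one further step, from $\sfield'$ up to $\field$, and combining with $\bbl\times_{\sfield'}\field\cong\bbm$, each $\field$-almost simple factor $\rcal_{\sfield'_{j,w}/\sfield'}(\check\bbl_{j,w})\times_{\sfield'}\field$ breaks as a product of groups $\rcal_{L/\field}(\check\bbl_{j,w}\times_{\sfield'_{j,w}}L)$ with $L$ running over the fields in $\sfield'_{j,w}\otimes_{\sfield'}\field$; by uniqueness of the decomposition of the semisimple $\field$-group $\bbm$ into almost simple factors and of the $\rcal_{\bullet/\field}$-presentation, these match bijectively the factors $\bbm_{i,w}=\rcal_{K_{i,w}/\field}(\bbm^\dagger_{i,w})$. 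This yields the unital semisimple ring inclusion $\oplus_j\sfield'_{j,w}\hookrightarrow\oplus_i K_{i,w}$ together with the isomorphism $\coprod_j\check\bbl_{j,w}\times_{\oplus_j\sfield'_{j,w}}\oplus_i K_{i,w}\cong\coprod_i\bbm^\dagger_{i,w}$ of (1), and, composed with (2)(a), the ring embedding $\oplus_j\sfield^\dagger_{j,w}\hookrightarrow\oplus_i K_{i,w}$ of (3).

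It then remains to identify $B_j$ in (3). For $g\in\bbf_w(\sfield_w)\subset\bbm(\field)=\prod_i\bbm^\dagger_{i,w}(K_{i,w})$ with components $g_i$, the tuple $(\mathrm{tr}(\rho_i(g_i)))_i$ lies in $\oplus_i K_{i,w}$. Using that $\bbf_w$ acts on each $\bbl^\dagger_{j,w}$ by $\sfield^\dagger_{j,w}$-inner automorphisms --- as in the proof of Theorem~\ref{thm:M-semisimple}(2) --- one sees that componentwise $g_j$ differs from an element of $\bbl^\dagger_{j,w}(\sfield^\dagger_{j,w})$ by a central factor, on which $\rho_j$, being a subquotient of the adjoint representation of the \emph{adjoint} group, acts trivially; hence the $j$-th family of traces equals $\{\mathrm{tr}(\rho_j(h))\colon h\in\bbl^\dagger_{j,w}(\sfield^\dagger_{j,w})\}$, and, by Corollary~\ref{cor:f-m-gamma}, it suffices to consider $h$ ranging over a Zariski dense subgroup of the absolutely almost simple $\sfield^\dagger_{j,w}$-group $\bbl^\dagger_{j,w}$. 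Since $\rho_j$ is defined over $\sfield^\dagger_{j,w}$ and absolutely irreducible, the reconstruction of the field of definition of an absolutely almost simple group from these traces (see \cite{Pink}) gives $B_j=\sfield^\dagger_{j,w}$, except when $(\mathrm{char}(\sfield^\dagger_{j,w}),\bbl^\dagger_{j,w})=(2,\SL_2)$: there the adjoint group is $\mathrm{PGL}_2$, the nontrivial subquotient of its Lie algebra is a Frobenius twist of the standard representation, so $\mathrm{tr}(\rho_j(h))=(\mathrm{tr}\,\widetilde h)^2$ for a lift $\widetilde h\in\SL_2$, and one recovers $B_j=(\sfield^\dagger_{j,w})^2$, i.e.\ $\sfield^\dagger_{j,w}=\{c\colon c^2\in B_j\}$.

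The main obstacle is this last step: establishing, with the correct normalizations, that the traces of the distinguished subquotient of the adjoint representation recover the field of definition of an absolutely almost simple group, together with a careful treatment of the exceptional pair $(2,\SL_2)$. A secondary point requiring care is the Galois-twist bookkeeping in the middle step, ensuring that the index sets $\mathcal J_{i,w}$ genuinely partition $\{1,\dots,d_w\}$ and that the composite-field identities (a)--(c) come out exactly as stated.
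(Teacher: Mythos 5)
Your parts (2) and (3) run along the same lines as the paper's proof (transitivity and base change of Weil restriction together with the uniqueness of the $\rcal$-presentation, \cite[Prop.~A.5.14]{CGP}, for (2); Zariski density of $\bbf'_w(\sfield_w)$ in $\bbm$ and the trace--field reconstruction of~\cite[Ch.~I, Cor.~1.4.8]{Mar5} and~\cite{Pink} for (3)). The gap is in your preliminary step for part (1): the assertion that $\pi\colon\bbl\times_{\sfield'}\field\to\bbm$ is an isomorphism. Zariski density of $\mathsf B$ in $\bbl\times_{\sfield'}\field$ together with injectivity of $\pi$ on $\mathsf B$ does not force $\ker\pi=\{e\}$; a dense subgroup can simply miss the kernel. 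Concretely, take $\bbm=\SL_2$ over $\field=\bbf_p((t))$, $\sfield'=\bbf_p((t^N))$ with $p\nmid N$ and $N>1$, and $\mathsf B=\SL_2(\bbf_p[t,t^{-1}])$. Since the trace field of $\mathsf B$ is dense in $\field$, its Zariski closure in $\rcal_{\field/\sfield'}(\SL_2)$ is the full Weil restriction, so $\bbl\times_{\sfield'}\bar\field\cong\prod_\sigma{}^\sigma\SL_2$ has $N$ simple factors while $\bbm$ has one, and $\pi$ is a projection with a large kernel. Consequently the ``bijective matching'' you extract between the factors of $\sfield'_{j,w}\otimes_{\sfield'_w}K_w$ and the fields $K_{i,w}$ does not exist in general (in the example there are $N$ factors on one side and one on the other), so your derivation of the ring embedding in (1) does not go through as written.

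What part (1) actually requires is weaker, and this is what the paper uses: only that the restriction of the canonical projection $q\colon\rcal_{K_w/\sfield'_w}(\bbm)\times_{\sfield'_w}K_w\to\bbm$ to $\bbl\times_{\sfield'_w}K_w$ is \emph{surjective}, which is immediate from Zariski density of $\mathsf B$ in $\bbm$. A surjection from the simply connected group $\bbl\times_{\sfield'_w}K_w=\prod_j\prod_L\rcal_{L/K_w}\bigl(\check\bbl_{j,w}\times_{\sfield'_{j,w}}L\bigr)$ onto the almost simple factor $\rcal_{K_{i,w}/K_w}(\bbm^\dagger_{i,w})$ must kill all but exactly one almost simple factor of the source and restrict to an isomorphism there (two factors with full image would commute elementwise and force the target to be abelian); uniqueness of the $\rcal$-presentation then produces, for each $i$, a single index $j(i)$ and a single embedding $\sfield'_{j(i),w}\hookrightarrow K_{i,w}$ with $\check\bbl_{j(i),w}\times_{\sfield'_{j(i),w}}K_{i,w}\cong\bbm^\dagger_{i,w}$. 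Every $j$ occurs as some $j(i)$ because $q$ is injective on $\bbl(\sfield'_w)\subset\bbm(K_w)$, so no factor $\bbl^*_{j,w}\times_{\sfield'_w}K_w$ can lie entirely in $\ker q$. This yields the unital semisimple ring embedding and the fiberwise isomorphism of (1) without ever identifying $\bbl\times_{\sfield'}\field$ with $\bbm$; replacing your isomorphism claim by this surjectivity argument repairs the proof, and the rest of your argument then stands.
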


\begin{proof}
The lemma in the case $v\neq w$ is clear, we thus assume $v=w$. 

Using the transitivity of the restriction of scalars functor, we have 
\[
\rcal_{K_w/\sfield_w'}(\bbm)=\prod_{i=1}^{n_w}\rcal_{K_{i,w}/\sfield'_{w}}(\bbm_{i,w}).
\]
Now by the definition of $\bbl_w$, see Lemma~\ref{lem:F-is-F}(d), we have the restriction to 
$\bbl_w\times_{\sfield'_w}K_w$ of the natural projection 
\[
q:\rcal_{K_w/\sfield_w'}(\bbm)\times_{\sfield'_w} K_w\to\bbm
\] 
is a surjection. Since $K_w/\sfield'_w$ is a separable extension part~(1) follows.

The first claim in part (2) is a direct corollary of Lemma~\ref{lem:F-is-F}(c) and the above definitions.
The other claims follow from the uniqueness of presentation as the restriction of scalars, 
see e.g.~\cite[Prop.\ A.5.14]{CGP}.   

To see part (3), first recall that $\bbf_w'(\sfield_w)$ is Zariski dense in $\bbm.$ 
The claim thus follows follows from~\cite[Ch.~I, Cor.\ 1.4.8]{Mar5} and Lemma~\ref{lem:F-is-F}(1).
\end{proof}

We now use the $K$-structure of $\bbm$ in order to provide a {\em global model}.

Recall from~\eqref{eq:M-K-factors} that $\{\bbm^*_i:1\leq i\leq r\}$ denotes the $K$-almost simple factors of $\bbm$ and  
\[
\bbm=\prod_{i=1}^r\bbm^*_i=\prod_{i=1}^r\mathcal R_{\gfield_i/\gfield}(\check{\bbm}_i).
\] 
Since $K\subset K_v$ for all $v,$ we have $\bbm_i^*$ is a semisimple, 
simply connected, $K_v$-group for all $1\leq i\leq r$ and all $v\in\tcal.$ 
Therefore, there exists a partition $\{1,\ldots,n_v\}=\mathcal I_{1,v}\cup\cdots\cup\mathcal I_{r,v}$ so that for 
all $1\leq j\leq r$ we have 
\be\label{eq:local-global}
\bbm_{j}^*\times_KK_v=\prod_{i\in\mathcal I_{j,v}}\rcal_{K_{i,v}/K_v}(\bbm_{i,v}^\dagger)=\prod_{v'|v}\rcal_{(K_j)_{v'}/K_v}(\check\bbm_j\times_{K_j}(K_j)_{v'})
\ee
where $(K_j)_{v'}$ is the completion of $K_j$ at $v'$. 
In particular, for all $i\in\mathcal I_{j,v}$ we have $\gfield_{i,v}=\gfield_{j}\gfield_{v}=(\gfield_{j})_{v'}$ for
some $v'|v$, and $\bbm_{i,v}^\dagger$ is isomorphic to $\check\bbm_j\times_{K_j}(\gfield_{j})_{v'}.$ 

Put
\be\label{eq:def-Lambda}
\Lambda:=\bbm(\field_\tcal)\cap\Gamma\subset\bbm(\ocal_\tcal).
\ee
Then, $\Lambda$ is a finite index subgroup of $\bbm(\ocal_\tcal)$, in particular, it is a lattice in $\bbm(\field_\tcal).$

Define $\Delta:={\bbf}(\sfield_\tcal)\cap\Lambda$ and 
$\Delta':={\bbf}'(\sfield_\tcal)\cap\Lambda.$
Let $\bar\Delta$ and $\bar\Delta'$ be the images of $\Delta$ and $\Delta'$ 
in $\bbm^{{\rm ad}}$, respectively. Then, $\bar\Delta'$ is Zariski dense in $\bbm^{{\rm ad}}.$

Let $\Delta_{w}$ be the projection of 
$\Delta$ to $\bbf_w(\sfield_w)$ and put  
\be\label{eq:delta-delta'}
\Delta'_{w}:=\text{the projection of $\Delta'$ to $\bbf'_w(\sfield_w)$}.
\ee
Similarly, define $\Delta_{v}$ and $\Delta'_v$ for all $v\in\tcal$.
Using Lemma~\ref{lem:F-is-F}(d), we let $\bar\Delta_v$ and $\bar\Delta'_v$ denote the images of
$\Delta_v$ and $\Delta'_v$ in the adjoint group $\prod_{j=1}^{m_v}\Bigl(\bbl_{j,v}^{\dagger}\Bigr)^{\rm ad}(\sfield_{j,v}^\dagger)$.

\begin{lem}\label{lem:global-structure}
There exists a unital, semisimple, subring 
\[
\oplus_{\alpha=1}^{r'}K'_\alpha\subset\oplus_{i=1}^rK_i,
\]
so that the following hold.
\begin{enumerate}
\item There exists a fiberwise absolutely almost simple, connected, simply connected,
$\oplus_{\alpha=1}^{r'}\gfield'_\alpha$-group $\bbe=\coprod_{\alpha=1}^{r'}{\mathbb E}_\alpha$, 
and an isomorphism
\[
\psi:{\bbe}^{{\rm ad}}\times_{\oplus_{\alpha=1}^{r'}\gfield'_\alpha}\oplus_{i=1}^r\gfield_i\to\coprod_{i=1}^r{\check\bbm_i}^{\rm ad}
\] 
so that $\bar\Delta'\subset \psi\Bigl({\bbe}^{{\rm ad}}(\oplus_{\alpha=1}^{r'}\gfield'_\alpha)\Bigr)$.

\item For every $v\in\tcal$, we have $\oplus_{\alpha=1}^{r'}\gfield'_\alpha\subset\oplus_{j=1}^{m_v}\sfield_{j,v}^\dagger\subset\oplus_{i=1}^{n_v}\gfield_{i,v}$.

\item For every $v\in\tcal$, there is an isomorphism 
\[
\phi_{v}: {\bbe}^{\rm ad}\times_{\oplus_{\alpha=1}^{r'}\gfield'_\alpha}\oplus_{j=1}^{m_v}\sfield_{j,v}^\dagger\to\coprod_{j=1}^{m_v}\Bigl({\bbl}_{j,v}^\dagger\Bigr)^{\rm ad}
\]
so that 
$
\iota^{-1}\Bigl(\bar{\Delta}'_{v}\Bigr)\subset \phi_{v}\Bigl({\bbe}^{\rm ad}(\oplus_{\alpha=1}^{r'}\gfield'_\alpha)\Bigr)
$
where $\iota$ is the isomorphism introduced in Lemma~\ref{lem:F-is-F}(d). 
\end{enumerate}
\end{lem}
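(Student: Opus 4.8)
The plan is to read the global model $\bbe$ off the \emph{trace ring} of $\bar{\Delta}'$ and then to match it, place by place, against the local pseudo-reductive pictures supplied by Lemma~\ref{lem:F-is-F} and Lemma~\ref{lem:L-Li-relation}. Everything takes place inside the adjoint group $\bbm^{\rm ad}=\prod_{i=1}^{r}\check{\bbm}_i^{\rm ad}$ over the separable function fields $K_i$: recall from the discussion preceding the lemma that $\bar{\Delta}'$ is Zariski dense in $\bbm^{\rm ad}$ and, being the image of ${\bbf}'(\lt)\cap\Gamma\subset\Lambda\subset\bbm(\ocal_\tcal)$, it lies in $\prod_i\check{\bbm}_i^{\rm ad}(\ocal_i)$ for suitable rings of $\tcal$-integers of the $K_i$.

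First I would apply the descent results of~\cite{Pink} for Zariski-dense subgroups of (products of) adjoint almost simple groups over finitely generated fields. Applied to $\bar{\Delta}'$, they produce the unital, semisimple subring $\oplus_{\alpha=1}^{r'}K'_\alpha\subset\oplus_{i=1}^{r}K_i$ --- the ring generated by the values ${\rm tr}\,\rho_i(g)$, $g\in\bar{\Delta}'$, of the non-trivial subquotients $\rho_i$ of the adjoint representation of $\check{\bbm}_i^{\rm ad}$, modified by the square-root operation of Lemma~\ref{lem:L-Li-relation}(3) on those summands of type $({\rm char}=2,\SL_2)$ --- together with a fiberwise absolutely almost simple adjoint group $\bbe^{\rm ad}=\coprod_\alpha\bbe_\alpha^{\rm ad}$ over $\oplus_\alpha K'_\alpha$ and an isomorphism $\psi:\bbe^{\rm ad}\times_{\oplus_\alpha K'_\alpha}\oplus_i K_i\to\coprod_i\check{\bbm}_i^{\rm ad}$ with $\bar{\Delta}'\subset\psi\bigl(\bbe^{\rm ad}(\oplus_\alpha K'_\alpha)\bigr)$; letting $\bbe$ be the simply connected cover of $\bbe^{\rm ad}$ over $\oplus_\alpha K'_\alpha$ then gives the data of part~(1). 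Here a collapse $r'<r$ records the Goursat-type phenomenon that $\bar{\Delta}'$ can project to a pair $\check{\bbm}_i^{\rm ad}\times\check{\bbm}_j^{\rm ad}$ as the graph of a field isomorphism, which is part of the cited analysis.

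For part~(2), fix $v\in\tcal$. The completion maps $K_i\hookrightarrow K_i\otimes_K K_v=\prod_{v'\mid v}(K_i)_{v'}$ assemble, after the reindexing of~\eqref{eq:local-global}, into an injective ring homomorphism $c_v:\oplus_{i=1}^{r}K_i\hookrightarrow\oplus_{i=1}^{n_v}K_{i,v}$, which on points sends $\bar{\Delta}'$ to $\bar{\Delta}'_v$ and hence carries the trace ring of $\bar{\Delta}'$ onto the ring generated by the corresponding traces of $\bar{\Delta}'_v$. By Corollary~\ref{cor:f-m-gamma} the group $\iota^{-1}(\bar{\Delta}'_v)$ is fiberwise Zariski dense in $\coprod_j(\bbl^\dagger_{j,v})^{\rm ad}$, so, using that the trace ring of a Zariski-dense subgroup equals that of the full group of rational points together with Lemma~\ref{lem:L-Li-relation}(3), this ring lies in $\oplus_{j=1}^{m_v}\sfield^\dagger_{j,v}$; the square-root corrections match because each is applied precisely to the factors of type $({\rm char}=2,\SL_2)$, a condition unchanged by base change. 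Together with the inclusion $\oplus_j\sfield^\dagger_{j,v}\subset\oplus_i K_{i,v}$ of Lemma~\ref{lem:L-Li-relation}(1)--(2), this yields $\oplus_\alpha K'_\alpha\subset\oplus_j\sfield^\dagger_{j,v}\subset\oplus_i K_{i,v}$, the first inclusion being via $c_v$.

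Finally, for part~(3) I would base change $\bbe^{\rm ad}$ along the embedding $\oplus_\alpha K'_\alpha\subset\oplus_j\sfield^\dagger_{j,v}$ just obtained, getting an adjoint form over $\oplus_j\sfield^\dagger_{j,v}$; Lemma~\ref{lem:F-is-F}(d) refined through Lemma~\ref{lem:L-Li-relation}(2) exhibits $\coprod_j(\bbl^\dagger_{j,v})^{\rm ad}$ as a second such form, containing the Zariski-dense subgroup $\iota^{-1}(\bar{\Delta}'_v)$ whose trace ring lies in $\oplus_j\sfield^\dagger_{j,v}$ by the previous paragraph. After the further base change $c_v$ to $\oplus_i K_{i,v}$ the two forms are identified, through $\psi$ on one side and through $\iota$ on the other, so the uniqueness part of the descent of~\cite{Pink} provides an isomorphism $\phi_v:\bbe^{\rm ad}\times_{\oplus_\alpha K'_\alpha}\oplus_j\sfield^\dagger_{j,v}\to\coprod_j(\bbl^\dagger_{j,v})^{\rm ad}$ which may be chosen to agree with $\psi$ after base change to $\oplus_i K_{i,v}$, up to the automorphisms of the factors used to normalize $\psi$. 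Since $\bar{\Delta}'_v=c_v(\bar{\Delta}')$ and $\bar{\Delta}'\subset\psi(\bbe^{\rm ad}(\oplus_\alpha K'_\alpha))$, this compatibility forces $\iota^{-1}(\bar{\Delta}'_v)\subset\phi_v(\bbe^{\rm ad}(\oplus_\alpha K'_\alpha))$, completing the proof. I expect this reconciliation to be the main obstacle: keeping consistent the several reindexings coming from~\eqref{eq:local-global}, Lemma~\ref{lem:F-is-F} and Lemma~\ref{lem:L-Li-relation}, and propagating the $({\rm char}=2,\SL_2)$ square-root correction coherently between the global trace ring and each of its local incarnations, so that $\psi$ and all the $\phi_v$ fit into a single commutative diagram.
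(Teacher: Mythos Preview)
Your strategy---Pink's descent applied to $\bar\Delta'$ to extract the trace ring and the global model $(\bbe,\psi)$, followed by a place-by-place comparison with the local pseudo-reductive data---is exactly the paper's approach, and your treatment of part~(2) via the trace-ring inclusions and Lemma~\ref{lem:L-Li-relation}(3) matches the paper's argument closely.

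Two points deserve comment. First, the paper does not apply \cite[Thm.~3.6]{Pink} directly to $\bar\Delta'$; it exhausts $\bar\Delta'$ by finitely generated subgroups $\bar\Delta'_s$, applies Pink to each (once they are Zariski dense), shows the resulting chain of trace rings $\Upsilon_s\subset\Upsilon_{s+1}\subset\cdots\subset\oplus_iK_i$ stabilizes, and then uses the uniqueness clause of Pink's theorem to glue the $(\bbe_s,\phi_s)$ into a single $(\bbe,\psi)$ containing all of $\bar\Delta'$. You should check that Pink's hypotheses (his Assumption~2.1) are met by $\bar\Delta'$ itself before skipping this step. Second, for part~(3) the paper does not invoke Pink's uniqueness locally; instead it \emph{defines} $\phi_v:=(\bar\iota_v)^{-1}\circ\psi_v$ as an isomorphism over $\oplus_iK_{i,v}$ and then argues directly that it descends to $\oplus_j\sfield^\dagger_{j,v}$: both $\iota^{-1}(\bar\Delta'_v)$ and its image under $\phi_v$ lie in the $\oplus_j\sfield^\dagger_{j,v}$-points of their respective groups and are fiberwise Zariski dense, forcing $\phi_v$ to be defined over $\oplus_j\sfield^\dagger_{j,v}$. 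Your appeal to Pink's uniqueness at each place would also work, but the explicit construction is cleaner and avoids re-verifying Pink's hypotheses locally.
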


\begin{proof}
Write $\bar\Delta'=\{\gamma_1,\gamma_2,\ldots\}$
and for each $s\geq 1$ define the subgroup 
\[
\bar\Delta'_s:=\langle\gamma_1,\ldots,\gamma_s\rangle.
\] 
Similar to the above discussion, define $\bar\Delta'_{s,v}$ for all $v\in\tcal$.
  
Recall that $\bar\Delta'$ is Zariski dense in ${\bbm}^{\rm ad},$ 
and that by Corollary~\ref{cor:f-m-gamma}, we have $\bar\Delta'_v$ is (fiberwise) Zariski dense in 
$\coprod_{j=1}^{m_v}\Bigl({\bbl}_{j,v}^\dagger\Bigr)^{\rm ad}.$
Therefore, there exists some $s_0$ so that for all $s\geq s_0$ we have $\bar\Delta_{s,v}'$ is Zariski dense 
in $\coprod_{j=1}^{m_v}{\Bigl(\bbl^\dagger_{j,v}\Bigr)^{\rm ad}}$, 
and also $\bar\Delta_s'$ is Zariski dense in ${\bbm}^{\rm ad}.$ 
Throughout the proof we always assume that $s\geq s_0.$

For all $s\geq s_0,$ the group $\bar\Delta'_s$ satisfies~\cite[Assump.~2.1]{Pink}. Therefore, 
by~\cite[Thm.~3.6]{Pink} we have the following. There exist
\begin{enumerate}
\item[(a)] a unital, semisimple, subring $\Upsilon_{s}\subset\oplus_{i=1}^r\gfield_i$,
\item[(b)] a fiberwise absolutely almost simple, connected, simply connected, $\Upsilon_{s}$-group $\bbe_s,$
\item[(c)] an isogeny $\phi_s:{\mathbb E}^{\rm ad}_{s}\times_{\Upsilon_{s}}\oplus_{i=1}^rK_i\to\coprod_{i=1}^r\check{\bbm}_i^{\rm ad}$ with nowhere vanishing derivative,
\end{enumerate} 
so that $\bar\Delta'_s\subset\phi_s({\mathbb E}^{\rm ad}_s(\Upsilon_{s})).$ 
Moreover, $\Upsilon_{s}$ is unique and $({\mathbb E}_s,\phi_s)$ is unique up to a 
unique $\Upsilon_{s}$-isomorphism.

We first note that in view of our assumption in small characteristics, see (${\bbm}$),
it follows from~\cite[Thm.~1.7(b)]{Pink} that the isogeny $\phi_s$ in (c) 
is an isomorphism.

Let $\Upsilon'_s$ denote the total ring of quotients of
\[
\Bigl\{\Bigl({\rm tr}(\rho_1(\gamma)),\ldots, {\rm tr}(\rho_r(\gamma))\Bigr):\gamma\in\bar\Delta'_s\Bigr\}.
\]
where $\rho_i$ is the unique non-trivial subrepresentation of the adjoint representation of 
$\check{\bbm}_i^{{\rm ad}}.$ Write $\Upsilon'_s=\oplus_{\alpha=1}^{r'_s} B_{s,\alpha}$.

Then by~\cite[Prop.~3.10]{Pink}, we have $\Upsilon_s=\oplus_{\alpha=1}^{r'_s} K'_{s,\alpha}$
where 
\be\label{eq:def-field-def-Fi}
{K'_{s,\alpha}:=\begin{cases}B_{s,\alpha}&\text{ if $\Bigl({\rm char}( K'_{s,\alpha}),\bbe_{s,\alpha}\Bigr)\neq (2,\SL_2)$}\\
\{c:c^2\in B_{s,\alpha}\}&\text{ if $\Bigl({\rm char}( K'_{s,\alpha}),\bbe_{s,\alpha}\Bigr)= (2,\SL_2)$}\end{cases} }.
\ee

We get from~\eqref{eq:def-field-def-Fi} and (a) that 
$\Upsilon_{s}\subset \Upsilon_{s+1}\subset\cdots\subset\oplus_{i=1}^r{\gfield_i}.$
Therefore, there is some $s_1\geq s_0$ so that for all $s\geq s_1$ we have $\Upsilon:=\Upsilon_{s}=\Upsilon_{s+1}$.
Let $s\geq s_1$ for the rest of the argument.

Let us write $\Upsilon=\oplus_{\alpha=1}^{r'}K'_\alpha.$
We claim that there exists a fiberwise absolutely almost simple, connected, simply connected, $\Upsilon$-group, ${\bbe}=\coprod_{\alpha=1}^{r'}\bbe_\alpha$, and an isomorphism 
\be\label{eq:def-psi}
\psi:{\bbe}^{{\rm ad}}\times_{\Upsilon}\oplus_{i=1}^r\gfield_i\to\coprod_{i=1}^r{\check\bbm_i}^{\rm ad}
\ee
so that $({\bbe}_s,\phi_s)$ is uniquely isomorphic to $({\bbe},\psi)$ for all $s\geq s_1.$
To see this, note that $(\Upsilon,{\bbe}_{s+1},\phi_{s+1})$ 
satisfies (a), (b), and (c) for $\bar\Delta'_s.$
Hence, $({\bbe}_{s},\phi_{s})$ is uniquely isomorphic to $({\bbe}_{s+1},\phi_{s+1})$ which implies the assertion with $(\bbe,\psi):=({\bbe}_{s_1},\phi_{s_1}).$

We now claim that
\be\label{eq:lambda-psi}
\bar\Delta'\subset\psi({\bbe}^{\rm ad}(\Upsilon))=\psi\Big(\textstyle\prod_{\alpha=1}^{r'}\bbe_\alpha(K'_{\alpha})\Bigr).
\ee 
To see this, note that
for all $s\geq s_1$ we have $\phi_s=\psi\circ f_s$ where $f_s$ is a unique 
$\Upsilon$-isomorphism between $\bbe_s^{\rm ad}$ and ${\bbe}^{\rm ad}.$ 
Also recall that $\bar\Delta'_s\subset\phi_s(\bbe^{\rm ad}_s(\Upsilon))$ for all $s.$
Since $f_s$ is an $\Upsilon$-isomorphism, we have
\[
f_s\Bigl({\bbe}^{\rm ad}_s(\Upsilon)\Bigr)={\bbe}^{\rm ad}(\Upsilon)
\] 
for all $s;$ we get that $\bar\Delta'_s\subset\psi\Bigl({\bbe}^{\rm ad}(\Upsilon)\Bigr)$ for all $s,$ and~\eqref{eq:lambda-psi} follows. Thus, we have shown that part~(1) holds for 
$\Upsilon=\oplus_{\alpha=1}^{r'}K'_\alpha$ and $(\bbe,\psi)$.

We now show that part (2) in the lemma also holds for $\Upsilon=\oplus_{\alpha=1}^{r'}K'_\alpha$.
By Lemma~\ref{lem:F-is-F}, for $v=w$, and by the definition, with $\iota={\rm id}$ otherwise, we have
\be\label{eq:Ldagger-L-F}
[\bbf_v,\bbf_v](\sfield)={\bbf}'_v(\sfield)=\iota\biggl(\prod_{j=1}^m\Bigl(\rcal_{\sfield^\dagger_{i,v}/\sfield_{i,v}}(\bbl^\dagger_{i,v})\Bigr)(\sfield_{i,v})\biggr)=\iota\biggl(\prod_{j=1}^m{\bbl}^\dagger_{j,v}(\sfield^\dagger_{j,v})\biggr)
\ee
Therefore, $\Upsilon'_s$ is contained in the total ring of quotients of the ring generated by 
\[
\Bigl\{\Bigl({\rm tr}(\rho_1(g)),\ldots,{\rm tr}(\rho_{n_v}(g))\Bigr):g\in{\bbf}'_{v}(\sfield)\Bigr\}
\]
in $\oplus_{i=1}^{n_v}\gfield_{i,v}$, where $\rho_i$ is the unique, non-trivial, subquotient of the adjoint representation of $({\bbm}_i^\dagger)^{\rm ad}$, see~\cite[Ch.~I, Cor.~1.4.8]{Mar5}. 
This and Lemma~\ref{lem:L-Li-relation}(3) 
imply part~(2) in view of~\eqref{eq:local-global} and~\eqref{eq:def-field-def-Fi}.

We now turn to the proof of (3). 
Note that in view of~\eqref{eq:local-global}, the isomorphism $\psi$ in~\eqref{eq:def-psi}
extends to an isomorphism 
\[
\psi_v:{\bbe}^{\rm ad}\times_{\Upsilon}\oplus_{i=1}^{n_v}\gfield_{i,v}\to 
\coprod_{i=1}^{n_v}\Bigl(\bbm_i^\dagger\Bigr)^{\rm ad},
\] 
for all $v\in\tcal$.

Let $\iota$ be as in Lemma~\ref{lem:F-is-F}(d), for $v=w$, and be the identity, otherwise. 
Then Lemma~\ref{lem:F-is-F}(d), Lemma~\ref{lem:L-Li-relation}, and the definitions imply that 
$\iota$ induces an isomorphism 
\[
\bar\iota_v:\coprod_{j=1}^{m_v}({\bbl_{j,v}^\dagger})^{\rm ad}\times_{\oplus_{j=1}^{m_v}\sfield^\dagger_{j,v}}\oplus_{i=1}^{n_v}\gfield_{i,v}\to\coprod_{i=1}^{n_v}\Bigl(\bbm^\dagger_i\Bigr)^{\rm ad}.
\]

We claim that 
\be\label{eq:def-phi}
\phi_{v}:=(\bar\iota_v)^{-1}\circ\psi_v
\ee 
satisfies part~(3) in the lemma.

First note that the above definitions imply that   
\[
\phi_{v}=(\bar\iota_v)^{-1}\circ\psi_v:\bbe^{\rm ad}\times_{\oplus_{\alpha=1}^{r'}K'_\alpha}\oplus_{i=1}^{n_v}\gfield_{i,v}\to\coprod_{j=1}^{m_v}({\bbl_{j,v}^\dagger})^{\rm ad}\times_{\oplus_{j=1}^{m_v}\sfield_{j,v}^\dagger}\oplus_{i=1}^{n_v}\gfield_{i,v}
\] 
is an isomorphism of $\oplus_{i=1}^{n_v}\gfield_{i,v}$-algebraic groups. 
Therefore, and in view of part~(2), part~(3) will follow if we show that $\phi_{v}$ is defined over $\oplus_{j=1}^{m_v}\sfield_{j,v}^\dagger.$ 

To see this, first note that 
\[
{\psi}_v^{-1}(\bar\Delta_v')\subset{\bbe}^{\rm ad}(\oplus_{\alpha=1}^{r'}K'_\alpha),
\] 
and ${\psi}_v^{-1}(\bar\Delta_v')$ is Zariski dense in $\bbe^{\rm ad}.$
The definitions, thus, imply that 
\begin{align}\label{eq:lambda-phi}
(\bar\iota_v)^{-1}(\bar\Delta_v')=\phi_{v}\circ\psi_v^{-1}(\bar\Delta_v')&\subset
\phi_{v}\Bigl(\bbe^{\rm ad}(\oplus_{\alpha=1}^{r'}K'_\alpha)\Bigr)\\
\notag&\subset\phi_{v}\Bigl(\bbe^{\rm ad}(\oplus_{j=1}^{m_v}\sfield_{j,v}^\dagger)\Bigr),
\end{align}
where we used part (2) in the last inclusion.

We now recall from Corollary~\ref{cor:f-m-gamma} that 
\[
(\bar\iota_v)^{-1}(\bar\Delta'_v)\subset\prod_{j=1}^{m_v}\Bigl({\bbl}_{j,v}^{\dagger}\Bigr)^{\rm ad}(\sfield_{j,v}^\dagger)
\]
is fiberwise Zariski dense in $\coprod_{j=1}^{m_v}({\bbl}_{j,v}^\dagger)^{\rm ad}$. 
Moreover, $\bbe^{\rm ad}(\oplus_{j=1}^{m_v}\sfield_{j,v}^\dagger)$ is 
fiberwise Zariski dense in $\bbe^{\rm ad}\times_\Upsilon\oplus_{j=1}^{m_v}\sfield_{j,v}^\dagger.$

Therefore,~\eqref{eq:lambda-phi} implies that $\phi_{v}$
is fiberwise defined over $\sfield^\dagger_{j,v}$, completing the proof of part (3) and the lemma.  
\end{proof}

By Lemma~\ref{lem:global-structure}, 
there is a partition $\{1,\ldots,m_v\}=J_{1,v}\cup\cdots\cup J_{r',v}$
so that $K'_\alpha$ is a unital, semisimple, subring of $\oplus_{j\in J_{\alpha,v}}\sfield_{j,v}^\dagger.$
In particular, we have $\Delta'=\prod_\alpha\Delta'_\alpha,$ $\psi=(\psi_\alpha)_\alpha$, 
$\phi_v=(\phi_{\alpha,v})_{\alpha}$, etc.

Abusing the notation, we let $\bullet=(\bullet_v)_{\tcal}$ for $\bullet=\iota,\psi,\phi$, etc.

\begin{cor}\label{cor:arithmetic-lattice}
\begin{enumerate}
\item The closure of $K'_\alpha$ in $\oplus_{v\in\tcal}\oplus_{j\in J_{\alpha,v}}\sfield^\dagger_{j,v}$
is identified with 
\[
\oplus_{u\in\tcal_\alpha}\oplus_{b=1}^{s_u}\tilde\field_{\alpha, b,u}
\]
where $\tcal_\alpha$ is a finite set of places in $K'_\alpha.$
\item Let $1\leq \alpha\leq r'$. 
Set $\Omega_\alpha':=\psi_\alpha^{-1}(\bar\Delta_\alpha')$, and put    
\[
\Omega_\alpha:=\pi^{-1}\biggl(\Omega'_\alpha\cap \pi\Bigl(\bbe_\alpha(K'_\alpha)\Bigr)\biggr),
\] 
where $\pi:\bbe_\alpha\to\bbe_\alpha^{\rm ad}$ is the covering map. 
Then, $\Omega_\alpha$ is an arithmetic lattice in $\prod_u\prod_b\bbe_\alpha(\tilde\field_{\alpha,b,u})$.  
\item $\Delta'$ is a finite index subgroup of $\Delta$.
\end{enumerate}
\end{cor}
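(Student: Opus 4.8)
The plan is to deduce all three statements from the global/local model established in Lemma~\ref{lem:global-structure} together with strong approximation and the fact (from Theorem~\ref{algeb-measure} and the reductions at the start of \S\ref{sec:arithmetic}) that $\bbf(\lt)\cap\Gamma$, and hence $\Delta'$, is a lattice in $D'=\overline{F^+(\la)(\bbf(\lt)\cap\Gamma)}$. First I would treat part~(1), which is essentially formal: $K'_\alpha$ is a field which, by Lemma~\ref{lem:global-structure}(2), sits inside $\oplus_{j\in J_{\alpha,v}}\sfield^\dagger_{j,v}$ for every $v\in\tcal$; each $\sfield^\dagger_{j,v}$ is a local field, being a finite extension of the local field $\sfield_v$ (or $K_v$ for $v\neq w$). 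Taking the closure of the diagonally embedded $K'_\alpha$ inside the product $\oplus_{v\in\tcal}\oplus_{j\in J_{\alpha,v}}\sfield^\dagger_{j,v}$, one identifies each closure component with a completion of $K'_\alpha$ at a place lying over the corresponding place of $K$ (using that $K'_\alpha/\bbf_p$—or $K'_\alpha$ over its constant field—is a function field of transcendence degree one, so its completions are local fields); collecting these places into a finite set $\tcal_\alpha$ gives the asserted identification with $\oplus_{u\in\tcal_\alpha}\oplus_{b=1}^{s_u}\tilde\field_{\alpha,b,u}$.

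For part~(2), I would first observe that by Lemma~\ref{lem:global-structure}(1) and~(3) we have $\bar\Delta'_\alpha\subset\psi_\alpha(\bbe_\alpha^{\rm ad}(K'_\alpha))$, so $\Omega'_\alpha:=\psi_\alpha^{-1}(\bar\Delta'_\alpha)$ is a subgroup of $\bbe_\alpha^{\rm ad}(K'_\alpha)$ which, by Corollary~\ref{cor:f-m-gamma}, is Zariski dense in $\bbe_\alpha^{\rm ad}$; pulling back along the central isogeny $\pi:\bbe_\alpha\to\bbe_\alpha^{\rm ad}$ and intersecting with $\pi(\bbe_\alpha(K'_\alpha))$ yields $\Omega_\alpha\subset\bbe_\alpha(K'_\alpha)$, Zariski dense in $\bbe_\alpha$. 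Since $\bbe_\alpha$ is absolutely almost simple, connected, and simply connected over the global function field $K'_\alpha$, and since $\Omega_\alpha$ is commensurable with the image of $\bbf'(\lt)\cap\Gamma$ under the composite isomorphism and $\Gamma\subset\bbg(\ocal_\tcal)$, the group $\Omega_\alpha$ is commensurable with $\bbe_\alpha(\ocal_{\tcal_\alpha})$ for the ring of $\tcal_\alpha$-integers of $K'_\alpha$; by the reduction theory of arithmetic groups over function fields it is then an arithmetic lattice in $\prod_{u}\prod_b\bbe_\alpha(\tilde\field_{\alpha,b,u})$. The one point requiring care here is that $\Omega_\alpha$ has finite covolume rather than merely being discrete: this is where I would invoke that $\Delta'$ is a lattice in $D'$ (its quotient carries the finite measure $\mu$), push this finiteness through the isomorphism $\psi$ of Lemma~\ref{lem:global-structure}, and use that $F^+(\la)(\bbf'(\lt)\cap\Gamma)$ already accounts for a cofinite part of $D'$.

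For part~(3), I would argue that $\Delta/\Delta'$ is finite by combining Lemma~\ref{lem:F-is-F}(2)—which says $\bbf_w(\sfield)/\bbf'_w(\sfield)$ is a \emph{compact} abelian group—with the fact that $\Delta$ is a \emph{discrete} subgroup of $\bbf(\sfield_\tcal)$ and $\bbf_v=\bbf'_v$ for all $v\neq w$ by~\eqref{eq:F-v-neq-w}. Concretely, the image of $\Delta$ in $\bbf(\sfield_\tcal)/\bbf'(\sfield_\tcal)=\bbf_w(\sfield_w)/\bbf'_w(\sfield_w)$ is both discrete (image of a discrete group under a proper map, using that $\Delta'$ is already a lattice in the relevant piece) and contained in a compact group, hence finite; therefore $\Delta'$ has finite index in $\Delta$. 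I expect the genuine obstacle to be part~(2), and within it the verification that the lattice property (cofiniteness) transfers correctly across the isogeny $\pi$ and the identification of completions from part~(1)—that is, checking that no covolume is lost and that $\Omega_\alpha$ really is commensurable with the full $\tcal_\alpha$-arithmetic group rather than a thin subgroup; here Lemma~\ref{lem:global-structure}'s uniqueness clauses (the $(\bbe_s,\phi_s)$ stabilize and are unique up to unique isomorphism) and the strong approximation theorem for the simply connected group $\bbe_\alpha$ are the tools I would lean on.
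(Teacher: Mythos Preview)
Your treatment of parts~(1) and~(2) is broadly the paper's. Part~(1) is indeed formal. For part~(2), the paper argues directly that $\Omega'_\alpha$ is commensurable with $\bbe_\alpha^{\rm ad}(\ocal_{\tcal_\alpha})$: since $\Lambda$ has finite index in $\bbm(\ocal_\tcal)$ and $\psi_\alpha$ is induced from a $\oplus_i K_i$-isomorphism (Lemma~\ref{lem:global-structure}(1)), the preimage $\psi_\alpha^{-1}(\bar\Delta'_\alpha)$ is commensurable with the $\ocal_{\tcal_\alpha}$-points, and the lattice property then comes from reduction theory (the paper cites~\cite{BP-Finiteness}). Your detour through the measure-theoretic fact that $\Delta'$ is a lattice in $D'$ is unnecessary and in fact premature: the paper uses part~(3) of this very corollary later (Lemma~\ref{lem:Omega-D}) to deduce that $\Delta'$ is a lattice in $D''$, so invoking that here would be circular. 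Strong approximation is likewise not needed for this corollary; it enters only in the final assembly of Theorem~\ref{thm:arithmetic-red}.

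There is a genuine gap in your argument for part~(3). The quotient map $\bbf(\sfield_\tcal)\to\bbf(\sfield_\tcal)/\bbf'(\sfield_\tcal)$ is \emph{not} proper: its kernel $\bbf'(\sfield_\tcal)$ is noncompact. Hence discreteness of $\Delta$ does not imply discreteness of its image. Your attempted repair (``using that $\Delta'$ is already a lattice in the relevant piece'') does not help either: even granting that $\Delta'$ is a lattice in $\bbf'(\sfield_\tcal)$, the fibers $\bbf'(\sfield_\tcal)/\Delta'$ of the induced map have finite volume but are noncompact (the group is isotropic over a function field), so the map remains non-proper; and as noted, that lattice statement is not yet available. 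The paper instead deduces part~(3) \emph{from} part~(2): conjugation by any $\gamma\in\Delta$ preserves $\bar\Delta'$ and hence, arguing as in the proof of Theorem~\ref{thm:M-semisimple}(2), preserves the global model $\psi\bigl(\prod_\alpha\bbe_\alpha^{\rm ad}(K'_\alpha)\bigr)$; thus $\bar\Delta$ lies in the normalizer of the arithmetic group $\bar\Delta'$, and one concludes using the fact that an arithmetic lattice has finite index in its normalizer~\cite[\S1]{BP-Finiteness}.
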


\begin{proof}
Part~(1) is a consequence of the definitions.    

We now turn to the proof of part (2). 
First note that since $\phi_{\alpha,v}$ is an isomorphism, we have
\be\label{eq:delta'-E}
\phi_{\alpha,v}\biggl(\prod_{j\in J_\alpha}{\bbe}_\alpha^{\rm ad}(\sfield^\dagger_{j,v})\biggr)=\prod_{j\in J_\alpha}\Bigl(\bbl_{j,v}^\dagger\Bigr)^{\rm ad}(\sfield^\dagger_{j,v}).
\ee

In view of~\eqref{eq:delta-delta'} and Lemma~\ref{lem:F-is-F}(1), thus, we have
\[
\bar\Delta_\alpha'=\biggl(\prod_{\tcal}\prod_{j\in J_\alpha}\Bigl({\bbl}^{\dagger}_{j,v}\Bigr)^{\rm ad}(\sfield_{j,v}^\dagger)\biggr)\cap\Lambda
=\biggl(\prod_\tcal\phi_{\alpha,v}\Bigl(\prod_{j\in J_\alpha}{\bbe}_\alpha^{\rm ad}(\sfield^\dagger_{j,v})\Bigr)\biggr)\cap\Lambda,
\]
The above, part (1), and~\eqref{eq:lambda-psi} imply that  
\[
\bar\Delta'_\alpha=\biggl(\prod_{\tcal_\alpha}\phi_{\alpha,v}\Bigl(\prod_{b=1}^{s_u}{\bbe}_\alpha^{\rm ad}(\tilde\field_{\alpha, b,u})\Bigr)\biggr)\cap\Lambda,
\]
where we used the embedding in part (1).

Recall now that $\Lambda$ has finite index in ${\bbm}(\vintegers)$. 
These observations, together with the fact that 
$\psi_{\alpha}$ is induced from a $\oplus_{i=1}^r\gfield_i$-isomorphism, see~\eqref{eq:def-psi}, 
imply that $\Omega_\alpha'$ is commensurable with $\bbe_\alpha^{\rm ad}(\ocal_{\tcal_\alpha}).$
Part (2) thus follows, see e.g.~\cite[\S1]{BP-Finiteness}.

To see part (3) we argue as in the proof of Theorem~\ref{thm:M-semisimple}(2). Indeed, for every
$\gamma\in \Delta$, conjugation by $\gamma$ defines an automorphism of  $\prod_{i=1}^{r}\bbm(K_i)$ 
which preserves $\psi\Bigl(\prod_{\alpha=1}^{r'}\bbe_\alpha^{\rm ad}(\gfield'_\alpha)\Bigr).$
The claim thus follows from part (2) and the fact that arithmetic groups have finite index in their normalizer, 
see e.g.~\cite[\S1]{BP-Finiteness}.
\end{proof}

Let the notation be as in Lemma~\ref{lem:F-is-F} and Lemma~\ref{lem:global-structure}; 
let $\tilde\phi_v$, etc.\ denote the canonical lift of $\phi_v$, etc.\ to the simply connected covering group. 
Define  
\be\label{eq:L-E-l-pts}
L_\alpha^\dagger:=\prod_{v\in\tcal}\tilde\phi_{\alpha,v}^{-1}\Bigl(\prod_{j\in J_{\alpha,v}}\bbl_{j,v}^\dagger(\sfield_{j,v}^\dagger)\Bigr)=\prod_\tcal\prod_{J_{\alpha,v}}{\bbe}_{\alpha}(\sfield_{j,v}^\dagger),
\ee
and let $L^\dagger:=\prod_\alpha L_\alpha^\dagger$.

We also put 
\[
E_\alpha^\dagger:=\prod_{u\in\tcal_\alpha}\prod_{b=1}^{s_u}{\bbe}_{\alpha}(\tilde\field_{\alpha,b,u}),
\]
and write $E^\dagger:=\prod_\alpha E_\alpha^\dagger.$ 

Let $\la$ be the cocharacter of $\bbf_{w}$ which appears in Theorem~\ref{algeb-measure}. 
We get from the discussion in \S\ref{sec:psd-red}, see~\eqref{eq:F-standard-rep}, that $F^+(\la)\subset \bbf_w'(\sfield_w).$

Recall from~\eqref{eq:def-D'} that 
$
D'=\overline{F^+(\la)\Delta}.
$ 
Put $D'':=\overline{F^+(\la)\Delta'}\subset \bbf'(\lt)$ and define
\be\label{eq:def-D}
D:=\Bigl(\tilde{\phi}_{v}^{-1}\circ\iota_{v}^{-1}(D'')\Bigr)_{v\in\tcal}=\Bigl(\tilde{\psi}_{v}^{-1}(D'')\Bigr)_{v\in\tcal}\subset L^\dagger.
\ee

\begin{lem}\label{lem:Omega-D}
Let $\Omega:=\prod_{\alpha=1}^{r'}\Omega_\alpha$.  Then $\Omega\subset D$
is a lattice in $D.$
\end{lem}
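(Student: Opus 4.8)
The goal is to transfer the lattice property of $\Delta'$ inside $D''=\overline{F^+(\la)\Delta'}$ through the isomorphisms $\tilde\psi_v$ (equivalently $\tilde\phi_v\circ\iota_v$) to the group $D\subset L^\dagger$, and then recognise $\Omega=\prod_\alpha\Omega_\alpha$ as the relevant arithmetic lattice. First I would observe that, by Corollary~\ref{cor:arithmetic-lattice}(3), $\Delta'$ has finite index in $\Delta$, so replacing $\Delta$ by $\Delta'$ changes neither $D'$ up to finite index of the acting group nor the statement; thus it suffices to work with $D''$. Next, applying $\tilde\psi_v^{-1}$ fiberwise (these are isomorphisms of the relevant simply connected groups over $\oplus_j\sfield^\dagger_{j,v}$, obtained by lifting the $\phi_v$ of Lemma~\ref{lem:global-structure}(3) composed with the $\iota_v$ of Lemma~\ref{lem:F-is-F}(d)), the group $D$ is by definition $\overline{\tilde\psi^{-1}(F^+(\la))\,\tilde\psi^{-1}(\Delta')}$, and $\tilde\psi^{-1}(\Delta')$ maps, after projecting to the adjoint group, onto $\bar\Delta'=\prod_\alpha\bar\Delta'_\alpha$, which sits inside $\prod_\alpha\psi_\alpha^{-1}(\bar\Delta'_\alpha)=\prod_\alpha\Omega'_\alpha$. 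Pulling back along the covering maps $\pi:\bbe_\alpha\to\bbe_\alpha^{\rm ad}$ and using that $\tilde\psi^{-1}(\Delta')$ consists of $\oplus_\alpha K'_\alpha$-points (it is contained in $\bbe(\oplus_\alpha K'_\alpha)$ by Lemma~\ref{lem:global-structure}(1) together with part (2)), one gets $\tilde\psi^{-1}(\Delta')\subset\Omega=\prod_\alpha\Omega_\alpha$; conversely the finite-index arithmetic lattice $\Omega_\alpha$ (Corollary~\ref{cor:arithmetic-lattice}(2)) is commensurable with $\bbe_\alpha(\ocal_{\tcal_\alpha})$, and the finite-index discrepancy with the image of $\Delta'$ is again controlled by Corollary~\ref{cor:arithmetic-lattice}(3) and finiteness of index in normalizers. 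Hence $\Omega$ and $\tilde\psi^{-1}(\Delta')$ are commensurable, so $\Omega\subset D$.

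It remains to show $\Omega$ is a lattice in $D$. For this I would use that $D$ is a closed subgroup of $L^\dagger=\prod_\alpha L_\alpha^\dagger$ containing the (fiberwise) simple factors generated by $F^+(\la)$: indeed $F^+(\la)\subset\bbf'_w(\sfield_w)$ by the standardness of $\bbf_w$ (see~\eqref{eq:F-standard-rep} in \S\ref{sec:psd-red}), and $\tilde\psi^{-1}(F^+(\la))$ together with the diagonally embedded $\tilde\psi^{-1}(\Delta')$ generates a group whose closure contains, by strong approximation (the $\bbe_\alpha$ are simply connected, absolutely almost simple, and isotropic at the relevant places because $F^+(\la)$ is nontrivial there), the full restricted product $E^\dagger=\prod_\alpha E^\dagger_\alpha$ of $\bbe_\alpha$-points over the places $\tcal_\alpha$. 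Concretely: $\overline{\Omega_\alpha}$ in $E^\dagger_\alpha$ equals $E^\dagger_\alpha$ by strong approximation applied to the arithmetic lattice $\Omega_\alpha\sim\bbe_\alpha(\ocal_{\tcal_\alpha})$, and the contracting element $s$ (or rather its image under $\tilde\psi^{-1}$) expands $F^+(\la)$, forcing $D$ to contain all the noncompact directions. Thus $D=E^\dagger$ (or a finite-index-related group), and $\Omega=\prod_\alpha\Omega_\alpha$ is precisely the arithmetic lattice in it by Corollary~\ref{cor:arithmetic-lattice}(2), together with the product being a lattice in the product.

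The main obstacle I anticipate is the bookkeeping needed to identify $D$ exactly with $E^\dagger$, i.e.\ checking that the closure $\overline{F^+(\la)\Delta'}$, transported through $\tilde\psi$, is neither too small (missing some simple factors at places in $\tcal_\alpha$) nor entangled across the $\alpha$'s in a way incompatible with the product decomposition. The key inputs here are: (i) $\bar\Delta'_v$ is \emph{fiberwise} Zariski dense in $\coprod_j(\bbl^\dagger_{j,v})^{\rm ad}$ by Corollary~\ref{cor:f-m-gamma}, which pins down the local closures; (ii) the uniqueness part of Pink's theorem~\cite[Thm.~3.6]{Pink} used in Lemma~\ref{lem:global-structure}, which guarantees the global ring $\Upsilon=\oplus_\alpha K'_\alpha$ and the $\bbe_\alpha$ are the correct ones; and (iii) strong approximation for simply connected groups over global function fields, applied componentwise. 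Once these are in place, the lattice statement follows from the standard fact that a finite product of lattices is a lattice in the product group, and that $\Omega_\alpha$ is a lattice in the completion $\prod_u\prod_b\bbe_\alpha(\tilde\field_{\alpha,b,u})=E^\dagger_\alpha$ by Corollary~\ref{cor:arithmetic-lattice}(2).
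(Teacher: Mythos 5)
Your first paragraph is essentially the right (and the paper's) argument: $\Omega\subset D$ follows from the definitions, and the entire content of the lemma is to transfer the lattice property of $\Delta'$ in $D''$ through the topological group isomorphisms $\tilde\psi_v$. Concretely: by Theorem~\ref{algeb-measure}, $\mu$ is the invariant \emph{probability} measure on the closed orbit of $\Sigma=g_0D'g_0^{-1}$, so $\Delta$ is a lattice in $D'$; by Corollary~\ref{cor:arithmetic-lattice}(3), $\Delta'$ has finite index in $\Delta$ and hence is a lattice in $D''$; since the maps in~\eqref{eq:def-D} are topological isomorphisms onto $D$, the image of $\Delta'$ is a lattice in $D$, and $\Omega$ is a discrete group (Corollary~\ref{cor:arithmetic-lattice}(2)) commensurable with that image, hence a lattice in $D$. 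You announce this transfer as your ``goal'' but never execute it --- in particular you never invoke Theorem~\ref{algeb-measure}, which is the only place the finite-covolume input enters the argument.

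Instead, your second paragraph derives the lattice property from an identification of $D$ with $E^\dagger$ by strong approximation, and this is where the proof breaks. The identification is circular: the correct statement is $D=\prod_{\alpha\in\oldaleph}E^\dagger_\alpha\prod_{\alpha\notin\oldaleph}\Omega_\alpha$ (not $E^\dagger$; for $\alpha\notin\oldaleph$ the factor is only the discrete group $\Omega_\alpha$), and it is the \emph{conclusion} of Theorem~\ref{thm:arithmetic-red}, established at the end of the section via Lemmas~\ref{lem:R-lattice}, \ref{lem:M-1}, \ref{lem:field-of-def} and~\ref{lem:sfield-I} --- all of which rest on Lemma~\ref{lem:Omega-D}. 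To run strong approximation one must first know that the places and completions $\tilde\field_{\alpha,b,u}$ coming from Corollary~\ref{cor:arithmetic-lattice}(1) match the fields $\sfield^\dagger_{j,w}$ at which $F^+(\la)$ is noncompact; that is exactly the content of Lemmas~\ref{lem:M-1}--\ref{lem:sfield-I}, which you may not assume here. Moreover the sentence ``$\overline{\Omega_\alpha}$ in $E^\dagger_\alpha$ equals $E^\dagger_\alpha$ by strong approximation'' is false as stated: $\Omega_\alpha$ is discrete in $E^\dagger_\alpha$, hence closed, and strong approximation concerns its closure in the adeles away from $\tcal_\alpha$, not inside $E^\dagger_\alpha$. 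Deleting the second and third paragraphs and completing the transfer sketched in the first, with Theorem~\ref{algeb-measure} cited for the lattice property of $\Delta$ in $D'$, repairs the proof.
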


\begin{proof}
First note that $\Omega\subset D$ follows from the definitions.
Now, by Theorem~\ref{algeb-measure}, we have $\Delta$ 
is a lattice in $D'$; therefore, $\Delta'$ is a lattice in $D''$ by Corollary~\ref{cor:arithmetic-lattice}(3). 
This, together with the fact that the above maps are topological isomorphisms, implies the claim. 
\end{proof}

In the rest of this section, we will use Corollary~\ref{cor:arithmetic-lattice} and Lemma~\ref{lem:Omega-D}
to finish the proof of Theorem~\ref{thm:arithmetic-red}.

Define
$
{\oldaleph}\subset\{1,\ldots,r'\}
$
to be the following subset. 
\be\label{eq:def-C}
\text{$\alpha\in\oldaleph$ if and only if the projection of $\la(\bbg_m)$ to $L^\dagger_\alpha$ is noncentral.}
\ee

Recall that ${\bbe}_\alpha$ is a connected, simply connected, absolutely almost simple, $K'_\alpha$-group 
for all $1\leq \alpha\leq r'.$ 
Then,~\cite[Ch.~1, Prop.\ 1.5.4 and Thm.\ 2.3.1]{Mar5}
and Lemma~\ref{lem:F-is-F}(1) imply that for every $\alpha\in{\oldaleph}$ there exists a (maximal) nonempty subset $J_\alpha(\la)\subset J_{\alpha,w}$ so that 
\be\label{eq:J-D-E}
\prod_{{\alpha\in\oldaleph}}\prod_{j\in J_\alpha(\la)}{\bbe}_{\alpha}(\sfield_{j,w}^\dagger)\subset D.
\ee

The following argument applies to any $\alpha\in{\oldaleph}$ and any $j\in J_\alpha(\la)$;
for the sake of an explicit exposition, however, we assume $1\in J_\alpha(\la)$ for some
$\alpha\in{\oldaleph}$ and work with this $\alpha$ and $j=1$.

Fix $Q,$ a compact open subgroup of $L^\dagger$. 
Since ${\bbe}_{\alpha}(\sfield_{1,w}^\dagger)\subset D$ is a normal subgroup of 
$L^\dagger$ and $Q$ is compact and open in $L^\dagger$, 
we get that ${\bbe}_{\alpha}(\sfield_{1,w}^\dagger)Q$ is an open and closed subgroup of $L^\dagger$. 

\begin{lem}\label{lem:R-lattice}
$\Bigl({\bbe}_{\alpha}(\sfield_{1,w}^\dagger)Q\Bigr)\cap\Omega$ is a lattice in 
${\bbe}_{\alpha}(\sfield_{1,w}^\dagger)Q.$
\end{lem}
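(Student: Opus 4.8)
The plan is to realize $R\cap\Omega$ as a lattice in $R:={\bbe}_{\alpha}(\sfield_{1,w}^\dagger)Q$ by applying two elementary operations to the lattice $\Omega$ of $D$: first passing to an open subgroup of $D$, then passing to a closed co-compact overgroup inside $L^\dagger$. Recall from Lemma~\ref{lem:Omega-D} that $\Omega$ is a lattice in the closed subgroup $D$ of $L^\dagger$, from~\eqref{eq:J-D-E} (together with the standing assumption $\alpha\in\oldaleph$ and $1\in J_\alpha(\la)$) that ${\bbe}_{\alpha}(\sfield_{1,w}^\dagger)\subset D$, and from the discussion preceding the lemma that ${\bbe}_{\alpha}(\sfield_{1,w}^\dagger)$ is normal in $L^\dagger$ and that $R$ is open and closed in $L^\dagger$. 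I will also use freely that every group occurring below is unimodular: $L^\dagger$ is a finite product of semisimple groups over local fields, hence unimodular, so every one of its open subgroups (in particular $R$) is unimodular; $D$ is unimodular because it contains the lattice $\Omega$; and every open subgroup of $D$ is therefore unimodular as well.

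First I would check that $R\cap\Omega$ is a lattice in $R\cap D$. Since $R$ is open in $L^\dagger$ and $D\subset L^\dagger$, the group $R\cap D$ is open (and closed) in $D$; hence $(R\cap D)\Omega$ is an open, left $(R\cap D)$-invariant subset of $D$, whose image in $D/\Omega$ is open, $(R\cap D)$-invariant, and $(R\cap D)$-equivariantly homeomorphic to $(R\cap D)/(R\cap D\cap\Omega)$. Restricting the finite $D$-invariant measure on $D/\Omega$ to this open $(R\cap D)$-invariant set produces a finite $(R\cap D)$-invariant measure on $(R\cap D)/(R\cap D\cap\Omega)$, that is, $R\cap\Omega = R\cap D\cap\Omega$ is a lattice in $R\cap D$.

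Next I would check that $R\cap D$ is co-compact in $R$: since ${\bbe}_{\alpha}(\sfield_{1,w}^\dagger)\subset R\cap D$, an element $eq$ of $R$ with $e\in {\bbe}_{\alpha}(\sfield_{1,w}^\dagger)$ and $q\in Q$ lies in $D$ exactly when $q\in D$, so $R\cap D = {\bbe}_{\alpha}(\sfield_{1,w}^\dagger)(Q\cap D)$; hence the canonical map $R\to R/(R\cap D)$ kills ${\bbe}_{\alpha}(\sfield_{1,w}^\dagger)$ and therefore sends the compact group $Q$ onto $R/(R\cap D)$, which is thus compact. Finally, $R\cap D$ is a closed co-compact subgroup of the unimodular group $R$ containing the lattice $R\cap\Omega$: the fibration $R/(R\cap\Omega)\to R/(R\cap D)$ has compact base and fibre $(R\cap D)/(R\cap\Omega)$ of finite volume, so Weil's integration formula shows that $R/(R\cap\Omega)$ has finite $R$-invariant volume, which is the assertion. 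The argument is short; the point needing care is only the bookkeeping of modular functions so that the relevant invariant measures exist at each of the two steps, and this is taken care of by the unimodularity remarks in the first paragraph.
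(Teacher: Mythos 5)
Your proof is correct and follows essentially the same route as the paper's: first restrict the invariant measure on $D/\Omega$ to the open subgroup $R\cap D$ of $D$ to see that $R\cap\Omega$ is a lattice there, then use that $R\cap D$ is cocompact in the unimodular group $R$ (via \cite[Lemma 1.6]{Raghunathan}, equivalently Weil's integration formula) to promote it to a lattice in $R$. Your justifications of the cocompactness of $R\cap D$ in $R$ and of the unimodularity claims are slightly more explicit than the paper's, but the argument is the same.
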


\begin{proof}
Put $R:={\bbe}_{\alpha}(\sfield_{1,w}^\dagger)$ and let $Q':=Q\cap D$.
Then $Q'$ is a compact open subgroup of $D.$ Recall that ${RQ}$ is an open and closed subgroup of
$L^\dagger$. Put $R':= RQ\cap D$. 
Then $R\subset R'$ and $R'$ is an open and closed subgroup of $D.$ 
We first show that
$
\Sigma:=R'\cap \Omega
$
is a lattice in $R'.$

Recall from Lemma~\ref{lem:Omega-D} that $\Omega$ is a lattice in $D.$  
Since $R'$ is an open subgroup of $D$ we have $R'\Omega$ is an open and closed subset of $D.$
Therefore, the orbit map is a homeomorphism from $R'\Omega/\Omega$ 
onto its image in $D/\Omega$ equipped with the subspace topology. 
Now the restriction of the $D$-invariant measure on $D/\Omega$ to $R'\Omega/\Omega$ 
pulls back to the $R'$-invariant measure on $R'/\Sigma$. Hence, $\Sigma$ is a lattice in $R'$
as we claimed. 

Since $RQ$ and $R'$ are unimodular groups, $RQ/R'$
has an $RQ$-invariant measure. Moreover, $RQ/R'$ is compact, hence, 
$RQ/R'$ has a finite $RQ$-invariant measure.
We showed in the previous paragraph that $\Sigma$ is a lattice in $R'.$
Therefore, $\Sigma$ is a lattice in $RQ,$ see e.g.~\cite[Lemma 1.6]{Raghunathan}. 
The claim now follows since $\bigl(RQ\bigr)\cap\Omega$
is discrete and contains $\Sigma.$
\end{proof}

Let ${\rm p}:L^\dagger\to{\bbe}_{\alpha}(\sfield_{1,w}^\dagger)$ be the natural projection.

\begin{cor}\label{cor:delta-lattice}
${\rm p}\Bigl(\bigl({\bbe}_{\alpha}(\sfield_{1,w}^\dagger)Q\bigr)\cap\Omega\Bigr)$ is a lattice in ${\bbe}_{\alpha}(\sfield_{1,w}^\dagger)$.
\end{cor}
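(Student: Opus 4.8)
The plan is to realize the restriction ${\rm p}|_{RQ}$ (write $R:={\bbe}_{\alpha}(\sfield_{1,w}^\dagger)$, as in the proof of Lemma~\ref{lem:R-lattice}) as a quotient homomorphism with \emph{compact} kernel, and then push forward the lattice supplied by Lemma~\ref{lem:R-lattice}. The whole argument takes place inside the locally compact second countable group $RQ$.

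First, Lemma~\ref{lem:R-lattice} tells us that $\Lambda:=\bigl(RQ\bigr)\cap\Omega$ is a lattice in $RQ$. Since ${\bbe}_{\alpha}(\sfield_{1,w}^\dagger)$ is precisely the factor of $L^\dagger$ onto which ${\rm p}$ projects, the map ${\rm p}|_{RQ}\colon RQ\to R$ is a continuous surjective homomorphism: surjectivity holds because $R\subset RQ$ and ${\rm p}$ restricts to the identity on $R$, so in particular the inclusion $R\hookrightarrow RQ$ is a continuous section of ${\rm p}|_{RQ}$. Set $N:=\ker({\rm p}|_{RQ})$. Any element of $N$ has the form $rq$ with $r\in R$, $q\in Q$ and $r\,{\rm p}(q)=e$, i.e.\ $r={\rm p}(q)^{-1}$; hence $N=\{{\rm p}(q)^{-1}q:q\in Q\}$ is the continuous image of the compact group $Q$, so $N$ is a compact normal subgroup of $RQ$. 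Using the section $R\hookrightarrow RQ$ one checks that the induced continuous bijection $RQ/N\to R$ is a topological isomorphism.

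It remains to see that the image of $\Lambda$ in $RQ/N\cong R$ is a lattice, and for this it suffices to verify the three defining properties. The set $\Lambda N$ is closed in $RQ$, being the product of the closed subgroup $\Lambda$ and the compact normal subgroup $N$; hence $\Lambda N/N$ is a closed subgroup of $RQ/N$, and it is discrete, since $\Lambda$ is discrete in $RQ$ and $\Lambda\cap N$ is finite (a discrete subgroup of the compact group $N$). Finally, $\Lambda N/N$ has finite covolume in $RQ/N$: a finite $RQ$-invariant measure on $RQ/\Lambda$ pushes forward along the quotient map $RQ/\Lambda\to RQ/\Lambda N\cong (RQ/N)/(\Lambda N/N)$, whose fibres are homeomorphic to the compact space $\Lambda N/\Lambda\cong N/(\Lambda\cap N)$, to a finite $(RQ/N)$-invariant measure. (This is the standard behaviour of lattices under quotients by compact normal subgroups; cf.\ the argument in the proof of Lemma~\ref{lem:R-lattice} and \cite[Lemma~1.6]{Raghunathan}.) Transporting along $RQ/N\cong R$, the lattice $\Lambda N/N$ corresponds to ${\rm p}(\Lambda)={\rm p}\bigl((RQ)\cap\Omega\bigr)$, which is therefore a lattice in $R={\bbe}_{\alpha}(\sfield_{1,w}^\dagger)$.

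The only genuinely non-formal point is the compactness of the kernel $N$ together with the identification $RQ/N\cong R$ via the section $R\hookrightarrow RQ$; granting these, the conclusion is a routine verification of the three conditions (closed, discrete, finite covolume) for the image subgroup, and it is in this last step that the compactness of $Q$ — hence of $N$ — is used in an essential way.
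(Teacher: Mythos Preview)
Your proof is correct and follows essentially the same approach as the paper's own proof: both restrict ${\rm p}$ to $RQ$, observe that this restriction is surjective with compact kernel, and then push forward the lattice $\Lambda=(RQ)\cap\Omega$ supplied by Lemma~\ref{lem:R-lattice}. The paper compresses the last step into the single observation that a surjective map with compact kernel is proper, whereas you unfold the verification that the image of a lattice under a quotient by a compact normal subgroup is again a lattice; the content is the same.
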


\begin{proof} 
The map ${\rm p}:{\bbe}_{\alpha}(\sfield_{1,w}^\dagger)Q\to {\bbe}_{\alpha}(\sfield_{1,w}^\dagger)$ 
is surjective and has compact kernel. 
Therefore, ${\rm p}$ is a surjective, proper map. This and Lemma~\ref{lem:R-lattice} 
imply that ${\rm p}\Bigl(\bigl({\bbe}_{\alpha}(\sfield_{1,w}^\dagger)Q\bigr)\cap\Omega\Bigr)$ is a lattice
in ${\bbe}_{\alpha}(\sfield_{1,w}^\dagger).$ 
\end{proof}

Recall from Corollary~\ref{cor:arithmetic-lattice}(1) that 
\[
\oplus_{\tcal_\alpha}\oplus_{b=1}^{s_u}\tilde\field_{\alpha, b,u}\subset\oplus_\tcal\oplus_{J_{\alpha,v}}\sfield^\dagger_{j,v}.
\] 
Therefore, for every $v\in\tcal$ and any $j\in J_{\alpha,v}$, there exist some $u(v)=u_\alpha(v)\in\tcal_\alpha$, 
some $1\leq b(j)=b_{\alpha,v}( j)\leq s_{u(v)},$ and a (continuous) embedding of fields 
\[
\tau_{\alpha,j,v}:\tilde\field_{\alpha,b(j),u(v)}\to\sfield_{j,v}^\dagger.
\]


\begin{lem}\label{lem:M-1}
Recall that we fixed one $\alpha\in\oldaleph$ and assumed that $1\in J_\alpha(\la)$.
Let us write $u_\alpha(w)=u_0$; without loss of generality we may assume $b_{\alpha,w}(1)=1.$ 
Suppose $v\in\tcal$ and $j\in J_{\alpha,v}$ are so that
$\Bigl(u_\alpha(v),b_{\alpha,v}(j)\Bigr)=(u_0,1)$. 
Then $v=w$ and $j=1.$ 
\end{lem}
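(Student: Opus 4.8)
The goal is a rigidity/separation statement: if a place--index pair $(v,j)$ "lands on" the same completion datum $(u_0,1)$ of $K'_\alpha$ that the distinguished pair $(w,1)$ lands on, then $(v,j)=(w,1)$. The natural approach is to use the arithmeticity established in Corollary~\ref{cor:arithmetic-lattice} and Corollary~\ref{cor:delta-lattice} together with the \emph{strong approximation} property for the simply connected, absolutely almost simple group $\bbe_\alpha$ over the global function field $K'_\alpha$. The key observation is that the closure of $K'_\alpha$ diagonally embedded in $\oplus_{v\in\tcal}\oplus_{j\in J_{\alpha,v}}\sfield^\dagger_{j,v}$ decomposes (Corollary~\ref{cor:arithmetic-lattice}(1)) as $\oplus_{u\in\tcal_\alpha}\oplus_{b=1}^{s_u}\tilde\field_{\alpha,b,u}$, i.e.\ distinct pairs $(v,j)$ mapping to the \emph{same} $(u,b)$ would force the corresponding local embeddings $\tau_{\alpha,j,v}$ to factor through a single completion $\tilde\field_{\alpha,b,u}$ of $K'_\alpha$; I want to show this only happens for $(w,1)$.

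First I would set up the relevant projections. Write $R:=\bbe_\alpha(\sfield^\dagger_{1,w})$ and recall from~\eqref{eq:J-D-E} that $R\subset D$ is a normal subgroup of $L^\dagger$; by Corollary~\ref{cor:delta-lattice}, ${\rm p}\bigl((RQ)\cap\Omega\bigr)$ is a lattice in $R$, where ${\rm p}$ is projection to the $(\alpha,1,w)$ factor. Now suppose toward a contradiction that some $(v,j)\neq(w,1)$ with $j\in J_{\alpha,v}$ satisfies $(u_\alpha(v),b_{\alpha,v}(j))=(u_0,1)$. Then in the arithmetic description of $\Omega_\alpha$ coming from Corollary~\ref{cor:arithmetic-lattice}(2) — where $\Omega_\alpha$ is commensurable with $\bbe_\alpha^{\rm ad}(\ocal_{\tcal_\alpha})$ viewed via the embedding $K'_\alpha\hookrightarrow\oplus_{u\in\tcal_\alpha}\oplus_b\tilde\field_{\alpha,b,u}$ — the two factors indexed by $(w,1)$ and $(v,j)$ \emph{both} correspond to the same place $u_0\in\tcal_\alpha$ and the same index $b=1$. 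But in $\oplus_{u}\oplus_b\tilde\field_{\alpha,b,u}$ there is exactly one copy of each completion $\tilde\field_{\alpha,b,u}$; an arithmetic group $\bbe_\alpha^{\rm ad}(\ocal_{\tcal_\alpha})$ projects \emph{diagonally} (via $\tau_{\alpha,1,w}$ and $\tau_{\alpha,j,v}$ which would then be two embeddings of the \emph{same} field $\tilde\field_{\alpha,1,u_0}$) into $R\times\bbe_\alpha(\sfield^\dagger_{j,v})$. Hence the projection of $\Omega_\alpha$ to the product of these two factors would be the graph of the isomorphism $\bbe_\alpha(\sfield^\dagger_{1,w})\to\bbe_\alpha(\sfield^\dagger_{j,v})$ induced by $\tau_{\alpha,j,v}\circ\tau_{\alpha,1,w}^{-1}$, in particular \emph{not} commensurable with $\bbe_\alpha^{\rm ad}(\ocal_{\tcal_\alpha})$ unless $w=v$ — and this is where I derive the contradiction.

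To make the contradiction precise I would invoke strong approximation for $\bbe_\alpha$ together with the finite-covolume constraint: $D\cap\Omega$ is a lattice in $D$ (Lemma~\ref{lem:Omega-D}), and $R=\bbe_\alpha(\sfield^\dagger_{1,w})$ is a \emph{noncentral} factor (that is the content of $\alpha\in\oldaleph$ and $1\in J_\alpha(\la)$, i.e.\ the projection of $\la(\bbg_m)$ to $L^\dagger_\alpha$ is noncentral, by~\eqref{eq:def-C}). If $(v,j)\ne (w,1)$ shared the completion datum, then by strong approximation the projection of the lattice $(RQ)\cap\Omega$ to $R\times \bbe_\alpha(\sfield^\dagger_{j,v})$ would be \emph{dense} in a product of positive-dimensional groups having \emph{no} diagonal relation, contradicting discreteness; equivalently, ${\rm p}((RQ)\cap\Omega)$ could not be a lattice in $R$ alone because the ``extra'' factor $\bbe_\alpha(\sfield^\dagger_{j,v})$ would contribute nontrivially and noncompactly. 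Since $Q$ was chosen compact open, this forces $(v,j)=(w,1)$; in particular $v=w$ and $j=1$.

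\textbf{The main obstacle.} The delicate point is bookkeeping: carefully tracking how the completions $\tilde\field_{\alpha,b,u}$ of the global field $K'_\alpha$ sit inside $\sfield^\dagger_{j,v}$ via the embeddings $\tau_{\alpha,j,v}$, and translating ``$(u_\alpha(v),b_{\alpha,v}(j))=(u_0,1)$'' into the statement that $\tau_{\alpha,1,w}$ and $\tau_{\alpha,j,v}$ have the \emph{same} source field, so that the two projections of $\Omega_\alpha$ are tied by a fixed algebraic isomorphism. Once that is made precise, the argument is the standard ``a lattice cannot project to a proper diagonal'' dichotomy, applied to the arithmetic lattice $\Omega_\alpha$; but one must be careful that $\Omega_\alpha$ is only \emph{commensurable} with (not equal to) $\bbe_\alpha^{\rm ad}(\ocal_{\tcal_\alpha})$, and that the passage between $\bbe_\alpha$ and $\bbe_\alpha^{\rm ad}$ (via the covering map $\pi$ and the lifts $\tilde\phi_{\alpha,v}$) does not introduce spurious identifications — this is handled by Corollary~\ref{cor:arithmetic-lattice} and the fact that the covering $\pi$ has central, hence finite, kernel.
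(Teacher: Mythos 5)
Your setup is the right one and matches the paper's: you correctly reduce the lemma to the observation that if some $(v,j)\neq(w,1)$ had the same completion datum $(u_0,1)$, then the $(w,1)$- and $(v,j)$-coordinates of every element of $\Omega\subset E^\dagger$ would be images of one and the same element of $\bbe_{\alpha}(\tilde\field_{\alpha,1,u_0})$ under the two continuous embeddings $\tau_{\alpha,1,w}$ and $\tau_{\alpha,j,v}$; and you correctly bring in Lemma~\ref{lem:R-lattice} and Corollary~\ref{cor:delta-lattice} together with the noncompactness of $R=\bbe_{\alpha}(\sfield^\dagger_{1,w})$, which comes from~\eqref{eq:def-C}. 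The problem is the step where the contradiction is supposed to appear. The assertion that ``by strong approximation the projection of $(RQ)\cap\Omega$ to $R\times\bbe_{\alpha}(\sfield^\dagger_{j,v})$ would be dense in a product \dots having no diagonal relation, contradicting discreteness'' is backwards: under the hypothesis being refuted the two factors \emph{are} tied by a diagonal relation, so density in the product fails rather than holds, and strong approximation (which the paper invokes only later, in the proof of Theorem~\ref{thm:arithmetic-red}) gives nothing here. Your alternative phrasing --- that the extra factor ``would contribute nontrivially and noncompactly'' --- is not an argument.

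The contradiction you actually need, and the one the paper runs, is the opposite of density: the diagonal relation makes $(RQ)\cap\Omega$ \emph{too small}. Elements of $RQ$ have every coordinate other than the $(w,1)$-coordinate confined to the compact set $Q$; in particular their $(v,j)$-coordinates are bounded. Since $(RQ)\cap\Omega$ is a lattice in the noncompact group $RQ$ (Lemma~\ref{lem:R-lattice}), it is unbounded, so the $(w,1)$-coordinates $\tau_{\alpha,1,w}\bigl(g_n^{(u_0,1)}\bigr)$ of a suitable sequence are unbounded, while the $(v,j)$-coordinates $\tau_{\alpha,j,v}\bigl(g_n^{(u_0,1)}\bigr)$ of the \emph{same} elements $g_n^{(u_0,1)}\in\bbe_{\alpha}(\tilde\field_{\alpha,1,u_0})$ stay bounded. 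Passing to matrix entries produces a sequence $\{c_n\}\subset\tilde\field_{\alpha,1,u_0}$ with $\tau_{\alpha,1,w}(c_n)$ unbounded and $\tau_{\alpha,j,v}(c_n)$ bounded, which is impossible for two continuous embeddings of the same local field: unboundedness of $c_n$ is equivalent to $c_n^{-1}\to 0$ along a subsequence, and any continuous embedding preserves this. This is exactly the paper's proof; your write-up assembles all the ingredients, but the decisive inference is missing and what stands in its place would not go through.
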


\begin{proof}
Let $\scal\subset\tcal$ be the set of all $v$ so that the equation
\[
\Bigl(u_\alpha(v),b_{\alpha,v}(j)\Bigr)=(u_0,1)
\] 
has a solution if and only if $v\in\scal$.
For all $v\in\scal$, let $J'_{\alpha,v}\subset J_{\alpha,v}$ be the set of all $j$ 
so that $b_{\alpha,v}( j)=1$ if and only if $j\in J_{\alpha,v}'$.  
We need to show that $\scal=\{w\}$ and that $J'_{\alpha,w}=\{1\}.$

By Lemma~\ref{lem:R-lattice}, we have $\Bigl({\bbe}_{\alpha}(\sfield_{1,w}^\dagger)Q\Bigr)\cap\Omega$ 
is a lattice in ${\bbe}_{\alpha}(\sfield_{1,w}^\dagger)Q.$ 

Recall that $\Omega\subset E^\dagger$. Moreover, ${\bbe}_{\alpha}(\sfield_{1,w}^\dagger)$ is not compact
since $\la$ is non-central. Hence, 
$
\Bigl({\bbe}_{\alpha}(\sfield_{1,w}^\dagger)Q\Bigr)\cap E^\dagger
$ 
is unbounded. 

Therefore, there exists a sequence $\{c_n\}\subset\tilde\field_{\alpha,1,u_0}$ 
so that $\{\tau_{\alpha,1,w}(c_n)\}$ is unbounded, but, $\{\tau_{\alpha,j,v}(c_n)\}$
is bounded for all $w\neq v\in\scal$ and all $j\in J'_{\alpha,v}$, and for all $1\neq j\in J'_{\alpha,w}$
when $v=w$. 
This contradicts the fact that $\{\tau_{\alpha,j,v}\}$'s are continuous, and finishes the proof.
\end{proof}

\begin{lem}\label{lem:field-of-def}
With the notation as in Lemma~\ref{lem:M-1}, we have
$\tilde\field_{\alpha,1,u_0}=\sfield^\dagger_{1,w}.$ 
\end{lem}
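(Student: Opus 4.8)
The plan is to leverage Lemma~\ref{lem:M-1} together with the lattice statements already established. We have fixed $\alpha\in\oldaleph$ with $1\in J_\alpha(\la)$, and we have written $u_\alpha(w)=u_0$ with $b_{\alpha,w}(1)=1$, so there is a continuous embedding $\tau_{\alpha,1,w}\colon\tilde\field_{\alpha,1,u_0}\to\sfield^\dagger_{1,w}$. The goal is to show this embedding is surjective, i.e.\ an equality of fields. First I would note that by Corollary~\ref{cor:delta-lattice}, ${\rm p}\bigl((\bbe_\alpha(\sfield^\dagger_{1,w})Q)\cap\Omega\bigr)$ is a lattice in $\bbe_\alpha(\sfield^\dagger_{1,w})$. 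On the other hand, every element of $\Omega$ lies in $E^\dagger=\prod_\alpha\prod_{u\in\tcal_\alpha}\prod_b\bbe_\alpha(\tilde\field_{\alpha,b,u})$, so its component in the $(1,w)$-slot of $L^\dagger$ is the image under $\tau_{\alpha,1,w}$ of an element of $\bbe_\alpha(\tilde\field_{\alpha,1,u_0})$ — provided the only slot of $E^\dagger$ mapping into the $(1,w)$-slot of $L^\dagger$ is the $(u_0,1)$-slot. That last fact is exactly the content of Lemma~\ref{lem:M-1}: $v=w$, $j=1$ is the unique pair with $(u_\alpha(v),b_{\alpha,v}(j))=(u_0,1)$.

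So the key step is: combining Lemma~\ref{lem:M-1} with the description of $\Omega$ as a subset of $E^\dagger$, the projection ${\rm p}\bigl((\bbe_\alpha(\sfield^\dagger_{1,w})Q)\cap\Omega\bigr)$ is contained in $\tau_{\alpha,1,w}\bigl(\bbe_\alpha(\tilde\field_{\alpha,1,u_0})\bigr)$, hence in $\bbe_\alpha\bigl(\tau_{\alpha,1,w}(\tilde\field_{\alpha,1,u_0})\bigr)$. Thus a lattice in $\bbe_\alpha(\sfield^\dagger_{1,w})$ is contained in the subgroup $\bbe_\alpha(F_0)$, where $F_0:=\tau_{\alpha,1,w}(\tilde\field_{\alpha,1,u_0})$ is a closed subfield of $\sfield^\dagger_{1,w}$. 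I would then invoke the standard fact that a lattice in $\bbe_\alpha(\sfield^\dagger_{1,w})$ (for $\bbe_\alpha$ absolutely almost simple and isotropic over $\sfield^\dagger_{1,w}$, which holds since $\la$ is noncentral on this factor) is Zariski dense in $\bbe_\alpha$ and, more to the point, cannot be contained in $\bbe_\alpha(F_0)$ for a proper closed subfield $F_0\subsetneq\sfield^\dagger_{1,w}$: indeed $\bbe_\alpha(F_0)$ is a closed subgroup of infinite covolume (or: it is not even open, being contained in a proper closed subgroup of the locally compact group $\bbe_\alpha(\sfield^\dagger_{1,w})$). Hence $F_0=\sfield^\dagger_{1,w}$, i.e.\ $\tau_{\alpha,1,w}$ is an isomorphism $\tilde\field_{\alpha,1,u_0}\xrightarrow{\sim}\sfield^\dagger_{1,w}$, giving the claimed equality $\tilde\field_{\alpha,1,u_0}=\sfield^\dagger_{1,w}$.

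In carrying this out I would be careful about two bookkeeping points. First, $\Omega$ itself need not consist only of the $(1,w)$-factor contributions — an element $\gamma\in\Omega$ has a component $\gamma_{b,u}\in\bbe_\alpha(\tilde\field_{\alpha,b,u})$ for each $(b,u)$, and the map into $L^\dagger$ sends $\gamma_{b,u}$ into the $(j,v)$-slot precisely when $\bigl(u_\alpha(v),b_{\alpha,v}(j)\bigr)=(b,u)$; so the $(1,w)$-slot image of $\gamma$ is $\tau_{\alpha,1,w}(\gamma_{1,u_0})$, using Lemma~\ref{lem:M-1} to rule out contributions from other slots. Second, I should make sure the $Q$-factor and the passage ${\rm p}$ don't enlarge things: ${\rm p}$ kills $Q$ and the other $L^\dagger_\beta$-factors, so ${\rm p}\bigl((\bbe_\alpha(\sfield^\dagger_{1,w})Q)\cap\Omega\bigr)$ is genuinely a set of elements of the form $\tau_{\alpha,1,w}(c)$ with $c\in\bbe_\alpha(\tilde\field_{\alpha,1,u_0})$.

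The main obstacle I anticipate is the last step — arguing that a lattice in $\bbe_\alpha(\sfield^\dagger_{1,w})$ cannot lie in $\bbe_\alpha(F_0)$ for a proper closed subfield $F_0$. This is morally clear (the subgroup $\bbe_\alpha(F_0)$ has infinite index / infinite covolume, while a lattice must have finite covolume), but to be rigorous one wants to cite the relevant superrigidity/arithmeticity input — e.g.\ that a lattice in a simple group over a local field determines its field of definition — or argue directly via Haar measure that a lattice cannot be contained in a proper closed subgroup which is itself the rational points over a proper closed subfield. I expect this to be dispatched by the same kind of argument used throughout \S\ref{sec:arithmetic} (comparing invariant measures and using that $\bbe_\alpha$ is absolutely almost simple and $\sfield^\dagger_{1,w}$-isotropic), possibly citing~\cite[Ch.~I]{Mar5} once more.

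\begin{proof}
Recall from Corollary~\ref{cor:delta-lattice} that
${\rm p}\Bigl(\bigl({\bbe}_{\alpha}(\sfield_{1,w}^\dagger)Q\bigr)\cap\Omega\Bigr)$ is a lattice in
${\bbe}_{\alpha}(\sfield_{1,w}^\dagger).$
By the definition of $\Omega=\prod_\beta\Omega_\beta$ and of $\Omega_\alpha$,
every $\gamma\in\Omega_\alpha$ has the form $\gamma=(\gamma_{b,u})$ with
$\gamma_{b,u}\in{\bbe}_\alpha(\tilde\field_{\alpha,b,u})$ for $u\in\tcal_\alpha$ and $1\leq b\leq s_u$;
under the embedding of $\oplus_{\tcal_\alpha}\oplus_{b}\tilde\field_{\alpha,b,u}$ into
$\oplus_\tcal\oplus_{J_{\alpha,v}}\sfield^\dagger_{j,v}$ from Corollary~\ref{cor:arithmetic-lattice}(1),
the component $\gamma_{b,u}$ maps into the $(j,v)$-slot of $L^\dagger$ exactly when
$\bigl(u_\alpha(v),b_{\alpha,v}(j)\bigr)=(u,b)$.
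In particular, the projection ${\rm p}(\gamma)$ to ${\bbe}_\alpha(\sfield^\dagger_{1,w})$ is,
via $\tau_{\alpha,1,w}$, the image of the sum of the components $\gamma_{b,u}$ with
$\bigl(u_\alpha(w),b_{\alpha,w}(1)\bigr)=(u_0,1)=(u,b)$.
By Lemma~\ref{lem:M-1}, the pair $(u_0,1)$ is attained only for $v=w$ and $j=1$,
so ${\rm p}(\gamma)=\tau_{\alpha,1,w}(\gamma_{1,u_0})\in{\bbe}_\alpha\bigl(\tau_{\alpha,1,w}(\tilde\field_{\alpha,1,u_0})\bigr).$

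Hence, writing $F_0:=\tau_{\alpha,1,w}(\tilde\field_{\alpha,1,u_0})$, which is a closed subfield of
$\sfield^\dagger_{1,w}$, we have shown that
\[
{\rm p}\Bigl(\bigl({\bbe}_{\alpha}(\sfield_{1,w}^\dagger)Q\bigr)\cap\Omega\Bigr)\subset{\bbe}_\alpha(F_0).
\]
Thus ${\bbe}_\alpha(F_0)$ contains a lattice $\Xi$ of ${\bbe}_\alpha(\sfield^\dagger_{1,w})$.
Since $\alpha\in\oldaleph$, the projection of $\la(\bbg_m)$ to $L^\dagger_\alpha$ is noncentral,
so ${\bbe}_\alpha$ is $\sfield^\dagger_{1,w}$-isotropic and ${\bbe}_\alpha(\sfield^\dagger_{1,w})$
is noncompact.
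If $F_0$ were a proper closed subfield of $\sfield^\dagger_{1,w}$, then ${\bbe}_\alpha(F_0)$
would be a closed subgroup of ${\bbe}_\alpha(\sfield^\dagger_{1,w})$ of infinite covolume:
indeed it is contained in a proper closed subgroup, and since ${\bbe}_\alpha$ is absolutely almost
simple and $\sfield^\dagger_{1,w}$-isotropic, ${\bbe}_\alpha(\sfield^\dagger_{1,w})$ has no proper
closed subgroup of finite covolume containing a lattice properly (see~\cite[Ch.~I]{Mar5}).
This contradicts the fact that $\Xi\subset{\bbe}_\alpha(F_0)$ is a lattice in
${\bbe}_\alpha(\sfield^\dagger_{1,w})$.
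Therefore $F_0=\sfield^\dagger_{1,w}$, i.e.\ $\tau_{\alpha,1,w}$ is an isomorphism and
$\tilde\field_{\alpha,1,u_0}=\sfield^\dagger_{1,w}$, as claimed.
\end{proof}
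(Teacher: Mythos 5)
Your proof is correct and follows essentially the same route as the paper: both arguments use Corollary~\ref{cor:delta-lattice} to produce a lattice of ${\bbe}_{\alpha}(\sfield_{1,w}^\dagger)$ inside the image of $E^\dagger$ under $\rm p$, use Lemma~\ref{lem:M-1} to identify that image with ${\bbe}_\alpha\bigl(\tau_{\alpha,1,w}(\tilde\field_{\alpha,1,u_0})\bigr)$, and then invoke the rigidity of closed finite-covolume subgroups of a simply connected almost simple isotropic group (the paper checks closedness of ${\rm p}(E^\dagger)$ via constructibility \`a la Bernstein--Zelevinsky and cites Raghunathan's Lemma~1.6 together with \cite[Ch.~1, Thm.~2.3.1 and Ch.~2, Thm.~5.1]{Mar5}, then deduces the field equality from the group equality). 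The one sentence you should repair is the justification of the final step: as written, ``no proper closed subgroup of finite covolume containing a lattice properly'' is false (a larger lattice is such a subgroup), and ``contained in a proper closed subgroup, hence infinite covolume'' is not an argument; what you actually need is that $F_0$ is an infinite closed subfield, so ${\bbe}_\alpha(F_0)$ is a \emph{non-discrete} closed subgroup which has finite covolume because it contains the lattice $\Xi$, whence it equals ${\bbe}_{\alpha}(\sfield_{1,w}^\dagger)$ and $F_0=\sfield_{1,w}^\dagger$ follows.
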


\begin{proof}
Recall from Corollary~\ref{cor:arithmetic-lattice}(1) that 
\[
\oplus_{\tcal_\alpha}\oplus_{b=1}^{s_u}\tilde\field_{\alpha, b,u}\subset\oplus_\tcal\oplus_{J_{\alpha,v}}\sfield^\dagger_{j,v},
\]
and that $\oplus_\tcal\oplus_{J_{\alpha,v}}\sfield^\dagger_{j,v}$ 
is a finitely generated $\oplus_{\tcal_\alpha}\oplus_{b=1}^{s_u}\tilde\field_{\alpha, b,u}$-module.

The projection~$\rm p$ is fiberwise defined over $\sfield_{j,v}^\dagger.$ 
Put
\[
P:={\rm p}\Bigl(E^\dagger\Bigr).
\]
Then, by~\cite[A.2]{BZ} we get $P$ is a constructible subgroup of ${\bbe}_{\alpha}(\sfield_{1,w}^\dagger)$. 
Hence, $P$ is a closed subgroup of ${\bbe}_{\alpha}(\sfield_{1,w}^\dagger)$.

Recall that $\Omega\subset E^\dagger;$ therefore, 
\[
{\rm p}\Bigl(\bigl({\bbe}_{\alpha}(\sfield_{1,w}^\dagger)Q\bigr)\cap\Omega\Bigr)\subset P.
\]
Moreover, by Corollary~\ref{cor:delta-lattice}, we have 
${\rm p}\Bigl(\bigl({\bbe}_{\alpha}(\sfield_{1,w}^\dagger)Q\bigr)\cap\Omega\Bigr)$ is a lattice in 
${\bbe}_{\alpha}(\sfield_{1,w}^\dagger).$

All together, we get $P$ is a non-discrete, closed subgroup of ${\bbe}_{\alpha}(\sfield_{1,w}^\dagger)$ 
with finite covolume, see~\cite[Lemma 1.6]{Raghunathan}.
In view of this and the fact that ${\bbe}_{\alpha}$ is a connected, absolutely almost simple, simply connected,
$K'_\alpha$-group, we get from~\cite[Ch.~1, Thm.\ 2.3.1 and Ch.~2, Thm.\ 5.1]{Mar5} that
\be\label{eq:P-and-E}
P={\bbe}_{\alpha}(\sfield_{1,w}^\dagger).
\ee
Using the definition of ${\rm p}$, we thus get the following.
\begin{align*}
\ker({\rm p}){\bbe}_{\alpha}(\tilde\field_{\alpha,1,u_0})&=\ker({\rm p})E^\dagger&&\text{Lemma~\ref{lem:M-1}}\\
&=L^\dagger&&\text{\eqref{eq:P-and-E}}\\
&=\ker({\rm p}){\bbe}_{\alpha}(\sfield_{1,w}^\dagger).
\end{align*}
Therefore, ${\bbe}_{\alpha}(\tilde\field_{\alpha,1,u_0})={\bbe}_{\alpha}(\sfield_{1,w}^\dagger)$
which implies $\tilde\field_{\alpha,1,u_0}=\sfield_{1,w}^\dagger,$ see e.g.~\cite[Ch.~1, Prop.\ 2.5.5]{Mar5}.
\end{proof}

A similar argument as in Lemma~\ref{lem:M-1} and Lemma~\ref{lem:field-of-def} implies the following.

\begin{lem}\label{lem:sfield-I}
Let $\alpha\in{\oldaleph}$.
\begin{enumerate}
\item Let $j_0\in J_\alpha(\la).$ Then 
$\Bigl(u_\alpha(v),b_{\alpha,v}(j)\Bigr)=\Bigl(u_\alpha(w),b_{\alpha,w}(j_0)\Bigr)$ 
if and only if $v=w$ and $j=j_0.$
\item There is a subset 
$
I_{\alpha}(\lambda)\subset\{1,\ldots, s_{u_{\alpha}(w)}\}
$ 
and a bijection
\[
b_\alpha:J_\alpha(\la)\to I_{\alpha}(\lambda)
\] 
so that $\sfield^\dagger_{j,w}=\tilde\field_{\alpha,b_\alpha(j),u_\alpha(w)}$ 
for all $j\in J_\alpha(\la)$.
\end{enumerate}
\end{lem}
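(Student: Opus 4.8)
\textbf{Proof plan for Lemma~\ref{lem:sfield-I}.}

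The plan is to imitate, almost verbatim, the two-step argument used for Lemma~\ref{lem:M-1} and Lemma~\ref{lem:field-of-def}, applied now to an arbitrary index $j_0\in J_\alpha(\la)$ rather than the normalized choice $j_0=1$. First I would fix $\alpha\in\oldaleph$ and $j_0\in J_\alpha(\la)$, and observe that by the same reasoning as in~\eqref{eq:J-D-E} the factor ${\bbe}_\alpha(\sfield^\dagger_{j_0,w})$ sits inside $D$ as a normal subgroup of $L^\dagger$ (here we use that $j_0\in J_\alpha(\la)\subset J_{\alpha,w}$, so this factor lives at the place $w$, and that $\la$ is noncentral on $L^\dagger_\alpha$ so the factor is noncompact). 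Then, choosing a compact open $Q\subset L^\dagger$ and running the argument of Lemma~\ref{lem:R-lattice} with $\sfield^\dagger_{1,w}$ replaced by $\sfield^\dagger_{j_0,w}$, one gets that $\bigl({\bbe}_\alpha(\sfield^\dagger_{j_0,w})Q\bigr)\cap\Omega$ is a lattice in ${\bbe}_\alpha(\sfield^\dagger_{j_0,w})Q$; in particular its intersection with $E^\dagger$ is unbounded. This forces, exactly as in Lemma~\ref{lem:M-1}, that the only $(v,j)$ with $v\in\tcal$, $j\in J_{\alpha,v}$ and $\bigl(u_\alpha(v),b_{\alpha,v}(j)\bigr)=\bigl(u_\alpha(w),b_{\alpha,w}(j_0)\bigr)$ is $(v,j)=(w,j_0)$ — otherwise continuity of the embeddings $\tau_{\alpha,j,v}$ would be contradicted by a sequence $\{c_n\}$ chosen unbounded in the $(w,j_0)$-coordinate but bounded in all the others. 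This is precisely part~(1).

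For part~(2), I would define $b_\alpha:J_\alpha(\la)\to\{1,\ldots,s_{u_\alpha(w)}\}$ by $b_\alpha(j):=b_{\alpha,w}(j)$, i.e.\ the component index attached to the place $w$ and the factor $j$ via the embedding of Corollary~\ref{cor:arithmetic-lattice}(1), and set $I_\alpha(\la):=b_\alpha\bigl(J_\alpha(\la)\bigr)$. Injectivity of $b_\alpha$ on $J_\alpha(\la)$ is the case $v=w$ of part~(1) (distinct $j,j_0\in J_\alpha(\la)$ cannot share the same $b_{\alpha,w}$-value), so $b_\alpha:J_\alpha(\la)\to I_\alpha(\la)$ is a bijection. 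The identification $\sfield^\dagger_{j,w}=\tilde\field_{\alpha,b_\alpha(j),u_\alpha(w)}$ for each $j\in J_\alpha(\la)$ is then obtained by repeating the proof of Lemma~\ref{lem:field-of-def} with the roles of $\sfield^\dagger_{1,w}$ and $\tilde\field_{\alpha,1,u_0}$ played by $\sfield^\dagger_{j,w}$ and $\tilde\field_{\alpha,b_\alpha(j),u_\alpha(w)}$: one shows that $P_j:={\rm p}_j(E^\dagger)$ — where ${\rm p}_j:L^\dagger\to{\bbe}_\alpha(\sfield^\dagger_{j,w})$ is the projection, which is defined over $\sfield^\dagger_{j,w}$ — is a closed, nondiscrete, finite-covolume subgroup of ${\bbe}_\alpha(\sfield^\dagger_{j,w})$ (using Corollary~\ref{cor:delta-lattice} in the appropriate coordinate and~\cite[A.2]{BZ}), whence $P_j={\bbe}_\alpha(\sfield^\dagger_{j,w})$ by~\cite[Ch.~1, Thm.~2.3.1 and Ch.~2, Thm.~5.1]{Mar5}, and comparing $\ker({\rm p}_j){\bbe}_\alpha(\tilde\field_{\alpha,b_\alpha(j),u_\alpha(w)})$ with $\ker({\rm p}_j){\bbe}_\alpha(\sfield^\dagger_{j,w})$ via part~(1) gives the equality of fields by~\cite[Ch.~1, Prop.~2.5.5]{Mar5}.

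I expect the only real subtlety — rather than a genuine obstacle — to be bookkeeping: keeping the indices $(v,j)$, $(u,b)$, the partitions $J_{\alpha,v}$, $J_\alpha(\la)$, and the two separate systems of embeddings ($\tau_{\alpha,j,v}$ and the structural isomorphisms $\phi_{\alpha,v}$) consistent across places, and making sure that the normalization ``$b_{\alpha,w}(1)=1$'' used in Lemma~\ref{lem:M-1} is not actually needed and was only an expository convenience. Once one checks that the lattice property in Lemma~\ref{lem:R-lattice} and the surjectivity-onto-a-finite-covolume-subgroup step in Lemma~\ref{lem:field-of-def} were proved for a generic normal almost simple factor of $L^\dagger$ of the form ${\bbe}_\alpha(\sfield^\dagger_{j,w})$ — which they were, since nothing in those proofs used $j=1$ beyond notation — everything goes through mechanically, and the lemma follows.
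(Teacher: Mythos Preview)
Your proposal is correct and is precisely the approach the paper takes: the paper's own proof of Lemma~\ref{lem:sfield-I} is the single line ``A similar argument as in Lemma~\ref{lem:M-1} and Lemma~\ref{lem:field-of-def} implies the following,'' and your write-up simply unpacks that sentence by replacing the normalized index $1$ with a general $j_0\in J_\alpha(\la)$. Your observation that the normalization $b_{\alpha,w}(1)=1$ in Lemma~\ref{lem:M-1} was purely notational is correct, and nothing else in those two proofs depends on the specific choice of index.
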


\begin{proof}[Proof of Theorem~\ref{thm:arithmetic-red}]
\begin{itemize}
\item Let $\oplus_{\alpha=1}^{r'}K'_\alpha$ and $\bbe=\coprod_{\alpha=1}^{r'}\bbe_\alpha$ be as in Lemma~\ref{lem:global-structure}.
\item Let $\oldaleph$ be as in~\eqref{eq:def-C}, and define $\mathsf J$ as in Theorem~\ref{thm:arithmetic-red}(2) 
for this $\oldaleph$. 
\item Let $\bbe'=\coprod_{\alpha\in\oldaleph}\bbe_\alpha$. 
\item Let $A=\oplus_{\alpha\in\oldaleph}A_\alpha$ where $A_\alpha=\oplus_{\tcal_\alpha}\oplus_{b=1}^{s_u}\tilde\field_{\alpha, b,u}$ is as in Corollary~\ref{cor:arithmetic-lattice}(1).
\end{itemize}
Then, $\psi$ in Lemma~\ref{lem:global-structure}(2) induces a $\oplus_{\mathsf J}K_i$-isomorphism, $f$, between 
$\bbe'\times_{\oplus_{\oldaleph}K_\alpha'}\oplus_{\mathsf J}K_i$ and $\coprod_{\mathsf J}\check\bbm_i$. 
Moreover, $A$ is the closure of $\oplus_{\oldaleph}K_\alpha'$ in $\oplus_{i\in\mathsf J}\Bigl( K_i\otimes_{K}(\oplus_\tcal K_v)\Bigr)$.

By Lemma~\ref{lem:sfield-I}, for all $\alpha\in\oldaleph$, we have
\[
\bbe_\alpha(\sfield^\dagger_{j,w})=\bbe_\alpha(\tilde\field_{\alpha,b_\alpha(j),u_\alpha(w)}).
\]
Therefore, in view of the strong approximation theorem~\cite[Ch.~2, Thm.\ 6.7]{Mar5}
and Corollary~\ref{cor:arithmetic-lattice}
we get that
\[
D=\displaystyle\prod_{\alpha\in {\oldaleph}}E^\dagger_\alpha\prod_{\alpha\not\in{\oldaleph}}\Omega_\alpha,
\]
see~\eqref{eq:J-D-E}. This and~\eqref{eq:def-D} finish the proof. 
\end{proof}




\begin{thebibliography}{ZZZZ}

\bibitem[BZ76]{BZ}I.~Bernstein, A.~Zelevinski,
{\it Representation of the group $\mbox{GL}(n, F)$ where $F$ is a non-archimedean local field.}
Russ.\ Math.\ Surv., \textbf{313}, (1976), 1--68.


\bibitem[B91]{B1}A.~Borel,
Linear algebraic groups.
Second enlarged edition. Berlin Heidelberg, New York, Springer 1991.

\bibitem[BP89]{BP-Finiteness}A.~Borel, G.~Prasad.
{\em Finiteness theorems for discrete subgroups of bounded covolume in semisimple groups.}
{Pub.\ Math.\ IHES.},  {\bf 69}:(1), (1989), 119--171.

\bibitem[BT65]{BT1}A.~Borel, J.~Tits,
{\it Groupes r\'{e}ductifs.}
Publications math\'{e}matiques de l' I.H.E.S., tome \textbf{27}, (1965), 55--151.

\bibitem[BT78]{BT2}A.~Borel, J.~Tits, 
{\it Th\'{e}or\`{e}mes de structure et de conjugaison pour les groupes alg\'{e}briques lin\'{e}aires.} 
C.\ R.\ Acad.\ Sci.\ Paris S\'{e}r., {\bf 287} (1978), 55--57

\bibitem[BS68]{BS}A.~Borel, T.~Springer,
{\it Rationality properties of linear algebraic groups.}
T$\hat{o}$hoku Math.\ J., \textbf{20}, (1968), 443--497. 

\bibitem[Bour]{Bour-Var}
N.~Bourbaki,
Vari\'{e}t\'{e}s diff\'{e}rentielles et analytiques.


\bibitem[CGP10]{CGP}B.~Conrad, O.~Gabber, G.~Parsad,
Pseudo-reductive groups.
New Mathematical Monographs no. 17, Cambridge Univ. Press, 2010.

\bibitem[DM93]{DM}S.~G.~Dani, G.~A.~Margulis,
{\it Limit distributions of orbits of unipotent flows and values of quadratic forms.} 
I.~M.~Gelfand Seminar,  91--137, Adv. Soviet Math., \textbf{16}, Part 1, Amer. Math. Soc., Providence, RI, 1993.

\bibitem[E06]{E}M.~Einsiedler, 
{\it Ratner's theorem on $\mbox{SL}_2(\bbr)$-invariant measures.} 
Jahresber.\ Deutsch.\ Math.\ Verein., {\bf 108}:(3) (2006), 143--164.

\bibitem[EG10]{EG} 
M.~Einsiedler and A.~Ghosh, 
{\it Rigidity of measures invariant under semisimple groups in positive characteristic.} 
Proc. London Math. Soc. {\bf 100}:(1), (2010), 249--268.

\bibitem[ELM16]{ELM-Torus} M.~Einsiedler, E.~Lindenstrauss, A.~Mohammadi, 
{\it Diagonal actions in positive characteristic.}
Preprint. 

\bibitem[EM12]{EM}M.~Einsiedler, A.~Mohammadi,
{\it A joining classification and a special case of Raghunathan's
conjecture in positive characteristic.} (with an appendix by Kevin
Wortman),
J'D Anal.\ Math., {\bf 116}:(1), (2012), 299--334.

\bibitem [Mar71] {Mar1}G.~A.~Margulis,  
{\it On the action of unipotent groups in the space of lattices.} 
In I.~M.~Gelfand (ed.) Proc. of the summer school on group representations. Bolyai 
Janos Math. Soc., Budapest, 1971, 365--370. Budapest: Akademiai Kiado (1975).

\bibitem[Mar86]{Mar2}G.~Margulis,
{\it Indefinite quadratic forms and unipotent flows on homogeneous spaces.}
Proceed of ``Semester on dynamical systems and ergodic theory" (Warsa 1986) 
399--409, Banach Center Publ., \textbf{23}, PWN, Warsaw, (1989).


\bibitem[Mar87b]{Mar3}G.~Margulis, 
{\it Discrete subgroups and ergodic theory.} 
In Aubert, K.E. et al. (eds.) Proc. of the conference "Number theory, trace formula and discrete grops" 
in honor of A. Selberg. Oslo (1987), 377--388. London New York: Academic Press (1988) 

\bibitem[Mar90a]{Mar4}G.~Margulis, 
{\it Orbits of group actions and values of quadratic forms at integral points.} 
In Festschrift in honour of I.I. Piatetski-Shapiro. (Isr.\ Math.\ Conf. Proc.~vol. 3, 127--151) 
Jerusalem: The Weizmann Science Press of Israel (1990). 

\bibitem[Mar90b]{Mar5}G.~Margulis,  
Discrete subgroups of semisimple Lie groups.
Berlin Heidelberg New York: Springer (1990).

\bibitem[Mar91]{Mar6}G.~Margulis
{\it Dynamical and ergodic properties of subgroup actions on homogeneous spaces with applications to number theory.}  
Proceedings of the International Congress of Mathematicians, vol. I, II (Kyoto, 1990),  193--215, Math.\ Soc.\ Japan, Tokyo, 1991.

\bibitem[MT94]{MT}G.~Margulis, G.~Tomanov,
{\it Invariant measures for actions of unipotent groups over local fields on homogeneous spaces. } 
Invent.\ Math.\  \textbf{116}:(1-3), (1994), 347--392.

\bibitem[M11]{AmirHoro} A.~Mohammadi, 
{\it Measures invariant under horospherical subgroups in positive characteristic.}
J.\ Mod.\ Dynamics, {\bf 5}:(2), (2011), 237--254.


\bibitem[Mo95]{Moz}S.~Mozes
{\it Epimorphic subgroups and invariant measures.}
Ergodic Theory and Dynam.\ Systems, {\bf 15}:(6) (1995), 1207--1210.

\bibitem[Oe84]{Oes} J.~Oesterl\'{e},
{\em Nombres de Tamagawa et groupes unipotents en caract\'{e}ristique $p$.}
Invent.\ Math., {\bf 78}, (1984), 13--88.

\bibitem[P98]{Pink}R.~Pink, 
{\it Compact subgroups of linear algebraic groups.} 
J.~Algebra, {\bf 206}:(2), (1998), 438--504.

\bibitem[Rag72]{Raghunathan}
M.~Raghunathan,
Discrete subgroups of semisimple Lie groups.
Ergebnisse der Mathematik und ihrer Grenzgebiete, Springer-Verlag, New York-Heidelberg, 1972. 


\bibitem[R83]{Rat1}M.~Ratner, 
{\it Horocycle flows: joining and rigidily ol products.} 
Ann.\ Math., \textbf{118}, (1983), 277--313.
 
\bibitem[R90a]{Rat2} M.~Ratner, 
{\it Strict measure rigidity for unipotent subgroups of solvable groups.} 
Invent.\ Math., \textbf{101}, (1990), 449--482. 

\bibitem[R90b]{Rat3}M.~Ratner,  
{\it On measure rigidity of unipotent subgroups of semi-simple groups.} 
Acta Math., \textbf{165}, (1990), 229--309.  

\bibitem[R91]{Rat4}M.~Rather, 
{\it Raghunathan topological conjecture and distributions of unipotent flows.} 
Duke Math.\ J., \textbf{63}, (1991), 235--280. 

\bibitem[R92]{Rat5} M.~Ratner, 
{\it On Raghunathan's measure conjecture.} 
Ann.\ Math.\ \textbf{134}, (1992), 545--607. 

\bibitem[R95]{Rat6} M.~Ratner, 
{\it Raghunathan's conjectures for Cartesian products of real and $p$-adic Lie groups.} 
Duke Math.\ J., {\bf 77}:(2), (1995), 275--382.

\bibitem[Sh95]{Sh} N.~Shah,
{\it Limit distributions of expanding translates of certain orbits on homogeneous spaces.}
Proc.\ Indian Acad.\ Sci.\ Math.\ Sci.\ {\bf 106}:(2), (1996), 105--125. 

\bibitem[Shl99]{Shalom} Y.~Shalom, 
{\it Invariant measures for algebraic actions, Zariski dense subgroups and Kazhdan's property (T).}
Trans.\ Amer.\ Math.\ Soc.,  \textbf{351}:(8),  (1999), 3387--3412.

\bibitem[Sp98]{Sp}T.~Springer,
Linear algebraic groups. 
Reprint of the 1998 second edition. Modern Birkh\"{a}user Classics. Birkh\"{a}user Boston, Inc., Boston, MA, 2009. 

\bibitem[We95]{We}A.~Weil,
Basic number theory. 
Reprint of the second (1973) edition. Classics in Mathematics. Springer-Verlag, Berlin, 1995.

\bibitem[Ve88]{Ve}T.~Venkataramana,
{\it On superrigidity and arithmeticity of lattices in semisimple groups over local fields of arbitrary characteristic.}
Invent.\ Math.,  \textbf{92},  (1988), 255--306.

\end{thebibliography}
\end{document}